\documentclass[12pt]{article}
\usepackage{amsfonts,epsfig,latexsym,amssymb}
\usepackage{amsmath,amscd,amsthm, enumerate}
\usepackage{graphicx,psfrag,color,longtable,array}
\usepackage[T1]{fontenc}
\usepackage{lmodern,microtype,adjustbox}
\usepackage[hidelinks,bookmarks=false,hypertexnames=false]{hyperref}
\graphicspath{{figures/}}
\date{}
\begin{document}
\title{On the Hausdorff dimension of oscillatory motions in three-body problems}
\author{Anton Gorodetski, Vadim Kaloshin}

\maketitle

\begin{abstract}
We show that for the Sitnikov example and for the restricted planar circular 3–body
problem the set of oscillatory motions often has maximal Hausdorff dimension. Also, we
construct Newhouse domains for both problems.
\end{abstract}
\par\smallskip
%

\newtheorem{Theorem}{Theorem}
\newtheorem{Lemma}{Lemma}[section]
\newtheorem{Proposition}[Lemma]{Proposition}
\newtheorem{Remark}[Lemma]{Remark}
\newtheorem{Corollary}[Lemma]{Corollary}
\newtheorem{Conjecture}{Conjecture}
\newtheorem{Definition}[Lemma]{Definition}

{\it  The initial draft of this paper has
circulated since 2012 and has been cited
multiple times since then
\cite{CG2023,DKRS2019,FGLP2016,GMP2025,GMPS2022,GMS2016,GPSV2021,GSMS2017,LGS2025,SZ2020}.
Some of the results of this paper were formulated and announced
in~\cite{GK2009}.}

\section{Introduction and the statement of main results}

Poincar\'e's work on the three-body problem~\cite{p} initiated the
study of non-integrability and the geometry of asymptotic solutions.
His discovery of the homoclinic picture was fundamental to this development. This picture was at the origin of ``chaos theory''
over a century ago.  Later the results by Birkhoff and Smale gave
a deep insight to the dynamics associated with the homoclinic picture
and constructed a wide range of chaotic orbits. This, however, does not
come close to providing a description of a typical orbit. In the context of
abstract smooth dynamical systems Newhouse found domains (open sets)
in the space of dynamical systems having many surprising phenomena
that are generic! It includes a counterexample to Thom's
conjecture on generic finiteness of the number of coexisting attracting
periodic points (see \cite{n2}). One can also show that a generic
map there has an arbitrarily fast growth of the number of periodic points
along a subsequence (see \cite{k2}). Nowadays the former phenomenon is called
{\it Newhouse phenomena} and domains with such properties {\it Newhouse
domains}. These two examples give a glimpse how complex and
persistently changing the homoclinic picture can be (see also \cite{GK}
for further strange generic properties in those domains).

In the present paper we show that Newhouse phenomena actually exist
for certain  three-body problems (namely, for the Sitnikov problem and
for the restricted planar circular three-body problem). This leads to
a package of highly surprising dynamical properties there.  From
historical perspective applying a variety of the deep techniques
developed in dynamics during the last century to the context of
the celestial mechanics brings us back to the original motivation:
three-body problem.

The classical $3$-body problem consists in studying the dynamics
of $3$ point masses in the plane or in the $3$-dimensional
space mutually attracted under Newton gravitation: Let
$q_1,q_2,q_3$ be point masses in $\mathbb{R} ^d$ for $d=2$ or $3$
with masses $m_1, m_2, m_3$ respectively.
\[
\adjustbox{max width=\linewidth}{$\displaystyle
\ddot q_i = \sum_{j\ne i} m_j \dfrac{ q_j - q_i }{ | q_j-q_i |^3 },
\qquad  q_i \in \mathbb{R} ^d,\quad i=1,2,3.
$}
\]
Denote $r_k$ the vector from $q_i$ to $q_j$
with $i \ne k, j \ne k, i<j$. One possible direction is to study
qualitative behavior of bodies as time tends to infinity either in
the future or in the past. Chazy \cite{Chazy} gave a classification
of all possible types of asymptotic motions:

\begin{Theorem}  (Chazy, 1922 (see also \cite{AKN})) Every solution of
the three-body problem that is defined for all sufficiently large positive times belongs to one of the following seven classes:

\begin{itemize}
\item $\mathcal H^+$ (hyperbolic): $|r_k|\to \infty,\
|\dot r_k| \to c_k>0$ as $t \to +\infty$;

\item ${\mathcal HP}_k^+$ (hyperbolic-parabolic): $|r_k|\to \infty,\
|\dot r_k| \to 0, |\dot r_i|\to c_i>0$  ($i\ne k$);

\item ${\mathcal HE}_k^+$ (hyperbolic-elliptic):
$|r_i| \to \infty, |\dot r_i|\to c_i>0$  ($i\ne k$),
$\sup_{t \ge t_0} |r_k|<\infty$;

\item ${\mathcal PE}_k^+$ (parabolic-elliptic):
$|r_i| \to \infty, |\dot r_i|\to 0$  ($i\ne k$),
$\sup_{t \ge t_0} |r_k|<\infty$;

\item ${\mathcal P}^+$ (parabolic) $|r_k|\to \infty,\
|\dot r_k| \to 0$,

\item $B^+$ (bounded): $\sup_{t \ge t_0} |r_k|<\infty$;

\item $OS^+$ (oscillatory): $\limsup_{t \to \infty} \max_k |r_k|=\infty$,
$\liminf_{t \to \infty} \max_k |r_k|<\infty$;
\end{itemize}
\end{Theorem}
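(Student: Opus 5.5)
The classification is a trichotomy on the behaviour of the largest mutual distance $R(t)=\max_k |r_k(t)|$, refined by energy and angular-momentum arguments on the subsystems. I would first split according to $R$. If $\limsup_{t\to\infty} R(t)<\infty$ the motion is $B^+$ by definition. If $\limsup R=\infty$ but $\liminf R<\infty$ it is $OS^+$ by definition. So all the work is in the case $R(t)\to\infty$, where one must show that the motion falls into one of the five classes $\mathcal H^+$, $\mathcal{HP}_k^+$, $\mathcal{HE}_k^+$, $\mathcal{PE}_k^+$, $\mathcal P^+$. Once $R\to\infty$ is assumed, the statement is essentially exhaustive bookkeeping, and the content is the existence of the limits.

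In the case $R\to\infty$ I would use the Lagrange--Jacobi identity $\ddot I = 4h + 2U$, where $I=\sum m_i|q_i|^2$ (centre of mass at the origin), $h$ is the energy and $U>0$ the force function. The first key step is to show that the configuration \emph{separates into clusters}. I would prove that either all three mutual distances tend to infinity, or exactly one, say $|r_k|$, stays bounded while the other two tend to infinity. The mixed case, where $|r_k|$ is unbounded but has bounded $\liminf$, must be excluded. For this I would use the triangle inequality: if $|r_i|,|r_j|\to\infty$ then bodies $i,j$ move away from the third, and the perturbation on the pair $k$ decays like $R^{-2}$. I would then show that the Kepler energy of the pair $r_k$ has a limit, because its derivative is integrable along the orbit. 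Finally I would argue that a negative limit forces $r_k$ bounded, while a nonnegative limit forces $|r_k|\to\infty$.

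The second step is to obtain the velocity asymptotics. When all distances tend to infinity, $U\to 0$, so $\ddot I\to 4h$, and the forces on each $r_k$ are $O(|r|^{-2})$. Hence each $\dot r_k$ converges to a limit $c_k\ge 0$; this follows because $\int |r|^{-2}dt<\infty$ once $|r|$ grows at least like $t^{2/3}$, a rate I would get from $\ddot I$ and a Sundman-type estimate. Sorting these limits into all $c_k>0$, exactly one $c_k=0$, or all $c_k=0$ gives $\mathcal H^+$, $\mathcal{HP}_k^+$ and $\mathcal P^+$. The case of two zero limits and one positive limit is impossible, since $\dot r_1+\dot r_2+\dot r_3=0$ with appropriate signs. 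When $r_k$ stays bounded, I would treat the binary as a single particle of mass $m_i+m_j$ whose distance to the third body tends to infinity. The relative velocity then has a limit that is either positive or zero, giving $\mathcal{HE}_k^+$ or $\mathcal{PE}_k^+$.

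The main obstacle is the first step: ruling out $R\to\infty$ with some pair oscillating, i.e.\ proving convergence of the partial energies. I would handle this by first proving a lower growth bound $R(t)\ge c\,t^{2/3}$ from Lagrange--Jacobi, then estimating the perturbing term $\int R^{-2}|\dot r_k|\,dt$ to show that it is finite. Close approaches of the binary complicate this estimate. There I would use the fact that the binary's energy is bounded below while it stays bounded, and fall back on Chazy's original argument, as presented in \cite{AKN}, for the technical estimates.
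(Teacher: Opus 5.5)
The paper does not prove this theorem; it quotes it from Chazy and \cite{AKN} as background, so your outline has to stand on its own. Your reduction is correct. $B^+$ and $OS^+$ are defined through $R=\max_k|r_k|$, so everything lies in the case $R\to\infty$.

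The gap is in the step you single out as the main obstacle: it is not proved, and your sketch for it assumes what it must show. The triangle inequality only gives that at each instant at most one side is shorter than $R/2$. It does not give that this is always the \emph{same} side. In the scenario to be excluded, whenever $|r_k|$ is of order $R$ another pair may be close (repeated exchanges), so you cannot take $|r_i|,|r_j|\to\infty$ for granted. Even if both diverge, the force $F$ on $r_k$ beyond its Kepler term is only $O(\min(|r_i|,|r_j|)^{-2})$, not $O(R^{-2})$.

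For $h<0$, separation is elementary and needs no energy convergence. $T=h+U\ge0$ gives $\min_k|r_k|\le\sum_{i<j}m_im_j/|h|=:C_h$ for all $t$. So once $R>2C_h$, exactly one side is $\le C_h$, and connectedness of $[T,\infty)$ makes it the same side forever. For $h\ge0$ there is no such bound.

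Your dichotomy for the limit of $e_k=\frac12|\dot r_k|^2-(m_i+m_j)/|r_k|$ also breaks at the boundary. A negative limit does bound $|r_k|$, since $e_k\ge-(m_i+m_j)/|r_k|$. But if $e_k\to0^-$, nothing in your sketch prevents Kepler arcs with apocentres of order $1/|e_k|\to\infty$ that return to small separation. In
\[
\frac{d^2}{dt^2}\frac{|r_k|^2}{2}=2e_k+\frac{m_i+m_j}{|r_k|}+r_k\cdot F
\]
the first and last terms have no sign. So $|r_k|\to\infty$ does not follow without a rate for $e_k\to0$, measured against the distance to the third body. This borderline, where $\mathcal{HP}_k^+$ and $\mathcal P^+$ must be separated from partial oscillation, is where the real work in Chazy's theorem lies. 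Deferring it to the reference leaves it unproved.

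A secondary point concerns the growth bound. Lagrange--Jacobi yields $R\ge ct^{2/3}$ only when $h\ge0$, via $\ddot I\ge2U\ge cI^{-1/2}$. For $h<0$, where $\mathcal{PE}_k^+$ lives, $4h+2U$ has no sign. Even for $h\ge0$ it bounds the \emph{longest} side, not the shortest, so it does not give $\int|r_k|^{-2}dt<\infty$ for every $k$. In $\mathcal{HP}_k^+$ the short side grows only like $t^{2/3}$ while the others grow linearly, and a priori it could grow more slowly still. What $R\ge ct^{2/3}$ does control is the forcing from the third body on a side when the other two sides are $\ge R/2$; that forcing is then $O(t^{-4/3})$. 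So the velocity limits have to come from convergence of that side's Kepler energy and of the center-of-mass motion. This again requires knowing which side is short, which is the separation problem above.
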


Examples of the first six types were known to Chazy. The existence
of oscillatory motions was proved by Sitnikov \cite{sit} in 1960.
Properties of the set of these motions are the central subject of the present paper.

Recall that energy of the three-body problem
\[
\adjustbox{max width=\linewidth}{$\displaystyle
H=\sum_k \dfrac{m_k |\dot q_k|^2}{2}-\sum_{i<j}
\dfrac{m_i\,m_j}{|q_i-q_j|}
$}
\]
is preserved along the solutions.  Notice that for positive energy
only $\mathcal H^\pm$, ${\mathcal HP}^\pm$, and ${\mathcal HE}^\pm$ are possible and
for negative energy only ${\mathcal HE}^\pm,\ {\mathcal PE}^\pm,\ B^\pm,\ OS^\pm$
are possible. It turns out that all logically possible intersections
of the past and the future final motion do exist as solutions of the
three-body problem. In the beginning of the last century there was
a heated discussion whether there is a symmetry of final type in
the past and in the future. In one of Chazy's papers (1929, see also \cite{AKN})
a false assertion was stated that in the three-body problem the two final
types in the past and the future coincide. It was disproved rigorously
by Sitnikov in 1953. Moreover, for all of them, but one, it is known whether
they form a set of initial conditions of positive or zero Lebesgue measure.
Here are two tables, omitting parabolic limiting cases, from a paper by Alexeev \cite{Ae,Af}.
Letting bodies to be far from each other we see that there is a qualitative
difference if energy $H$ is positive or negative. Indeed, potential energy
(the second term in energy $H$) goes to zero. So it is natural
to distinguish two cases:

\smallskip

\begin{table}[!htbp]
\begin{center}
\begin{adjustbox}{max width=\textwidth}
\begin{tabular}{|c|c|c|}
\hline
\multicolumn{3}{|c|}{Positive energy $H>0$} \\ \hline
 & $H^+$ & $HE_i^+$ \\ \hline
 & & \\
 & Lagrange, 1772  & PARTIAL CAPTURE \\
 & (isolated examples); & Measure $>0$ \\
$H^-$ & Chazy, 1922  & Shmidt (numerical examples), 1947; \\
 & Measure $>0$ & Sitnikov (qualitative methods), 1953 \\ \hline
 & & \\
 & COMPLETE DISPERSAL & $i=j$ \quad Measure $>0$ \\
 & Measure $>0$ & Birkhoff, 1927 \\ \cline{3-3}
  $HE_j^-$  & & $i \ne j$ \quad EXCHANGE, Measure $>0$ \\
  & & Bekker (numerical examples), 1920; \\
  & & Alexeev (qualitative methods), 1956 \\ \hline
\end{tabular}
\end{adjustbox}

\vspace{12pt}

\begin{adjustbox}{max width=\textwidth}
\begin{tabular}{|c|c|c|c|}
\hline
\multicolumn{4}{|c|}{Negative energy $H<0$} \\ \hline
    & $HE_i^+$ & $B^+$ & $OS^+$ \\ \hline
 & & & \\
 & $i=j$ \quad  Measure $>0$ & COMPLETE &  \\
 & Birkhoff, 1927 & CAPTURE & \\ \cline{2-2}
 & Exchange & Measure $=0$ & Measure $=0$ \\
 & $i\ne j$ \quad Measure $>0$ & Chazy,1929 \& Merman,1954; &
 Chazy,1929 \& Merman,1954; \\
 $HE_j^-$ & Bekker, 1920 & Littlewood, 1952; & Alexeev, 1968 \\
 & (numerical examples); & Alexeev, 1968; & $\ne \emptyset$ \\
 &  Alexeev, 1956; & $\ne \emptyset$ & \\
 & (qualitative methods) & & \\  \hline
 & & & \\
 & PARTIAL & Euler, 1772; & Littlewood, 1952 \\
 & DISPERSAL & Lagrange, 1772 & Measure $= 0$ \\
  $B^-$ & $\ne \emptyset$ & Poincar\'e, 1892 & Alexeev, 1968 \\
 & Measure $= 0$ & (isolated examples); & $\ne \emptyset$ \\
 &  & Arnold, 1963 & \\ \hline & & & \\
 & $\ne \emptyset$ & $\ne \emptyset$ & Sitnikov, 1960, \\
     $OS^-$  & Measure $= 0$ & Measure $= 0$ & $\ne \emptyset$ \\
      &  & & Measure $= ?$ \\ \hline
\end{tabular}
\end{adjustbox}
\end{center}
\end{table}

\smallskip

A central question highlighted by these tables is

\begin{center}
{\it Is Lebesgue measure of the set of oscillatory motions positive?}
\end{center}

Arnold in the conference in the honor of 70-th anniversary of Alexeev
called this {\it the central problem of the celestial mechanics.}
The conjecture, which probably\footnote{We are not sure because
in the English version \cite{Ae} Alexeev refers to Kolmogorov,
however in the French one \cite{Af} Kolmogorov's name is surprisingly
omitted.} goes back to Kolmogorov and stated in the paper of Alexeev \cite{Ae,Af},
is that Lebesgue measure of oscillatory motions is zero.

The $3$-body problem is called {\it restricted} if one of
the bodies has mass zero and the other two are strictly positive.
Historically, the first example of oscillatory motions is due to Sitnikov
\cite{sit} for the restricted spatial three-body problem.  In the pioneering
work \cite{Af,Ae}  Alexeev not only extended the Sitnikov example
to the spatial (unrestricted) $3$-body problem, but also found important
use of hyperbolic dynamics for the $3$-body problem. Later
Moser \cite{Mo} gave a conceptually transparent proof of existence
of oscillatory motions for the Sitnikov problem interpreting homoclinic
intersections using symbolic dynamics. This paved a road to a variety of
applications of hyperbolic dynamics to the three-body problem.

{The original development of this approach appears in Alekseev's
papers on quasirandom dynamical systems~\cite{A1968II,A1969III}.}

Dankowicz and Holmes~\cite{DH} gave a Melnikov-method proof of
transverse homoclinic intersections in the Sitnikov problem.

Existence of oscillatory motions for the planar three-body problem had
to wait for another decade. Investigation of planar oscillatory motions
was initiated by  Llibre--Sim\'o \cite{LS} who proved their existence for
the restricted planar circular $3$-body problem (see also \cite{MP}
on the subject of evaluation of the Melnikov integral). In \cite{X} Xia
simplified their proof. Robinson~\cite{R2} studied homoclinic orbits
and oscillation in the planar three-body problem. Oscillatory motions in the restricted planar elliptic problem and
in the planar three-body problem were studied in \cite{Xi2,Xi3}.
For an exposition
of the elliptic restricted problem, see \cite{Bak}.
Splitting of invariant manifolds formed by future (resp. past)
parabolic motions is studied in detail in \cite{MP}.

{Moeckel~\cite{Moe2007} constructed symbolic dynamics in the
unrestricted planar three-body problem combining close approaches to
triple collision with excursions near infinity, including oscillatory
motions. This gives a related construction of prescribed itineraries;
the question considered here is the Hausdorff dimension of the
oscillatory set in the restricted models.}

In this paper we investigate how large is the set {\it oscillatory
motions} and show that for the Sitnikov example and the restricted
planar circular $3$-body problem this set often has maximal
Hausdorff dimension. Full Hausdorff dimension is compatible with zero Lebesgue measure, so this conclusion does not contradict the conjecture.  Now we discuss
the Sitnikov example followed by the restricted planar circular
$3$-body problem and state our results.

\paragraph{Relation to recent work.}
The results of this paper were announced in part in~\cite{GK2009}
and developed in an earlier version of this manuscript. Subsequent
work has established overlapping results by different methods.
Guardia, Mart\'in and Seara~\cite{GMS2016} proved the existence of
oscillatory motions in the RPC3BP for every positive mass ratio and
sufficiently large Jacobi constant. In their introduction they also
observed, attributing the observation to the present authors, that
combining their estimates with our techniques yields full Hausdorff
dimension for each fixed mass and a Baire generic subset of an open
set of Jacobi constants.

{Capi\'nski, Guardia, Mart\'in, Seara and Zgliczy\'nski~\cite{CGMSZ2022}
proved the existence of oscillatory motions and all combinations of
past and future final motions for realistic Sun--Jupiter parameters,
using correctly aligned windows and rigorous computer assistance.
Their result concerns existence at specified physical parameters,
whereas our conclusions concern full Hausdorff dimension on residual
subsets of suitable parameter sets. Capi\'nski and Pasiut~\cite{CP2026}
proved the existence of oscillatory approaches to collision in the
Earth--Moon problem. In that result the trajectories repeatedly
approach a primary arbitrarily closely and return to a prescribed
distance from it; this differs from the unbounded excursions to
infinity considered here.}

Guardia, Mart\'in, Paradela and Seara~\cite{GMPS2022} proved the
existence of oscillatory motions in the unrestricted planar three-body
problem for positive masses that are not all equal, constructing
hyperbolic dynamics conjugate to a shift on infinitely many symbols.

Garrido, Mart\'in and Paradela~\cite{GMP2025} proved the existence of
mass-parameter Newhouse domains associated to parabolic infinity
and, for residual parameters, hyperbolic sets of dimension approaching
two in its homoclinic class. They also obtained results for the
Sitnikov problem and proved additional conclusions concerning
unbounded families of elliptic islands. Their Section~1.3 and
Remark~5 explain how their construction, combined with the dimension
argument of our earlier manuscript, gives an alternative proof of
full Hausdorff dimension for oscillatory motions. 

Baldom\'a, Giralt
and Guardia~\cite{BGG2023} constructed energy-parameter Newhouse
domains for each sufficiently small fixed mass near Lyapunov
periodic orbits associated to $L_3$. This concerns coorbital motions
near a finite energy level, rather than the regime of arbitrarily
large Jacobi constants considered here.

The present paper gives detailed proofs of our previously announced
results, with separate constructions for varying mass at fixed Jacobi
constant and for varying Jacobi constant at each fixed positive mass.
In the latter case we obtain an unbounded open set of Jacobi constants;
the required uniform splitting and coordinate estimates are established
in Sections~\ref{ss:complex-splitting-control} and
\ref{ss:rpc-uniform-energy}. The dimension argument in
Section~\ref{ss:sparse-excursions} prescribes excursions both to
infinity and back to a fixed finite periodic orbit in both time
directions, without decreasing the dimension bound supplied by a
hyperbolic basic set. Our Newhouse-domain constructions do not invoke
the corresponding theorems of~\cite{GMP2025}. They use the classical
first-order Sitnikov splitting calculation of Garc\'ia and
P\'erez-Chavela~\cite{GP}; see also \cite[Lemma B.1]{GMP2025} for
its formulation in coordinates measuring the energy displacement. 

\subsection{The Sitnikov example}

Consider two point masses $q_1$ and $q_2$ of equal mass
$m_1=m_2=1/2$. Suppose they move on the plane so that the center
of mass is at the origin. Assume that their orbits are elliptic of
eccentricity $e>0$ and period $2\pi$. We shall treat $e$ as
parameter. Consider a third massless point $q_3$ moving along the
$z$-axis. Due to symmetry if an initial condition and velocity
belong to the $z$-axis, then the whole orbit of $q_3$ also belongs
to the $z$-axis. Denote by $(t,z(t),\dot z(t))$ an orbit of $q_3$,
where the time $t $ (mod $2\pi$) determines position of primaries.
Denote $r(t)=r_{e}(t)$ distance of primaries to the origin. Then
the equation of motion of the massless body is of the form
 \begin{eqnarray}  \label{Sitnikov-EM}
 \ddot z=-\dfrac{z}{(z^2+r^2(t))^{3/2}}
 \end{eqnarray}
 and the corresponding Hamiltonian is of the form
 \[
\adjustbox{max width=\linewidth}{$\displaystyle
H(t,z,\dot z)=\dfrac{\dot z^2}{2}-\dfrac{1}{\sqrt{z^2+r^2(t)}}.
$}
\]

\begin{figure}[htbp]
    \begin{center}
        \includegraphics[width= 3.5in,angle=0]{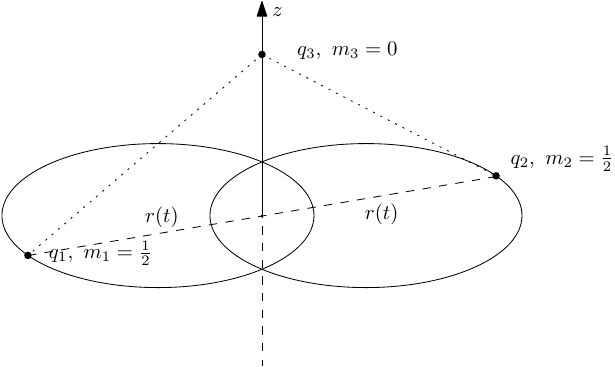}
    \caption{Sitnikov problem.}
    \label{f.sitnikov}
\end{center}
\end{figure}

\begin{Theorem}  \label{Sitnikov-main} There is an open set $\mathcal N \subset$
$(0,1)$ accumulating at zero, of values of eccentricity $e$ such
that for a Baire generic $e\in \mathcal N$ the set of two-sided oscillatory motions in the extended phase space
$(\mathbb R/2\pi\mathbb Z)\times\mathbb R^2$ has Hausdorff dimension $3$. \end{Theorem}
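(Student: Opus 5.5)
We reduce to the Poincaré map and build the large-dimension hyperbolic sets through Newhouse phenomena at parabolic infinity. Work with the time-$2\pi$ map $F_e$ of (\ref{Sitnikov-EM}) on the plane $\{t=0\}$; it is area preserving. Since the flow direction adds exactly one dimension (the oscillatory set is invariant under the flow, and locally it is a product of an interval in $t$ with a subset of the section), it suffices to show that the set $\mathcal O_e\subset\mathbb R^2$ of initial data whose $F_e$-orbit is oscillatory in both time directions has Hausdorff dimension $2$. Following Moser, pass to McGehee coordinates $z=2/x^2$ near infinity. The "point at infinity" $p_\infty=\{x=0,\dot z=0\}$ becomes a degenerate (parabolic) fixed point of $F_e$. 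It has $C^1$ stable and unstable curves $W^{s,u}(p_\infty)$ formed by the parabolic escape motions, and these curves split for $e>0$. By the first-order splitting computation of Garc\'ia--P\'erez-Chavela~\cite{GP}, for small $e$ the Melnikov function has nondegenerate zeros, so $W^u(p_\infty)\pitchfork W^s(p_\infty)$.

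\textbf{Step 1 (Newhouse domain).} I would find a tangency and unfold it. Follow a one-parameter family in $e$ along which a pair of transverse homoclinic points, or a homoclinic point to a hyperbolic periodic orbit $P_e$ that is heteroclinically related to $p_\infty$, merges into a quadratic homoclinic tangency. The natural place to look is where the Melnikov function has a degenerate critical value; its coefficients depend analytically on $e$, so generic unfolding follows. Near a tangency in an area-preserving family, the Newhouse--Duarte mechanism gives an open set of parameters $\mathcal N$ with the following property. Over $\mathcal N$ there is a horseshoe $\Lambda_e$ of thickness $>1$ whose stable and unstable foliations have persistent tangencies, and $\Lambda_e$ is homoclinically related to $p_\infty$ through transverse heteroclinic connections. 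Repeating this for a sequence of tangency parameters $e_n\to0$ makes $\mathcal N$ accumulate at $0$.

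\textbf{Step 2 (residual full dimension).} By Duarte / Gorodetski--Kaloshin type arguments, for Baire generic $e\in\mathcal N$ the homoclinic class contains hyperbolic basic sets $\Lambda_e^{(k)}$ with $\dim_H\Lambda_e^{(k)}\to2$. The scheme is as follows. Generic unfolding of tangencies in the persistent family produces, at a dense set of parameters, hyperbolic sets whose stable and unstable dimensions exceed $1-1/k$. Since basic sets persist, the set of such parameters is open and dense for each $k$, and intersecting over $k$ gives a residual set. Each $\Lambda_e^{(k)}$ is linked to $p_\infty$ by transverse heteroclinic orbits in both directions.

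\textbf{Step 3 (sparse excursions).} I would embed oscillatory orbits without losing dimension. Fix $\Lambda=\Lambda_e^{(k)}$, coded by a subshift. Build orbits that shadow $\Lambda$ for long blocks of lengths $n_j$ and in between make an excursion $\Lambda\to$ neighborhood of $p_\infty$ (reaching $|z|>j$) $\to\Lambda$, with $n_j\to\infty$ so fast that the excursions have zero density, in both time directions. These orbits have $\limsup|z|=\infty$ and return to a compact set, so they are oscillatory. The shadowing is done with a Conley--Moser / correctly-aligned-windows construction near the parabolic point; the polynomial rather than exponential contraction there is handled as in Moser's symbolic dynamics. The resulting set is a Cantor set whose covering numbers at scale $\lambda^{n}$ are asymptotically those of $\Lambda$, because the excursion blocks have zero density. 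Hence its dimension is $\ge\dim_H\Lambda-\epsilon$. Letting $k\to\infty$ gives $\dim_H\mathcal O_e=2$, and adding the time direction gives dimension $3$.

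\textbf{Main obstacle.} Step 1 is the hardest: producing a genuine quadratic tangency, unfolding generically with $e$, in a family where the splitting is exponentially small in the relevant scaling and the relevant fixed point is parabolic. Newhouse theory is formulated for hyperbolic saddles, so I would first transfer the tangency to a hyperbolic periodic orbit $P_e$ near infinity whose invariant manifolds $C^1$-shadow $W^{s,u}(p_\infty)$. Then I would verify the tangency and its nondegenerate unfolding using the first-order Melnikov expansion of~\cite{GP}, with error bounds uniform in $e$ near each $e_n$.
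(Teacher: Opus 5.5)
Your Steps 2 and 3 broadly match what is needed. The paper also uses Gorodetski's dimension refinement of Duarte's theorem to get open dense parameter sets with basic sets of dimension $>2-1/k$, and it also uses a sparse-excursion construction. Step 1, however, has a genuine gap: you never identify where quadratic tangencies come from at small $e$. In the Sitnikov problem the first-order splitting of $W^u(O_+)$ and $W^s(O_-)$ is a nonzero multiple of $e\sin\vartheta$. In the local chart the stable graph is $y=a(x)\bigl(e\sin\vartheta(x)+e^2\rho\bigr)$ with $a>0$. The sine has only simple zeros and its critical values are $\pm1$, so there is no ``degenerate critical value'' to look for. For all small $e>0$ every primary intersection is transverse, and nothing merges. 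Transferring first to a periodic orbit whose manifolds $C^1$-shadow $W^{s,u}(O_\infty)$ does not help either: $C^1$-closeness on compact arcs only propagates this transversality.

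Since $\mathcal N$ must accumulate at $0$, the tangencies must be secondary. They arise where a piece of an invariant curve, after a long passage near the parabolic point, folds back close to the unstable axis. The missing ingredient is a $C^2$ inclination estimate at the degenerate saddle, where the classical $\lambda$-lemma fails, together with a parameter estimate. The paper shows (Theorem~\ref{t.c2lyambdalemma} and Section~\ref{sec:parameter-revised}) that the $N$-th local image of the symmetry line $\{x=y\}$ is a graph $y=\Delta_N(x,e)$ with $\Delta_N\to0$ and
$|\partial_x\Delta_N|\le C\Delta_N/x$, $|\partial_x^2\Delta_N|\le C\Delta_N/x^2$, $|\partial_e\Delta_N|\le K\Delta_N$.
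It then takes the first $e\in(0,4d_N)$ at which the lobe $e\sin\theta$ reaches the normalized height $\Delta_N/a$. These relative bounds force the contact to be interior and quadratic, with unfolding speed $>1/2$. Reversibility then turns the contact of $W^s(O_-)$ with $\operatorname{Fix}R$ into a homoclinic tangency.

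Several further points need repair.

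Your justification ``coefficients depend analytically on $e$, so generic unfolding follows'' is not an argument. Analyticity does not prevent the unfolding speed from vanishing. For example, $x^2+e-l^{-1}\sin(le)$ is analytic and converges in $C^2_x$, uniformly in $e$, to $x^2+e$, yet its only tangency has zero speed.

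For this reason the transfer to periodic saddles needs more than $C^1$ shadowing. It needs $C^2$ spatial convergence of the invariant manifolds and $C^0$ convergence of their $e$-derivatives, uniformly on a fixed parameter interval, as in Proposition~\ref{saddle-sequence}. The tangency must also be established for $W^{s,u}(O_\infty)$ before the transfer, since the Melnikov expansion says nothing directly about the saddle's manifolds.

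In Step 3, the excursion costs are not uniformly bounded: deeper passages stretch like $x^{-5/2}$. The free-block lengths must therefore be chosen after these costs, as in \eqref{eq:sparse-schedule}. Moreover, ``asymptotic covering numbers'' only control box dimension. A Hausdorff lower bound needs a genuine device, such as the H\"older deletion map of Lemma~\ref{lem:sparse-deletion}, whose estimate holds across gaps.

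Finally, the Sitnikov splitting at small $e$ is of first order in $e$, not exponentially small. Exponential smallness is the difficulty in the RPC3BP at large Jacobi constant.
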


\subsection{The restricted planar circular three-body problem}
Consider the restricted planar circular $3$-body problem. Namely,
consider two massive bodies, the {\it primaries}, performing uniform
circular motion about their center of mass. Normalizing the masses
of the primaries so that their masses sum to one, we obtain
primaries of mass $\mu$ and $1 - \mu$ respectively, where $0 < \mu <
1$ is the \textit{mass ratio}. In addition, we choose coordinates so
that the center of mass of the system is located at the origin, and
we normalize the period of the circular motion to $2\pi$. By
entering into a frame which rotates with the primaries, we can
choose rectangular coordinates $(x, y)$ so that the primaries are
fixed at $(1 - \mu, 0)$ and $(-\mu, 0)$, respectively. Finally, we
introduce a third massless body $P$ into the system, so that it does
not effect the primaries. RPC3BP investigates how $P$ moves.

The distance of $P$ to the primaries is given by   $d_1(x, y) = [(x
- (1 - \mu))^2 + y^2]^{1/2}$ and $d_2(x, y) = [(x + \mu)^2 +
y^2]^{1/2}$.  The standard formula for the \textit{Jacobi constant}
$C$, a conserved integral for RPC3BP, is given by
 \begin{eqnarray}  C_\mu (x,y,\dot x, \dot y)=
 x^2 + y^2 + \frac{2\mu}{d_1} + \frac{2(1 - \mu)}{d_2} -
 (\dot{x}^2 + \dot{y}^2). \label{Jacobi}
 \end{eqnarray}

\begin{figure}[htbp]
    \begin{center}
        \includegraphics[width= 3.5in,angle=0]{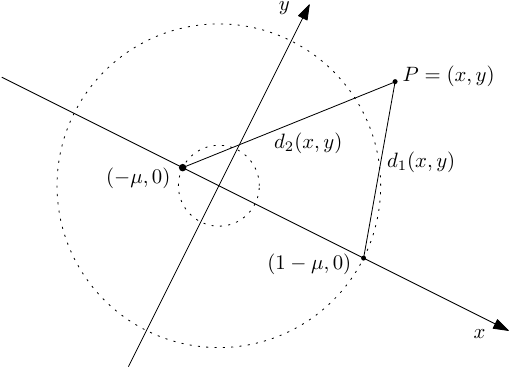}
    \caption{The restricted planar circular $3$-body problem.}
    \label{f.rpc3bp}
\end{center}
\end{figure}

Here is the main result for the RPC3BP. Denote by RPC3BP$(\mu, C)$
the RPC3BP with mass ratio $\mu$ restricted to the energy surface
$\Pi_C=\{C_\mu(x,y,\dot x,\dot y)=C\}$. We shall treat both $\mu$
and $C$ as parameters.

\begin{Theorem}[RPC3BP parameter conclusions]\label{RPC3BP-main}
For every sufficiently large positive Jacobi constant $C$, there is
a nonempty open set $\mathcal N_C\subset(0,1/2)$, with
$\mathcal N_C$ accumulating at zero, and a residual subset
$\mathcal R_C\subset\mathcal N_C$ such that, for every
$\mu\in\mathcal R_C$, the set of two-sided oscillatory motions
in the collision-free part of the energy surface $\Pi_C$ has Hausdorff dimension three.

For every fixed $\mu\in(0,1)$ there is also an unbounded open
set $\mathcal N_\mu$ of positive Jacobi constants and a residual
subset $\mathcal R_\mu\subset\mathcal N_\mu$ such that for
$C\in\mathcal R_\mu$ the same dimension-three conclusion holds.
\end{Theorem}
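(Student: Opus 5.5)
We reduce the RPC3BP on $\Pi_C$ to a two-dimensional area-preserving return map near infinity and then run the same Newhouse-type dimension argument as for Theorem~\ref{Sitnikov-main}. In polar coordinates with McGehee's change $r=2/x^2$, the circle $\{x=0\}$ at infinity in $\Pi_C$ (parametrised by the angle) is a degenerate, parabolic invariant set. Its stable and unstable manifolds consist of the future and past parabolic motions, and they are $C^\infty$ away from $x=0$ by McGehee's analysis. Taking a section transverse to the flow (for instance $\dot r=0$ near the pericentre, or $x=\mathrm{const}$), we obtain an exact area-preserving map $F_{\mu,C}$ defined on a collision-free domain. For $C$ large the orbits that return from near infinity stay far from the primaries, so they are collision-free. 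The first step is to verify transversality of $W^s(\infty)$ and $W^u(\infty)$ through the Melnikov/Poincar\'e function. For small $\mu$ at fixed large $C$ we use the Llibre--Sim\'o/Xia first-order computation, whose splitting is of order $\mu e^{-cC^{3/2}}$. For fixed $\mu$ and $C\to\infty$ we use the exponentially small splitting estimates (complexified Melnikov integral together with uniform control of the error terms), since at fixed $\mu$ first-order perturbation theory in $\mu$ is not available. This gives transverse homoclinic points to infinity, and via Moser's construction a family of horseshoes $\Lambda_n$ visiting ever larger neighbourhoods of infinity.

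The second step is to produce Newhouse domains. We follow the homoclinic tangency generically unfolded inside each family. Varying $\mu$ (respectively $C$), the Melnikov function has a nondegenerate critical value that crosses zero, so quadratic homoclinic tangencies of a basic set $\Lambda$ accumulating on infinity unfold with nonzero speed. Area preservation means we must use Duarte's/Newhouse's conservative version: near a tangency unfolded by a parameter there are basic sets of thickness $\tau(\Lambda^s)\tau(\Lambda^u)>1$, which yields an open parameter set $\mathcal N$ with persistent tangencies. Openness is immediate; accumulation at $0$ (respectively unboundedness in $C$) comes from repeating the construction at each scale.

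The third step establishes full dimension on a residual set, as in the Sitnikov case. In a Newhouse domain, Duarte's theorem together with the Baire argument gives, for residual parameters, hyperbolic basic sets $\Lambda_k$ of the return map with $\dim_H\Lambda_k\to 2$; the condition "there is a basic set of dimension $>2-1/k$ homoclinically related to infinity" is open and dense. Inside each $\Lambda_k$ we build oscillatory orbits. Using the sparse-excursion coding, we insert rare itineraries that leave $\Lambda_k$, go out along the homoclinic connection to distances $\ge n$, and return, in both time directions. Because the insertion times are sparse (density zero), the resulting set has the same Hausdorff dimension as $\Lambda_k$; this is shown by a mass distribution estimate on cylinder sets. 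Adding the flow direction raises the dimension in the three-dimensional energy surface to $3-1/k$, and letting $k\to\infty$ gives dimension $3$.

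The main obstacle is the second family of parameters, fixed $\mu$ with $C$ varying. Here the splitting is exponentially small in $C$, so we must show that the unfolding of tangencies with respect to $C$ dominates the error terms uniformly. We must also control the McGehee-coordinate normal forms near the parabolic point uniformly in $C$ so that the thickness estimates survive. We would first try rescaling time and coordinates by powers of $C$ to reduce to a Sitnikov-like singular perturbation, and then apply the Lazutkin-type complex-strip bounds.
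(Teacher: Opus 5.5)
The genuine gap is in your second step: the mechanism you propose does not produce any homoclinic tangency. In the straightened coordinates at infinity, the primary branches split as $y=\mu a_C(x)\bigl(m_G(\vartheta(x))+O(\mu)\bigr)$, with $a_C>0$ and $m_G$ $C^2$-close to $\sin$. Its zeros are simple for every small $\mu>0$. Its critical values have size $\pm\mu a_C$ and vanish only at $\mu=0$, where the two manifolds coincide. Likewise, at fixed $\mu$ the normalized splitting is $\sin\vartheta+o(1)$ for all large $C$, which is exactly why \cite{GMS2016} obtains transversality for every large $C$. So no critical value of the Melnikov function ``crosses zero'' as $\mu$ or $C$ varies, and the primary intersections stay transverse.

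The tangencies have to be secondary, created by long passages near the degenerate parabolic point, and this is where the real work lies. One compares the global stable graph with the image $f^N(\Delta)$ of the symmetry line $\Delta=\{x=y\}$. This needs a $C^2$ inclination estimate at the parabolic point: the images are decreasing, convex graphs with $|\lambda_N|,\kappa_N\le Cy_N$. It also needs the parameter bound $|\partial_\mu y_N|\le Cy_N$. A first-contact argument on one positive lobe of the sine then gives a quadratic contact at some $\mu_N\in(0,4d_N)$ with nonzero unfolding speed. Reversibility converts this contact with $f^N(\operatorname{Fix}R)$ into a homoclinic tangency to infinity. Even this is not yet an input for Duarte's theorem, which needs a tangency of a \emph{hyperbolic periodic} saddle. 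Transferring the parabolic tangency to nearby saddles requires parameter-uniform estimates on their invariant graphs, because spatial $C^2$ convergence alone does not preserve a nonzero unfolding derivative (e.g.\ $x^2+e-l^{-1}\sin(le)\to x^2+e$).

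For fixed $\mu$ and varying $C$ you correctly identify the obstacle, but rescaling plus Lazutkin-type strip bounds does not supply the missing mechanism. Nondegenerate tangencies in $C$ come from a competition between two quantities. One is the exponentially small amplitude $A(C)\asymp C^pe^{-kC^3/24}$; with $C=2G_0$ the exponent is $G_0^3/3$, not $cC^{3/2}$. The other is the algebraic passage height $H_N\asymp C^2N^{-2/3}$. Sweeping $C$ over a window $C\asymp(\log N)^{1/3}$ forces a first contact at $C_N^3=\tfrac{16}{k}\log N+O(\log\log N)$. The unfolding is nondegenerate because $\partial_C\log A\approx-kC^2/8$ dominates $\partial_C\log H_N=O(C)$ and $\partial_C\varepsilon=o(C^2)$.

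Proving those derivative bounds requires three things: a holomorphic continuation of the splitting construction to a disk $|z-G_0|<cG_0^{-2}$ with an interpolation argument; a local normal form on a square fixed in scaled coordinates, with passage estimates uniform for $N\gtrsim C^3$; and matching of the phase and prefactor between the two coordinate systems. You also need a separate treatment at $\mu=1/2$. There the first harmonic, proportional to $1-2\mu$, vanishes identically, and the second harmonic, with $k=2$, must be used.

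Two smaller points in your third step. First, dimension close to two comes from Gorodetski's refinement \cite{Go} of Duarte's construction, not from Duarte's theorem alone. Second, ``density zero'' insertion is not sufficient by itself: the distortion cost $D_j$ of the $j$th excursion is unbounded, so the schedule must be chosen after these costs, with $\sum_{i\le j}D_i=o(N_j)$.
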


The fixed-energy assertion is proved in
Section~\ref{ss:fixed-energy-completion}; the fixed-mass assertion
is proved in Section~\ref{ss:rpc-uniform-energy}.

 \begin{Remark}
 Our technique could also be applied to the $3$-body problem on
 the line \cite{ls1,sx}, but we do not elaborate on it here.
 \end{Remark}

In what follows the following motions are also of importance. A motion
of the massless body is called {\it future (resp. past) parabolic}  if the body
escapes to infinity with vanishing speed as time tends to $+\infty$
(resp. $-\infty$).

\subsection{Reduction to area-preserving maps}

A natural way to reduce the Sitnikov example to a $2$-dimensional
Poincar\'e map is as follows. Define
 \begin{eqnarray}  \label{sitnikov-map}
 f_{e}:(z,\dot z)\mapsto
 (z',\dot z')\qquad  (z,\dot z)\in \mathbb{R} ^2
 \end{eqnarray}
where a trajectory of (\ref{Sitnikov-EM}) with initial condition
$(0,z,\dot z)$ at time $0$ is located at $(2\pi,z',\dot z')$ at time $2\pi$.
Since equations of motion are Hamiltonian this map is
{\it area-preserving}.

There are many ways to define a Poincar\'e map for the RPC3BP$(\mu,C)$
for sufficiently large positive $C$, on the collision-free domain near
parabolic infinity where the rotating angle is monotone. Consider the polar coordinates
$(r,\varphi)$ on the $(x,y)$-plane and let $(P_r,P_\varphi)$ be
their symplectic conjugate. Write the Hamiltonian of the RPC3BP in
these coordinates:
\begin{eqnarray}  \label{Hamiltonian}
H(r,P_r,\varphi,P_\varphi)=\dfrac{P_r^2}{2}+\dfrac{P_\varphi^2}{2r^2}-
\dfrac 1r-P_\varphi+\left(\dfrac 1r- \frac{\mu}{d_1} - \frac{1 -
\mu}{d_2}\right)=:H_0 + \Delta H,
\end{eqnarray}
where $d_1$ and $d_2$ are the distances to the primaries as above (\ref{Jacobi}),
\begin{eqnarray*}
d_1 & =& \left( r^2-2(1-\mu)r \cos \varphi +(1-\mu)^2\right)^{1/2}\\
d_2 & =& (r^2+2\mu r \cos \varphi +\mu^2)^{1/2},
\end{eqnarray*}
$P_r$ (resp. $P_\varphi$) is the variable conjugate
to $r$ (resp. $\varphi$). In other words, $P_r=\dot r$ and
$P_\varphi$ is the angular momentum. One can rewrite the Jacobi
constant in the polar coordinates.

Since the Jacobi constant is the first integral of this problem,
there is a $3$-dimensional `energy' surface
$\Pi_C=\{C=C_\mu(r,\varphi,P_r, P_\varphi)\}$. For each fixed $C$, the implicit function theorem gives a branch
$P_\varphi=P_\varphi(r,\varphi,P_r,C)$ near parabolic infinity
on $\Pi_C$ and hence a $3$-dimensional differential equation on
$(r,\varphi,P_r)$. On a sufficiently small neighborhood of this
branch at infinity, $\dot\varphi=-1+P_\varphi/r^2<0$ and
$\varphi(t)$ is strictly decreasing. Choose a $2$-dimensional
surface $S=\{\varphi=0\} \subset \Pi_C$ and a Poincar\'e return
map
 \begin{eqnarray}  \label{RPC3BP-map}
 f_{\mu,C}:(r,P_r)\mapsto (r',P_r'),
 \end{eqnarray}
 where a trajectory of the RPC3BP with an initial condition
 $(r,0,P_r,P_\varphi(r,0,P_r,C))$ that passes through
 $(r',-2\pi,P'_r,P_\varphi(r',-2\pi,P'_r,C))$.
 This gives rise to {\it an area-preserving map}
$f_{\mu,C}:U\to \mathbb{R} ^2$ defined on an open set $U\subset \mathbb{R} ^2$.
When $C$ is fixed, we also write $f_\mu=f_{\mu,C}$.
The rotating Hamiltonian in \eqref{Hamiltonian} is also denoted by
$H_{\rm rot}$, so $C=-2H_{\rm rot}$.

\subsection{Newhouse domains for area-preserving maps}

We say that a saddle periodic point $p$ of an area-preserving map
$f$ exhibits a {\it homoclinic tangency} (HT) if stable and
unstable manifolds $W^s(p)$ and $W^u(p)$ of $p$ respectively have
a point of tangency. We say that $f$ has an HT if some of its saddle
points has an HT. Denote by $\mathcal{HT} $ the closure of the set of
area-preserving maps with HT. It seems that appearance of an HT
is a codimension-$1$ phenomenon and can be destroyed for
an individual saddle. It turned out that $\mathcal{HT} $ has
nonempty interior.  This phenomenon was discovered first for
dissipative $2$-dimensional  maps by Newhouse \cite{n1,n2,n3}.
It took over two decades to extend it to area-preserving setting.
This was done by Duarte \cite{Du1}.

{A related dimension result is due to Downarowicz and
Newhouse~\cite{DN2005}: generically among surface diffeomorphisms,
homoclinic closures containing tangencies have Hausdorff dimension
two. Such genericity in the space of maps does not by itself give
genericity in a specified physical parameter family, nor does it
identify the dimension of the oscillatory subset of a homoclinic
class. These are the additional questions addressed below.}

Call an open set with a dense subset of maps with an HT a {\it
Newhouse domain}; here this term does not require connectedness.
An open parameter Newhouse set is a union of interval components,
each with dense tangency parameters. One of the main results of this paper is a proof of
existence of Newhouse domains for the Sitnikov example and the
RPC3BP. We shall also prove that

\begin{Theorem}  \label{parameter-Newhouse-Sitnikov} Let $\{f_e\}_{0<e<1}$
be the family of maps (\ref{sitnikov-map}). Then there is a Newhouse
domain $\mathcal{N} \subset (0,1)$ accumulating at zero, i.e. for a dense set of $e$ in $\mathcal{N} $
the Poincar\'e map $f_e$ has an HT. \end{Theorem}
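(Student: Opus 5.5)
The plan is to obtain the Newhouse domain from a \emph{generic unfolding of a homoclinic tangency} in the one-parameter area-preserving family $\{f_e\}$, and then invoke Duarte's theorem \cite{Du1}. Duarte's theorem, in its parameter form, says the following. Let a one-parameter family of area-preserving surface maps have, at $e=e_0$, a saddle with a quadratic homoclinic tangency that unfolds with nonzero speed. Then there are open intervals of parameters, accumulating at $e_0$, in which a dense set of parameters has a homoclinic tangency. The work is therefore to produce tangency parameters $e_n\to 0$ at which the hypotheses hold, and to take $\mathcal N$ to be the union of the Duarte intervals.

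\textbf{Step 1 (parabolic infinity and Melnikov splitting).} For $e=0$ the Sitnikov problem is integrable. Parabolic infinity $\{z=\infty,\dot z=0\}$ is a degenerate fixed point of $f_0$ at infinity. Its stable and unstable manifolds, which are the future and past parabolic orbits, coincide along the zero-energy separatrix. Using McGehee coordinates $z=2/x^2$, I would use the known fact that these manifolds exist and are analytic for $e>0$ (McGehee, Moser). Next I would use the first-order splitting calculation of Garc\'ia and P\'erez-Chavela \cite{GP}. It gives a Melnikov function $M(t_0)=e\,A\cos t_0+O(e^2)$ with $A\ne 0$. Hence for small $e>0$ the manifolds $W^{s,u}(\infty)$ intersect transversally at two primary homoclinic points per period, with splitting angle of order $e$. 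Both the displacement in energy and the area of the lobes are of order $e$.

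\textbf{Step 2 (a horseshoe with thickness product greater than one).} Near the transverse homoclinic orbit to parabolic infinity, I would construct a horseshoe $\Lambda_e$ by passing close to infinity for $N$ periods. This is the Moser symbolic-dynamics construction. The return time to the region $|z|\le$ const is governed by the energy, and its differential, a twist of order $N^{5/3}$ in the energy variable, is unbounded. Consequently the stable and unstable Cantor sets of $\Lambda_e$ accumulating at the invariant manifolds of infinity become increasingly dense as one includes orbits with longer excursions. I would aim to show that the Newhouse thicknesses satisfy $\tau^s(\Lambda_e)\,\tau^u(\Lambda_e)\to\infty$ as the number of allowed excursion lengths grows. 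This follows because the gaps shrink like $N^{-c}$ relative to the bridges, uniformly in small $e$. I expect this to be the \textbf{main obstacle}: the map is non-hyperbolic at infinity, so the thickness must be controlled by explicit asymptotics of the near-parabolic transition map rather than by a uniform linearization.

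\textbf{Step 3 (creating and unfolding a tangency).} The family $W^u(\Lambda_e)$ and $W^s(\Lambda_e)$ contains curves whose turning behaviour is inherited from the second-order terms of the splitting. I would vary $e$ within a window $[e_n,2e_n]$. Together with the twist from Step 2, this forces a leaf of $W^u(p)$ to pass from one side of a leaf of $W^s(p)$ to the other, where $p\in\Lambda_e$ is a periodic saddle. The resulting tangency $e_n'$ is quadratic, since the Melnikov function has nondegenerate critical points, and it unfolds with nonzero speed, since $\partial_e M\ne 0$. The gap lemma then gives persistent tangencies between the thick Cantor sets. Alternatively one can feed $(e_n',p)$ directly into Duarte's theorem, which supplies an interval $I_n$ around $e_n'$ with dense tangency parameters. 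Setting $\mathcal N=\bigcup_n I_n$ with $e_n\to0$ yields the required open set accumulating at zero.
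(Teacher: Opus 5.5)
\textbf{The final assembly matches the paper, but Step~3 is asserted, not proved.} Your final assembly is how the paper concludes: find generically unfolding quadratic homoclinic tangencies of hyperbolic saddles at parameters $e_n'\to0$, apply Duarte's conservative unfolding theorem at each, and set $\mathcal N=\bigcup_n I_n$. On that route Step~2 is not needed. As written, Step~2 is also unsupported. In the horseshoe built from excursions of $N$ periods near infinity, the strip widths and the spacings between consecutive strips both scale like $N^{-5/3}$. Including longer excursions therefore does not drive the gap-to-bridge ratio to zero.

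The genuine gap is Step~3, which carries the entire content of the theorem. Since the primary intersections are transverse for small $e>0$, a tangency argument needs a specific pair of arcs. One arc must lie strictly on one side of the other at some parameter and touch it at an \emph{interior} point at a later parameter. You never identify such arcs. In the paper they are:
\begin{itemize}
\item the global stable graph, written in the local chart at infinity as $y=a(x)\bigl(e\sin\vartheta(x)+e^2\rho\bigr)$ with $a>0$;
\item the local passage image $y=\Delta_N(x,e)$ of the reversibility line $\operatorname{Fix}R$, which is positive, flat, and tends to zero as $N\to\infty$.
\end{itemize}
As $e$ increases from $0$, a positive sine lobe first touches $\Delta_N$ at an interior phase. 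Reversibility then converts this contact with $\operatorname{Fix}R$ into a homoclinic tangency.

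\textbf{Each of your two justifications hides a missing estimate.}
\begin{itemize}
\item \emph{Quadraticity.} It does not follow from nondegenerate critical points of the Melnikov function. The Melnikov phase $u(x)$ satisfies $u_x\sim x^{-4}/2$, and one needs the relative bounds $|\partial_x\Delta_N|\le C\Delta_N/x$ and $|\partial_x^2\Delta_N|\le C\Delta_N/x^2$ uniformly in $N$. This is a $C^2$ inclination lemma at the \emph{degenerate} point at infinity, where the hyperbolic $\lambda$-lemma does not apply.
\item \emph{Nonzero unfolding speed.} It does not follow from $\partial_eM\ne0$. The other curve also moves with $e$ and has undergone $N\to\infty$ iterates, so you need $|\partial_e\Delta_N|\le K\Delta_N$ uniformly in $N$.
\end{itemize}

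\textbf{The tangency must also be transferred to a hyperbolic saddle.} The argument above yields a tangency for the parabolic point, and Duarte's theorem does not apply to it. Transferring it to hyperbolic periodic saddles requires $C^2$ convergence of their invariant-manifold graphs \emph{together with} convergence of their $e$-derivatives. Spatial $C^2$ convergence alone can destroy the unfolding speed. For example, $x^2+e-l^{-1}\sin(le)\to x^2+e$ in $C^2_x$ uniformly in $e$, yet its only tangency parameter is $e=0$, where the speed vanishes.

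In the paper these inputs are supplied by the local passage estimates, the parameter estimate, and the parameter-uniform saddle approximation. Without them your proposal produces no generically unfolding tangency, so Duarte's theorem has nothing to act on.
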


\begin{Theorem}[RPC3BP parameter Newhouse domains]
\label{parameter-Newhouse-RPC3BP}
For every sufficiently large positive $C$, the mass-parameter family
$\{f_{\mu,C}\}_{\mu\in(0,1)}$ has a nonempty open Newhouse
domain $\mathcal N_C\subset(0,1/2)$ accumulating at zero.
For every fixed $\mu\in(0,1)$, the energy-parameter family has
Newhouse domains whose union $\mathcal N_\mu$ is unbounded
in the positive $C$ direction.
\end{Theorem}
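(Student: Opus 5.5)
Both parts of Theorem~\ref{parameter-Newhouse-RPC3BP} will follow from Duarte's area-preserving Newhouse mechanism applied to the parabolic point at infinity of $f_{\mu,C}$. The plan has four steps.

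\textbf{Step 1: regularise infinity.} We use McGehee coordinates $r=2/x^2$, in which the point at infinity $\{x=0,\,P_r=0\}$ becomes a degenerate (parabolic) fixed point $O$ of the Poincar\'e map $f_{\mu,C}$. This point has $C^\infty$ (in fact analytic away from $x=0$) stable and unstable curves $W^{s,u}(O)$, formed by future and past parabolic motions. For $\mu=0$ the system is the integrable Kepler problem in rotating coordinates, and $W^s(O)=W^u(O)$ coincide along a parabola.

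\textbf{Step 2: transversality and a horseshoe.} For small $\mu$ with $C$ fixed and large, a Melnikov computation (as in Llibre--Sim\'o and Xia) shows that the splitting of $W^{s,u}(O)$ is of order $\mu\, e^{-cC^{3/2}}$ and is transverse. A transverse homoclinic point of the parabolic point gives, by Moser's construction, a horseshoe $\Lambda_{\mu}$ accumulating on $O$. The key quantitative fact is that, near the parabolic point, the stable/unstable thickness of these horseshoes (in the sense of Newhouse) tends to infinity as the horseshoes approach infinity. This is because the dynamics near $O$ is very weakly contracting, like $x\mapsto x-x^3$, so successive gaps become polynomially small relative to the bridges.

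\textbf{Step 3: produce tangencies with unfolding.} Varying $\mu$ with $C$ fixed (or $C$ with $\mu$ fixed), the first-order splitting function changes its form. At a parameter value where the Melnikov function has a degenerate zero, we obtain a quadratic homoclinic tangency of $W^{u}$ and $W^s$ of a saddle in $\Lambda$ that is close to $O$, unfolding with nonzero speed in the parameter. For the Sitnikov-type computation this comes from the Garc\'ia--P\'erez-Chavela formula; in the RPC3BP we would use the higher Fourier harmonics in $\varphi$ of the Melnikov integral to create the degenerate zero. Renormalising near the tangency (Duarte's argument), we obtain for nearby parameters a wild hyperbolic set $\Lambda'$ whose thickness product exceeds $1$ and whose stable and unstable Cantor sets are linked. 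The Newhouse gap lemma then produces an open parameter interval $I$ in which tangencies are dense; continuation of $\Lambda'$ over $I$ keeps the thickness condition. Taking the union over the sequence of such intervals for $\mu\to0$ gives $\mathcal N_C$ accumulating at zero.

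\textbf{Step 4: fixed mass, varying energy.} For the second assertion we fix $\mu$ and instead let $C\to\infty$, rescaling so that the effective perturbation parameter is $\mu$-independent but exponentially small in $C$. This yields one Newhouse interval near each of infinitely many large values of $C$, and hence an unbounded union $\mathcal N_\mu$.

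The main obstacle is Step 4. The splitting is exponentially small in $C$, so first-order Melnikov theory must be justified uniformly via complex-extension estimates of the parametrisations of $W^{s,u}(O)$ (the complex splitting control mentioned above). We must also show that the tangency unfolds nondegenerately in $C$ even though the splitting itself is exponentially small. I would first try to prove that the $C$-derivative of the normalised splitting function is comparable to the splitting size, and then apply Duarte's criterion in rescaled coordinates.
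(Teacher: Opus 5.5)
\textbf{There is a genuine gap, and it sits in Step 3, the mechanism that is supposed to produce the quadratic tangency.}

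In both regimes of the theorem, the normalized first-order splitting is $C^2$-close to $\sin\theta$, so all its zeros are simple.

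\emph{Fixed large $C$, varying $\mu$.} The first harmonic of the Melnikov potential dominates the higher ones by a factor of order $e^{-G_0^3/3}$, where $G_0=C/2$. (The amplitude itself is $e^{-C^3/24}$ up to powers of $C$, not $e^{-cC^{3/2}}$.) The first-order term has the form $\mu M_C(\theta)$ with $M_C$ independent of $\mu$. Varying $\mu$ therefore only rescales it and cannot change its shape. The exponentially smaller harmonics cannot create a degenerate zero.

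\emph{Fixed $\mu$, $C\to\infty$.} The same picture holds. At $\mu=1/2$ the second harmonic plays the role of the first.

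Hence every primary intersection of $W^u(O)$ and $W^s(O)$ is transverse in the relevant parameter ranges, and there is no degenerate Melnikov zero to unfold. The Garc\'ia--P\'erez-Chavela formula for Sitnikov likewise gives only a sinusoid with simple zeros.

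Separately, the thickness claim in your Step 2 is unjustified and not needed. The Moser-type set accumulating at $O$ is not uniformly hyperbolic, so Newhouse's persistence of thickness does not apply to it. Moreover, nothing makes its thickness large: the return strips after $N$ and $N+1$ passages have widths and separations both of order $N^{-5/3}$.

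\textbf{The missing idea is to build secondary tangencies from long passages near $O$, using reversibility.}

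Over one positive lobe of the outgoing tail, write $W^s(O)$ as $y=\mu a(x)\bigl(m(\vartheta(x))+O(\mu)\bigr)$. Compare it with the $N$-th local passage image $y=\Delta_N(x,\mu)$ of the symmetry line $\operatorname{Fix}R=\{x=y\}$. This comparison requires $C^2$ inclination estimates at the degenerate point, with parameter control:
$\Delta_N>0$, $\Delta_N\to0$, $|\partial_x\Delta_N|\le K\Delta_N/x$, $|\partial_x^2\Delta_N|\le K\Delta_N/x^2$, and $|\partial_\mu\Delta_N|\le K\Delta_N$.

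With these estimates the argument proceeds in four steps.
\begin{enumerate}
\item Take the first parameter at which the maximum over the lobe of the normalized difference vanishes. This is an interior contact that is automatically quadratic and unfolds with nonzero speed (Lemma~\ref{lem:separation}).
\item Since $R$ maps $W^s(O)$ to $W^u(O)$, the pulled-back contact with $\operatorname{Fix}R$ becomes a quadratic homoclinic tangency of $O$ (Proposition~\ref{prop:general-reversible-contact}).
\item Transfer this tangency to hyperbolic periodic saddles homoclinically related to $O$, with parameter control (Section~\ref{ss.saddlesnearhomoclinicpoint}). Spatial $C^2$ closeness of invariant manifolds alone does not preserve a nonzero unfolding speed.
\item Only then does Duarte's mechanism (Theorem~\ref{t.thick-sets}) give Newhouse domains.
\end{enumerate}

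\textbf{For fixed $\mu$, the contact is found differently.} Balance the splitting amplitude $A(C)\asymp C^pe^{-kC^3/24}$ against the passage height $H_N\asymp C^2N^{-2/3}$. This gives $C_N^3=(16/k)\log N+O(\log\log N)$ (Proposition~\ref{prop:energy-contact}).

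The unfolding derivative is not ``comparable to the splitting size.'' It equals $H_N\bigl(-kC^2/8+o(C^2)\bigr)$: the logarithmic derivative of the exponential dominates both $\partial_C\log H_N=O(C)$ and the $o(C^2)$ parameter derivative of the normalized splitting. Establishing this requires two inputs:
\begin{itemize}
\item local passage bounds that are uniform over the joint range $C\asymp(\log N)^{1/3}$, obtained in a scaled McGehee square;
\item complex-parameter control of the splitting, to obtain its $C$-derivative.
\end{itemize}
Your proposal supplies neither.
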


Robinson \cite{R}, using ideas of Newhouse, showed that for a generic
$1$-parameter unfolding a homoclinic tangency there are Newhouse domains
on the parameter line. In a sense we prove a similar statement. Namely,
the issue is nondegeneracy in the specified one-parameter family.
For the RPC3BP the global input is separated explicitly in
Section~\ref{ss:rpc-parameter-status}.  The proofs of these two theorems are based on
the conservative homoclinic bifurcation results of Duarte
\cite{Du2,Du3,Du4} and their dimension refinement in \cite{Go}.

\subsection{Scheme of the proof}

Since the construction consists of several steps, we describe here
its main ingredients. The local passage estimates and the dimension
argument are common to both problems. The construction of tangencies
requires separate arguments for the eccentricity, mass, and
Jacobi-constant parameters.

Step 1. Using McGehee coordinates, we represent a parabolic end by
a degenerate saddle $O_\infty$, whose stable and unstable manifolds
correspond to parabolic motions; see \cite{Mo} and Section 2.
For the full Sitnikov phase plane there are two such ends, denoted
by $O_+$ and $O_-$, corresponding to $z=+\infty$ and
$z=-\infty$, respectively. The symbol $O_\infty$ denotes the
selected end. To obtain two-sided oscillatory motions, we will
construct orbits that accumulate both at a parabolic end and at a
fixed finite periodic orbit in positive and negative time. The latter
condition guarantees infinitely many returns to a bounded region.

Step 2. Zero eccentricity in the Sitnikov problem and zero mass
parameter in the RPC3BP correspond to integrable systems. In the
Sitnikov problem, the selected branch of $W^u(O_+)$ coincides with
a branch of $W^s(O_-)$; symmetry supplies the reverse connection.
The Melnikov calculation in \cite{GP} gives transverse heteroclinic
intersections for small positive eccentricity. The local inclination
estimates then give transverse homoclinic points for each parabolic
end (Lemma~\ref{lem:transverse-return}). In the RPC3BP the
unperturbed connection is homoclinic to infinity. We use the
expansion in \cite{MP} and the splitting estimates in
\cite{GMS2016}; the latter also supply transverse intersections
for each fixed positive mass and sufficiently large $C$.

Step 3. We study local passages near $O_\infty$. The classical
hyperbolic inclination lemma does not apply at this degenerate
point. We prove convergence of passage images to the unstable
manifold in the $C^2$ metric on outgoing intervals away from the
vertex (Theorems~\ref{t.5} and \ref{t.c2lyambdalemma}). We also
establish quantitative stretching and projective contraction,
shape estimates, and parameter estimates; see
\eqref{stretching}, \eqref{projective-hyperbolicity},
\eqref{e.shape}, and \eqref{e.derivine}.
We first treat a simplified system. For the general system,
fixed barriers and integrable error estimates give bounds uniform
in the passage length on a sufficiently small local square
(Section~\ref{sec:revised-general}). Slopes, curvatures, and
parameter derivatives refer to images at a common physical time;
the rescaled time is used only to estimate the resulting equations.

Step 4. We compare a global splitting graph with a local passage
image of the symmetry curve. An interior contact gives a quadratic
parabolic tangency, and parameter estimates show that it unfolds
nondegenerately. For Sitnikov this produces parameters $e_n\to0$
(Theorem~\ref{quad-tangency}). For the RPC3BP we distinguish two
regimes. With $C$ sufficiently large and fixed, the first-order
mass splitting gives tangencies at $\mu_n\to0$
(Proposition~\ref{prop:rpc-fixed-energy-tangencies}). With
$\mu>0$ fixed, we instead construct tangencies at $C_n\to\infty$
(Proposition~\ref{prop:rpc-fixed-mass-tangencies}). This second
argument requires a complex-parameter extension of the splitting
estimates and uniform matching with the scaled local coordinates;
see Sections~\ref{ss:complex-splitting-control} and
\ref{ss:rpc-uniform-energy}. At equal masses we use the second
harmonic, with its separate normalization. Uniformity in this
regime is asserted for each fixed mass.

Step 5. We transfer the parabolic tangencies to hyperbolic periodic
saddles at nearby parameters, retaining transverse connections to
$O_\infty$ in both directions. Spatial $C^2$ approximation alone
would not preserve nondegenerate unfolding. The additional
parameter estimates in Section~\ref{ss.saddlesnearhomoclinicpoint}
provide this transfer and yield Proposition~\ref{saddle} for
Sitnikov. The same approximation is applied at each finite
parameter in the two RPC3BP tangency constructions.

Step 6. We apply the conservative homoclinic bifurcation
construction of Duarte~\cite{Du2,Du3,Du4} and its dimension
refinement in \cite{Go}, in the local form stated in
Theorem~\ref{t.thick-sets}. For every $\delta>0$, it gives nearby
open parameter sets with hyperbolic basic sets containing the
continued saddle, of dimension greater than $2-\delta$, and
exhibiting persistent tangencies. We first fix a parameter Newhouse
domain using one tolerance. Reapplying the construction near its
dense set of nondegenerate tangencies gives, for each $k$, an open
dense subset admitting such basic sets of dimension $>2-1/k$
(Remark~\ref{rem:parameter-density}). All parameter neighborhoods
are chosen so that the selected transverse connections to infinity
persist. The RPC3BP applications use the direct tangency
constructions in Sections~\ref{ss:fixed-energy-completion} and
\ref{ss:rpc-uniform-energy}.

Step 7. From the transverse connections to $O_\infty$ and the
local inclination estimates, we obtain transverse homoclinic
points of the finite periodic saddle accumulating at infinity.
The proof of Theorem~\ref{2-diml-hd} in
Section~\ref{ss:sparse-excursions} explains this step.

Step 8. We approximate a given hyperbolic basic set in dimension
by a return horseshoe and insert prescribed finite excursions
among long free blocks. These excursions visit shrinking
neighborhoods of infinity and of the finite periodic orbit in
both time directions. Lemma~\ref{lem:sparse-deletion} bounds
the geometric cost on entire intervals, including gaps, and
shows that the deletion map onto the return horseshoe is H\"older
with every exponent below one. Proposition~\ref{prop:dimension-excursions}
therefore gives oscillatory sets with Hausdorff dimension at least
that of the given basic set.

Step 9. On the residual intersection of the open dense parameter
sets from Step 6, the basic-set dimensions tend to two. Applying
Step 8 gives full Hausdorff dimension for two-sided oscillatory
motions on the section. The Sitnikov parameter argument is completed
in Section~\ref{ss.summingup}; the two RPC3BP regimes are completed
in Sections~\ref{ss:fixed-energy-completion} and
\ref{ss:rpc-uniform-energy}. Finally, a smooth flow box and the
product formula \eqref{eq:interval-product-dimension} raise the
dimension from two to three in the extended Sitnikov phase space
or on the RPC3BP energy surface. For the RPC3BP return map, the
proof also checks that the prescribed returns occur at physical
times tending to infinity in both directions.

\medskip

{\bf Acknowledgements.} The authors would like to thank Vassili
Gelfreich for helping us to understand the separatrix splitting.
Discussions of thermodynamic formalism with Omri Sarig
were  enlightening and  valuable. We are grateful to
Don Saari and Sheldon Newhouse for lively interest, support, and
encouragement; they also provided several useful references.
We also thank Pedro Duarte, Mark Levi, and Dmitry Turaev for their interest
in our work and useful discussions.

During the development of this work, A.G. was supported in part by NSF grants DMS--0901627 and IIS-1018433, and also DMS--2247966.
 V.K. was supported in part by NSF grant DMS-1101510, and also the ERC Advanced Grant SPERIG (nr. 885707).

\medskip

{\bf Use of AI tools.} ChatGPT was used during preparation of the current
version of the manuscript to assist with exposition and refinement of selected proof details.

\section{McGehee coordinates}

\subsection{Parabolic motions and McGehee coordinates for the Sitnikov problem}

For the Sitnikov problem equations of motion are

 \begin{eqnarray}  \label{Sitnikov}
 \ddot z+ \dfrac{z}{(z^2+r_e^2(t))^{3/2}}=0, \quad r =
 \dfrac  12 (1-e \cos t)+ O(e^2).
 \end{eqnarray}

After  McGehee's transformation
\[
\adjustbox{max width=\linewidth}{$\displaystyle
z= \dfrac {2}{q^2} \quad \dot z= - p, \quad ds = \frac{q^3}{4}\,dt,
$}
\]
the new equations of motion become
 \begin{eqnarray}  \label{rescaled-pq-equations-of-motion} \begin{cases}
 \dfrac{dq}{ds}=p \\
 \dfrac{dp}{ds}=q\left(1+\dfrac{q^4}{4} r^2 \right)^{-3/2}\\
 \dfrac{dt}{ds}=\dfrac{4}{q^3}.
 \end{cases} \end{eqnarray}

By McGehee's theorem  \cite{McG} this equation has stable and unstable parabolic branches
meeting at the origin, with saddle-type passage dynamics in the physical
sector. These invariant manifolds are $C^\infty$ curves, analytic away
from the origin. This assertion concerns the physical sector, not an
isolated saddle on a full neighborhood of the smooth infinity boundary. One can provide more detailed description of these manifolds (that correspond to parabolic motions of the Sitnikov system):

\begin{Theorem} [Moser \cite{Mo} ch III, sect 2.b)]\label{t.Moser}
 Local unstable and stable manifolds of the system
 \begin{eqnarray}  \label{e.oldtime} \begin{cases}
 \dfrac{dq}{dt}=\dfrac{pq^3}{4} \\
 \dfrac{dp}{dt}=\dfrac{q^4}{4}\left(1+\dfrac{q^4}{4} r^2(t) \right)^{-3/2}
 \end{cases} \end{eqnarray}
 are $C^\infty$-manifolds of the form $q=\chi(p, t)$ and $q=\chi(-p, -t)$, real analytic away from the origin. The Taylor expansion of the function $\chi(p, t)$ has the form
 \[
\adjustbox{max width=\linewidth}{$\displaystyle
q=\chi(p, t)=p+a_4p^5+a_7(t)p^8+\ldots
$}
\]
 where $a_4=\frac{1}{32\pi}\int_{0}^{2\pi}r^2(t)dt$, and $a_7(t)$ is such that $\int_0^{2\pi}a_7(t)dt=0$ and $\frac{d}{dt}a_7(t)=\frac{3}{32}r^2(t)-\frac{3}{64\pi}\int_{0}^{2\pi}r^2(t)dt$.
 \end{Theorem}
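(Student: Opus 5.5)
The plan is to take existence and regularity of the invariant curves from McGehee's theorem~\cite{McG}, quoted above, and to derive the Taylor coefficients from the invariance equation by matching powers of $p$.

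\emph{Existence, regularity and the stable branch.} McGehee's theorem gives that the degenerate fixed point $q=p=0$ of \eqref{e.oldtime}, in the physical sector $q>0$, has a local unstable manifold that is a $C^\infty$ graph $q=\chi(p,t)$, $2\pi$-periodic in $t$, tangent to the diagonal $q=p$. Analyticity away from the origin follows because the manifold is a union of trajectories of an analytic vector field. Since $r_e(t)$ is even in $t$, system \eqref{e.oldtime} is reversible under $(q,p,t)\mapsto(q,-p,-t)$. This involution exchanges the stable and unstable manifolds, so the stable one is $q=\chi(-p,-t)$.

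\emph{Invariance equation.} Invariance of the graph under the flow means that $\dot q = \chi_t+\chi_p\,\dot p$ on $q=\chi$:
\[
\chi_t+\chi_p\,\frac{\chi^4}{4}\Bigl(1+\frac{\chi^4}{4}r^2\Bigr)^{-3/2}=\frac{p\,\chi^3}{4}.
\]
I insert $\chi=p+\sum_{k\ge2}a_k(t)p^k$, where the $a_k$ are $2\pi$-periodic, and use $(1+x)^{-3/2}=1-\tfrac32x+\dots$. The $\chi_t$ term at order $p^k$ is $a_k'(t)p^k$, while the nonlinear terms involving $a_k$ first appear at order $p^{k+3}$. Hence the order-$p^m$ equation has the form $a_m'(t)=F_m(t)$, where $F_m$ depends on $a_j$ with $j\le m-3$.

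\emph{Recursive determination of the coefficients.}
\begin{itemize}
\item The terms $p^4/4$ cancel at order $p^4$, which reflects tangency to the diagonal.
\item For $2\le m\le 7$, the equations force the $a_m$ to be constant. Their constant values are fixed by the solvability conditions at order $m+3$ (zero mean of $F_{m+3}$). This gives $a_2=a_3=a_5=a_6=0$, and it leaves $a_4$ to be determined at order $p^8$.
\item At order $p^8$, with $\chi=p+a_4p^5+a_7p^8$, the left side contributes $a_7'+\tfrac94a_4-\tfrac{3}{32}r^2$ and the right side contributes $\tfrac34a_4$. So
\[
a_7'=\tfrac{3}{32}r^2-\tfrac32a_4 .
\]
Periodicity of $a_7$ forces $\tfrac32a_4=\tfrac{3}{64\pi}\int_0^{2\pi}r^2\,dt$, that is, $a_4=\tfrac{1}{32\pi}\int_0^{2\pi}r^2\,dt$. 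This matches the stated formula, and $a_7'$ then has exactly the stated form.
\item The additive constant of $a_7$ is fixed by the mean-zero solvability condition at order $p^{11}$. I would check that this yields $\int_0^{2\pi}a_7\,dt=0$. Failing that, the normalization comes from the freedom noted below.
\end{itemize}

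\emph{Identifying the formal series with the Taylor series; the main obstacle.} The delicate step is showing that the formal series obtained this way is the actual Taylor expansion of the $C^\infty$ function $\chi$. The point is degenerate, so standard hyperbolic uniqueness does not apply. I would argue as follows. The Taylor jet of the $C^\infty$ graph from McGehee's theorem satisfies the invariance equation order by order, so it satisfies the same recursion. The recursion determines every coefficient uniquely except the integration constants, and those are fixed by the solvability conditions at higher orders. This uniqueness is where I expect care to be needed. One must confirm that each solvability condition genuinely involves the earlier constant with a nonzero coefficient, as the factor $\tfrac32$ does for $a_4$. If some constant were genuinely free, it would correspond to a time shift along the manifold's parametrization. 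In that case I would use the flexibility in choosing the section $t=0$ to impose the normalization $\int a_7=0$.
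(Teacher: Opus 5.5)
Your route is the paper's. You take existence and $C^\infty$ regularity from McGehee (the paper also cites Slotnick), substitute the Taylor expansion of $\chi$ into the invariance equation, and let periodicity of each coefficient fix the integration constants. Your order-$p^8$ computation, $a_7'=\tfrac{3}{32}r^2-\tfrac32a_4$ and hence the value of $a_4$, agrees with the paper's. Your reversibility argument for the stable branch uses that $r$ is even, i.e.\ that the time origin sits at a symmetry of the primaries; it makes explicit something the paper only cites.

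The step you leave open, $\int_0^{2\pi}a_7\,dt=0$, is a gap only because you did not carry out the check. It closes exactly as you first propose, and the fallback should be discarded.
\begin{itemize}
\item Multiply your equation by $4$. A coefficient of $p^j$ then enters linearly at order $p^{j+3}$ with factor $j+1\neq0$, so every solvability condition does involve the earlier constant. The same mechanism at orders $p^9$ and $p^{10}$ kills the coefficients of $p^6$ and $p^7$.
\item Below $p^8$ the only nonzero coefficient is the one at $p^5$. Hence all quadratic terms and all corrections to the $r^2$ forcing begin at order $p^{12}$.
\item Write $b(t)$ for the coefficient of $p^{11}$. The order-$p^{11}$ equation is then just $4b'+9a_7=0$, and periodicity of $b$ gives $\int a_7=0$.
\end{itemize}

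There is no normalization freedom to exploit. Once the time origin is fixed, $\chi$ is determined by the system. Shifting $t$ only replaces $a_7(t)$ by $a_7(t+t_0)$, which has the same mean, and it would also destroy the evenness of $r$ needed for the stable branch.

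Likewise, your ``main obstacle'' is not one. You are deriving necessary conditions satisfied by the actual Taylor coefficients of the given $C^\infty$ function, using Taylor's formula to finite order as the paper does. No uniqueness statement for formal solutions is needed.

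Two smaller points:
\begin{itemize}
\item Your list ``$a_2=a_3=a_5=a_6=0$, with $a_4$ determined at order $p^8$'' is written in the theorem's indexing ($a_j$ multiplies $p^{j+1}$), not in the indexing $a_kp^k$ you introduced. In the theorem's indexing, $a_1$ must also be listed as vanishing.
\item Analyticity away from the origin should be taken from McGehee's theorem. Being a union of trajectories of an analytic vector field does not by itself make a surface analytic.
\end{itemize}
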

Since the complete proof of this statement is not provided in \cite{Mo} (or anywhere else), we provide it here.

\begin{proof}[Proof of Theorem \ref{t.Moser}]
The statements about existence and smoothness (and analyticity) of the invariant manifolds follow from the results in \cite{McG} and \cite{Sl}. Therefore we a priori know that the unstable manifold has the form $q=\chi(p, t)$, and the function $\chi(p, t)$ has the Taylor expansion
\[
\adjustbox{max width=\linewidth}{$\displaystyle
q=\chi(p, t)=p(1+a_1(t)p+a_2(t)p^2+a_3(t)p^3+a_4(t)p^4+a_5(t)p^5+a_6(t)p^6+a_7(t)p^7+\ldots ),
$}
\]
where $a_i(t)$ are $2\pi$-periodic smooth functions. Notice that since we cannot claim that $\chi(p, t)$ is analytic at $p=0$, we cannot assume that the series converges, but we can still use the Taylor formula with as many terms as we need. Take any point $(q, p, t)$ that belongs to the unstable manifold $q=\chi(p, t)$. Let us consider the surface in $(q, p, t)$-space that is formed by all the local unstable manifolds, $S=\cup_{t\in[0, 2\pi)}Graph_{\chi(p, t)}$. The normal vector to $S$ at a  point $(q, p, t)$ has the form
\[
\adjustbox{max width=\linewidth}{$\displaystyle
\begin{gathered}
    N_{(q, p, t)}=\left(-1, \frac{\partial \chi}{\partial p}, \frac{\partial \chi}{\partial t}\right)=\\
=\left(-1, 1+2a_1(t)p+3a_2(t)p^2+4a_3(t)p^3+5a_4(t)p^4+6a_5(t)p^5+\ldots,\right. \\  \left.a_1'(t)p^2+a_2'(t)p^3+a_3'(t)p^4+a_4'(t)p^5+a_5'(t)p^6+\ldots\right),
\end{gathered}
$}
\]
which must be orthogonal to the vector of the autonomous vector field
\begin{eqnarray}  \nonumber \begin{cases}
 \dfrac{dq}{d\tau}=\dfrac{pq^3}{4} \\
 \dfrac{dp}{d\tau}=\dfrac{q^4}{4}\left(1+\dfrac{q^4}{4} r^2(t) \right)^{-3/2}\\
 \dfrac{dt}{d\tau}=1
 \end{cases} \end{eqnarray}
Taking into account that $(1+\alpha)^{-3/2}=1-\frac{3}{2}\alpha+\frac{15}{8}\alpha^2-\ldots$, this orthogonality condition is equivalent to the following:
\[
\adjustbox{max width=\linewidth}{$\displaystyle
\begin{gathered}
  -pq^3+\left(1+2a_1(t)p+3a_2(t)p^2+4a_3(t)p^3+5a_4(t)p^4+\ldots\right)\cdot q^4\left(1-\frac{3}{8}q^4r^2(t)+\ldots\right)+\\
  +4\left(a_1'(t)p^2+a_2'(t)p^3+a_3'(t)p^4+a_4'(t)p^5+a_5'(t)p^6+\ldots\right)=0
\end{gathered}
$}
\]
After we substitute $q=p(1+a_1(t)p+a_2(t)p^2+a_3(t)p^3+a_4(t)p^4+a_5(t)p^5+a_6(t)p^6+a_7(t)p^7+\ldots )$ and represent the left hand side as a power series in $p$, all the coefficients must be equal to zero. In particular, we get:
\begin{itemize}
\item
Coefficient at $p^2$: $a_1'(t)=0$, hence $a_1(t)$ is a constant;

\item
Coefficient at $p^3$: $a_2'(t)=0$, hence $a_2(t)$ is a constant;

\item
Coefficient at $p^4$: $a_3'(t)=0$, hence $a_3(t)$ is a constant;

\item
Coefficient at $p^5$: $4a_1+2a_1-3a_1+4a_4'(t)=0$, hence $a_1=0$, and $a_4(t)$ is a constant (otherwise $a_4(t)$ cannot be a periodic function);

\item
Coefficient at $p^6$: $4a_2+6a_1^2+2a_1^2+3a_2-(3a_2+3a_1)+4a'_5(t)=4(a_2+a_5'(t))=0$, hence $a_2=0$, and $a_5(t)$ is a constant;

\item
Coefficient at $p^7$: $4a_3+4a_3-3a_3+4a_6'(t)=5a_3+4a_6'(t)=0$, hence $a_3=0$, and $a_6(t)$ is a constant;

\item
Coefficient at $p^8$: $4a_4+5a_4-\frac{3}{8}r^2(t)-3a_4+4a_7'(t)=6a_4-\frac{3}{8}r^2(t)+4a_7'(t)=0$. Since $\int_0^{2\pi}a_7'(t)dt=0$, we have $\int_0^{2\pi}(6a_4-\frac{3}{8}r^2(t))dt=0$, hence $a_4=\frac{1}{32\pi}\int_0^{2\pi}r^2(t)dt$;

\item
Coefficient at $p^9$: $4a_5+6a_5-3a_5+4a_8'(t)=7a_5+4a_8'(t)=0$, hence $a_5=0$, and $a_8(t)$ is a constant;

\item
Coefficient at $p^{10}$: $4a_6+ 7a_6-3a_6+4a_9'(t)=8a_6+4a_9'(t)=0$, hence $a_6=0$, and $a_9(t)$ is a constant;

\item
Coefficient at $p^{11}$: $4a_7(t)+8a_7(t)-3a_7(t)+4a_{10}'(t)=9a_7(t)+4a_{10}'(t)=0$, hence $\int_0^{2\pi}a_7(t)dt=0$.

This completes the proof of Theorem \ref{t.Moser}.
\end{itemize}

\end{proof}

   According to Moser \cite{Mo} ch III, sect 2.b)
these manifolds have the following form
\[
\adjustbox{max width=\linewidth}{$\displaystyle
q\ = \ \chi(p,t)\ =\ p+a_4 p^5 + a_7(t) p^8...
$}
\]
and
\[
\adjustbox{max width=\linewidth}{$\displaystyle
q= \chi(-p,-t)=-p-a_4 p^5 + a_7(-t) p^8 ...,
$}
\]
where $a_4$ is some nonzero constant and $a_7(t)$ is a time-periodic
function. Expressions for the invariant manifolds show that for
\[
\adjustbox{max width=\linewidth}{$\displaystyle
x= \dfrac 14 (q-\chi (-p,-t))=\dfrac 14 (p+q) + \dfrac{a_4}{4} p^5 -
\dfrac{a_7(-t)}{4} p^8\cdots
$}
\]
\[
\adjustbox{max width=\linewidth}{$\displaystyle
y\ =\  \dfrac 14 (q-\chi (p,t))\ =\ \dfrac 14 (q-p) - \dfrac{a_4}{4}
p^5 -\dfrac{a_7(t)}{4} p^8  \cdots
$}
\]
they are represented by $x$ and $y$ coordinate axes.

\paragraph{Definition of the weighted remainder classes.}
In order to simplify notation we introduce the following classes.
Here $r=x+y$ is a local size variable, distinct from the physical
polar radius or the Sitnikov primary radius. We work on
$x,y\ge0$, $0<r<\delta$. We say that a function $f(x,y,t,e)$,
smooth on this punctured quadrant, belongs to $O_n$ if the following
bounds hold:
\[
 |\partial_x^i\partial_y^j\partial_t^\ell\partial_e^a f|
 \le C_{i,j,\ell,a}r^{n-i-j}.
\]
As part of this definition, we require these estimates for all the
derivatives used below (at least
three spatial derivatives and one parameter derivative), uniformly in the
periodic variable $t$ and in a fixed compact parameter interval.
Thus the derivative bounds are requirements defining membership in
$O_n$, rather than consequences of the bound $|f|\le Cr^n$ alone.
Smooth extension through the corner is not required by this definition.
For integers $n<m$, we define $f\in O_{n,m}$ to mean that
$f=f_1(x,y,e)+f_2(x,y,t,e)$, where $f_1\in O_n$ and $f_2\in O_m$.
The same definition with a real exponent $\eta>0$ gives the class
$O_\eta$. The weighted bounds do not require the quotient to extend
smoothly through the corner. Positive constants denoted by $c$, $K$,
or $C$ in local estimates may change between occurrences; their
parameter dependence is specified in each argument. In RPC3BP
parameter statements, $C$ always denotes the Jacobi constant.

\begin{Lemma}\label{lem:remainder-calculus}
The following rules hold, with the indicated derivative bounds:
\begin{enumerate}
\item $\partial_x O_{n,m},\partial_y O_{n,m}\subset O_{n-1,m-1}$;
\item $(ax+by)O_{n,m}\subset O_{n+1,m+1}$;
\item $O_{n,m}O_{k,p}\subset O_{n+k,\min(n+p,m+k)}$;
\item if $a,b>0$, then $O_{n,m}/(ax+by)\subset O_{n-1,m-1}$
 on the punctured quadrant;
\item if $f$ is smooth at the corner, its spatial Taylor jet of order
 $n-1$ vanishes, and $f(0,y,t,e)=0$, then $f/x$ is smooth
 and its Taylor jet of order $n-2$ vanishes. The analogous statement
 holds for division by $y$.
\end{enumerate}
\end{Lemma}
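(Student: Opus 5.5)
The plan is to verify each rule directly from the definition of $O_n$. Every rule is linear in the decomposition $f=f_1+f_2$, with $f_1\in O_n$ independent of $t$ and $f_2\in O_m$, so it suffices to prove the corresponding statement for a single class $O_n$ and then check that $t$-independence of the first summand is preserved. Throughout we use that on the quadrant $x,y\ge0$ one has $x,y\le r$. We also use that $\partial_x r=\partial_y r=1$ and $\partial_t r=\partial_e r=0$.

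For (1), $\partial_x^i\partial_y^j\partial_t^\ell\partial_e^a(\partial_x f)$ is a derivative of $f$ of spatial order $i+j+1$, so it is bounded by $Cr^{n-1-i-j}$. Differentiation also preserves $t$-independence of $f_1$. For (2), we apply the Leibniz rule to $(ax+by)f$. The only nonzero derivatives of $ax+by$ are of order $\le1$, with sizes $O(r)$ and $O(1)$. The terms are therefore $r\cdot r^{n-i-j}$ and $1\cdot r^{n-(i+j-1)}$, both $O(r^{n+1-i-j})$. For (3), we expand $(f_1+f_2)(g_1+g_2)$. The Leibniz rule gives $O_nO_k\subset O_{n+k}$, since the derivative orders split as $r^{n-i_1-j_1}r^{k-i_2-j_2}$. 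The product $f_1g_1$ is $t$-independent and lies in $O_{n+k}$. The remaining three products lie in $O_{n+p}$, $O_{m+k}$ and $O_{m+p}$. Because $r<\delta$ is bounded, $O_s\subset O_{s'}$ whenever $s'\le s$. Hence all three lie in $O_{\min(n+p,m+k)}$, using $m+p\ge\min(n+p,m+k)$.

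For (4), we first show that $g=1/(ax+by)$ belongs to $O_{-1}$. On the quadrant $ax+by\ge\min(a,b)\,r$. By induction, $\partial_x^i\partial_y^jg=(-1)^{i+j}(i+j)!\,a^ib^j(ax+by)^{-1-i-j}$, which is bounded by $Cr^{-1-i-j}$. The function $g$ is independent of $t$ and $e$. Then (3) with $k=p=-1$ gives $O_{n,m}O_{-1}\subset O_{n-1,m-1}$. This step is exactly where the restriction to the punctured quadrant is needed.

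For (5) we use the Hadamard formula $f(x,y,t,e)=x\int_0^1\partial_xf(sx,y,t,e)\,ds$, which is valid because $f(0,y,t,e)=0$. It shows that $f/x=\int_0^1\partial_xf(sx,y,\cdot)\,ds$ is smooth at the corner, with derivatives obtained by differentiating under the integral. The jet statement is the step needing a little care. The order-$(n-1)$ jet of $f$ vanishes, so every spatial monomial in the Taylor polynomial of $f$ has degree $\ge n$. Dividing the Taylor expansion by $x$ is legitimate because $f(0,y)=0$ forces every monomial to contain $x$. The quotient therefore has all Taylor monomials of degree $\ge n-1$, i.e.\ its jet of order $n-2$ vanishes. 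Equivalently, $\partial^\alpha(f/x)(0)=\int_0^1 s^{\alpha_x}\,\partial^\alpha\partial_xf(0)\,ds=0$ for $|\alpha|\le n-2$. Division by $y$ is symmetric. No step is a real obstacle. The only points to watch are the bookkeeping of the minimum in (3) and the use of $x,y\ge0$ in the lower bound $ax+by\gtrsim r$ in (4).
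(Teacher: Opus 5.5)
Your proposal is correct and follows essentially the paper's route: the Leibniz rule (keeping the mixed products $f_1g_2$, $f_2g_1$) for (1)--(3), the homogeneous bound $|\partial_x^i\partial_y^j(ax+by)^{-1}|\le Cr^{-1-i-j}$ for (4), and Hadamard's formula $f=x\int_0^1\partial_xf(sx,y,t,e)\,ds$ for (5). One bookkeeping slip is in (4): plugging $k=p=-1$ into (3) literally gives $O_{n-1,\min(n-1,m-1)}=O_{n-1,n-1}$, and (3) in any case presupposes $k<p$. Argue instead from the $t$-independence of $(ax+by)^{-1}$, which you already noted, together with $O_nO_k\subset O_{n+k}$: this gives $f_1/(ax+by)\in O_{n-1}$, still $t$-independent, and $f_2/(ax+by)\in O_{m-1}$ directly.
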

\begin{proof}
The first three assertions follow from differentiation and the product
rule, keeping both mixed products $f_1g_2$ and $f_2g_1$. For the fourth,
$ax+by$ is comparable with $r$, and every derivative of its reciprocal
has the corresponding homogeneous bound. For the last assertion use
\[
 f(x,y,t,e)=x\int_0^1\partial_xf(ux,y,t,e)\,du.
\]
For the finitely many derivatives needed here, the integral gives the
claimed bounds whenever the underlying vanishing order is sufficiently
large; in the applications the numerator is a smooth remainder of order
at least six. Equivalently, Taylor's formula first removes its vanishing
jet and then Hadamard's formula divides the remainder by the invariant
axis. No division by $x$ without this axis condition is permitted.
\end{proof}
In particular, division by $x+y$ need not preserve smoothness at the
corner. Also, $(x+x^2\sin t)^2$ illustrates why the time-dependent order
in the product rule is not $m+p$.
As before, $A_{n,m}$ denotes $A+O_{n,m}$; each occurrence can denote a
different remainder. We use $O_n$ when no separation of time dependence
is needed.

\subsubsection{Derivation of equations of motion in the $xy$-coordinates}
We use the invariant manifolds to straighten the axes exactly, rather
than divide a truncated Taylor polynomial by its leading linear term.
Let
\[
 x=\frac{q-\chi(-p,-t)}4,\qquad
 y=\frac{q-\chi(p,t)}4.
\]
The linear part is invertible. The inverse function theorem and the
expansions above give
\[
 p=2(x-y)+O_5,\qquad q=2(x+y)+O_8.
\]
The estimates here and below are estimates for derivatives, not merely
pointwise Taylor remainders. The identity $q=2(x+y)$ is not used.

Write the physical-time vector field as $F=\dot x$, $G=\dot y$.
Its leading terms, calculated from (\ref{e.oldtime}), are
\[
 F=\frac{xq^3}{4}+O_8=2x(x+y)^3+O_8,
 \qquad G=-2y(x+y)^3+O_8.
\]
Indeed, differentiating a correction of order five in the spatial
coordinates against a vector field of order four gives order eight;
the first time-dependent correction also has order eight. Since
$x=0$ and $y=0$ are exact invariant axes, respectively $F$ and $G$
vanish there. The remainders after subtraction of the displayed leading
terms therefore vanish on the corresponding axes. Hadamard's formula,
followed by division by $(x+y)^3$ on the punctured quadrant, gives
\begin{equation}\label{sitnik-1}
 \dot x=2x(x+y)^3(1+A_e),\qquad
 \dot y=-2y(x+y)^3(1+B_e),\qquad A_e,B_e\in O_4.
\end{equation}
The two remainders are independent. Along an individual orbit put
$ds=2(x+y)^3dt$. Then
\begin{equation}\label{sitnik-2}
 x_s=x(1+A_e),\qquad y_s=-y(1+B_e),\qquad
 t_s=\frac{1}{2(x+y)^3}.
\end{equation}
The last equation must be retained when the remainders depend on time.
Derivatives of fixed physical-time maps are computed before this change
of the independent variable.

\begin{Lemma}\label{lem:uniform-separatrices}
The local invariant graphs used above, and the invariant graphs used in
the RPC3BP reduction below, have finite Taylor expansions of arbitrarily
high order with uniform weighted remainders after any fixed finite number
of spatial, phase, and parameter derivatives. Parameters range over a
compact subset of $|e|<1$ in the Sitnikov problem and over a fixed compact
set of $(\mu,C)$ in the RPC3BP. The local neighborhood may depend on
that compact set.
\end{Lemma}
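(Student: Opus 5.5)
The plan is to derive the uniform finite-order expansions from the invariance equation itself, solved order by order as in the proof of Theorem~\ref{t.Moser}, and to control the remainder by a contraction argument with weights; parameters enter only as smooth dependence of that fixed-point problem. Concretely, I start from McGehee's stable manifold theorem for degenerate fixed points \cite{McG}, in the smooth-dependence-on-parameters form of \cite{Sl}. For every compact parameter set this gives a local invariant graph $q=\chi(p,t,\lambda)$, where $\lambda=e$ in the Sitnikov problem and $\lambda=(\mu,C)$ in the RPC3BP. The graph is $C^\infty$ jointly in $(p,t,\lambda)$ on a neighborhood $0\le p<\delta(K)$, and only the size $\delta$ depends on the compact set $K$. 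For the RPC3BP I first rewrite the Hamiltonian \eqref{Hamiltonian} in McGehee variables $r=2/q^2$ on the branch $P_\varphi=P_\varphi(r,\varphi,P_r,C)$. Near $q=0$ this has the same structure as \eqref{e.oldtime}, with $\varphi$ as the periodic phase and perturbation terms of order $q^4$ or higher that are analytic in $(\mu,C)$. The same theorem therefore applies.

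Next I compute the Taylor coefficients. I repeat the orthogonality computation from the proof of Theorem~\ref{t.Moser} up to an arbitrary order $N$, with $\lambda$ as a parameter. At each order the coefficient $a_k(t,\lambda)$ is fixed by a linear cohomological equation $c_k a_k+4a_{k+3}'=(\text{known})$, so it is a smooth function of $\lambda$ given by finitely many integrals. Writing $\chi=\chi_N+R_N$, smoothness of $\chi$ already gives $R_N=O(p^{N+1})$ pointwise with the derivatives in $p$. The main point is to make these bounds uniform, including after $t$- and $\lambda$-derivatives, with the weighted bounds defining the classes $O_n$.

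Pointwise Taylor remainders of a jointly smooth function are not sufficient for this, because the constants must be controlled after differentiation. So the main step, and the one I expect to be the obstacle, is a weighted fixed-point argument. I substitute $\chi=\chi_N+R$ into the invariance PDE, which becomes a linear degenerate transport equation for $R$ along the flow $\dot p\approx p^4/4$ plus a nonlinear term of higher order. The backward-in-time characteristics are used to write $R$ as an integral, $R(p,t)=\int_{-\infty}^0(\cdots)(\Phi_\tau(p,t))\,d\tau$. Along a characteristic $p(\tau)\sim (1-\tfrac34 p^3\tau)^{-1/3}p$, so a source of order $p^{N+4}$ integrates to order $p^{N+1}$; this is the gain of three orders that matches the time rescaling $ds=q^3dt/4$. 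Differentiating under the integral in $t$ and $\lambda$ gives variational equations whose coefficients are of the same degenerate type. This is where the structure found in the proof of Theorem~\ref{t.Moser} is used: the first $t$-dependence appears at order $p^8$. Consequently $\partial_t$ and $\partial_\lambda$ of the flow grow only polynomially in $\tau$, and this growth is absorbed by taking $N$ large. One may lose finitely many orders for each derivative, and this loss is compensated by choosing $N$ larger than the target order plus the number of derivatives required. The contraction constant depends only on bounds for the vector field on $K$, which gives the uniformity.

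Finally, I transfer the result to the straightened coordinates $x,y$. These are defined by the exact graphs, so the inverse function theorem with parameters, together with Lemma~\ref{lem:remainder-calculus}, turns the uniform remainder bounds into the weighted classes used in \eqref{sitnik-1}. Here division is only by invariant axes, after the jet has been removed. The RPC3BP case proceeds in the same way once the collision-free neighborhood of infinity is chosen uniformly for $(\mu,C)$ in the compact set.
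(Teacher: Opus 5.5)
\textbf{There is a gap: the analytic step is assumed, not proved.} In your first paragraph you take from \cite{McG} and \cite{Sl} a local graph that is jointly $C^\infty$ in $(p,t,\lambda)$ up to $p=0$, uniformly for $\lambda$ in a compact set $K$. The paper uses those references only for existence and regularity at a fixed parameter. The parameter-uniform jet control at the degenerate point is exactly what it imports from \cite{BFM} (Corollary 2.9, with the weighted remainders of Theorem 2.8). It first verifies that theorem's hypotheses for the fixed-period map in unstraightened linear coordinates, with the initial phase as an extra parameter. It then passes to the flow by uniqueness of the local graphs.

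Your claim that pointwise Taylor remainders of a jointly smooth function are insufficient is also mistaken. If the black box were available, Taylor's formula applied to $\partial_t^\ell\partial_\lambda^a\chi$ on $[0,\delta]\times\mathbb T\times K$ would give $|\partial_p^i\partial_t^\ell\partial_\lambda^a(\chi-\chi_N)|\le Cp^{N+1-i}$ at once, with $C$ a supremum of derivatives over a compact set. So the entire burden falls on your fixed-point step. That step must \emph{produce} parameter-uniform smoothness at the vertex, not merely refine it.

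\textbf{As you formulate it, that step does not close.} The $p$-velocity in the invariance equation is evaluated on the unknown graph:
\[
 \partial_t\chi+Q(\chi,t)\,\partial_p\chi=\tfrac14\,p\chi^3,\qquad
 Q(q,t)=\tfrac14\,q^4\bigl(1+\tfrac14\,q^4r^2(t)\bigr)^{-3/2}.
\]
Suppose you transport along the fixed field $\partial_t+Q(\chi_N,t)\partial_p$ (your ``$\dot p\approx p^4/4$''). Then the ``nonlinear term'' contains $\bigl(Q(\chi_N+R,t)-Q(\chi_N,t)\bigr)\partial_pR\approx p^3R\,\partial_pR$. This term is small in size, but two problems follow.
\begin{itemize}
\item Its Lipschitz estimate in $R$ already needs $\partial_p(R_1-R_2)$.
\item Bounding the $p$-derivative of its Duhamel integral needs $\partial_p^2R$.
\end{itemize}
The map therefore loses a derivative and is not a contraction on any weighted $C^k$ ball.

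A working version integrates along the true orbits on the unknown manifold. Concretely, solve a weighted Lyapunov--Perron problem for orbits with $p(\tau)\asymp|\tau|^{-1/3}$ as $\tau\to-\infty$, or use a parametrization method. Then obtain the $p$, $t$ and $\lambda$ derivatives by a fiber-contraction induction, choosing $N$ after the derivative order; this last choice agrees with the paper.

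Your observation that a source of order $p^{N+4}$ integrates to order $p^{N+1}$ is correct and is the right ingredient. (The linearized propagator along backward characteristics is $p(\tau)/p_0\le1$.) But carrying this out with constants uniform on $K$ is precisely the content of \cite{BFM}. The proposal asserts this rather than proving it.
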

\begin{proof}
We apply the parameter-dependent parabolic invariant-manifold result
\cite{BFM}, Corollary 2.9 (with the weighted remainder conclusion of
Theorem 2.8). We verify its hypotheses for the fixed-period map, without
first straightening the invariant manifolds. Use linear coordinates
$(v,w)$ with $v$ the contracting coordinate and $w$ the expanding one.
For a fixed period $P$ the map has the form
\[
 (v,w)\longmapsto(v+p(v,w)+f(v,w),\ w+q(v,w)+g(v,w)),
\]
where
\[
 p=-c v(v+w)^3,\qquad q=c w(v+w)^3,\qquad c>0,
 \qquad f,g=O_5.
\]
For Sitnikov, $v=(q-p)/4$, $w=(q+p)/4$ and $c=2P$.
For RPC3BP, $v=u+P_r/\sqrt2$, $w=u-P_r/\sqrt2$, and, with
$\theta=-\varphi$ as time, $c=P/(8\sqrt2)$. The energy equation is
solved near infinity using $\partial H/\partial P_\varphi=-1+O(u^4)$.
The vector fields, and hence their fixed-period maps, are smooth with
uniform bounds on compact parameter sets; all derivatives of $f,g$
have the corresponding Taylor bounds. Initial phase can be included
among these parameters.

Using definitions (2.4)--(2.5) and their parameter versions (2.12)
in \cite{BFM}, the constants on
$V=(0,\rho)$ with the absolute-value norm, are
\[
 N=M=4,\quad a_p=b_p=c_p=d_p=c,\quad
 A_p=B_p=4c,\quad B_q=c.
\]
Indeed, $p(v,0)=-cv^4$, $D_vp(v,0)=-4cv^3$ and
$D_wq(v,0)=cv^3$. Also $q(v,0)=0$, and for small $\rho$,
\[
 \operatorname{dist}(v-cv^4,\mathbb R\setminus(0,\rho))
 \ge(c/2)v^4.
\]
Thus the contracting-domain condition holds, and the remaining
inequality for equal degrees is $2+B_q/c_p=3>
\max\{1-A_p/d_p,0\}=0$. These constants are independent of the
physical parameters and initial phase. Corollary 2.9(4) applies because $A_p\ge d_p$ and the map
is smooth in space and parameters. Weighted derivative control is
provided by Theorem 2.8 and the remainder spaces in Proposition 7.1;
the approximation order is chosen after the required derivative order. In one contracting dimension the homogeneous approximations
are polynomials in $v$; taking their order arbitrarily high gives the
weighted Taylor bounds just stated. The graph projection is invertible
because its derivative at zero is one. Apply the same argument to the
inverse map for the unstable graph.

Uniqueness of the local stable and unstable graphs identifies their
bounded-time translates with the graphs at the translated phase.
Consequently the invariant graphs for the maps give periodic invariant
surfaces for the vector field. Their coefficients can therefore be
computed from the continuous-time invariance equation, as in the proof
of Theorem~\ref{t.Moser}. The uniform remainder estimates justify that
coefficient calculation and its parameter derivatives.
\end{proof}
\begin{Remark}\label{rem:normal-input}
Lemma~\ref{lem:uniform-separatrices} supplies the parameter-uniform
input in the preceding normal-form calculation. When evaluating a
one-sided graph outside its physical branch in the coordinate formula,
choose a finite-order smooth extension with the same Taylor jet and
parameter bounds; invariance is used only on the physical axis.
The derivative order can be chosen larger than every order used in
this paper. No convergence of the Taylor series at infinity is assumed.
\end{Remark}

In order to obtain a similar expression for the RPC3BP we discuss
the Kepler problem first. Then we come back to RPC3BP and notice that
it is a small perturbation of the Kepler problem for small mass ratio $\mu$.

\subsection{The Kepler problem (2BP) and the polar coordinates}
\label{sympl-polar} Recall that the polar coordinates for the Kepler
problem are formed by the polar coordinates $(r,\varphi)$ and the
momenta $(P_r,P_\varphi)$ conjugate to $(r,\varphi)$ resp. The
Hamiltonian for the RPC3BP in rotating
 polar
coordinates $(r,\varphi,P_r,P_\varphi)$ is
\begin{eqnarray*}
H_0(r, \varphi, P_r, P_\varphi)  = \frac{P_r^2}{2} +
\frac{P_\varphi^2}{2r^2} - P_\varphi - \frac{1}{r}.
\end{eqnarray*}

Notice that
 \newline
 --- angular momentum $P_\varphi$ is the first integral;
 \newline
 --- at fixed angular momentum $P_\varphi=L$, level sets of
 $E=H_0+L=P_r^2/2+L^2/(2r^2)-1/r$ in the $(r,P_r)$-plane
 describe the radial Kepler motion;
 \newline
 --- for $L\ne0$ and negative inertial energy $E<0$, nonempty
 radial energy levels are compact and the collision-free inertial
 trajectories are periodic (including the circular orbit);
 \newline
 --- motions of the Kepler problem are conic sections (see e.g.
 \cite{AKN});
 \newline
 --- these bounded collision-free trajectories are ellipses in the
 inertial plane $\mathbb{R} ^2\setminus \{0\}$. When $L=0$,
 collision orbits require separate treatment. The sign of the rotating
 Hamiltonian $H_0$ alone does not distinguish bounded from unbounded motion;
 \newline
 --- in the inertial coordinate system the Hamiltonian becomes
 $H_0+P_\varphi$ and both future and past parabolic motions coincide and
 correspond to the curve $\{H_0+P_\varphi=0\}$.

\subsection{McGehee coordinates for the Restricted Planar Circular 3 Body Problem}

The Hamiltonian for RPC3BP in rotating polar coordinates
$(r,\varphi,P_r,P_\varphi)$ is given by (\ref{Hamiltonian}).
So the equations of motion become
 \begin{eqnarray}  \label{EM}
  \begin{cases}
   \begin{array}{rcl}
   \dot r & = & P_r \\
   \dot P_r & = & P_\varphi^2\, r^{-3}-r^{-2}-\partial_r \Delta H \\
   \dot \varphi & = & -1+P_\varphi \, r^{-2} \\
   \dot P_\varphi & = & - \partial_\varphi \Delta H.
   \end{array}
  \end{cases}
 \end{eqnarray}

Directly one could prove the following bounds on the perturbation
term $\Delta H$ and its derivatives satisfy
 \begin{Lemma}  \label{apriori-estimates-noE} \cite{GaK} \ For any $r> 1$
  \begin{equation}
  \begin{aligned}
   \max_\varphi|\Delta H(r,\varphi)|
      &\le\frac{\mu}{r^2(r-1)},\\
   \max_\varphi|\partial_r\Delta H(r,\varphi)|
      &\le\frac{\mu(3r-2)}{r^3(r-1)^2}
       \le\frac{3\mu}{r^2(r-1)^2}.
  \end{aligned}
  \end{equation}
 \end{Lemma}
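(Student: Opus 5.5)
The plan is to write $\Delta H$ as an explicit finite sum and bound each piece by elementary inequalities for the distances $d_1,d_2$. The centre of mass is at the origin, with primaries of mass $1-\mu$ and $\mu$ at distances $\mu$ and $1-\mu$ from it. So
\[
\Delta H=\frac{(1-\mu)+\mu}{r}-\frac{\mu}{d_1}-\frac{1-\mu}{d_2}
=(1-\mu)\Big(\frac1r-\frac1{d_2}\Big)+\mu\Big(\frac1r-\frac1{d_1}\Big).
\]
The key cancellation is the vanishing dipole term. In a multipole expansion in $1/r$, the first-order terms are proportional to $(1-\mu)\mu\cos\varphi-\mu(1-\mu)\cos\varphi=0$. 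Hence $\Delta H$ is a quadrupole quantity of size $\mu(1-\mu)/r^3$ to leading order, and this explains the factor $\mu/(r^2(r-1))$.

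To get a bound valid for all $r>1$, and not only asymptotically, I will avoid the series. For a point mass at distance $a<r$ from the origin, I will use the exact Taylor remainder of $\rho\mapsto (r^2-2\rho r\cos\varphi+\rho^2)^{-1/2}$ at $\rho=0$ to second order:
\[
\frac1{d(\rho)}=\frac1r+\frac{\rho\cos\varphi}{r^2}+R_2(\rho),
\]
where $R_2$ is controlled by the second derivative along the segment. That second derivative is bounded by $1/(\text{distance})^3\le 1/(r-\rho)^3$ times a constant of order one. The linear terms cancel after weighting by $(1-\mu)$ and $\mu$ with $\rho=\mu$ and $\rho=1-\mu$ on opposite sides. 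This leaves
\[
|\Delta H|\le (1-\mu)\frac{\mu^2}{r(r-\mu)^2}\cdot c+\mu\frac{(1-\mu)^2}{r(r-(1-\mu))^2}\cdot c.
\]
A cleaner route to the sharp constant is to use the Legendre generating function $1/d=\sum_n \rho^n P_n(\cos\varphi)/r^{n+1}$ with $|P_n|\le1$. The tail from $n\ge2$ is then bounded by $\rho^2/(r^2(r-\rho))$. Weighting, we get $\mu(1-\mu)\,[\mu/(r^2(r-\mu))+(1-\mu)/(r^2(r-1+\mu))]\le \mu(1-\mu)/(r^2(r-1))\le \mu/(r^2(r-1))$, using $r-\rho\ge r-1$. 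I will take this route.

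For $\partial_r\Delta H$, I will differentiate the Legendre series termwise, which is legitimate since it converges uniformly with derivatives for $r>1\ge\rho$. This gives $\sum_{n\ge2}(n+1)\rho^nP_n/r^{n+2}$, bounded by $\sum_{n\ge2}(n+1)\rho^n r^{-n-2}$. Summing $\sum_{n\ge2}(n+1)x^n=\frac{1}{(1-x)^2}-1-2x=\frac{x^2(3-2x)}{(1-x)^2}$ with $x=\rho/r$ gives $\rho^2(3r-2\rho)/(r^3(r-\rho)^2)$. After weighting by the masses and using $\rho\le1$, both pieces are bounded monotonically in $\rho$ by the $\rho=1$ expression. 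The weights then sum to $\mu(1-\mu)(\mu+1-\mu)\le\mu$, yielding $\mu(3r-2)/(r^3(r-1)^2)$. Finally $(3r-2)/r\le3$ gives the last inequality.

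The main obstacle is the monotonicity/convexity step that combines the two weighted terms into a single $\rho=1$ bound while keeping the factor $\mu$ rather than $\mu(1-\mu)\cdot 2$. I will handle it by noting that $\rho\mapsto \rho\,(3r-2\rho)/(r-\rho)^2$ is increasing on $[0,1]$. Writing $\mu(1-\mu)[\rho_2 g(\rho_2)+\rho_1 g(\rho_1)]$ with $\rho_1+\rho_2=1$ and $g(\rho)=(3r-2\rho)/(r^3(r-\rho)^2)$ increasing, the bracket is at most $(\rho_1+\rho_2)\,g(1)=g(1)$.
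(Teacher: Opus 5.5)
Your Legendre-series argument is correct and is essentially the paper's proof. In both, you expand each $1/d_i$, use $|P_n|\le 1$, cancel the $n=0,1$ terms, sum the geometric series, and differentiate termwise for $r>1$. The only difference is the order of estimation: the paper first bounds the combined coefficient $|c_n|=|\mu(1-\mu)^n+(1-\mu)(-\mu)^n|\le\mu(1-\mu)\bigl((1-\mu)^{n-1}+\mu^{n-1}\bigr)\le\mu$ uniformly in $n\ge2$ and then sums one series, so the monotonicity step you flag as the main obstacle is not needed (your handling of it is valid, since $\partial_\rho\bigl[(3r-2\rho)/(r-\rho)^2\bigr]=2(2r-\rho)/(r-\rho)^3>0$).
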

\begin{proof}
Let $P_n$ denote the Legendre polynomial of degree $n$. For $r>1$,
\[
 \Delta H=-\sum_{n\ge2}c_nP_n(\cos\varphi)r^{-n-1},\qquad
 c_n=\mu(1-\mu)^n+(1-\mu)(-\mu)^n.
\]
The terms of degrees zero and one cancel. For $n\ge2$,
\[
 |c_n|\le\mu(1-\mu)
       \bigl((1-\mu)^{n-1}+\mu^{n-1}\bigr)\le\mu.
\]
Since $|P_n(t)|\le1$ for $|t|\le1$, summing the geometric series
proves the first bound. Termwise radial differentiation is justified
uniformly on every region $r\ge1+\epsilon$, $\epsilon>0$, and gives
\[
 |\partial_r\Delta H|\le\mu\sum_{n\ge2}(n+1)r^{-n-2}
       =\frac{\mu(3r-2)}{r^3(r-1)^2}.
\]
The final inequality follows from $3r-2\le3r$.
\end{proof}

Notice that in the rotating polar coordinates $H$ is $\mu$-close to
the Kepler Hamiltonian $H_0$ on compact collision-free regions with
$r>1$. Put $E=H+P_\varphi$. For $\mu=0$ this is the inertial Kepler
energy, and parabolic motions have $E=0$. For $\mu>0$ it is not in
general a first integral. On the Jacobi surface $H=-C/2$ one has
\[
 2P_\varphi=C+2E.
\]
Thus a bound $|E|\le\varepsilon$ gives
$|2P_\varphi-C|\le2\varepsilon$; an $O(\mu)$ conclusion requires
a separate $O(\mu)$ bound on $E$.

For each fixed $C$, we can express $P_\varphi$ as an implicit
function near parabolic infinity. Indeed, (\ref{Hamiltonian}) gives
\[
 \frac{P_\varphi^2}{r^2}-2P_\varphi-\frac2r
                  +2\Delta H+P_r^2=-C.
\]
In the coordinates $u=r^{-1/2}$, at $(u,P_r)=(0,0)$ the relevant
solution is $P_\varphi=C/2$, and the derivative of the left side
with respect to $P_\varphi$ is $-2$. The implicit function theorem
therefore gives a smooth branch
\[
 P_\varphi=C/2+O(u^2+P_r^2)
\]
near infinity, uniformly on fixed compact parameter sets. On a
sufficiently small such neighborhood,
$\dot\varphi=-1+P_\varphi u^4<0$.
This local reduction requires no global periapsis estimate.
For comparison, an unperturbed Kepler orbit with angular momentum
$L\ne0$ and energy $E$ has
\[
 r_{\min}=\frac{L^2}{1+\sqrt{1+2EL^2}}.
\]
On the parabolic branch $E=0$, $L=C/2$, this becomes $C^2/8$.
A fixed neighborhood of $E=0$ does not by itself give an
$O(\mu)$ perturbation of that value.

Introduce $u=r^{-1/2}$ along with a function
$d_r(u,\varphi)=u^{-3}\partial_r \Delta H(u^{-2},\varphi)$
which are well defined for $u\ge 0$ and $\phi\in \mathbb{T} $. By lemma
\ref{apriori-estimates-noE} we have that $d_r(u,\varphi)$ has
$u$-zero of order $5$ at $u=0$.
Plug $u$ into the equations of motion (\ref{EM}):
 \begin{eqnarray}  \label{EM1}
  \begin{cases}
   \begin{array}{rcl}
   \dot u & = & -\dfrac 12 P_r \,u^3 \\
   \dot P_r & = & -u^3(u-P_\varphi^2\,u^3+d_r(u,\varphi))\\
   \dot \varphi & = & -1+P_\varphi\, u^4.
    \end{array}
  \end{cases}
 \end{eqnarray}

By McGehee's theorem  \cite{McG} this equation has stable and unstable parabolic branches
meeting at the origin, with saddle-type passage dynamics in the physical
sector. The invariant manifolds are analytic away from the origin.
The statement does not assert that the origin is isolated in the
full smooth extension at infinity. We would like to write this system in the form similar to
(\ref{sitnik-1}) and (\ref{sitnik-2}).

For the forward-time expanding coordinate the correct leading choice is
$X=u-P_r/\sqrt2$, $Y=u+P_r/\sqrt2$. Direct substitution in
(\ref{EM1}) gives, with $u=(X+Y)/2$,
\[
 \dot X=\frac{(X+Y)^3X}{8\sqrt2}+O_6,\qquad
 \dot Y=-\frac{(X+Y)^3Y}{8\sqrt2}+O_6.
\]
Here $C$ is fixed in a compact parameter interval; no uniformity as
$C\to\infty$ is asserted. The angular momentum is eliminated on
$H=-C/2$ by the implicit function theorem near infinity, where
$\partial H/\partial P_\varphi=-1+O(u^4)$ and $P_\varphi=C/2+O(u^2+P_r^2)$.

By Lemma~\ref{lem:uniform-separatrices}, the exact local invariant surfaces are
$X=\sigma_s(Y,\varphi)$ and $Y=\sigma_u(X,\varphi)$, with
$\sigma_s,\sigma_u=O_3$ and $\partial_\varphi\sigma_s,
\partial_\varphi\sigma_u=O_6$, in the weighted derivative sense.
To check these orders, expand $\sigma_s(Y,\theta)=\sum_{j\ge2}a_j(\theta)Y^j$.
In the invariance equation the coefficients of degrees two through
four give $a_2'=a_3'=a_4'=0$. There is no degree-five forcing in the
unstraightened field, whose first correction has order six. The
coefficient of degree five is $a_5'=3c_{\rm McG} a_2$, where
$c_{\rm McG}=1/(8\sqrt2)$. Periodicity forces $a_2=0$ and $a_5'=0$.
The unstable calculation uses reversed time and is identical.
These equalities also hold after parameter differentiation.
Set $x=X-\sigma_s(Y,\varphi)$ and
$y=Y-\sigma_u(X,\varphi)$. This is an invertible change near the origin.
The same axis-division argument as above proves
\begin{equation}\label{RPC3BP-1}
 \dot x=\frac{x(x+y)^3}{8\sqrt2}(1+A),\qquad
 \dot y=-\frac{y(x+y)^3}{8\sqrt2}(1+B),\qquad A,B\in O_2.
\end{equation}
For the return map use the monotone phase $\theta=-\varphi$ as time.
Since $d\theta/dt=1+O_4>0$, the equations retain this form with
$O_2$ remainders and are $2\pi$-periodic in $\theta$.
Along an orbit, with $ds=(x+y)^3d\theta/(8\sqrt2)$, they become
\begin{equation}\label{RPC3BP-2}
 x_s=x(1+A),\qquad y_s=-y(1+B),\qquad
 \theta_s=\frac{8\sqrt2}{(x+y)^3}.
\end{equation}
\begin{Remark}\label{rem:rpc-normal-input}
The normal form follows from exact invariant surfaces and weighted
derivative estimates. Only $O_\eta$ remainders for some $\eta>0$
are needed below; no $O_{3,7}$ bound is assumed.
All local uniformity here is on compact parameter sets. In particular,
this does not prove the global large-$C$ assertions of the introduction.
For a different normal-form construction see \cite[Proposition 2.3]{GMP2025}.
\end{Remark}

\section{Dynamics at infinity}\label{dynamics-at-infinity}

In order to study $C^1$ and $C^2$ dynamics at infinity of the Sitnikov
and the RPC3BP we start with a class of differential equations on
$(x,y)\in \mathbb{R} ^2$ which contains both.

First we add one differential
equation on $\lambda $, coupled with the first two, so that it describes
evolution of slopes $\lambda (\cdot)$ of certain class of curves on $\mathbb{R} ^2$ carried
under evolution of the first two equations. This gives us information
about $C^1$-dynamics.

Finally, we add one more differential equation on $\kappa$, coupled
with the first three, so that it describes evolution of quantity
$\kappa(\cdot)$ related to curvature of some curves on $\mathbb{R} ^2$ carried
under evolution of the first two equations. This gives us information
about $C^2$-dynamics sufficient for the proof.

\begin{figure}[tbp]
\begin{center}
\includegraphics[width= 1.9in,angle=0]{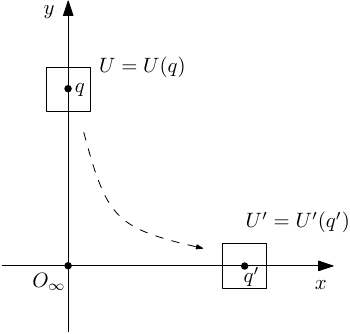}
\caption{Dynamics near infinity.}
\end{center}
\end{figure}

\subsection{Evolution of slopes and second derivatives near degenerate saddles}
We consider the class
\begin{equation}\label{deg-saddle-evolution}
 \frac{dx}{dt}=x(x+y)^3(1+A),\qquad
 \frac{dy}{dt}=-y(x+y)^3(1+B),\qquad A,B\in O_\eta,
\end{equation}
with weighted derivative bounds for some $\eta>0$. This includes
\eqref{sitnik-1} and \eqref{RPC3BP-1}, after accounting for their
constant positive leading factors. For the RPC3BP return map, $t$
in this local calculation denotes the monotone angular time
$\theta=-\varphi$. Every image curve is evaluated at a common value
of this time variable. Rescaling time along individual orbits is
used only after deriving the variational equations.

In Sections~\ref{dynamics-at-infinity}--\ref{sec:revised-general}, $\lambda=dy/dx$ and $\kappa=d^2y/dx^2$ are jet
coordinates; the RPC3BP mass ratio remains $\mu$. The second derivative $\kappa$ is not the Euclidean signed
curvature. We write $a_\eta=a+O_\eta$ for a numerical coefficient
$a$; distinct occurrences denote independent remainders.

\subsection{Derivation of equations for slopes and second derivatives
for evolving curves}\label{derivation-of-C1-C2-evolutions}
Write the physical-time equations as $\dot x=F(x,y,t,e)$,
$\dot y=F^{(2)}(x,y,t,e)$. All spatial derivatives below are at fixed $t,e$.
For a graph transported at a common physical time, direct differentiation
gives the exact identities
\begin{equation}\label{eq:exact-jets}
\begin{aligned}
 \dot\lambda={}&F^{(2)}_x+(F^{(2)}_y-F_x)\lambda-F_y\lambda^2,\\
 \dot\kappa={}&F^{(2)}_{xx}+(2F^{(2)}_{xy}-F_{xx})\lambda
 +(F^{(2)}_{yy}-2F_{xy})\lambda^2-F_{yy}\lambda^3\\
 &+(F^{(2)}_y-2F_x-3F_y\lambda)\kappa.
\end{aligned}
\end{equation}
Indeed, if $u=x_\xi$ and $v=y_\xi$ for a parametrization of the
initial curve, then $\dot u=F_xu+F_yv$, $\dot v=F^{(2)}_xu+F^{(2)}_yv$ and
$\lambda=v/u$. Differentiating this identity once more along the
curve gives the equation for $\kappa$. These identities apply to
nonautonomous vector fields: the initial phase is the same for every
point on the curve, so no $t_\xi$ term occurs.

For $F=x(x+y)^3(1+A)$ and $F^{(2)}=-y(x+y)^3(1+B)$ with
$A,B\in O_\eta$, expansion of these identities yields the following
system, with all remainders of order $\eta$:
\begin{eqnarray}   \label{2jet-deg-saddle-evolution}
\left\{
  \begin{array}{ll}
   \dot{x}=\ \ x(x+y)^3\ 1_{\eta}\\
    \dot{y}=-y(x+y)^3\ 1_{\eta} \\
    \dot{\lambda}=-(x+y)^2[3_{\eta}y +5_{\eta}(x+y) \lambda +
    3_{\eta} x \lambda^2] \\
\dot{\kappa}
=-3(x+y)[2_{\eta}y+(6_{\eta}x+8_{\eta}y)\lambda+(8_{\eta}x+6_{\eta}y)\lambda^2+2_{\eta}x\lambda^3+
\\
\qquad \qquad \qquad \qquad \qquad \qquad \qquad \qquad \qquad +\left( 3_{\eta}x+2_{\eta}y+3_{\eta}x\lambda \right )(x+y)\kappa ]
\end{array}
\right.
\end{eqnarray}
The first two equations describe evolution of position, the third
--- evolution of slope, and the fourth --- of the second derivative.
It is natural to call $\lambda (x,y)$ {\it slope of evolving curve}.
Denote by $\kappa(x,y)$ {\it second derivative of evolving curve}.
We first study the simplified system obtained by setting $A=B=0$.
Section~\ref{sec:revised-general} then proves the estimates with the
weighted remainders retained.

The local passage takes an incoming graph crossing the stable axis
$x=0$ to an outgoing graph near the unstable axis $y=0$. The strips
$U_{\rm in}$ and $U_{\rm out}$ are specified below; their longitudinal
coordinates stay in fixed positive compact intervals. No global
homoclinic intersection is needed for these local estimates.

\subsection{Statement of main results on dynamics at infinity}

Introduce the notation: $X=(x,y)\in \mathbb{R} ^2,\ \overline X=(x,y,\lambda )\in\mathbb{R} ^3,\
\overline {\overline X}=(x,y,\lambda ,\kappa)\in\mathbb{R} ^4$. Let
$\overline {\overline \phi}_t$ be the time $t$ map of the equation
(\ref{2jet-deg-saddle-evolution}). Let
$\overline \phi_t$ be the time $t$ map of first three equations
(\ref{2jet-deg-saddle-evolution}), which is well
defined because the first three equations describe dynamics
in the space of $1$-jets, and are independent of $\kappa$.
Finally, let $\phi_t$ denote the time $t$ map of first two equations
of (\ref{2jet-deg-saddle-evolution}).
Denote
\[
\adjustbox{max width=\linewidth}{$\displaystyle
\overline {\overline X}_t=(x_t,y_t,\lambda _t,\kappa_t)=
\overline {\overline \phi}_t\overline{\overline X},
$}
\]
\[
\adjustbox{max width=\linewidth}{$\displaystyle
\overline X_t=(x_t,y_t,\lambda _t)= \overline \phi_t\overline X.
$}
\]
\[
\adjustbox{max width=\linewidth}{$\displaystyle
X_t=(x_t,y_t)= \phi_t X.
$}
\]
It turns out that for different purposes we need statements
about evolution of $X_t$, $\overline X_t$, and
$\overline {\overline X}_t$. Such an evolution is not always
defined for all time. Let $\gamma =\{(x,y(x)),\ x\in [0,\delta ]\}$
be a smooth curve. Denote
$\overline \gamma =\{(x,y(x),\lambda (x)),\ x\in [0,\delta ]\}$, where
$\lambda (x)=y'(x)$.
Denote $\overline {\overline \gamma }=
\{(x,y(x),\lambda (x),\kappa(x)),\ x\in [0,\delta ]\}$, where
$\kappa(x)=y''(x)$.

The Poincar\'e maps in the applications preserve a smooth area form away from the origin. The local estimates below do not require this additional property.

\noindent\textit{Local passage convention.}
In the next two statements $U_{\rm in}$ is a strip along the stable axis
with $y$ in a fixed compact subinterval of $(0,\delta]$, and
$U_{\rm out}$ is a strip along the unstable axis with $x$ in such an
interval. In the notation of Theorem~\ref{t.5}, $U'=U_{\rm in}$ and
$U=U_{\rm out}$. We consider only image components whose entire local
passage remains in the positive square. The incoming curve crosses the
stable axis and has uniformly bounded slope and, when needed, curvature.
The remainders satisfy the hypotheses of Section~\ref{sec:revised-general}.
The time period is fixed; $f$ is the corresponding fixed-period map.
The smooth area-form assumption above is not used in the proof of stretching.

\begin{Theorem} \label{t.5}  With the notation above, for any $\lambda ^*, \kappa^*>0$
and sufficiently small $\delta >0$ there is
$C=C(\delta ,\lambda ^*,\kappa^*)>1$ such that for any $\gamma $ with
$\max |\lambda (x)|<\lambda ^*$ we have the following estimates.
Then for large $N$
intersection of the image $f^N \gamma  \cap U$ is a graph
of a function $y_N(x)$.  Denote $\lambda _N(x)=y'_N(x)$. Then
the following properties hold
\begin{eqnarray}
\label{e.C1-lb-lemma}
0< -\lambda _N(x)\le C y_N(x) & \\
\label{stretching} \dfrac{1}{Cx^{2.5}}\le |df^N(x)v_x|\le \dfrac{C}{x^{2.5}},
&\text{ where }v_x \text{ is a unit tangent to }  \gamma .
\end{eqnarray}
Moreover, for any two unit vectors $v',v''\in T_{(x,y)}U'$
with slopes $|\lambda '|,|\lambda ''|<\lambda ^*$ the following holds. Let $N=N(x,y)$ be the number of iterates of the Poincar\'e map $f$
to get to $U$, i.e. $f^N(x,y)\in U$. Then
\begin{eqnarray}  \label{projective-hyperbolicity}
\dfrac{x^{5}}{C}\angle(v',v'')\le
\angle(df^N(x,y)(v'),df^N(x,y)(v''))
\le C x^{5}\angle(v',v'').
\end{eqnarray}

In the stretching and angle estimates, $x$ is the incoming coordinate;
in $y_N(x)$ and its derivatives, $x$ is the outgoing coordinate.
Here angles are the smaller angles between the corresponding unoriented lines.

In the case $\gamma $ is also $C^2$ smooth, denote $\kappa(x)=y''(x)$ and
$f^N(\gamma )=\gamma _N$ \ and \ $\overline {\overline \gamma }_N=\{(x,y_N(x),
\lambda _N(x),\kappa_N(x))\}.$ If $\max |\kappa(x)|<\kappa^*$ then for all large
enough $N$ we have
\begin{eqnarray}
\label{e.C2-lb-lemma}
0< \kappa_N(x) < C y_N(x).
\end{eqnarray}

\end{Theorem}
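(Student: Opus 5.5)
We plan to prove Theorem~\ref{t.5} first for the simplified system ($A=B=0$) in (\ref{2jet-deg-saddle-evolution}), and then to treat the remainders by perturbation; the latter is deferred to Section~\ref{sec:revised-general}.

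\textbf{Passage time and reduction.} Pass to the rescaled time $ds=(x+y)^3dt$ along each orbit. Then $x_s=x$ and $y_s=-y$, so $xy$ is conserved. The passage from $U_{\rm in}$ (where $y\approx y_0$) to $U_{\rm out}$ (where $x\approx x_1$) takes rescaled time $s\approx\log(x_1/x)$. The jet equations become linear in $\lambda$ and $\kappa$ once divided by $(x+y)^3$:
\[
 \lambda_s=-\frac{3y+5(x+y)\lambda+3x\lambda^2}{x+y}.
\]

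\textbf{Slope estimate \eqref{e.C1-lb-lemma}.} Rather than integrating this Riccati equation directly, we use the tangent vector $(u,v)=(x_\xi,y_\xi)$. Its variational system is $\dot u=F_xu+F_yv$, $\dot v=F^{(2)}_xu+F^{(2)}_yv$. Along the orbit this is an explicit linear system in $s$ with coefficients built from $x=x_0e^s$ and $y=y_0e^{-s}$. The key observation is that the homogeneous scaling of the vector field gives an exact solution. Since $x$ and $y$ grow and decay like $e^{\pm s}$, we expect $u\sim e^{s}\cdot(\text{weight})$ and $v$ to lie near the direction of the unstable axis. This gives $\lambda=v/u=-O(y)$ at the exit, with the sign fixed because the incoming curve crosses $x=0$ transversally while the axes are invariant. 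Concretely, we will show that the region $\{-Cy\le\lambda<0\}$ is forward invariant for the $\lambda$-equation near the exit, and that every orbit starting with $|\lambda|<\lambda^*$ enters it. Near the exit $x\gg y$, so the dominant terms are $-5x\lambda-3x\lambda^2-3y$, and this produces an attracting slope $\lambda\approx-3y/(5x)$.

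\textbf{Stretching \eqref{stretching} and projective contraction \eqref{projective-hyperbolicity}.} For stretching we estimate $|u|$ at exit. We integrate $d\log u/ds=F_x/(x+y)^3+F_y\lambda/(x+y)^3$. This equals $(x+y)^{-2}\bigl((x+y)+3x\bigr)$ up to bounded slope terms, so it is $\approx1$ while $y\gg x$ and $\approx4$ while $x\gg y$. The crossover occurs at $s_*\approx\tfrac12\log(y_0/x_0)$. This gives $|u_{\rm exit}|\sim x_0^{-1/2}\cdot x_0^{-2}=x_0^{-5/2}$ up to constants, which is the stated exponent. For angles, the determinant of the linearized passage map is governed by the divergence $F_x+F^{(2)}_y$. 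Writing $\mathrm{Jac}=\exp\int(\operatorname{div})\,dt$ and converting to $s$, we compute $\operatorname{div}/(x+y)^3=3(x-y)/(x+y)$. Integrating gives a Jacobian of order $x_0^{0}$ in the simplified system. Since the angle change equals the Jacobian divided by the product of the two stretchings, we obtain $\asymp x_0^{5}$.

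\textbf{Second derivative \eqref{e.C2-lb-lemma} and the main obstacle.} The $\kappa$-equation is affine in $\kappa$ with coefficient $-3(3x+2y+3x\lambda)(x+y)^{-1}\cdot(x+y)$ after rescaling. This coefficient is strongly contracting near the exit, so $\kappa$ is slaved to its forcing term. By the slope estimate the forcing is $-3(x+y)^{-2}[2y+6x\lambda+\dots]$. Evaluating at the quasi-steady slope $\lambda\approx-3y/(5x)$ leaves a net positive term of size $O(y)$ relative to the damping rate, which gives $0<\kappa_N<Cy_N$. We expect this step to be the hardest: we must verify the sign of this residual forcing after cancellation, and check that contributions from the transient phase (where $y\gtrsim x$ and $\kappa$ may be large of order $\kappa^*$) are damped by the factor $\exp(-\int 9\,ds)$ strongly enough. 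Finally, we pass from the flow to the fixed-period map $f^N$ by restricting to common physical times, and use that $U_{\rm out}$ has $x$ bounded away from zero.
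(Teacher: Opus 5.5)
Your route to \eqref{projective-hyperbolicity} differs from the paper's and is sound. Liouville's formula with $\operatorname{div}/(x+y)^3=3(x-y)/(x+y)=3(\log r)_s$ gives $|\det Df^N|\asymp1$. Then $\sin\angle(Dv',Dv'')=|\det Df^N|\sin\angle(v',v'')/(\|Dv'\|\,\|Dv''\|)$ for unit $v',v''$ turns \eqref{stretching} directly into the $x^{5}$ bound. The paper instead integrates the linearized Riccati equation for $\lambda_1-\lambda_2$, and uses Liouville only in Lemma~\ref{lem:local-cross}. Your remark that the model variational equation is exactly solvable is also correct: by autonomy and degree-four homogeneity, $X(z(t))$ and $z(t)+3t\,X(z(t))$ are solutions. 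You do not exploit this, however, and it is lost once the remainders are present. The rest of the proposal has genuine gaps.

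\textbf{Slopes.} A barrier $\lambda=-Cy$ with fixed $C$ is invariant only where $x+y$ is bounded below, i.e.\ in the last bounded stretch of the passage. The barriers have to be proportional to $\tau=y/x$. More importantly, nothing in your plan carries a bounded incoming slope through the crossover $\tau\sim1$. There the term $-3\lambda^2/(1+\tau)$ can drive $\lambda$ to $-\infty$, giving a vertical tangent and no graph.

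This control is also needed for \eqref{stretching}. With merely bounded slopes, $\int3\lambda/(1+\tau)\,ds$ over the outgoing time $\approx\frac12\log(1/x_0)$ is uncontrolled and shifts the exponent $5/2$. You need $|\lambda|\lesssim\tau$ from a bounded time after the crossover onward.

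The paper's key tool is a cone invariant for all $\tau$, namely $[-2h,-h/2]$ with $h=\tau/(1+\tau)$. It is entered in bounded time because $\lambda_s\approx-5\lambda-3$ when $\tau(0)$ is large. It is then refined to $-\frac65\tau\le\lambda\le-\frac45\tau$ once $\tau\le1/20$.

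Note also that the attracting slope is $\lambda\approx-\tau$, not $-\frac35\tau$. The latter is the instantaneous root, but the root moves since $\tau_s=-2\tau$. Writing $\lambda=-k\tau$ gives $k_s=3-3k+O(\tau)$, which is consistent with the slope $-y/x$ of $X$ in your own exact solution. The sign of the curvature forcing, proportional to $3k-1$, survives, but your constants do not.

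\textbf{Curvature and remainders.} In the incoming phase $\kappa$ is not of size $\kappa^*$. There $d\to6$, and at $\lambda=-3/5$ one has $B\to6(1+\lambda)(1+3\lambda)=-48/25$. So the forcing $-B/(x+y)\approx48/(25y)$ drives $\kappa\approx48/(175y)$, which is of order $1/h_*\asymp x_0^{-1/2}$ at the diagonal crossing.

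Damping with $d^g\ge8$ over the remaining time $\approx\frac12\log(1/x_0)$ still reduces this to $O(x_0^{7/2})\ll y_N\asymp x_0$, so your conclusion survives. But this bound has to be proved, as in Lemma~\ref{l.explicitupperbound-general}. The normalized variable $w=\kappa x^2/y$ then gives \eqref{e.C2-lb-lemma} cleanly, since in the outgoing cone it satisfies $w_s=-Dw+Q$ with $4\le D\le8$ and $4\le Q\le40$.

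Finally, Theorem~\ref{t.5} is stated for the normal form with $A,B\in O_\eta$, and deferring this case skips its main difficulty. Passage times are unbounded, so finite-time continuity is useless. Two ingredients are needed:
\begin{enumerate}
\item the bound $\int_0^Sr^p\,ds\le C_p\delta^p$ uniformly in $S$, which follows from monotonicity of $x$ and $y$;
\item the relative error structure $|E|\le Cr^\eta\bigl(\tau/(1+\tau)+|\lambda|+\lambda^2/(1+\tau)\bigr)$ in the slope equation, and $O(r^\eta\tau)$ in $B$.
\end{enumerate}
The second comes from the factors $y$ in $F^{(2)}_x$ and $x$ in $F_y$, i.e.\ from exact invariance of the axes. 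An absolute $O(\delta^\eta)$ error would exceed the cone width $\asymp\tau\asymp x_0$ near the exit. That would destroy the barrier arguments for \eqref{e.C1-lb-lemma} and \eqref{e.C2-lb-lemma}.
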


Denote $\Delta=\{(x,y):\ x=y, 0<x<\delta \}$ for small $\delta $ as above.

\begin{Theorem} \label{t.c2lyambdalemma} ($C^2$ behavior of the images
of the diagonal)
Assume also the parameter-uniform hypotheses in Section~\ref{sec:parameter-revised}.
For large enough $N$
the local passage component of $f_e^N(\Delta)\cap U$
is a graph of a function $y_N(x,e)$ such that
\begin{eqnarray} \label{e.shape}
\begin{aligned}
0<&\ \ \ \partial^2_x y_N(x,e)=\kappa_N(x) < C y_N(x,e), \\
0< & -\partial_x y_N(x,e)=-\lambda _N(x) < C y_N(x,e).
\end{aligned}
\end{eqnarray}

We also have
\begin{eqnarray} \label{e.derivine}
\left| \dfrac{d}{de} y_N(x,e) \right| \le C y_N(x,e).
\end{eqnarray}
\end{Theorem}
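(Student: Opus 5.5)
The plan is to follow the orbits of \eqref{2jet-deg-saddle-evolution} started on $\Delta$, using the orbit time $s$ with $ds=(x+y)^3dt$ from \eqref{sitnik-2}. I treat the simplified system $A=B=0$ first and then carry the weighted remainders as in Section~\ref{sec:revised-general}. For $A=B=0$ one has $\frac{d}{ds}(xy)=0$, so a point $(\sigma,\sigma)\in\Delta$ travels along the hyperbola $xy=\sigma^2$. It reaches $U_{\rm out}$, where $x$ lies in a fixed compact interval $[a,b]\subset(0,\delta]$, after $s$-time about $\log(x/\sigma)$, with outgoing height $y=\sigma^2/x$. The points of $\Delta$ landing in $U_{\rm out}$ after exactly $N$ periods form a short arc near $\sigma_N$. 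By the $t$-equation in \eqref{sitnik-2} the physical time is $\int ds/(2r^3)\asymp\sigma^{-3}$, so $\sigma_N\asymp N^{-1/3}$. Along this arc $dN/d\sigma$ is large, hence the arc maps onto the whole $x$-interval of $U_{\rm out}$. Every point of it has $y\asymp\sigma_N^2$, and $y_N(x,e)\asymp\sigma_N^2$ uniformly in $x\in[a,b]$. The graph property then follows from $x_s=x(1+O_\eta)>0$ together with the sign control on $\lambda$ below.

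For the slope I rewrite the third equation as
\[
\lambda_s=-\frac{3_\eta y+5_\eta(x+y)\lambda+3_\eta x\lambda^2}{x+y},
\]
starting from $\lambda=1$ on the diagonal. I will prove three facts in turn.
\begin{enumerate}
\item $\lambda$ decreases monotonically while it is positive. The numerator is positive there, and it is bounded below, so $\lambda$ becomes negative after an $s$-time of order one, while still near the diagonal.
\item The strip $-Ky/x\le\lambda\le0$ is forward invariant once $x\gg y$. On $\lambda=0$ we have $\lambda_s=-3y/(x+y)<0$. On $\lambda=-Ky/x$ the derivative of $\lambda+Ky/x$ is positive when $K$ is large; this is a comparison with the invariant slope of the stable foliation.
\item Since $y\asymp\sigma_N^2$ at exit, fact 2 gives $-\lambda_N\le Cy_N$.
\end{enumerate}
The remainders enter as $O(r^\eta)$ perturbations of the coefficients, and I will integrate them in $s$. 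Along the outgoing portion $r\approx x$ grows like $e^s$, and near the diagonal $r\asymp\sigma$ holds only for $s=O(1)$. Hence $\int r^\eta\,ds$ is bounded uniformly in $N$, and the comparison survives.

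The curvature equation is linear in $\kappa$. Its coefficient is $-3(3_\eta x+2_\eta y+3_\eta x\lambda)(x+y)^2$; after dividing by $(x+y)^3$ this becomes a damping rate close to $-3$ in the $s$-variable once $x\gg y$. The forcing term, once $\lambda=O(y/x)$, is $-3(x+y)^{-2}[2y+(6x+8y)\lambda+\dots]$. This reduces to $-3\cdot(2y+6x\lambda)/x^2+O(y^2)$, and its sign has to be computed using the sharp asymptotics of $\lambda$. I would first determine the limiting slope $\lambda\approx-y/x$ from the invariant relation $xy=\text{const}$. The forcing is then $\approx 6y/x^2>0$ after taking the overall signs into account. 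Variation of constants then gives $0<\kappa_N\le Cy_N$, with exponentially decaying memory of the initial value $\kappa=0$ and of the transient near the diagonal. I expect this sign bookkeeping to hold with $O(r^\eta)$ errors, which again integrate to something small.

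For \eqref{e.derivine}, I differentiate the flow with respect to $e$. The variational equations for $(\partial_ex,\partial_ey)$ are forced only through $\partial_eA,\partial_eB\in O_\eta$. Their relative effect on $\log y$ and $\log x$ is bounded by $\int r^\eta\,ds$, which is bounded uniformly in $N$ by the argument above. The derivative of the exit point at fixed $N$ therefore satisfies $|\partial_e y|\le Cy$ and $|\partial_e x|\le C$. To convert this to $y_N(x,e)$ at fixed outgoing $x$, I write $\frac{d}{de}y_N=\partial_e y-\lambda_N\partial_e x$ and use $|\lambda_N|\le Cy_N$. The initial diagonal point and the number $N$ are held fixed while $e$ varies; the free parameter along the diagonal absorbs the change of exit abscissa.

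I expect the main obstacle to be the uniformity in $N$ of all remainder integrals. The weighted bounds on $A_e,B_e$ must control $\int O(r^\eta)\,ds$, and its analogue for the mixed $e$-derivatives, including the initial segment where the orbit sits near the corner at scale $\sigma_N\to0$. It is precisely there that the parameter-uniform hypotheses of Section~\ref{sec:parameter-revised} will be invoked.
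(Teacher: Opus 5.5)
Your treatment of \eqref{e.derivine} has a genuine gap. The bound ``relative effect of $\partial_eA,\partial_eB$ on $\log x,\log y$ is at most $\int r^\eta\,ds$'' holds for the derivative at fixed \emph{rescaled} time $s$. Fixing $N$, however, fixes the physical time $NP$, and $t_s\asymp r^{-3}$ depends on $e$. Passing to fixed physical time adds a displacement along the vector field with coefficient roughly $\int A_e\,dt=\int A_e r^{-3}\,ds$. For $\eta<3$ this integral is dominated by the segment near the corner where $r\asymp\sigma_N$, and it is of order $\sigma_N^{\eta-3}$.

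Concretely, take $F=(1+eA)F_0$ with $A=(x+y)^\eta$, which satisfies the hypotheses. At $e=0$ one has exactly $\partial_eX(NP)=F_0(X(NP))\int_0^{NP}A\,dt$. So at the exit, $|\partial_ex|\asymp\sigma_N^{\eta-3}$ and $|\partial_ey|/y\asymp\sigma_N^{\eta-3}$. For $\eta<3$ your bounds $|\partial_ex|\le C$ and $|\partial_ey|\le Cy$ are therefore false. This matters: the theorem is stated for every $\eta>0$ and is used for the RPC3BP with $\eta=2$.

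The conclusion survives only through a cancellation in $\partial_ey-\lambda_N\partial_ex$. The vector field at the exit is almost tangent to the image curve (in the model case $1+\lambda_N/\tau=O(\sigma_N^3)$), so the huge time-shift component is nearly annihilated. Estimating the two terms separately cannot see this. For $\eta>3$, as in Sitnikov, your separate bounds do hold, but because $\int r^{\eta-3}ds$ is bounded, not for the reason you give. Likewise, the corner segment you flag as the main obstacle is harmless in $s$-time.

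The paper avoids the issue by differentiating the graph at fixed $x$ and fixed $t$. Then $V=\partial_eY$ solves \eqref{sitnik-e-variation}, and $W=V/Y$ along a characteristic satisfies $W_s=KW+L$, with $L=-B_e-(\lambda/\tau)A_e=O(r^\eta)$ and $K=-3(y+x\lambda)/r+O(r^\eta)(1+|\lambda|)$. The diagonal cone $|\lambda|\le2\tau$ and $\int\tau\,ds\le1$ make $\int|K|\,ds$ bounded. Finally $W(0)=0$ because $\Delta$ does not depend on $e$.

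The curvature step rests on a miscomputed constant that sits exactly at the critical threshold. The damping coefficient is $-3(3x+2y+3x\lambda)/(x+y)\to-9$, not $-3$; in the outgoing cone the paper has $8\le d^g\le10$. The forcing is about $12y/x^2$. Along an orbit $y/x^2\propto e^{-3s}$, so with damping rate $3$ the equation would be resonant. Variation of constants would then give $\kappa_N\asymp y_N\log N$. The memory term $e^{-3S}\kappa(s_1)$, with $|\kappa(s_1)|\lesssim\sigma_N^{-1}$ after the diagonal transient, would be of size $y_N$ with no definite sign. Neither inequality in \eqref{e.shape} would follow.

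The argument works only because of the margin $d-3\approx6$. The paper's substitution $w=\kappa x^2/y$ isolates exactly this margin: it gives $w_s=-Dw+Q$ with $D=d^g-2a-b\in[4,8]$ and $Q\in[4,40]$. In addition, the sign of the forcing requires $k=-\lambda/\tau>1/3$ with a margin, and your strip $-K\tau\le\lambda\le0$ does not provide this. Conservation of $xy$ along individual orbits does not by itself fix the slope of the image curve, whose points lie on different hyperbolas. The paper proves $k\in[4/5,6/5]$ after a bounded time, using the attracting $k$-equation in Lemma~\ref{l.absorbgen}.

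A smaller point: your barrier $\lambda=-K\tau$ is invariant only while $K\tau<1$. So $K$ must satisfy $1<K<1/\tau$ at the zero crossing, rather than being taken large.
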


\begin{Remark}
The local spatial estimates are used in the periodic-saddle
construction in Section~\ref{ss.saddlesnearhomoclinicpoint} and in
the transverse-connection argument in Section~\ref{ss:sparse-excursions}.
The diagonal estimates and the parameter bound give the first-contact
construction in Section~\ref{ss.existenceoftangencies}. Parameter
control for incoming curves is established separately in
Lemma~\ref{lem:incoming-parameter}.
\end{Remark}

\begin{Remark} \label{ss.comparision}

It is interesting to compare the dynamics near the degenerate saddle $O_{\infty}$ that
we study in this section, and the well known dynamical properties of a linear hyperbolic
saddle. This comparison is not directly needed for the proof, but highlights
the difficulties of the local analysis. On the restricted positive
passage sector, both models have an incoming stable edge and an
outgoing unstable edge. This sector picture must be distinguished
from the dynamics on an open neighborhood in the smooth McGehee
extension. In that extension the line $q=0$ consists of fixed points,
so no local conjugacy with an isolated hyperbolic saddle can exist
on a full neighborhood. Results on degenerate Hartman theorems such
as \cite{Mor} require their own hypotheses and an appropriate
choice of domain. Our proof uses the explicit passage estimates,
not a conjugacy at the boundary. In the smooth category the passage
dynamics have the following differences.

1. Expansion rates of the maps along the saddle are different.

2. The classical hyperbolic inclination lemma does not apply at the
degenerate point. Theorems~\ref{t.5} and~\ref{t.c2lyambdalemma}
provide the restricted inclination estimates used here, on outgoing
arcs away from the vertex.

3. Transition times are certainly very much different.

4. Corresponding dynamics in 1- and 2-jets is essentially non-linear and is quite non-trivial in the case of the degenerate saddle. In the case of a linear hyperbolic saddle the dynamics in 2-jets is linear and hyperbolic, and is easy to describe.

5. For a non-linear hyperbolic saddle normal forms (smooth and in some cases analytic) or even linearizing coordinates that essentially simplify the picture are available. In the case of the degenerate saddle even a model case (see Section \ref{s.simplified1}) is highly non-trivial if one is interested in $C^2$ or even $C^1$ dynamical properties.
\end{Remark}

\section{Evolution of curves in the simplified system}\label{s.simplified1}

In order to study the behavior of solutions of the system
(\ref{2jet-deg-saddle-evolution}) we will first neglect some
higher order terms and obtain the required properties for
the simplified system. After that we show that the neglected
terms do not change the obtained results.

After we rescale the time by $ds=(x+y)^3\,dt$ and neglect some
higher order terms  in the system (\ref{2jet-deg-saddle-evolution}),
we get the following {\it simplified model}:
\begin{eqnarray}   \label{rescaled-simplified-2jet-deg-saddle-evolution}
\left\{
  \begin{array}{ll}
   \dfrac{dx}{ds}   =   x\\
   \dfrac{dy}{ds}   = - y\\
   \dfrac{d\lambda }{ds} = \dfrac{-3y}{x+y}-5\lambda -\dfrac{3x}{x+y}\lambda^2 \\
   \dfrac{d\kappa}{ds} = \dfrac{-3}{(x+y)^2}\left[
   2y+(6x+8y)\lambda+(8x+6y)\lambda^2 +2x\lambda^3\right.
\\ \qquad\qquad\left.{}+(x+y)(3x+2y+3x\lambda )\kappa\right]
  \end{array}
\right.
\end{eqnarray}
The initial bounded transient is handled separately in
Lemma~\ref{l.explicitupperbound-general}; comparison arguments requiring
$d\ge0$ start after entry into the slope cone. The normalization
$x(s)=e^{s-2T_0}$, $y(s)=e^{-s}$ in the model computations is a convenient
choice of fixed incoming and outgoing scales; the uniform estimates for
actual strips are proved in Section~\ref{sec:revised-general}.

In this section we prove analogs of Theorems \ref{t.5}
and \ref{t.c2lyambdalemma} for the dynamics defined by
the simplified system (\ref{rescaled-simplified-2jet-deg-saddle-evolution}).

\subsection{Dynamics of 1-jets for the simplified system}

First of all let us understand the asymptotic behavior
of $\lambda$ under the dynamics defined by
(\ref{rescaled-simplified-2jet-deg-saddle-evolution}). We need to consider only the first three equations of the system:
\begin{eqnarray}   \label{e.simplelyambda}
\left\{
  \begin{array}{ll}
   \dfrac{dx}{ds}   =   x\\
   \dfrac{dy}{ds}   = - y\\
   \dfrac{d\lambda }{ds} = \dfrac{-3y}{x+y}-5\lambda -\dfrac{3x}{x+y}\lambda^2
  \end{array}
\right.
\end{eqnarray}

The right hand side of the equation
\[
\adjustbox{max width=\linewidth}{$\displaystyle
\dfrac{d\lambda }{ds} = \dfrac{-3y}{x+y}-5\lambda -\dfrac{3x}{x+y}\lambda^2
$}
\]
is a quadratic polynomial in $\lambda$, and can be rewritten as
\begin{eqnarray} \label{e.difflyambda}
\dfrac{d\lambda }{ds}=-\frac{3x}{x+y}\, (\lambda -\lambda _0(x,y))\,(\lambda -\lambda _1(x,y)).
\end{eqnarray}

\begin{Definition} \label{absorbing-definition} A family of intervals $[\lambda _-(x,y),\lambda _+(x,y)]$
is called {\rm a family of absorbing intervals} or simply
{\rm absorbing intervals} if any solution $(x(s), y(s), \lambda (s))$ of
the system (\ref{e.simplelyambda}) satisfies the following
conditions:
\begin{eqnarray}  \label{absorbing-condition}
\begin{aligned}
\textup{  If } \ & \lambda (s)=\lambda _-(x(s), y(s)), \ \ \ \ \ \text{ then } \ \ \ \ \
\dfrac{d\lambda }{ds}(s)>\dfrac{d}{ds}\lambda _-(x(s), y(s)),\ \ \text{ and }
\\
\textup{if }\  & \lambda (s)=\lambda _+(x(s), y(s)), \ \ \ \ \ \text{ then } \ \
\ \ \ \dfrac{d\lambda }{ds}(s)<\dfrac{d}{ds}\lambda _+(x(s), y(s)).
\end{aligned}
\end{eqnarray}
\end{Definition}
Notice that if for some $s_0$ the value $\lambda (s_0)$ gets into
an absorbing interval then it will stay there for all $s>s_0$.

\begin{figure}[htbp]
\begin{center}
\includegraphics[width= 2.8in,angle=0]{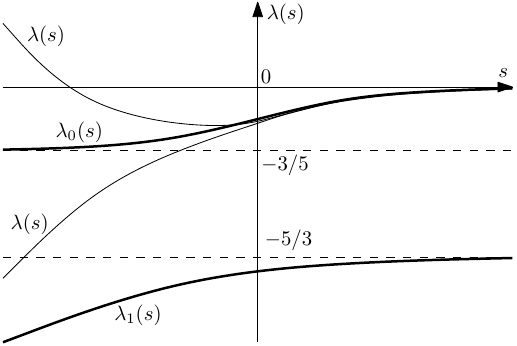}
\caption{Dynamics of slopes $\lambda (s)$ and $\lambda _{0,1}(x(s), y(s))$.}
\label{fig:slopes}\label{lb-graph}
\end{center}
\end{figure}

In order to provide the explicit expressions for $\lambda _0(x,y)$ and $\lambda _1(x,y)$ it is convenient to introduce an intermediate variable
$\tau:=\frac{y}{x}$. Also, denote
\begin{eqnarray} \label{e.p}
P(\tau)=(1+\tau)^2-1.44\tau.
\end{eqnarray}

 Then we have
\begin{eqnarray}  \label{small-tau-new}
\dfrac{d\lambda }{ds}= - \dfrac{3\lambda ^2}{1+\tau} -5 \lambda  - \dfrac{3\tau}{1+\tau}
= - \dfrac{3 }{1+\tau}\left(\lambda -\lambda _1(\tau) \right)\left(\lambda -\lambda _0(\tau) \right),
\end{eqnarray}
where
\begin{eqnarray}  \label{lb-roots}
\lambda _{0,1}(\tau)=
-\dfrac 56 (1+\tau)\pm \dfrac 56 \sqrt{(1+\tau)^2- 1.44 \tau}=
-\dfrac 56 (1+\tau)\pm \dfrac 56 \sqrt{P(\tau)}.
\end{eqnarray}

In the next statement we establish the existence of an absorbing
interval whose size tends to zero as $y\to 0$.

\begin{Lemma} \label{l.absorb}
The family of intervals
\[
\adjustbox{max width=\linewidth}{$\displaystyle
[\lambda _-(x,y), \lambda _+(x,y)]:=\left[2\lambda _0(\tau),\frac{9}{10}\lambda _0(\tau)\right]
$}
\]
is an absorbing family.
\end{Lemma}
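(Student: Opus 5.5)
Since $\tau=y/x$ and $dx/ds=x$, $dy/ds=-y$ in \eqref{e.simplelyambda}, we have $d\tau/ds=-2\tau$. Every quantity in Definition~\ref{absorbing-definition} is then a function of $\tau\in(0,\infty)$ alone. So the two absorbing conditions reduce to two scalar inequalities in $\tau$:
\[
 R(c\lambda_0(\tau),\tau)+2c\,\tau\,\lambda_0'(\tau)\ \gtrless\ 0,\qquad c=2\ (>),\ \ c=\tfrac{9}{10}\ (<),
\]
where $R(\lambda,\tau)$ is the right-hand side of \eqref{small-tau-new}. I would prove these for every $\tau>0$.

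\emph{Step 1: sign structure.} Set $\sigma(\tau)=\sqrt{P(\tau)}/(1+\tau)$. Since $\tau/(1+\tau)^2\le 1/4$, we have $\sigma^2=1-1.44\,\tau/(1+\tau)^2\ge 0.64$, so $\sigma\in[0.8,1)$. In the notation of \eqref{lb-roots} this gives
\[
 \lambda_0=-\tfrac56(1+\tau)(1-\sigma)=-\frac{1.2\,\tau}{(1+\tau)(1+\sigma)}<0,\qquad \lambda_1=-\tfrac56(1+\tau)(1+\sigma).
\]
The inequality $2\lambda_0>\lambda_1$ is equivalent to $3\sigma>1$, which holds. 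Hence $\lambda_1<2\lambda_0<\lambda_0<\tfrac9{10}\lambda_0<0$. By \eqref{small-tau-new}, $R(2\lambda_0,\tau)>0$ and $R(\tfrac9{10}\lambda_0,\tau)<0$. This shows that the vector field points inward in the frozen-$\tau$ sense. What remains is to show that this inward push beats the drift $-2c\tau\lambda_0'(\tau)$ of the endpoints.

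\emph{Step 2: explicit comparison.} Substitute $\lambda=c\lambda_0$ into \eqref{small-tau-new} to get
\[
 R(c\lambda_0,\tau)=\frac{3}{1+\tau}\,(1-c)\,\lambda_0\,(c\lambda_0-\lambda_1).
\]
The factor $c\lambda_0-\lambda_1$ is bounded below by a positive multiple of $(1+\tau)$ for both values of $c$. Hence $|R|\ge k\,|1-c|\,|\lambda_0|$ with an explicit constant $k$: roughly $3\cdot\tfrac56\cdot$(a lower bound for $(1+\sigma)-c(1-\sigma)$), which is about $3$ for $c=0.9$ and about $2.4$ for $c=2$. Next I would differentiate the closed form $\lambda_0=-1.2\tau/((1+\tau)(1+\sigma))$ and show $\tau\lambda_0'(\tau)$ is a small multiple of $|\lambda_0|$:
\[
 \tau\lambda_0'=\lambda_0\Bigl(\frac{1}{1+\tau}-\frac{\tau\sigma'}{1+\sigma}\Bigr).
\]
Here $\tau\sigma'$ is computed from $\sigma^2=1-1.44\tau/(1+\tau)^2$ and is bounded by an explicit constant times $0.36$. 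The two inequalities therefore reduce to
\[
 3(1-c)\bigl((1+\sigma)-c(1-\sigma)\bigr)\cdot\tfrac56\ \ \text{vs.}\ \ 2c\Bigl(\frac{1}{1+\tau}-\frac{\tau\sigma'}{1+\sigma}\Bigr),
\]
with the appropriate sign after dividing by $\lambda_0<0$. These are elementary inequalities in $\tau$, which I would check on $(0,\infty)$.

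\emph{Main obstacle.} The delicate case is $c=\tfrac9{10}$ as $\tau\to0$. There the inward push has the small coefficient $1-c=0.1$: it is about $3\cdot0.1\cdot\tfrac56\cdot(2-0.9\cdot0)\,|\lambda_0|\approx0.5|\lambda_0|$, while $\lambda_0\approx-0.6\tau$ so that $2c\tau\lambda_0'\approx1.8\lambda_0$. At first sight the sign looks wrong, so the careful step is to get the orientation right. For $c=0.9$ we need $R<\tfrac{d}{ds}(0.9\lambda_0)=-1.8\tau\lambda_0'$. Since $\lambda_0'<0$ near $0$, the right-hand side is positive and the inequality holds with room to spare. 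The binding case is actually $c=2$: there we need $R>-4\tau\lambda_0'>0$. Near $\tau=0$ this reads $-\tfrac{3}{1+\tau}(\lambda_0)(2\lambda_0-\lambda_1)\approx 3\cdot0.6\tau\cdot\tfrac53=3\tau$ against $4\cdot0.6\tau=2.4\tau$, so it holds with a thin margin.

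I therefore expect the real work to be a uniform verification of the $c=2$ inequality over all $\tau$, including $\tau\to\infty$ where $\sigma\to1$ and $\lambda_0\to-0.6$. I would do this by clearing denominators into a polynomial-in-$(\tau,\sigma)$ inequality, using $\sigma^2$ to eliminate even powers, and checking signs. Interval subdivision with monotonicity bounds is the fallback if the algebra does not close.
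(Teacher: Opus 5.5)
This is essentially the paper's argument. The upper endpoint is settled by signs alone: the field is negative at $\tfrac9{10}\lambda_0$, while $\tfrac{d}{ds}\lambda_0=-H\lambda_0>0$ with $H=1+(2z-1)/\sigma>0$ and $z=(1+\tau)^{-1}$. This holds for every $\tau$, not just near $0$ as you state. The lower endpoint reduces, exactly as in the paper, to $\tfrac52(3\sigma-1)>2H$.

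The part you leave as a plan is closed in the paper as follows: multiply by $\sigma$, eliminate $\sigma^2$, and use $\sigma\le1-\tfrac{18}{25}z(1-z)$ to reach $5-\tfrac{289}{25}z+\tfrac{189}{25}z^2>0$, a quadratic with negative discriminant. Note that separate uniform bounds of the kind sketched in your Step~2 cannot work. Since $\tau\lambda_0'=\tfrac H2\lambda_0$ with $H\to2$ as $\tau\to0$, $\tau\lambda_0'$ is not a small multiple of $\lambda_0$ there. The push coefficient $\tfrac52(3\sigma-1)$ has infimum $3.5$, while the drift coefficient $2H$ approaches $4$. So the joint $\tau$-dependence must be kept, as in your final reduction.
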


\begin{Lemma}  \label{lem:absorb} ($\lambda $-absorbing interval) For any
$\lambda ^*>0$ there is $s_0=s_0(\lambda ^*)>0$  such that for any
$C^1$-smooth curve $\overline \gamma =
\{(x,y(x),\lambda (x)=y'(x)),\ x\in [0,\delta ]\}, \gamma \subset U_{\rm in}$
with small $\delta >0$ and $\max |\lambda (x)|<\lambda ^*$
  assume the initial ratio $y/x$ is sufficiently large depending on the slope bound; we have the following. Let $\{(x(s),y(s),\lambda (s))\}$ be the image of some point $\{(x,y,\lambda )\}\in \overline \gamma $ under the flow (\ref{e.simplelyambda}). Then for any $s>s_0$
\[
\adjustbox{max width=\linewidth}{$\displaystyle
\lambda (s)\in\ \left[2\lambda _0(\tau(s)),\frac{9}{10}\lambda _0(\tau(s))\right]=:
[\lambda _-(x(s), y(s)),\lambda _+(x(s), y(s))]
$}
\]
\end{Lemma}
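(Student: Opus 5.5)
The strategy is to reduce the statement to Lemma~\ref{l.absorb}. Since the family $[2\lambda_0(\tau),\frac{9}{10}\lambda_0(\tau)]$ is absorbing, it is enough to show that every solution starting on $\overline\gamma$ enters this interval before a time $s_0$ depending only on $\lambda^*$. Invariance for all $s>s_0$ then follows from the absorbing property. The dynamics of $\tau=y/x$ is explicit: from \eqref{e.simplelyambda}, $d\tau/ds=-2\tau$, so $\tau(s)=\tau(0)e^{-2s}$. The hypothesis that the initial ratio is large, i.e. $\tau(0)\ge T(\lambda^*)$, is what we will exploit. This is natural on $U_{\rm in}$, where $y$ lies in a fixed compact interval and $x$ is small.

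First I would record the asymptotics of the two roots \eqref{lb-roots} as $\tau\to\infty$:
\[
\lambda_0(\tau)=-\tfrac{5}{6}\cdot\tfrac{0.72\,\tau}{(1+\tau)}\bigl(1+O(\tau^{-1})\bigr)\to-0.6,
\qquad
\lambda_1(\tau)=-\tfrac53(1+\tau)+O(1).
\]
Since $d\lambda/ds$ is positive for $\lambda\in(\lambda_1,\lambda_0)$ and negative for $\lambda>\lambda_0$, the root $\lambda_0$ is the attracting one. The only danger is a solution lying below the repelling root $\lambda_1$, which escapes to $-\infty$.

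\emph{Phase 1 (transient while $\tau$ is large).} Restrict to times $s\in[0,s_1]$ with $s_1=s_1(\lambda^*)$ chosen below, and require $T(\lambda^*)\,e^{-2s_1}\ge K\lambda^*$ for a large constant $K$. Then on $[0,s_1]$ we have $\tau(s)\ge K\lambda^*$. The equation \eqref{small-tau-new} becomes
\[
\frac{d\lambda}{ds}=-3-5\lambda+\epsilon(s),
\qquad |\epsilon|\le \frac{3\lambda^2+3}{1+\tau}\lesssim \frac{\lambda^2+1}{K\lambda^*}.
\]
This holds as long as $|\lambda|\le 2\lambda^*$, and by the same estimate $|\lambda|$ cannot reach $2\lambda^*$ on this window, since the dominant linear term pushes $\lambda$ toward $-0.6$. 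In particular $\lambda_1(\tau)\le-\frac53 K\lambda^*<-2\lambda^*$, so the repelling root stays out of reach. A linear comparison (Gr\"onwall) gives
\[
|\lambda(s)+0.6|\le e^{-5s}(\lambda^*+1)+O(1/K).
\]
Choosing $s_1\approx\frac15\log(C\lambda^*)$ and $K$ large puts $\lambda(s_1)$ within a small fixed distance of $-0.6$. Meanwhile $\lambda_0(\tau(s_1))$ is also close to $-0.6$, because $\tau(s_1)$ is large. Hence $\lambda(s_1)$ lies strictly inside $[2\lambda_0,0.9\lambda_0]\approx[-1.2,-0.54]$. Set $s_0=s_1$; this depends only on $\lambda^*$, while $T(\lambda^*)$ absorbs the factor $e^{2s_1}$.

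\emph{Phase 2.} For $s\ge s_0$, Lemma~\ref{l.absorb} applies directly: a solution sitting on the boundary $\lambda_\pm(x(s),y(s))$ is pushed strictly inward, so it never leaves the interval. This gives the claim for all $s>s_0$.

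The main obstacle is the uniformity in Phase 1. One has to check that the quadratic term $3\lambda^2/(1+\tau)$ and the $\tau$-dependence of $\lambda_0$ are simultaneously small on the entire window $[0,s_1]$, so that no solution starting with $|\lambda|<\lambda^*$ crosses below $\lambda_1$ or overshoots. This is exactly where the largeness of the initial ratio, depending on $\lambda^*$, is used. I would try first to handle it with two constant barriers, $\lambda=-2\lambda^*$ and $\lambda=2\lambda^*$: at these values the sign of $d\lambda/ds$ points inward whenever $\tau\ge K\lambda^*$. After that, the explicit linear comparison above takes over.
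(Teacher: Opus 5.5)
Your proposal is correct and follows the paper's own argument. For large $\tau$ the slope equation is a small perturbation of $d\lambda/ds=-5(\lambda+0.6)$, so $\lambda$ reaches a small neighborhood of $-0.6$ in a time depending only on $\lambda^*$; this neighborhood is interior to $[2\lambda_0(\tau),\frac{9}{10}\lambda_0(\tau)]$ once $\tau$ is large, and Lemma~\ref{l.absorb} then keeps $\lambda$ there. Your barriers, which keep $\lambda$ above the repelling root $\lambda_1$, and your Gr\"onwall estimate simply make the paper's ``$\simeq$'' quantitative.

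Two bookkeeping adjustments are needed. First, assume $\lambda^*\ge1$ without loss of generality. Otherwise the lower barrier $-2\lambda^*$ can lie above the attracting root $\lambda_0\approx-0.6$, where $d\lambda/ds<0$, so it is not inward-pointing. Second, with $|\lambda|\le2\lambda^*$ your bound on $\epsilon$ is only $O(\lambda^*/K)$, not $O(1/K)$. So $K$, and hence the threshold on $\tau(0)$, should be allowed to depend on $\lambda^*$, which the statement permits.
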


\begin{Lemma} \label{lem:absorbdiagonal} ($\lambda $-absorbing interval for the diagonal)
There are $s_0>0$ and small $\delta >0$ such that for any point $(x,y,\lambda ), x=y, \lambda =1,$ for any $s>s_0$ the image $\{(x(s),y(s),\lambda (s))\}$  under the flow (\ref{e.simplelyambda}) is such that
\[
\adjustbox{max width=\linewidth}{$\displaystyle
\lambda (s)\in\ \left[2\lambda _0(\tau(s)),\frac{9}{10}\lambda _0(\tau(s))\right]=:
[\lambda _-(x(s), y(s)),\lambda _+(x(s), y(s))]
$}
\]
\end{Lemma}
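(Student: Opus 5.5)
The plan rests on one observation: the system \eqref{e.simplelyambda} is homogeneous in $(x,y)$. Its $\lambda$-equation \eqref{small-tau-new} depends on $(x,y)$ only through $\tau=y/x$, and along solutions $\tau(s)=\tau(0)e^{-2s}$. For a diagonal initial point we have $\tau(0)=1$ and $\lambda(0)=1$, so every such initial point produces the \emph{same} function $\lambda(s)$, whatever the value of $x=y\in(0,\delta)$. It therefore suffices to study the single scalar nonautonomous ODE
\[
\frac{d\lambda}{ds}=-\frac{3}{1+\tau}(\lambda-\lambda_1(\tau))(\lambda-\lambda_0(\tau)),\qquad \tau=e^{-2s},\ \lambda(0)=1.
\]
For this ODE I would show that there is a finite time $s_0$ with $\lambda(s_0)\in[2\lambda_0,\tfrac9{10}\lambda_0]$. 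Lemma~\ref{l.absorb} then keeps $\lambda(s)$ in the absorbing interval for all $s>s_0$. The uniformity of $s_0$ is automatic, and $\delta$ only has to be small enough that the orbit stays in the local square.

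The first step records the needed facts about the roots \eqref{lb-roots} for $\tau\in(0,1]$. Since $P(\tau)<(1+\tau)^2$ there, we get $\lambda_1(\tau)<\lambda_0(\tau)<0$. At the endpoints, $\lambda_0(1)=-1/3$ and $\lambda_1(1)=-3$ (because $P(1)=2.56$). As $\tau\to0$, $\lambda_0(\tau)=-0.6\tau+O(\tau^2)$ and $\lambda_1\to-5/3$. Moreover $|\lambda_0(\tau)|\asymp\tau$ uniformly on $(0,1]$. In particular $0.9\lambda_0>\lambda_0$, so the region $\lambda\ge0.9\lambda_0$ lies strictly above the larger root. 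On that region $d\lambda/ds<0$, and $\lambda(s)\ge\lambda(0)$ never happens later.

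The main step, which I expect to be the only real obstacle, is to show that $\lambda$ actually reaches the upper barrier $0.9\lambda_0(\tau(s))$ in finite time. A priori it might only approach it asymptotically, since $\lambda_0\to0$ as well. Suppose $\lambda(s)\ge0.9\lambda_0$ on $[0,S]$. There $\lambda-\lambda_1\ge0.9\lambda_0-\lambda_1$, and I expect to verify that
\[
c:=\inf_{\tau\in(0,1]}\frac{3}{1+\tau}\bigl(0.9\lambda_0(\tau)-\lambda_1(\tau)\bigr)>2 .
\]
At $\tau=1$ this quantity is $\tfrac32\cdot2.7>4$, and as $\tau\to0$ it tends to $5$; a direct check of the explicit formulas covers the intermediate range. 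This gives
\[
\frac{d}{ds}(\lambda-\lambda_0)\le -c(\lambda-\lambda_0)-\frac{d\lambda_0}{ds}\le -c(\lambda-\lambda_0),
\]
because $\lambda_0(\tau(s))$ increases in $s$. Hence $\lambda-\lambda_0\le(1-\lambda_0(1))e^{-cs}$. On the other hand, staying above the barrier requires $\lambda-\lambda_0\ge0.1|\lambda_0(\tau(s))|\ge c'e^{-2s}$. Since $c>2$, the two bounds contradict each other for $S$ larger than an explicit $s_0$. So $\lambda$ hits $0.9\lambda_0$ at some time $s_1\le s_0$. At that moment the absorbing inequality \eqref{absorbing-condition} at the upper endpoint pushes $\lambda$ strictly inside the interval.

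In the last step, Lemma~\ref{l.absorb} (invariance of the absorbing family) gives $\lambda(s)\in[2\lambda_0(\tau(s)),\tfrac9{10}\lambda_0(\tau(s))]$ for every $s>s_1$, and hence for all $s>s_0$. This is the claim of Lemma~\ref{lem:absorbdiagonal}, with $s_0$ independent of the diagonal point.
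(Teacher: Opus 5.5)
Your proposal is correct and is essentially the paper's proof. Both track $d=\lambda-\lambda_0(\tau(s))$ along $\tau=e^{-2s}$, use $d\lambda_0/ds>0$ to get $d_s\le -c\,d$ with $c>2$, contradict this with the lower bound $d\ge 0.1|\lambda_0|\ge e^{-2s}/30$ that holds until first entrance, and then invoke Lemma~\ref{l.absorb}. The only cosmetic difference is that the paper bounds $\lambda-\lambda_1$ below directly by $\lambda_0-\lambda_1=\frac53\sqrt{P(\tau)}\ge\frac53$, giving $c=5/2$. That same bound, together with $0.9\lambda_0-\lambda_1\ge\lambda_0-\lambda_1$ and $3/(1+\tau)\ge 3/2$, is exactly what completes your ``direct check'' over the intermediate range of $\tau$.
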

\begin{Remark}
Lemma \ref{lem:absorb} is needed to prove   Theorem \ref{t.5}, and Lemma \ref{lem:absorbdiagonal} is needed to prove (\ref{e.shape})   in Theorem \ref{t.c2lyambdalemma}.
\end{Remark}

Before we start the proof of Lemma \ref{lem:absorb} and Lemma \ref{lem:absorbdiagonal} we need to
obtain some details on behavior of the function $\lambda _0(x,y)$.

\begin{Proposition} \label{p.tau-and-lambda-not}
The following properties hold:
\

\begin{enumerate}[(i)]
\item
For any $\tau>0$ we have $0.8\le \frac{\sqrt{P(\tau)}}{1+\tau}<1$;
\item
$\lim_{\tau\to +\infty}\lambda_0(\tau)=-0.6$;
\item
$\lim_{\tau\to 0}\frac{\lambda_0(\tau)}{\tau}=-0.6$;
\item
$\frac{d}{ds}\lambda_0(s)=-\frac{\sqrt{P(\tau)}-\tau+1}{\sqrt{P(\tau)}}\lambda_0(s)$;
\item
$-\frac{1.28\lambda_0(s)}{\sqrt{P(\tau)}}<
\frac{d}{ds}\lambda_0(s) < -\frac{2\lambda_0(s)}{\sqrt{P(\tau)}}$;
\item
$\lim_{\tau\to 0}\frac{\frac{d}{ds}\lambda_0}{\lambda_0}=-2$.
\end{enumerate}
\end{Proposition}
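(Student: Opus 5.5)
The plan is to work with the single variable $\tau=y/x$ and an algebraically simplified form of $\lambda_0$. Along solutions of \eqref{e.simplelyambda} we have $x_s=x$ and $y_s=-y$, so $\frac{d\tau}{ds}=-2\tau$. Consequently every $s$-derivative of $\lambda_0$ equals $-2\tau\,\lambda_0'(\tau)$. Rationalizing \eqref{lb-roots} gives
\[
 \lambda_0(\tau)=\tfrac56\bigl(\sqrt{P(\tau)}-(1+\tau)\bigr)
 =\tfrac56\,\frac{P(\tau)-(1+\tau)^2}{\sqrt{P(\tau)}+1+\tau}
 =-\frac{1.2\,\tau}{\sqrt{P(\tau)}+1+\tau}.
\]
In particular $\lambda_0(\tau)<0$ for $\tau>0$. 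I will also use the expansion $P(\tau)=1+0.56\tau+\tau^2$.

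For (i), write $P/(1+\tau)^2=1-1.44\,\tau/(1+\tau)^2$. The function $\tau/(1+\tau)^2$ attains its maximum $1/4$ at $\tau=1$, so the ratio lies in $[0.64,1)$ for $\tau>0$. Taking square roots gives (i). For (ii) and (iii), use the rationalized formula. As $\tau\to\infty$ we have $\sqrt P\sim\tau$, so the denominator is asymptotic to $2\tau$ and $\lambda_0\to-0.6$. As $\tau\to0$ the denominator tends to $2$, so $\lambda_0/\tau\to-0.6$.

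For (iv), I will compute $\lambda_0'(\tau)=\frac56\bigl(\frac{2(1+\tau)-1.44}{2\sqrt P}-1\bigr)$, which gives
\[
\frac{d\lambda_0}{ds}=-\frac{5}{3}\,\tau\,\frac{\tau+0.28-\sqrt P}{\sqrt P}.
\]
I will compare this with the claimed right-hand side, $-\frac56\,(\sqrt P+1-\tau)(\sqrt P-1-\tau)/\sqrt P$. Expanding the product gives $P-2\tau\sqrt P-1+\tau^2=2\tau(\tau+0.28-\sqrt P)$, using $P=1+0.56\tau+\tau^2$. The two expressions therefore coincide.

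For (v), note that $-\lambda_0>0$, so by (iv) it suffices to prove $1.28<\sqrt P-\tau+1<2$, that is, $\tau+0.28<\sqrt P<\tau+1$. This follows from two identities: $P-(\tau+0.28)^2=1-0.0784=0.9216>0$ and $(\tau+1)^2-P=1.44\tau>0$. For (vi), divide (iv) by $\lambda_0$ and let $\tau\to0$. Since $P\to1$, the ratio tends to $-(1+1)/1=-2$.

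The only step needing care is the algebraic identity in (iv), where signs and the constant $0.28$ must match exactly. Everything else reduces to elementary inequalities once $P$ is expanded.
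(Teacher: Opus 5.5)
Your proof is correct and follows essentially the same elementary route as the paper: the rationalized formula $\lambda_0=-1.2\tau/(\sqrt P+1+\tau)$ together with $\tau_s=-2\tau$ gives (i)--(iii) and (vi) exactly as there. The differences are cosmetic. For (iv), the paper differentiates $\log|\lambda_0|$ in the rationalized form, whereas you verify the product identity directly. For the lower bound in (v), the paper writes $\sqrt P-\tau+1=2.56\tau/(\sqrt P+\tau-1)$, while your identity $P-(\tau+0.28)^2=0.9216$ gives $\sqrt P-\tau+1>1.28$ a little more directly.
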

\begin{proof}[Proof of Proposition \ref{p.tau-and-lambda-not}]
Here is the proof of the part $(i)$. We have
\[
\adjustbox{max width=\linewidth}{$\displaystyle
\frac{\sqrt{P(\tau)}}{1+\tau}=\frac{\sqrt{(1+\tau)^2-1.44\tau}}{1+\tau}=
\sqrt{1-\frac{1.44 \tau}{(1+\tau)^2}}<1.
$}
\]
On the other hand, $\max_{\tau>0}\frac{\tau}{(1+\tau)^2}=
\frac{1}{4}$, hence
\[
\adjustbox{max width=\linewidth}{$\displaystyle
\frac{\sqrt{P(\tau)}}{1+\tau}=
\sqrt{1-\frac{1.44 \tau}{(1+\tau)^2}}\ge \sqrt{1-\frac{1.44 }{4}}=0.8
$}
\]
Now let us show that $(ii)$ and $(iii)$ hold. One has
\[
\adjustbox{max width=\linewidth}{$\displaystyle
\lambda _{0}(\tau)=
-\dfrac 56 (1+\tau)+ \dfrac 56 \sqrt{(1+\tau)^2- 1.44 \tau}=
- \dfrac{ 1.2\tau}{(1+\tau)+ \sqrt{(1+\tau)^2- 1.44 \tau}}\to -0.6 \ \ \text{as}\ \ \tau\to +\infty,
$}
\]
and
\[
\adjustbox{max width=\linewidth}{$\displaystyle
\frac{\lambda _{0}(\tau)}{\tau}=
 - \dfrac{ 1.2}{(1+\tau)+ \sqrt{(1+\tau)^2- 1.44\tau }}\to -0.6 \ \ \text{as}\ \ \tau\to 0.
$}
\]
Here is how the formula $(iv)$ can be justified. We have
\[
\adjustbox{max width=\linewidth}{$\displaystyle
\frac{d}{ds}\lambda _0(s)=\frac{d}{d\tau}\lambda _0(\tau)\frac{d\tau}{ds}=
-2\tau \frac{d}{d\tau}\lambda _0(\tau)
$}
\]
At the same time we have
\[
\adjustbox{max width=\linewidth}{$\displaystyle
\frac{\frac{d}{d\tau}\lambda _0(\tau)}{\lambda _0(\tau)}=
\frac{d}{d\tau}\left(\ln |\lambda _0(\tau)|\right)=
$}
\]
\[
\adjustbox{max width=\linewidth}{$\displaystyle
=\frac{d}{d\tau}\left(\ln \dfrac{ 1.2\tau}{(1+\tau)+ \sqrt{(1+\tau)^2- 1.44\tau }}\right)=
\frac{d}{d\tau}\left(\ln(1.2\tau)-\ln (1+\tau+\sqrt{P(\tau)}) \right)=
$}
\]
\[
\adjustbox{max width=\linewidth}{$\displaystyle
=\frac{1}{\tau}-\frac{\frac{d}{d\tau}(1+\tau+\sqrt{P(\tau)})}{1+\tau+\sqrt{P(\tau)}}=
\frac{1}{\tau}-\frac{1+\frac{\tau+0.28}{\sqrt{P(\tau)}}}{1+\tau+\sqrt{P(\tau)}}=
\frac{\sqrt{P(\tau)}-\tau+1}{2\tau\sqrt{P(\tau)}}.
$}
\]
Hence
\[
\adjustbox{max width=\linewidth}{$\displaystyle
\frac{d}{ds}\lambda _0(s)=-2\tau \frac{\sqrt{P(\tau)}-\tau+1}{2\tau\sqrt{P(\tau)}} \lambda _0(\tau)=
-\frac{\sqrt{P(\tau)}-\tau+1}{\sqrt{P(\tau)}}\lambda_0(s).
$}
\]
Since $(1+\tau)^2-1.44\tau<(1+\tau)^2$ for any $\tau>0$, we have
\[
\adjustbox{max width=\linewidth}{$\displaystyle
\sqrt{(1+\tau)^2-1.44\tau}-\tau<1, \ \ \text{or}\ \ \sqrt{P(\tau)}-\tau+1<2.
$}
\]
On the other hand,
\[
\adjustbox{max width=\linewidth}{$\displaystyle
\sqrt{P(\tau)}-\tau+1=\frac{2.56\tau}{\sqrt{P(\tau)}-1+\tau}=
\frac{2.56}{\sqrt{\left(1+\frac{1}{\tau}\right)^2-\frac{1.44}{\tau}}-\frac{1}{\tau}+1}>\frac{2.56}{2}=1.28
$}
\]
Therefore, $1.28<\sqrt{P(\tau)}-\tau+1<2$, and so $(iv)$ implies $(v)$.
Finally,
\[
\adjustbox{max width=\linewidth}{$\displaystyle
\lim_{\tau\to 0}\frac{\frac{d}{ds}\lambda_0}{\lambda_0}=
\lim_{\tau\to 0}\left(-\frac{\sqrt{P(\tau)}-\tau+1}{\sqrt{P(\tau)}}\right)=
-2,\ \ \ \text{\rm so $(vi)$ holds.}
$}
\]
\end{proof}

Now we are prepared to start the proof of Lemma \ref{l.absorb}.

\begin{proof}
Write $L=-\lambda_0>0$, $z=(1+\tau)^{-1}$ and
$D=\sqrt P/(1+\tau)=\sqrt{1-(36/25)z(1-z)}$.
Then $(\lambda_0)_s=HL$, where $H=1+(2z-1)/D>0$.
At the upper boundary $\lambda=-9L/10$ the Riccati field is negative
while the boundary derivative is positive. At the lower boundary,
the field minus the boundary derivative equals
\[
 L\left(5-\frac{9L}{1+\tau}-2H\right).
\]
Multiplication of the parenthesis by $D$ gives
$19/2-(74/5)z+(54/5)z^2-(9/2)D$.
Since $D\le1-(18/25)z(1-z)$, this is bounded below by
\[
 5-\frac{289}{25}z+\frac{189}{25}z^2>0.
\]
The last quadratic has positive leading coefficient and negative
discriminant. Both boundary inequalities are therefore strict.
This proves forward invariance of the stated family.
\end{proof}

\begin{proof}[Proof of Lemma \ref{lem:absorb}]
Let us now  show that the condition $\max |\lambda (x)|<\lambda ^*$ implies that a vectors tangent to the curve $\gamma $ will enter this absorbing interval $[\lambda _{-}(x(s), y(s)), \lambda _{+}(x(s), y(s))]=\left[2\lambda _0(\tau(s)),\frac{9}{10}\lambda _0(\tau(s))\right]$ within a finite time. For large $\tau$ (we can guarantee that by choosing small enough $\delta$)  we have $\lambda _0(\tau(s))\simeq -0.6$, see Proposition \ref{p.tau-and-lambda-not} ({\it ii}). Also, in this case we have
\[
\adjustbox{max width=\linewidth}{$\displaystyle
\dfrac{d\lambda }{ds}= - \dfrac{3\lambda ^2}{1+\tau} -5 \lambda  - \dfrac{3\tau}{1+\tau}\simeq -5\lambda -3=-5(\lambda +0.6).
$}
\]
 Therefore, since $|\lambda (0)|$ is bounded by $\lambda ^*$, $\lambda (s)$ within a finite time will enter a small neighborhood of $\lambda _0(\tau(s))\simeq -0.6$, and, hence, the absorbing interval $\left[2\lambda _0(\tau(s)),\frac{9}{10}\lambda _0(\tau(s))\right]$ that contains $-0.6$ as an internal point for large values of $\tau$.

This completes the proof of Lemma \ref{lem:absorb}.
\end{proof}

\begin{proof}[Proof of Lemma \ref{lem:absorbdiagonal}]
For the diagonal initial condition, $\tau(s)=e^{-2s}$ and
$\lambda(0)=1$. Let $s_*$ be the first entrance time into the
absorbing interval, if such an entrance occurs. Before that time,
\[
 \lambda(s)>\frac9{10}\lambda_0(\tau(s))>\lambda_0(\tau(s)).
\]
For $0<\tau\le1$ one has
\[
 \lambda_0-\lambda_1=\frac53\sqrt{1+0.56\tau+\tau^2}
       \ge\frac53,
 \qquad \frac{d\lambda_0}{ds}>0.
\]
Consequently, with $d=\lambda-\lambda_0>0$, the slope equation gives
\[
 d_s=-\frac3{1+\tau}(\lambda-\lambda_1)d
                      -\frac{d\lambda_0}{ds}
       \le-\frac52d.
\]
Since $\lambda_0(1)=-1/3$, it follows that
\[
 0<d(s)\le\frac43e^{-5s/2}\qquad(0\le s<s_*).
\]
On the other hand, failure to enter the interval implies
\[
 d(s)>\frac1{10}|\lambda_0(\tau(s))|
   =\frac{0.12\tau(s)}{1+\tau(s)+\sqrt{P(\tau(s))}}
   \ge\frac1{30}e^{-2s}.
\]
These two inequalities cannot both hold when $s>2\log40$.
Thus entrance occurs within a bounded rescaled time independent of
the initial diagonal point. The absorbing property in
Lemma~\ref{l.absorb} gives membership at all subsequent times.
The comparison for $d$ is used only before this first entrance.
\end{proof}

The next statement can be interpreted as an analogue of a ``cone condition'' in a neighborhood of the degenerate saddle.

\begin{Lemma}  \label{lem:stretch} (stretching lemma)
For any $\lambda ^*>0$ there exist $0<c=c(\lambda ^*)<1$
and $\delta =\delta (\lambda ^*)>0$ with the following property.

a) Let $\overline X'=(x,y,\lambda ')$ and $\overline X''=(x,y,\lambda '')$, $|\lambda '|, |\lambda ''|\le \lambda ^*$,
be two initial conditions with $X=(x,y)\in U'$ and
$0<x<\delta $. Then at any moment of  time $s$
such that $X_s\in U$ we have
\[
\adjustbox{max width=\linewidth}{$\displaystyle
c x^{5}|\lambda '(0)-\lambda ''(0)|\le |\lambda '(s)-\lambda ''(s)|\le \dfrac{x^{5}}{c}|\lambda '(0)-\lambda ''(0)|.
$}
\]

b) Let $(x,y)\in U'$ and $v =(v_x,v_y)$ satisfy
$0<x<\delta $ and $v_x\ne0$, $|v_y/v_x|\le\lambda ^*$. Let $N(x,y)$
be the number of iterates of the Poincar\'e map $f$
to get to $U$, i.e. $f^N(x,y)\in U$. Then
\[
\adjustbox{max width=\linewidth}{$\displaystyle
c x^{-2.5}\|v\| \le |df^N(x)\, v| \le \dfrac{x^{-2.5}}{c}\|v\|.
$}
\]
\end{Lemma}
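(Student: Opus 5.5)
Part (a) follows from linearizing the slope Riccati equation, and part (b) then follows from (a) together with area preservation, or more directly from the tangent-vector equation.

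\emph{Part (a).} Let $\lambda'(s),\lambda''(s)$ be two solutions of \eqref{e.simplelyambda} over the same base orbit $(x(s),y(s))=(xe^{s},ye^{-s})$. Put $w=\lambda'-\lambda''$. From \eqref{small-tau-new},
\[
 \frac{dw}{ds}=-\Bigl(5+\frac{3(\lambda'+\lambda'')}{1+\tau}\Bigr)w ,
\]
so $w(s)=w(0)\exp\bigl(-\int_0^s(5+3(\lambda'+\lambda'')/(1+\tau))\,d\sigma\bigr)$. The exit time $s_1$ to $U$ satisfies $e^{s_1}\asymp 1/x$, because $x(s_1)$ lies in a fixed compact interval. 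I will split $[0,s_1]$ into two pieces.
\begin{itemize}
\item On the transient $[0,s_0]$ of Lemma~\ref{lem:absorb}, both slopes remain bounded. This follows from the Riccati comparison: the coefficient $-3x/(x+y)\le0$ prevents blow-up toward $+\infty$, and near $-0.6$ the slope is attracted. The integrand is therefore bounded, and this piece contributes only a multiplicative constant depending on $\lambda^*$.
\item After $s_0$, both slopes lie in $[2\lambda_0,\frac9{10}\lambda_0]$. As long as $\tau$ is large, $\lambda_0\approx-0.6$ and $\lambda',\lambda''\approx-0.6$ up to errors that are integrable in $s$. Once $\tau\to0$, we have $\lambda_0=O(\tau)=O(e^{-2s})$ by Proposition~\ref{p.tau-and-lambda-not}(iii), so the integrand equals $5+O(e^{-2(s-s_c)})$.
\end{itemize}

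The key accounting is the following. In the regime $\tau\gg1$ the integrand is $5-6\cdot0.6/(1+\tau)+\dots\approx5$ as well. More precisely, on the whole orbit
\[
 5+\frac{3(\lambda'+\lambda'')}{1+\tau}=5+O\Bigl(\frac{\tau}{(1+\tau)^2}\Bigr)+O\bigl(|\lambda-\lambda_0|\bigr),
\]
and $\tau/(1+\tau)^2$ is integrable in $s$ because $\tau=(y/x)e^{-2s}$. The deviation from $\lambda_0$ must also be integrable; it decays like $e^{-5s/2}$ relative to the scale, as in the proof of Lemma~\ref{lem:absorbdiagonal}. Hence
\[
 \int_0^{s_1}(\cdot)\,d\sigma=5s_1+O(1), \qquad |w(s_1)|\asymp e^{-5s_1}|w(0)|\asymp x^5|w(0)|,
\]
with constants depending only on $\lambda^*$ and $\delta$. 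Intermediate times with $X_s\in U$ differ from $s_1$ by $O(1)$, so the estimate holds at all such times.

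\emph{Part (b).} I would work directly with the tangent vector $(u,v)$, which satisfies the linearized field. For the simplified model in rescaled time, $du/ds=u+(\text{terms involving }\partial_y\text{ of }x(x+y)^3\text{ divided by }(x+y)^3)$, that is,
\[
 \frac{du}{ds}=u\Bigl(1+\frac{3x}{x+y}\bigl(1+\lambda\bigr)\Bigr)
\]
after accounting for the time change. I then need to check this coefficient carefully, since fixed physical-time maps are compared with the rescaled flow. Integrating along the orbit, with the slope controlled as in (a), gives $\log|u(s_1)/u(0)|$. The expected value is $\tfrac52 s_1+O(1)$, consistent with $x^{-2.5}$. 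Equivalently, area preservation shows that $|\det|$ is comparable to $(x+y)^{3}$-weight ratios, and combining the determinant with the angle contraction $x^5$ from (a) gives stretching by $x^{-5/2}$. This is the cleaner route, so I plan to use the Liouville formula for the divergence of the rescaled field $\bigl(x,-y\bigr)\cdot(x+y)^3$ to compute the determinant of $d\phi$ exactly.

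The final outgoing slope is bounded, by (a) and \eqref{e.C1-lb-lemma}. The norm of $df^N v$ is then comparable to $\sqrt{|\det|/\text{angle factor}}$, with the angle factor $x^5$ coming from (a); this yields the bound $x^{-2.5}$.

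\emph{Main obstacle.} The hardest step is showing that the slope deviation and the $\tau$-terms integrate to $O(1)$ uniformly over the transition through $\tau\approx1$. I would handle this by using the absorbing interval together with the explicit decay bound on $d=\lambda-\lambda_0$ from the proof of Lemma~\ref{lem:absorbdiagonal}.
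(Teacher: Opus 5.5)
Your part (a) follows the paper's argument: subtract the two Riccati equations and show that the correction $3(\lambda'+\lambda'')/(1+\tau)$ to the rate $5$ has bounded integral over the passage, with the bounded transient contributing only $O(1)$.

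The tool you propose for your ``main obstacle'' does not do this, however. The bound $0<d\le\frac43e^{-5s/2}$ in the proof of Lemma~\ref{lem:absorbdiagonal} concerns the diagonal jet before its first entrance, where $\tau\le1$ and $d>0$. It says nothing about general incoming slopes. Moreover, in the outgoing regime the slope actually tracks $-\tau$ while $\lambda_0\approx-0.6\tau$, so $\lambda-\lambda_0$ is of order $\tau$ rather than governed by that bound.

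No comparison with $\lambda_0$ is needed. In the absorbing interval,
$|\lambda_i|\le2|\lambda_0|=\frac{2.4\,\tau}{(1+\tau)+\sqrt{P(\tau)}}\le\frac43\,\frac{\tau}{1+\tau}$.
Hence the correction is at most $8\tau/(1+\tau)^2$, and $\tau_s=-2\tau$ gives $\int\tau(1+\tau)^{-2}\,ds\le\frac12$. This is the paper's cone estimate. Likewise, boundedness of the final slope should come from the absorbing interval, not from \eqref{e.C1-lb-lemma}, which is itself derived from this lemma.

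For (b) you take a different route from the paper. The paper integrates the first component of the tangent vector directly. Your coefficient $1+\frac{3x}{x+y}(1+\lambda)$ equals $\frac{4+\tau+3\lambda}{1+\tau}=\frac52+\frac32\,\frac{x-y}{x+y}+\frac{3\lambda}{1+\tau}$. The non-integrable middle term is exactly $\frac32(\log r)_s$, so $|u(S)/u(0)|\asymp e^{5S/2}(r(S)/r(0))^{3/2}$, and the endpoint ratio is bounded.

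You instead combine two ingredients:
\begin{itemize}
\item Liouville's formula, which gives $\det D\phi=(r(S)/r(0))^3\asymp1$;
\item the exact planar identity $|Av|^2=|\det A|\,/\,(d\theta'/d\theta)$ for unit $v$, where $d\theta'/d\theta\asymp x^5$ follows from (a) by letting $\lambda''\to\lambda'$. Bounded slopes make slope and angle derivatives comparable at both ends.
\end{itemize}
This is valid, and it is the same computation reorganized. Indeed, $(\log|u|)_s$ is exactly one half of the rescaled divergence $3(x-y)/(x+y)$ minus one half of the logarithmic rate $-(5+6\lambda/(1+\tau))$ of the slope derivative.

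Your version makes (b) a formal corollary of (a) and explains the exponent $5/2$ as half of $5$. The paper's version avoids differentiating the slope map and the two-dimensional identity, and uses only the variational equation and the cone.

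One correction is needed. Liouville must be applied to the physical-time field $r^3(x,-y)$, whose divergence is $3(x-y)r^2$ (equivalently, use invariance of $r^{-3}\,dx\wedge dy$). The map being differentiated is the fixed physical-time map. The rescaled field $(x,-y)$ is divergence free, and its fixed-time maps are not that map. The paper uses exactly this divergence computation in Lemma~\ref{lem:local-cross}.
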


\begin{proof}
The simplified system is the case $A=B=0$ of
Lemma~\ref{lem:stretchgen}. Its proof gives
$|\lambda'(S)-\lambda''(S)|\asymp e^{-5S}|\lambda'(0)-\lambda''(0)|$
and $\|D\phi_t v\|\asymp e^{5S/2}\|v\|$ between the fixed strips.
Since $e^S\asymp x(0)^{-1}$, both conclusions follow. Here $S$ is
rescaled time, whereas $t=NP$ for an iterate of the Poincar\'e map.
These quantities are not identified. The proof uses the physical-time
variational equation along the base orbit, so it is also valid when
the physical terminal time is prescribed.
\end{proof}

\subsection{Dynamics of 2-jets for the simplified system}

Now we present behavior of convexity of curves. It is useful to
keep in mind that this dynamics depends on
slope so we incorporate slope $\lambda $ into the model. Recall
that due to above discussion of evolution of slope it is natural
to assume that $\lambda (s)$ is bounded and becomes small when $s$ is close to $2T_0$.

\begin{Lemma} \label{l.muest}
For any $\lambda ^*>0$ and $\kappa^*>0$ there exists a constant
$C>1$, depending also on the fixed incoming and outgoing strips, such that
for any 2-jet ${\overline{\overline{X}}}=(x, y, \lambda , \kappa)$
satisfying $|\lambda |\le \lambda ^*$, $|\kappa|\le \kappa^*$, $(x, y)\in U_{\rm in}$,
with initial $x$ sufficiently small, and for any sufficiently long local
passage ending at a moment $s$ such that $X_s\in U_{\rm out}$ we have
${\overline{\overline{X}}}_s=(x_s, y_s, \lambda _s, \kappa_s)$, where
\[
\adjustbox{max width=\linewidth}{$\displaystyle
0<\kappa_s<Cy_s.
$}
\]
\end{Lemma}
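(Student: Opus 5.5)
The plan is to treat the $\kappa$-equation of the simplified system as a linear ODE in $\kappa$ with a forcing term that is a cubic polynomial in $\lambda$. Along a trajectory we use the normalization $x(s)=x_0e^{s}$ and $y(s)=y_0e^{-s}$, so that $\tau(s)=\tau_0e^{-2s}$. Writing $\rho=x+y$, the equation reads
\[
 \frac{d\kappa}{ds}=-a(s)\kappa-\frac{3}{\rho^2}Q(\lambda,x,y),
\]
where
\[
 a(s)=\frac{3(3x+2y+3x\lambda)}{\rho}
 \quad\text{and}\quad
 Q=2y+(6x+8y)\lambda+(8x+6y)\lambda^2+2x\lambda^3 .
\]
By Lemma~\ref{lem:absorb}, after a bounded time $s_0(\lambda^*)$ the slope $\lambda(s)$ lies in $[2\lambda_0(\tau),\tfrac{9}{10}\lambda_0(\tau)]$. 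During the transient $[0,s_0]$ all coefficients are bounded, because $\rho$ stays comparable to its initial value on that bounded interval. Gronwall's inequality then gives $|\kappa(s_0)|\le C(\lambda^*,\kappa^*)/\rho(s_0)^2$. This bound is harmless, since the initial $y$ lies in a fixed compact interval and $\rho$ is therefore bounded below there.

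After entry into the absorbing cone, we analyse two regimes. In the first regime $\tau\gg1$, so $y$ dominates and $\lambda\approx-0.6$. In the second regime $\tau\ll1$, so $x$ dominates and $\lambda\approx\lambda_0(\tau)\approx-0.6\tau$. For the sign and size of the forcing, note that on the line $\lambda=\lambda_0(\tau)$ the forcing $Q$ is related to the derivative of the slope field. The curve $y=$ const$\cdot x^{-1}$-type invariant foliation suggests computing $Q$ at $\lambda=\lambda_0$ exactly.

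The main step is the regime $\tau\to0$. There we expand $Q/x=2\tau+6\lambda+8\lambda^2+2\lambda^3+O(\tau\lambda)$. With $\lambda=-0.6\tau(1+O(\tau))$ this becomes $2\tau-3.6\tau+O(\tau^2)=-1.6\tau+O(\tau^2)$. The same expansion holds for all $\lambda$ in the absorbing interval, with constants. Since the interval is $[2\lambda_0,0.9\lambda_0]$, we have $6\lambda\in[-7.2\tau,-3.24\tau]$, so $Q/x\in[-5.2\tau,-1.24\tau]$ to leading order, which is negative. The forcing $-3Q/\rho^2\approx-3Q/x^2$ is therefore positive and of size $\asymp\tau/x=y/x^2$. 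The damping is $a\approx 9+9\lambda\approx9$. The linear equation
\[
 \kappa_s=-9\kappa+c\,y/x^2,\qquad y/x^2\propto e^{-3s},
\]
has the particular solution $\kappa_p\asymp y/x^2$, while the homogeneous part decays like $e^{-9s}$ and so decays faster. By a Duhamel/variation-of-constants comparison, $\kappa(s)=e^{-\int a}\kappa(s_1)+\int e^{-\int a}(\text{forcing})$. The second term lies between $c_1y/x^2$ and $c_2y/x^2$.

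At the start of the small-$\tau$ regime, which is a time $s_1$ with $\tau\asymp1$, we need $|\kappa(s_1)|\lesssim1/\rho(s_1)^2$. Its contribution at time $s$ is then $e^{-9(s-s_1)}/\rho(s_1)^2$, whereas the forced term is $\asymp e^{-3(s-s_1)}y(s_1)/x(s_1)^2$. The forced term dominates for long passages, since $s-s_1\sim\log(1/x_0)\to\infty$. In $U_{\rm out}$ the coordinate $x$ is in a compact interval, so $0<\kappa_s<Cy_s$ follows.

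The intermediate regime $\tau\ge1$ requires a uniform a priori bound $|\kappa|\lesssim1/\rho^2$ carried from $s_0$ to $s_1$. In this regime $a=(9x+6y+9x\lambda)/\rho\ge c>0$, because $\lambda\ge2\lambda_0\ge-1.2$ and the term $6y$ dominates. The forcing satisfies $|Q|/\rho^2\lesssim1/\rho$. Since $\rho$ is non-increasing for $\tau>1$ and bounded by comparable values throughout, Gronwall with positive damping gives $|\kappa|\le C/\rho^2$ at $s_1$. This step needs care only in checking positivity of $a$ via Proposition~\ref{p.tau-and-lambda-not}.

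I expect the main obstacle to be the sign of the forcing uniformly over the whole absorbing interval, in particular that $Q<0$ for every $\lambda\in[2\lambda_0,0.9\lambda_0]$ when $\tau$ is small, including the cubic and cross terms. It may be necessary to use the sharper information that $\lambda-\lambda_0$ decays like $e^{-5s}$, as in Lemma~\ref{lem:stretch}, so that $\lambda$ is effectively $\lambda_0(\tau)+O(e^{-5s})$. With that information the explicit leading value $Q/x\approx-1.6\tau$ gives a definite sign.
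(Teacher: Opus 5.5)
Your proposal is correct and follows essentially the paper's route (Lemmas \ref{l.explicitupperbound-general} and \ref{l.finalmu-general} with $A=B=0$): a bounded transient, an a priori bound carried to a fixed small ratio $\tau=\epsilon$, then variation of constants with positive forcing $\asymp y/x^2$ and damping near $9$. Cross the range $\epsilon\le\tau\le1$ by a bounded-time Gronwall rather than applying small-$\tau$ asymptotics at $\tau\asymp1$; the damping near $9$ comfortably exceeds the threshold $4$ that your weaker bound $|\kappa|\lesssim\rho^{-2}$ requires, and your own $\tau\ge1$ estimate in fact gives the paper's $|\kappa|\lesssim\rho^{-1}$. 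The sign you flag as the main obstacle is already settled by your expansion, since for $\lambda=-k\tau$ with bounded $k$ all other terms of $Q/x=2(1+\lambda)\bigl(\tau(1+3\lambda)+\lambda(3+\lambda)\bigr)$ are $O(\tau^2)$ (Lemma \ref{l.ineqforB} even gives the sign for all $\tau\le1$); the proposed fallback $\lambda=\lambda_0+O(e^{-5s})$ is false, because $\lambda=-\tau$ solves the slope equation exactly (it is tangent to the invariant curves $xy=\mathrm{const}$), so $\lambda/\tau\to-1$ and $\lambda-\lambda_0$ is of order $\tau$.
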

 Notice that Lemma \ref{l.muest}, evaluated when the physical time equals $NP$, proves (\ref{e.C2-lb-lemma}).

For general fixed strips, Lemma~\ref{l.muest} follows from
Lemma~\ref{l.finalmu-general} with $A=B=0$.

 \begin{Lemma} \label{l.muestdiagonal}
 Suppose that the image $(x_s, y_s, \lambda _s, \kappa_s)$ of the 2-jet
 $(x_0, y_0, \lambda _0, \kappa_0)$, $x_0=y_0$, $\lambda _0=1, \kappa_0=0$, under
 the flow  (\ref{rescaled-simplified-2jet-deg-saddle-evolution})
 is such that $(x_s, y_s)\in U_{\rm out}$. If $x_0=y_0$ is small enough then
 \[
\adjustbox{max width=\linewidth}{$\displaystyle
0<\kappa_s<Cy_s.
$}
\]
 \end{Lemma}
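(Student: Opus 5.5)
We work with the $\kappa$-equation of \eqref{rescaled-simplified-2jet-deg-saddle-evolution} along the diagonal orbit. The normalization is $x(s)=x_0e^{s}$, $y(s)=x_0e^{-s}$, so that $\tau(s)=e^{-2s}$. The slope $\lambda(s)$ is the one controlled by Lemma~\ref{lem:absorbdiagonal}.

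The $\kappa$-equation is linear in $\kappa$:
\[
 \kappa_s=-a(s)\kappa-b(s),
\]
with
\[
 a=\frac{3(3x+2y+3x\lambda)}{x+y},\qquad
 b=\frac{3}{(x+y)^2}\bigl[2y+(6x+8y)\lambda+(8x+6y)\lambda^2+2x\lambda^3\bigr].
\]
We split the passage at the bounded time $s_0$ supplied by Lemma~\ref{lem:absorbdiagonal}.

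\emph{Transient phase, $[0,s_0]$.} Here $\lambda$ stays bounded, since the Riccati equation is solved from $\lambda=1$ over a bounded time with $\tau\in[e^{-2s_0},1]$. Because $x+y\ge 2x_0e^{-s_0}$, we get $|a|\le K$ and $|b|\le K/x_0^2$. Gronwall then gives $|\kappa(s_0)|\le K'/x_0^2$. This blow-up is harmless because it is only polynomial in $x_0^{-1}$.

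\emph{Main phase, $s>s_0$.} Now $\lambda\in[2\lambda_0(\tau),\tfrac9{10}\lambda_0(\tau)]$, and $\lambda_0(\tau)\sim-0.6\tau$ as $\tau\to0$. Writing $b$ in terms of $\tau$ gives
\[
 b=\frac{3}{x^2(1+\tau)^2}\bigl[2\tau+(6+8\tau)\lambda+(8+6\tau)\lambda^2+2\lambda^3\bigr],
\]
and $a=9+O(\tau)$, since $\lambda=O(\tau)$.

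The key computation is the sign of the bracket. With $\lambda=-c\tau$ and $c\in[0.54,1.2]$ up to $o(1)$, the bracket equals $(2-6c)\tau+O(\tau^2)$, which is negative because $6c\ge3.24>2$. Hence $-b>0$, of size
\[
 -b\asymp \frac{\tau}{x^2}=\frac{y}{x^3}.
\]
This positive forcing is what makes $\kappa$ positive. To confirm the sign uniformly for $s\ge s_0$, including moderate $\tau$, I would check positivity of $-[\dots]$ on the whole absorbing interval for $\tau\in(0,e^{-2s_0}]$. This is a cubic in $\lambda$ on an explicit interval, and one can enlarge $s_0$ so that only small $\tau$ matters.

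Variation of constants then gives
\[
 \kappa(s)=\kappa(s_0)e^{-\int_{s_0}^s a}+\int_{s_0}^s e^{-\int_\sigma^s a}\,(-b(\sigma))\,d\sigma .
\]
Since $a\ge 9-\epsilon$, the homogeneous term is at most
\[
 \frac{K'}{x_0^2}e^{-(9-\epsilon)(s-s_0)}.
\]
At the exit time $x(s)\asymp1$, so $e^{s}\asymp x_0^{-1}$ and $y_s\asymp x_0^2$. The homogeneous term is therefore $O(x_0^{7-\epsilon})=o(y_s)$.

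For the integral, $-b(\sigma)\asymp x_0^{-2}e^{-4\sigma}$, because $y/x^3=x_0e^{-\sigma}/(x_0^3e^{3\sigma})$. Combined with the kernel $e^{-9(s-\sigma)}$, the integrand is dominated by $\sigma$ near $s$. This gives
\[
 \kappa(s)\asymp x_0^{-2}e^{-4s}\asymp x_0^{2}\asymp y_s,
\]
with positive constants on both sides. Hence $0<\kappa_s<Cy_s$, and positivity survives because the homogeneous term is $o(y_s)$.

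The main obstacle is the sign of the bracket in $b$ throughout $s\ge s_0$, not merely asymptotically. The absorbing interval of Lemma~\ref{l.absorb} is the tool, but checking $2\tau+(6+8\tau)\lambda+\dots<0$ for all $\lambda$ in it and all relevant $\tau$ needs an explicit inequality. If that fails for moderate $\tau$, I would enlarge $s_0$ and put those times into the transient Gronwall estimate, which is uniform in $x_0$.
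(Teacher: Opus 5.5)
Your argument is correct and is essentially the paper's proof: after the diagonal slope enters the absorbing cone, the source term $-b$ in the linear $\kappa$-equation is positive of the right size. Variation of constants, combined with a bounded-time estimate for the transient, then gives $\kappa\asymp y$ on the outgoing strip, as in Lemma~\ref{l.finalmu} and Lemma~\ref{l.finalmu-general} (the latter phrased through $w=\kappa x^2/y$). One harmless slip: the bracket carries a factor $x$, so $b=3[\cdots]/(x(1+\tau)^2)$ and $-b\asymp y/x^2=x_0^{-1}e^{-3s}$ rather than $y/x^3$; since $x\asymp1$ on $U_{\rm out}$, this still yields $\kappa_s\asymp x_0^2\asymp y_s$ at exit.
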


The normalized model estimate is proved in Lemma~\ref{l.finalmu};
Lemma~\ref{l.finalmu-general}, with $A=B=0$, gives both assertions
on the fixed strips specified here.

Likewise, evaluate Lemma \ref{l.muestdiagonal} at physical time $NP$, not at integer rescaled time, to obtain (\ref{e.shape}).

 \

Substituting $y=\tau x$ into the last equation of
(\ref{rescaled-simplified-2jet-deg-saddle-evolution}) directly gives the following:
\begin{Lemma} \label{lemma:muformula}
The function $\kappa(s)$ along a solution satisfies the equation
\[
\adjustbox{max width=\linewidth}{$\displaystyle
\frac{d\kappa}{ds}= -d(\tau(s), \lambda )\kappa-\frac{B(\tau(s), \lambda )}{x+y},
$}
\]
where
\[
\adjustbox{max width=\linewidth}{$\displaystyle
B(\tau(s),\lambda )=   \dfrac{6 \tau}{1+\tau} +
\dfrac{18+24 \tau+(24+18 \tau)\lambda }{1+\tau}  \,\lambda  +
\dfrac{6\lambda ^3 }{1+\tau}
$}
\]
and
\[
\adjustbox{max width=\linewidth}{$\displaystyle
d(\tau(s),\lambda )=
\dfrac{9+ 6 \tau }{1+\tau}+ \dfrac{9\lambda  }{1+\tau}.
$}
\]
\end{Lemma}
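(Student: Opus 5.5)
The identity is a direct algebraic rewriting of the last equation of \eqref{rescaled-simplified-2jet-deg-saddle-evolution}, so the plan is simply to substitute $y=\tau x$ with $\tau=\tau(s)=y(s)/x(s)$ and use the homogeneity of each bracketed term. Since we work in the open positive quadrant, $x>0$ and $x+y=x(1+\tau)>0$, so every division below is legitimate.

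First I will split the right-hand side into the part containing $\kappa$ and the part free of $\kappa$. The $\kappa$-part is
\[
 -\frac{3}{(x+y)^2}(x+y)(3x+2y+3x\lambda)\kappa
 =-\frac{3(3x+2y+3x\lambda)}{x+y}\,\kappa .
\]
Both numerator and denominator are homogeneous of degree one, so dividing each by $x$ gives
\[
 -\frac{3(3+2\tau+3\lambda)}{1+\tau}\,\kappa
 =-\frac{9+6\tau+9\lambda}{1+\tau}\,\kappa ,
\]
which is exactly $-d(\tau,\lambda)\kappa$.

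Next, the $\kappa$-free part is $-3(x+y)^{-2}Q$, where
\[
 Q=2y+(6x+8y)\lambda+(8x+6y)\lambda^2+2x\lambda^3 .
\]
Each coefficient of a power of $\lambda$ in $Q$ is linear in $(x,y)$, so
\[
 Q=x\bigl[2\tau+(6+8\tau)\lambda+(8+6\tau)\lambda^2+2\lambda^3\bigr].
\]
Writing $(x+y)^{-2}=(x+y)^{-1}\bigl(x(1+\tau)\bigr)^{-1}$, the factor $x$ cancels. This leaves
\[
 -\frac{1}{x+y}\cdot\frac{3\bigl[2\tau+(6+8\tau)\lambda+(8+6\tau)\lambda^2+2\lambda^3\bigr]}{1+\tau}.
\]
Expanding the numerator gives
\[
 6\tau+(18+24\tau)\lambda+(24+18\tau)\lambda^2+6\lambda^3 .
\]
After grouping the middle two terms as $(18+24\tau+(24+18\tau)\lambda)\lambda$, this is $B(\tau,\lambda)(1+\tau)$, i.e.\ the term $-B/(x+y)$ in the statement.

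There is no real obstacle. The only points to check are the arithmetic of the coefficients, and that $\tau(s)$ enters only through the quotient $y/x$, with the single leftover factor $1/(x+y)$ kept explicit rather than expressed in $\tau$. That explicit factor is what the later estimates in Lemma~\ref{l.muest} and Lemma~\ref{l.muestdiagonal} exploit, through the relation $x+y\asymp e^{s}$ on the outgoing side.
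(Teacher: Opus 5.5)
Your proposal is correct and is the paper's own argument: the paper gets the lemma by substituting $y=\tau x$ into the last equation of \eqref{rescaled-simplified-2jet-deg-saddle-evolution}, and your split into the $\kappa$-term and the $\kappa$-free term, with the check $B(1+\tau)=6\tau+(18+24\tau)\lambda+(24+18\tau)\lambda^2+6\lambda^3$, writes that substitution out correctly. Only your closing side remark is slightly off: in the normalized model $x+y\asymp x=e^{s-2T_0}$ on the outgoing side, so the comparison with $e^{s}$ carries the factor $e^{-2T_0}$ used in Lemma~\ref{l.estimateonb}, but this does not affect the proof.
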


Denote also $b(x(s),y(s),\lambda )=\frac{B(\tau(s), \lambda )}{x+y}$.

\paragraph{Normalization for the model calculation.}
For the calculations through Lemma~\ref{l.finalmu}, put the initial
incoming height $b=y(0)$ equal to one by using
$\widehat x=x/b$, $\widehat y=y/b$. Rescaled time $s$ and slope
are unchanged, whereas the second derivative becomes
$\widehat\kappa=b\kappa$. Set $2T_0=\log(b/x(0))$ and end the
symmetric comparison passage at $x=b$. In these normalized
variables, suppressing hats, one has exactly
\[
 x(s)=e^{s-2T_0},\qquad y(s)=e^{-s},\qquad 0\le s\le2T_0.
\]
The constants $\kappa^*$, $C^*$ and $M$ in this model calculation
refer to these normalized variables. For arbitrary fixed incoming
and outgoing strips, Lemma~\ref{l.finalmu-general} gives the direct
unnormalized estimate $0<\kappa\le Cy/x^2$. In particular it proves
Lemmas~\ref{l.muest}--\ref{l.muestdiagonal} with constants depending
on the strip heights. No normalization of a general passage to
$y(0)=1$ without rescaling curvature is understood.

\subsubsection{Upper and lower bounds for $d(s)$.}\label{mu-absorbing}

Recall that $x(s)=x_0 e^s$ and $\tau(s)=\tau_0 e^{-2s}$ with
$\tau(0)=O(1)/x(0)=O(\exp(2T_0))$, $\tau(T_0)=O(1)$,
$x(T_0)=O(\exp(-T_0))$, and $\tau(2T_0)=O(\exp(-2T_0))$,
$x(2T_0)=O(1)$. Denote $S=T_0$; we have
\[
\adjustbox{max width=\linewidth}{$\displaystyle
b(s,\lambda ,S):= 6 \left(
\dfrac{ \tau(s)}{(1+\tau(s))^2 x(s)} +
\dfrac{3+4 \tau(s)+(4+3 \tau(s))\lambda (s)}{(1+\tau(s))^2 x(s)}  \,\lambda (s)+
\dfrac{\lambda ^3(s) }{(1+\tau(s))^2 x(s)} \right)
$}
\]
and
\[
\adjustbox{max width=\linewidth}{$\displaystyle
d(s,\lambda ):=
\dfrac{9 + 6 \tau(s) }{1+\tau(s)}+ \dfrac{9\lambda (s) }{1+\tau(s)}.
$}
\]

\begin{Lemma} \label{lemma:estimate_on_d}
If $\lambda $ belongs to the absorbing interval $\left[2\lambda _0(\tau(s)),\frac{9}{10}\lambda _0(\tau(s))\right]$ then
\[
\adjustbox{max width=\linewidth}{$\displaystyle
3\le d(s,\lambda )\le 9.
$}
\]
\end{Lemma}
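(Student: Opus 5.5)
The bound is elementary, and I would prove it by writing $d$ as an affine function of $\lambda$ at fixed $\tau$:
\[
 d(\tau,\lambda)=\frac{9+6\tau+9\lambda}{1+\tau}.
\]
Since the coefficient of $\lambda$, namely $9/(1+\tau)$, is positive, on the absorbing interval $[2\lambda_0,\tfrac{9}{10}\lambda_0]$ (recall $\lambda_0<0$) the maximum is attained at $\lambda=\tfrac9{10}\lambda_0$ and the minimum at $\lambda=2\lambda_0$. It therefore suffices to check two one-variable inequalities in $\tau>0$.

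\textbf{Upper bound.} Since $\lambda\le\tfrac9{10}\lambda_0<0$, we have $9+6\tau+9\lambda<9+6\tau\le 9+9\tau$. Hence $d<9$ for every $\tau>0$, and no information about $\lambda_0$ beyond its sign is needed.

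\textbf{Lower bound.} The required inequality $9+6\tau+18\lambda_0\ge 3(1+\tau)$ is equivalent to
\[
 6+3\tau\ge 18|\lambda_0(\tau)|, \qquad\text{that is,}\qquad |\lambda_0(\tau)|\le \frac{2+\tau}{6}.
\]
I would use the rationalized formula from the proof of Proposition~\ref{p.tau-and-lambda-not}:
\[
 |\lambda_0(\tau)|=\frac{1.2\tau}{1+\tau+\sqrt{P(\tau)}}.
\]
By part (i) of that proposition, $\sqrt{P(\tau)}\ge 0.8(1+\tau)$, so
\[
 |\lambda_0(\tau)|\le \frac{1.2\tau}{1.8(1+\tau)}=\frac{2\tau}{3(1+\tau)}.
\]
It remains to check $\frac{2\tau}{3(1+\tau)}\le\frac{2+\tau}{6}$, which after clearing denominators reads $4\tau\le(2+\tau)(1+\tau)=2+3\tau+\tau^2$. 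This is equivalent to $\tau^2-\tau+2\ge0$, and that holds because the quadratic has negative discriminant.

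The only step that could be delicate is the lower bound. The crude estimate $|\lambda_0|<0.6$ does not suffice, since it fails when $\tau$ is small relative to $(2+\tau)/6$ only near $\tau\approx 1$. This is why I would use the sharper bound from part (i) of Proposition~\ref{p.tau-and-lambda-not}. Combining the two bounds gives $3\le d\le 9$ on the absorbing interval.
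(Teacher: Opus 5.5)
Your proof is correct and takes essentially the paper's route. Both arguments use the positive $\lambda$-coefficient of $d$, the sign $\lambda<0$ for the upper bound, and $\sqrt{P}\ge\frac45(1+\tau)$ from Proposition~\ref{p.tau-and-lambda-not}(i) for the lower bound. The paper substitutes this bound into the unrationalized root formula to get $2\lambda_0\ge-(1+\tau)/3$, hence $d\ge 3+3/(1+\tau)$ at once, which avoids your final discriminant check.

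One aside is inaccurate: the crude bound $|\lambda_0|<0.6$ fails for every $\tau<1.6$, not only near $\tau\approx1$. Your proof does not use that bound, so this does not affect correctness.
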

\begin{proof}
Since $\sqrt P\ge\frac45(1+\tau)$, one has
$\lambda\ge2\lambda_0\ge-(1+\tau)/3$. Therefore
\[
 d\ge\frac{9+6\tau-3(1+\tau)}{1+\tau}
   =3+\frac3{1+\tau}>3.
\]
Since $\lambda<0$, $d\le6+3/(1+\tau)\le9$.
\end{proof}

For $\tau\in (0,1)$ the estimates given in Lemma \ref{lemma:estimate_on_d} can be improved.

\begin{Lemma} \label{lemmaimproved:estimate_on_d}
If $\lambda $ belongs to the absorbing interval $\left[2\lambda _0(\tau(s)),\frac{9}{10}\lambda _0(\tau(s))\right]$ and $\tau\in(0,1)$  then
\[
\adjustbox{max width=\linewidth}{$\displaystyle
4\le d(s,\lambda )\le 9.
$}
\]
\end{Lemma}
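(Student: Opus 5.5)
The lower bound is the only new content. The upper bound $d\le9$ is already contained in Lemma~\ref{lemma:estimate_on_d}: there it follows from $\lambda<0$ and $d\le 6+3/(1+\tau)$, and neither step uses the size of $\tau$. For the lower bound I follow the same route as in Lemma~\ref{lemma:estimate_on_d}. Since $d$ is increasing in $\lambda$ (the coefficient $9/(1+\tau)$ is positive), it suffices to check the lower endpoint of the absorbing interval, namely $\lambda=2\lambda_0(\tau)$.

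The earlier proof used the crude bound $2\lambda_0\ge-(1+\tau)/3$. I will instead use the exact formula from Proposition~\ref{p.tau-and-lambda-not},
\[
 \lambda_0(\tau)=-\frac{1.2\,\tau}{(1+\tau)+\sqrt{P(\tau)}}.
\]
By part (i), $\sqrt{P(\tau)}\ge0.8(1+\tau)$. This gives $|\lambda_0|\le \frac{1.2\tau}{1.8(1+\tau)}=\frac{2\tau}{3(1+\tau)}$, and hence $\lambda\ge 2\lambda_0\ge -\frac{4\tau}{3(1+\tau)}$. Substituting into $d$,
\[
 d\ge \frac{9+6\tau}{1+\tau}-\frac{12\tau}{(1+\tau)^2}
 =\frac{(9+6\tau)(1+\tau)-12\tau}{(1+\tau)^2}
 =\frac{9+3\tau+6\tau^2}{(1+\tau)^2}.
\]
It remains to show that this quantity is at least $4$ for $\tau\in(0,1)$. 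The inequality is equivalent to $9+3\tau+6\tau^2\ge4+8\tau+4\tau^2$, that is, to $2\tau^2-5\tau+5\ge0$. This quadratic has negative discriminant ($25-40<0$), so it is positive for every real $\tau$, and in particular on $(0,1)$.

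The only real obstacle is choosing a bound on $\lambda_0$ sharp enough in the small-$\tau$ regime. The bound from Lemma~\ref{lemma:estimate_on_d} produces only $3+3/(1+\tau)$, which falls below $4$ as $\tau\to1$. The exact formula fixes this because it gives $|\lambda_0|=O(\tau)$. If the estimate had come out tight, I would keep the exact $\sqrt{P}$ and check monotonicity in $\tau$ on $[0,1]$ directly. The discriminant check above shows this is unnecessary; in fact the same argument gives $d>3.5$ for all $\tau>0$, and the bound $d\ge4$ on $(0,1)$ has considerable margin.
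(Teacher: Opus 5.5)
Your proof is correct. However, the reason you give for departing from Lemma~\ref{lemma:estimate_on_d} is mistaken. The bound $d\ge 3+3/(1+\tau)$ proved there is decreasing in $\tau$ and equals $9/2$ at $\tau=1$, so on $(0,1)$ it already gives $d\ge 9/2>4$; it drops below $4$ only for $\tau>2$. That one-line observation is the paper's entire proof.

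Your route is slightly different. You use the rationalized formula for $\lambda_0$ together with $\sqrt{P}\ge 0.8(1+\tau)$ to get $|\lambda_0|\le 2\tau/(3(1+\tau))$. This is never weaker than the earlier bound $|\lambda_0|\le(1+\tau)/6$, since the difference is $(1-\tau)^2/(6(1+\tau))\ge 0$, and the two coincide at $\tau=1$.

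What your version buys is uniformity in $\tau$. Because $2\tau^2-5\tau+5>0$ for every real $\tau$, your estimate gives $d>4$ for all $\tau>0$, so the hypothesis $\tau\in(0,1)$ is unnecessary. In fact your lower bound $(9+3\tau+6\tau^2)/(1+\tau)^2$ has minimum $69/16$, attained at $\tau=5/3$. Your closing remark ``$d>3.5$ for all $\tau>0$'' therefore undersells your own computation.

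The paper's version is shorter and suffices for its purpose. The lemma is used only in Lemma~\ref{l.finalmu} for $s\in[T_0,2T_0]$, where $\tau\le 1$.
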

\begin{proof}
The preceding proof gives
$d\ge3+3/(1+\tau)\ge9/2>4$ for $0<\tau\le1$.
The upper bound is unchanged.
\end{proof}

\subsubsection{Upper and lower bound for $B(\tau(s),\lambda )$.}

We will need the following statements.
\begin{Proposition} \label{lemma:estimate_on_B}
Assume $0<\tau\le1$. If $\lambda $ belongs to the absorbing interval $\left[2\lambda _0(\tau(s)),\frac{9}{10}\lambda _0(\tau(s))\right]$ then
\[
\adjustbox{max width=\linewidth}{$\displaystyle
-42\tau\frac{1+\lambda }{1+\tau}\le B(\tau(s), \lambda )\le -3\tau\frac{1+\lambda }{1+\tau}
$}
\]
\end{Proposition}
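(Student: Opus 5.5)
The plan is to factor $B$ exactly and then estimate the quadratic factor using the explicit two-sided bounds on $\lambda_0$ that follow from Proposition~\ref{p.tau-and-lambda-not}.

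First I would write the numerator of $B$ as a polynomial in $\lambda$:
\[
 6\lambda^3+(24+18\tau)\lambda^2+(18+24\tau)\lambda+6\tau .
\]
Substituting $\lambda=-1$ gives $-6+24+18\tau-18-24\tau+6\tau=0$, so $\lambda=-1$ is a root. Polynomial division then gives
\[
 B(\tau,\lambda)=\frac{6(1+\lambda)}{1+\tau}\,Q(\lambda),\qquad
 Q(\lambda)=\lambda^2+3(1+\tau)\lambda+\tau .
\]
Next I would check that $1+\lambda>0$ on the absorbing interval. The proof of Lemma~\ref{lemma:estimate_on_d} gives $\lambda\ge2\lambda_0\ge-(1+\tau)/3\ge-2/3$ for $\tau\le1$. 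Since $1+\lambda>0$, the statement is equivalent to
\[
 -7\tau\le Q(\lambda)\le-\tfrac12\tau
 \qquad\text{for }\lambda\in[2\lambda_0,\tfrac9{10}\lambda_0].
\]

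Second, I would make $\lambda_0$ quantitative. From the representation $\lambda_0=-1.2\tau/\bigl((1+\tau)+\sqrt{P}\bigr)$ and part (i) of Proposition~\ref{p.tau-and-lambda-not}, namely $0.8(1+\tau)\le\sqrt P<1+\tau$, I get
\[
 -\frac{2}{3}\frac{\tau}{1+\tau}\le\lambda_0\le-\frac{0.6\,\tau}{1+\tau}.
\]
The vertex of $Q$ is at $-\tfrac32(1+\tau)$, which lies to the left of the whole interval. Hence $Q$ is decreasing on $[2\lambda_0,\tfrac9{10}\lambda_0]$, and it suffices to estimate $Q$ at the two endpoints.

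Third, I would estimate the endpoints.

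\emph{Upper endpoint.} Here
\[
 Q(\tfrac9{10}\lambda_0)=0.81\lambda_0^2+2.7(1+\tau)\lambda_0+\tau .
\]
Using $\lambda_0^2\le\tfrac49\tau^2/(1+\tau)^2\le\tfrac19\tau$ (since $\tau/(1+\tau)^2\le\tfrac14$) and $(1+\tau)\lambda_0\le-0.6\tau$, this is at most
\[
 0.09\tau-1.62\tau+\tau=-0.53\tau\le-\tfrac12\tau .
\]

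\emph{Lower endpoint.} Dropping the nonnegative square, and using $(1+\tau)\lambda_0\ge-\tfrac23\tau$,
\[
 Q(2\lambda_0)\ge6(1+\tau)\lambda_0+\tau\ge-4\tau+\tau=-3\tau\ge-7\tau .
\]
Multiplying both bounds by $6(1+\lambda)/(1+\tau)>0$ gives the stated inequalities, with room to spare in the lower bound.

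The only genuine obstacle is spotting the factor $(1+\lambda)$; once it is extracted, the rest is the monotonicity of a quadratic plus the elementary bounds on $\lambda_0$. The upper endpoint is the tight one, where the margin $-0.53\tau$ versus $-0.5\tau$ is small. There I would take care to use the sharp bound $\tau/(1+\tau)^2\le1/4$ rather than a cruder estimate.
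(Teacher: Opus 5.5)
Your proof is correct and is essentially the paper's. It uses the same factorization $B=6\frac{1+\lambda}{1+\tau}\bigl(\lambda^2+3(1+\tau)\lambda+\tau\bigr)$, the same bounds $\frac35\le(1+\tau)|\lambda_0|/\tau\le\frac23$ and $\lambda_0^2\le\tau/9$, the same upper-endpoint arithmetic $0.09-1.62+1=-0.53$, and the same lower bound $-3\tau$; the paper gets the upper bound by convexity (checking both endpoints) and the lower bound directly from $\lambda\ge2\lambda_0$, where you use monotonicity. One slip to fix: since $Q'(\lambda)=2\lambda+3(1+\tau)>0$ to the right of the vertex, $Q$ is \emph{increasing}, not decreasing, on $[2\lambda_0,\tfrac9{10}\lambda_0]$, and that is exactly what your choice of bounding above at $\tfrac9{10}\lambda_0$ and below at $2\lambda_0$ requires.
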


\begin{Corollary} \label{c.upperboundforB}
There is a constant $C^*>0$ such that, whenever
$\lambda \in \left[2\lambda _0(\tau(s)),\frac{9}{10}\lambda _0(\tau(s))\right]$,
we have $|B(\tau, \lambda (\tau))|<C^*$.
\end{Corollary}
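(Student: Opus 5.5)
The plan is to bound $B$ directly from its explicit formula in Lemma~\ref{lemma:muformula}, uniformly in all $\tau>0$, instead of deducing the bound from Proposition~\ref{lemma:estimate_on_B}. That proposition covers only $0<\tau\le1$, and its bound carries a factor of $\tau$, so it does not cover large $\tau$. Write
\[
 B(\tau,\lambda)=\frac{6\tau+(18+24\tau)\lambda+(24+18\tau)\lambda^2+6\lambda^3}{1+\tau}.
\]
Each coefficient quotient is bounded independently of $\tau>0$:
\[
 \frac{\tau}{1+\tau}<1,\qquad \frac{18+24\tau}{1+\tau}\le24,\qquad \frac{24+18\tau}{1+\tau}\le24,\qquad \frac1{1+\tau}<1.
\]
So it suffices to show that $\lambda$ stays in a fixed bounded interval whenever it lies in the absorbing interval.

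For this I will use the representation from the proof of Proposition~\ref{p.tau-and-lambda-not}(ii),
\[
 \lambda_0(\tau)=-\frac{1.2\,\tau}{(1+\tau)+\sqrt{P(\tau)}} .
\]
By part (i) of that proposition, $\sqrt{P(\tau)}\ge0.8(1+\tau)$. Hence
\[
 0<-\lambda_0(\tau)\le\frac{1.2\,\tau}{1.8(1+\tau)}<\frac23 .
\]
Every $\lambda\in[2\lambda_0,\tfrac9{10}\lambda_0]$ is therefore negative and satisfies $|\lambda|\le2|\lambda_0|<4/3$. This agrees with the bound $\lambda\ge-(1+\tau)/3$ used in Lemma~\ref{lemma:estimate_on_d}, but has the advantage of being uniform in $\tau$.

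Inserting these bounds term by term gives
\[
 |B|\le 6+24\cdot\tfrac43+24\cdot\tfrac{16}9+6\cdot\tfrac{64}{27}.
\]
Any $C^*$ larger than this number (for example $C^*=100$) works. The constant is absolute: it does not depend on $s$, $T_0$, or the initial point.

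There is no real obstacle here. The only point needing care is not to rely on Proposition~\ref{lemma:estimate_on_B} alone, because for large $\tau$ its bound $42\tau(1+\lambda)/(1+\tau)$ is not available as stated. The direct coefficient bound above avoids that restriction.
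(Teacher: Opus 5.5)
Your argument is correct, but it takes a different route from the paper. The paper reuses the factorization
\[
 B=6\,\frac{1+\lambda}{1+\tau}\bigl(\tau(1+3\lambda)+\lambda(3+\lambda)\bigr)
\]
from the proof of Proposition~\ref{lemma:estimate_on_B}, together with the lower bound $-7\tau$ of Lemma~\ref{l.ineqforB}. The key observation there is that Lemma~\ref{l.ineqforB}, unlike the Proposition, is proved for every $\tau>0$. This gives $|B|\le 42\tau|1+\lambda|/(1+\tau)\le 42$, using $-6/5<\lambda<0$.

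You instead expand $B$ and bound each coefficient quotient uniformly in $\tau$. Your only input on $\lambda$ is $|\lambda|\le 2|\lambda_0|<4/3$, which follows from part (i) of Proposition~\ref{p.tau-and-lambda-not}. The coefficient bounds and the final numerical bound (about $94.9$) are right.

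What each route buys:
\begin{itemize}
\item Your route is more elementary and self-contained, since it never needs the signed estimate of Lemma~\ref{l.ineqforB}.
\item The paper's route gives a better constant. It also keeps the factor $\tau/(1+\tau)$, which shows $B\to0$ as $\tau\to0$.
\end{itemize}
Neither difference matters for Lemma~\ref{l.explicitupperbound}, where only the existence of some $C^*$ is used.

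One small correction to your motivation. The factor $\tau$ in the Proposition's bound is harmless, because $\tau/(1+\tau)<1$. The only real obstruction is the stated hypothesis $0<\tau\le1$, and the paper removes it by noting that the underlying lemma holds for all $\tau$.
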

\begin{proof}
The factorization in the proof of Proposition~\ref{lemma:estimate_on_B}
and Lemma~\ref{l.ineqforB} give
$|B|\le42\tau|1+\lambda|/(1+\tau)$ for every $\tau>0$.
Since $-6/5<\lambda<0$, one has $|1+\lambda|\le1$;
thus any $C^*>42$ works. This does not use the signed inequality
outside $0<\tau\le1$.
\end{proof}

The main part of the proof of Proposition \ref{lemma:estimate_on_B} is the following estimate on $B(\tau(s),\lambda )$.
\begin{Lemma} \label{l.ineqforB}
If $\lambda $ belongs to the absorbing interval $\left[2\lambda _0(\tau(s)),\frac{9}{10}\lambda _0(\tau(s))\right]$ then
\[
\adjustbox{max width=\linewidth}{$\displaystyle
-7\tau\le \tau(1+3\lambda )+\lambda (3+\lambda )\le -0.5 \tau.
$}
\]
\end{Lemma}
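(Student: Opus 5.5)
The plan is to regard the middle expression as a quadratic in $\lambda$. Set
\[
 g(\lambda)=\tau(1+3\lambda)+\lambda(3+\lambda)=\lambda^2+3(1+\tau)\lambda+\tau .
\]
I would check that $g$ is monotone on the absorbing interval, so that only its two endpoints need to be examined. Then I would evaluate $g$ at $\lambda=\tfrac9{10}\lambda_0$ and at $\lambda=2\lambda_0$. At each endpoint I would eliminate $\lambda_0^2$ using the quadratic equation that $\lambda_0$ satisfies.

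\emph{Monotonicity.} We have $g'(\lambda)=2\lambda+3(1+\tau)$. The proof of Lemma~\ref{lemma:estimate_on_d} already gives $\lambda\ge2\lambda_0\ge-(1+\tau)/3$ on the interval, using Proposition~\ref{p.tau-and-lambda-not}(i). Hence
\[
 g'\ge\tfrac73(1+\tau)>0,
\]
so $g$ is increasing on $[2\lambda_0,\tfrac9{10}\lambda_0]$. It therefore suffices to prove $g(\tfrac9{10}\lambda_0)\le-0.5\tau$ and $g(2\lambda_0)\ge-7\tau$.

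\emph{Elimination of $\lambda_0^2$.} Write $L=-\lambda_0>0$. By \eqref{small-tau-new}, $\lambda_0$ is a root of $3\lambda^2+5(1+\tau)\lambda+3\tau=0$. In terms of $L$ this reads
\[
 L^2=\tfrac13\bigl(5(1+\tau)L-3\tau\bigr).
\]
From the explicit formula $L=1.2\tau/(1+\tau+\sqrt{P})$ and Proposition~\ref{p.tau-and-lambda-not}(i), we get $0.6\,\tau/(1+\tau)<L\le\tfrac23\,\tau/(1+\tau)$. The lower bound is what gets used, in the form $(1+\tau)L>0.6\tau$.

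\emph{Upper endpoint.} Substituting $\lambda=-0.9L$ gives $g=0.81L^2-2.7(1+\tau)L+\tau$. Replacing $L^2$ by the relation above yields
\[
 g=-1.35(1+\tau)L+0.19\,\tau .
\]
Using $(1+\tau)L>0.6\tau$, this is less than $-0.81\tau+0.19\tau=-0.62\tau<-0.5\tau$.

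\emph{Lower endpoint.} Substituting $\lambda=-2L$ gives $g=4L^2-6(1+\tau)L+\tau$. The same elimination yields
\[
 g=\tfrac23(1+\tau)L-3\tau\ge-3\tau\ge-7\tau .
\]

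There is no real obstacle here. The only point needing care is the direction of monotonicity, since the argument depends on the interval lying to the right of the vertex $-\tfrac32(1+\tau)$. The bound $2\lambda_0\ge-(1+\tau)/3$ secures this.
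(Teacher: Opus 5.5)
Your proof is correct and follows essentially the paper's route: treat the expression as a quadratic in $\lambda$, eliminate $L^2$ via the root identity $3L^2-5(1+\tau)L+3\tau=0$, use $0.6\,\tau/(1+\tau)<L\le\tfrac23\,\tau/(1+\tau)$, and check endpoints. The differences are minor. You use monotonicity, since the interval lies to the right of the vertex, where the paper uses convexity for the upper bound and the termwise bound $c^2L^2/\tau-3ca+1\ge-3$ for the lower bound. Your exact elimination at the endpoints gives the slightly sharper constant $-0.62\tau$.
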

\begin{proof}
Put $L=-\lambda_0$ and $a=(1+\tau)L/\tau$.
The root identity gives $L^2/\tau=(5/3)a-1$, and
$3/5\le a\le2/3$, $0\le L^2/\tau\le1/9$.
For $R_\tau(\lambda)=\lambda^2+3(1+\tau)\lambda+3\tau/2$,
\[
 R_\tau(-2L)/\tau\le4/9-18/5+3/2<0,
 \quad R_\tau(-9L/10)/\tau\le9/100-81/50+3/2<0.
\]
Convexity gives $R_\tau<0$ throughout the interval, proving the upper
bound. For $\lambda=-cL$, $9/10\le c\le2$, the expression to be
bounded, divided by $\tau$, is $c^2L^2/\tau-3ca+1\ge-3$.
This proves, in particular, the stated lower bound $-7$.
\end{proof}

Now we are ready to prove Proposition \ref{lemma:estimate_on_B}.

\begin{proof}[Proof of Proposition \ref{lemma:estimate_on_B}] We have
\[
\adjustbox{max width=\linewidth}{$\displaystyle
\begin{gathered}
    B(\tau(s), \lambda )=6\dfrac{1+\lambda (s)}{1+\tau(s)}
\left( \tau(s)+ 3 \lambda (s) \tau(s) + 3 \lambda (s) + \lambda ^2(s)\right)=\\
=6\dfrac{1+\lambda (s)}{1+\tau(s)}
\left( \tau(s)(1+ 3 \lambda (s))+ \lambda (s)(3  + \lambda (s))\right).
\end{gathered}
$}
\]
Now the required estimates follow directly from Lemma \ref{l.ineqforB}.
\end{proof}

\subsubsection{Evolution of 2-jets}

In the following statement we show that solutions of the equation
$\frac{d\kappa}{ds}=-d(s)\kappa-b(s)$ cannot grow above a uniform upper bound
on solutions of the equation $\frac{d\bar\kappa}{ds}=-b(s)$.

\begin{Lemma} \label{l.upperbound}
Assume $d(s)\ge0$ on the interval under consideration.
Denote by $\bar\kappa(\kappa_0, \bar s, T_0)$ the value at $s=T_0$ of the solution of equation
\[
\adjustbox{max width=\linewidth}{$\displaystyle
\frac{d\bar\kappa}{ds}=-b(s)\ \ \ \text{with the initial condition} \ \ \ \bar\kappa(\bar s)=\kappa_0.
$}
\]
Set $M=\sup_{\bar s\in [0, T_0], |\kappa_0|\le \kappa^*}|\bar\kappa(\kappa_0, \bar s, T_0)|$.
Then $|\kappa(T_0)|\le M$, where $\kappa(s)$ is a solution of the equation
$\frac{d\kappa}{ds}=-d(s)\kappa-b(s)$, $|\kappa(0)|\le \kappa^*$.
\end{Lemma}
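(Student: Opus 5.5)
The plan is to solve the linear equation by variation of constants and then show that $\kappa(T_0)$ is a convex average of values $\bar\kappa(\kappa_0,\bar s,T_0)$ with admissible data $(\kappa_0,\bar s)$. The bound $|\kappa(T_0)|\le M$ then follows at once. The only properties of $d$ used are $d\ge0$ and enough regularity (say continuity of $d$ and $b$ along the solution) to write the integrals.

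\emph{Step 1: variation of constants.} For $0\le s\le T_0$ set
\[
 D(s)=\int_s^{T_0}d(\sigma)\,d\sigma,\qquad w(s)=e^{-D(s)}.
\]
Since $d\ge0$, $D$ is nonincreasing with $D(T_0)=0$. Hence $w$ is nondecreasing, $0<w\le1$ and $w(T_0)=1$. Duhamel's formula gives
\[
 \kappa(T_0)=w(0)\kappa(0)-\int_0^{T_0}w(s)b(s)\,ds .
\]

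\emph{Step 2: rewrite against the measure $dw$.} Put $G(\sigma)=\int_\sigma^{T_0}b(s)\,ds$. Then the solution of $\bar\kappa'=-b$ with $\bar\kappa(\bar s)=\kappa_0$ satisfies
\[
 \bar\kappa(\kappa_0,\bar s,T_0)=\kappa_0-G(\bar s).
\]
Write $w(s)=w(0)+\int_0^s w'(\sigma)\,d\sigma$ with $w'=dw\ge0$, and exchange the order of integration (Fubini):
\[
 \int_0^{T_0}w(s)b(s)\,ds=w(0)G(0)+\int_0^{T_0}G(\sigma)\,w'(\sigma)\,d\sigma .
\]
Consequently
\[
 \kappa(T_0)=w(0)\,\bar\kappa(\kappa(0),0,T_0)+\int_0^{T_0}\bar\kappa(0,\sigma,T_0)\,w'(\sigma)\,d\sigma .
\]

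\emph{Step 3: convexity.} The weights are nonnegative: $w(0)>0$ and $w'\ge0$. Their total mass is $w(0)+\int_0^{T_0}w'\,d\sigma=w(T_0)=1$. Both kinds of data are admissible in the definition of $M$. The first term uses $\bar s=0$ and $|\kappa(0)|\le\kappa^*$. The integral uses initial value $0$, and $|0|\le\kappa^*$. So every averaged quantity has absolute value at most $M$, and therefore $|\kappa(T_0)|\le M$.

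The only real point is Step 2: recognizing that, because $d\ge0$, the damping factor $w$ is a monotone weight. Its increments act as a probability measure on restarting times $\bar s$ at which the undamped solution restarts from $\kappa_0=0$. This is exactly why the supremum in $M$ is taken over all $\bar s\in[0,T_0]$ and not only $\bar s=0$. If $d$ were merely measurable, I would run the same computation with $w$ absolutely continuous. Alternatively, I would prove the statement for continuous $d$ and approximate.
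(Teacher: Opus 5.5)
Your argument is correct, but it takes a different route from the paper.

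The paper uses a last-crossing comparison. Since $\bar\kappa(\kappa_0,T_0,T_0)=\kappa_0$, one has $M\ge\kappa^*$, so only the case $|\kappa(T_0)|>\kappa^*$ needs work. If, say, $\kappa(T_0)>\kappa^*$, the paper takes the last time $\hat s$ with $\kappa(\hat s)=\kappa^*$. On $(\hat s,T_0]$ one has $\kappa>0$, hence $-d\kappa\le0$, and integrating $\kappa'\le-b$ gives $\kappa(T_0)\le\bar\kappa(\kappa^*,\hat s,T_0)\le M$. The negative side is symmetric.

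Your Duhamel computation instead gives the exact identity
\[
 \kappa(T_0)=w(0)\,\bar\kappa(\kappa(0),0,T_0)+\int_0^{T_0}\bar\kappa(0,\sigma,T_0)\,w'(\sigma)\,d\sigma
\]
with a probability weight. This is an identity rather than an inequality, and it shows slightly more. $\kappa(T_0)$ is an average of undamped solutions restarted from $0$ at intermediate times, together with the single one started from $\kappa(0)$ at $\bar s=0$. Restarts from $\pm\kappa^*$ at interior times are never needed.

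The price is that your argument uses linearity of the equation and the sign $d\ge0$ on the whole interval, so that $w$ is monotone. It also needs enough integrability to justify Fubini. The paper's barrier argument uses only the sign of $d\kappa$ on the final excursion beyond level $\kappa^*$. It would therefore survive replacing $d(s)\kappa$ by any nonlinear damping term $g(s,\kappa)$ with $\kappa\,g(s,\kappa)\ge0$.

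For the linear equation actually used in Lemma~\ref{l.explicitupperbound}, either proof suffices.
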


\begin{proof}
If $|\kappa(T_0)|\le\kappa^*$ the assertion is immediate. If
$\kappa(T_0)>\kappa^*$, let $\hat s$ be the last time at which
$\kappa(\hat s)=\kappa^*$. On $(\hat s,T_0]$ we have $\kappa>0$, so
$\kappa'=-d\kappa-b\le-b$. Integration gives
\[
 \kappa(T_0)\le\kappa^*-\int_{\hat s}^{T_0}b(s)\,ds
               =\bar\kappa(\kappa^*,\hat s,T_0)\le M.
\]
If $\kappa(T_0)<-\kappa^*$, use the last time with
$\kappa(\hat s)=-\kappa^*$; then $\kappa'\ge-b$ until $T_0$, giving
$\kappa(T_0)\ge\bar\kappa(-\kappa^*,\hat s,T_0)\ge-M$.
These non-strict inequalities include $d=0$.
\end{proof}
It is clear that the upper bound $M$ given by Lemma \ref{l.upperbound} can be large if $T_0$ is large (or, equivalently, if we are taking the solution that starts very close to the $y$-axis). The next statement provides an explicit estimate of that upper bound in terms of $T_0$.

\begin{Lemma} \label{l.explicitupperbound}
Assume a fixed bound on the incoming slope and an initial ratio
$\tau(0)$ sufficiently large for Lemma~\ref{lem:absorb} to apply.
If $T_0$ is large enough then $M<2C^*e^{T_0}$, where $C^*>0$
is the bound on $|B|$ in the absorbing interval supplied by
Corollary~\ref{c.upperboundforB}. The threshold may depend on the
initial slope bound and on $\kappa^*$.
\end{Lemma}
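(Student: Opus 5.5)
The bound will follow from the definition of $M$ in Lemma~\ref{l.upperbound} and an explicit integration of $b$. For any $\bar s\in[0,T_0]$ and $|\kappa_0|\le\kappa^*$ we have
\[
 \bar\kappa(\kappa_0,\bar s,T_0)=\kappa_0-\int_{\bar s}^{T_0}b(s)\,ds ,
\]
so it suffices to show
\[
 \int_0^{T_0}|b(s)|\,ds\le (2-\epsilon)C^*e^{T_0}
\]
for some fixed $\epsilon>0$. The constant $\kappa^*$ is then absorbed once $T_0$ is large, since $\kappa^*<\epsilon C^*e^{T_0}$ eventually. This is exactly the permitted dependence of the threshold on $\kappa^*$.

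To estimate $b=B/(x+y)$ we use the normalized passage $x(s)=e^{s-2T_0}$, $y(s)=e^{-s}$. Then
\[
 x+y\ \ge\ y=e^{-s}, \qquad\text{so}\qquad \frac{1}{x+y}\le e^{s}.
\]
Once $\lambda(s)$ lies in the absorbing interval, Corollary~\ref{c.upperboundforB} gives $|B|<C^*$, hence $|b(s)|<C^*e^{s}$. Integrating over $[s_0,T_0]$ gives at most $C^*(e^{T_0}-e^{s_0})<C^*e^{T_0}$. This is already a factor two below the target bound, which leaves room for the transient.

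The initial transient $[0,s_0]$ is where the slope may not yet be absorbed. Here $s_0=s_0(\lambda^*)$ comes from Lemma~\ref{lem:absorb}; this is why we assume $\tau(0)$ is large enough for that lemma. On this interval $\lambda$ stays bounded by a constant depending only on $\lambda^*$. To see this, note that $\lambda$ obeys the Riccati equation, whose coefficients are bounded since $\tau/(1+\tau)\le1$. Comparison with the constant solutions near the roots $\lambda_{0,1}\approx-0.6,\ -(5/3)(1+\tau)+0.6$, or simply the observation that for large $|\lambda|$ the quadratic term pushes toward the roots, keeps $\lambda$ in a fixed compact set on a time interval of bounded length. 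In fact, for large $\tau$ the equation is close to $\lambda'=-5(\lambda+0.6)$, as in the proof of Lemma~\ref{lem:absorb}. Consequently $|B|\le K(\lambda^*)$ on $[0,s_0]$, and
\[
 \int_0^{s_0}|b|\le K e^{s_0},
\]
which is a constant independent of $T_0$. For $T_0$ large this is below $\tfrac12 C^*e^{T_0}$.

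Combining the two pieces gives $|\bar\kappa(\kappa_0,\bar s,T_0)|\le \kappa^*+Ke^{s_0}+C^*e^{T_0}<2C^*e^{T_0}$ for $T_0$ large, uniformly in $\bar s$ and $\kappa_0$. This proves $M<2C^*e^{T_0}$.

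The main obstacle is the transient step. We need a bound on $\lambda$ before absorption that is uniform in the starting point. We also need to know that the absorption time $s_0$ does not grow with $T_0$; this uses that $\tau(0)\sim e^{2T_0}$ is large, so the slope dynamics near $\lambda\approx-0.6$ are essentially autonomous. If the crude Riccati bound is awkward, we will instead restrict the supremum defining $M$ to $\bar s\ge s_0$. In that case we handle the initial segment by noting that $d\ge0$ need only hold after entry into the cone, as the paper remarks, and that $\kappa(s_0)$ is bounded by a constant depending on $\lambda^*$ and $\kappa^*$; this constant is then used as the new $\kappa^*$.
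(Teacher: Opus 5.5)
Your proposal is correct and follows essentially the same route as the paper. You split at the absorption time, which is bounded independently of $T_0$. You bound the transient part of $\int|b|\,ds$ by a $T_0$-independent constant, using $x+y\ge e^{-s}$ and bounded slopes. You bound the absorbed part by $C^*e^{T_0}$ via Corollary~\ref{c.upperboundforB}, and then absorb $\kappa^*$ and the transient constant for large $T_0$.

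Your closing remark matches the paper's final paragraph: apply the comparison only after entry, using the value of $\kappa$ at the entry time as the new initial bound. Note that this remark still needs bounded slopes on the transient, since $d$ and $b$ must be bounded there, so it does not remove that step.
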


\begin{proof}
Let $s_a$ be an entrance time into the absorbing interval, bounded
above by a constant $s_*$ independent of $T_0$. On $[0,s_a]$ the
slope remains uniformly bounded. Since $B(\tau,\lambda)$ is bounded
for bounded $\lambda$ and $\tau\ge0$, and $x+y\ge y=e^{-s}$,
there is a constant $K_0$, independent of $T_0$, such that
\[
 \int_0^{s_a}|b(s)|\,ds\le K_0\int_0^{s_*}e^s\,ds=:K_1.
\]
On $[s_a,T_0]$ the absorbing property and
Corollary~\ref{c.upperboundforB} give $|B|\le C^*$. Thus, for
any $\bar s\in[0,T_0]$ and $|\kappa_0|\le\kappa^*$,
\[
 |\bar\kappa(\kappa_0,\bar s,T_0)|
 \le\kappa^*+K_1+C^*\int_{s_a}^{T_0}e^s\,ds
 \le\kappa^*+K_1+C^*e^{T_0}<2C^*e^{T_0}
\]
for all sufficiently large $T_0$.

When estimating the actual curvature, the comparison in
Lemma~\ref{l.upperbound} is applied only after $s_a$, where $d\ge0$.
On the bounded initial interval, $d$ and $b$ are uniformly bounded,
so the linear curvature equation first gives a uniform bound on
$|\kappa(s_a)|$. Using that bound in place of $\kappa^*$ in the same
integral estimate gives the required bound at $T_0$. No sign
condition on $d$ before entry is needed.
\end{proof}

\begin{Lemma} \label{l.estimateonb}
Assume that
$\lambda(s)\in[2\lambda_0(\tau(s)),\frac9{10}\lambda_0(\tau(s))]$
for every $s\in[T_0,2T_0]$.
There are constants $C_1\ge C_2>0$ such that for $s\in [T_0, 2T_0]$ (i.e. for $\tau\le 1$) we have
\[
\adjustbox{max width=\linewidth}{$\displaystyle
-C_1e^{4T_0-3s}\le b(s)\le -C_2e^{4T_0-3s}.
$}
\]
\end{Lemma}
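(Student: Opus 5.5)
We work in the normalized model variables, where $x(s)=e^{s-2T_0}$, $y(s)=e^{-s}$ and $\tau(s)=y/x=e^{2T_0-2s}$. On $[T_0,2T_0]$ this gives $\tau\in[e^{-2T_0},1]$, so $0<\tau\le1$ throughout. By definition $b=B(\tau,\lambda)/(x+y)$. The plan is to combine the signed two-sided bound on $B$ from Proposition~\ref{lemma:estimate_on_B} with elementary bounds on $\tau$, $1+\tau$, $1+\lambda$ and $x+y$. The hypothesis that $\lambda$ stays in the absorbing interval on all of $[T_0,2T_0]$ is exactly what Proposition~\ref{lemma:estimate_on_B} requires, together with $0<\tau\le1$.

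First, Proposition~\ref{lemma:estimate_on_B} gives
\[
 -42\tau\frac{1+\lambda}{1+\tau}\le B\le-3\tau\frac{1+\lambda}{1+\tau}.
\]
Next we need two-sided bounds on $1+\lambda$. From the absorbing interval and Proposition~\ref{p.tau-and-lambda-not}(i), we have $2\lambda_0\ge-\frac53(1+\tau)(1-0.8)=-(1+\tau)/3\ge-2/3$ for $\tau\le1$. Hence $1+\lambda\ge1/3$. Since $\lambda<0$, also $1+\lambda\le1$. Combining this with $1\le1+\tau\le2$, we obtain
\[
 -42\tau\le B\le-\tau/2 .
\]

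Then we bound the denominator. Here $x+y=e^{s-2T_0}(1+\tau)$, so $e^{s-2T_0}\le x+y\le2e^{s-2T_0}$. Substituting $\tau=e^{2T_0-2s}$ gives
\[
 \frac{\tau}{x+y}\in\Bigl[\tfrac12e^{4T_0-3s},\ e^{4T_0-3s}\Bigr].
\]
Multiplying the sign-definite bounds yields $-42e^{4T_0-3s}\le b\le-\tfrac14e^{4T_0-3s}$. So $C_1=42$ and $C_2=1/4$ will do.

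There is no real obstacle here; the only care needed is bookkeeping of signs. Both $B$ and $b$ are negative, so the upper bound on $x+y$ must be used for the upper bound on $b$, and the lower bound on $x+y$ for the lower bound on $b$. One must also check that the lower bound $1+\lambda\ge1/3$ is uniform on $\tau\le1$, which is what keeps $C_2>0$. If the normalization is not exact (general strips), the same argument goes through with $x\asymp e^{s-2T_0}$ and $\tau\asymp e^{2T_0-2s}$, at the cost of changing the constants.
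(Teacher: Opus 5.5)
Your proof is correct and follows the paper's argument essentially step for step, arriving at the same constants $C_1=42$, $C_2=1/4$. The only cosmetic differences are that you obtain $\lambda\ge-2/3$ from $\sqrt P\ge\frac45(1+\tau)$ (as in the proof of Lemma~\ref{lemma:estimate_on_d}) rather than from $\lambda_0(1)=-1/3$ and monotonicity of $\lambda_0$, and that you bound the denominator via $x+y=e^{s-2T_0}(1+\tau)$ rather than via $e^{s-2T_0}\ge e^{-s}$.
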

\begin{proof}[Proof of Lemma \ref{l.estimateonb}.]
Notice first that if $\tau\le 1$ then $\lambda _0(\tau)\in \left[-\frac{1}{3}, 0\right)$. Indeed,
\[
\adjustbox{max width=\linewidth}{$\displaystyle
\lambda _0|_{\tau=1}=-\frac{5}{6}\cdot 2+\frac{5}{6}\sqrt{2^2-1.44}=-\frac{5}{3}+\frac{5}{6}\cdot 1.6=-\frac{1}{3},
$}
\]
and $\lambda _0(1)<\lambda _0(\tau)<0$ if $\tau<1$. This implies that if $\lambda \in [2\lambda _0, \frac{9}{10}\lambda _0]$ then $\lambda \in \left[-\frac{2}{3}, 0\right)$. Therefore for $\tau\le 1$ we have
\[
\adjustbox{max width=\linewidth}{$\displaystyle
\frac{1+\lambda }{1+\tau}\in \left[\frac{1-\frac{2}{3}}{2}, 1\right)=\left[\frac{1}{6}, 1\right).
$}
\]
Proposition \ref{lemma:estimate_on_B} implies now that for $\tau\le 1$ we have $-42\tau\le B(\lambda , \tau)\le -\frac{1}{2}\tau$.
Since $x(s)=e^{s-2T_0}, y(s)=e^{-s}$, and $\tau=\frac{y(s)}{x(s)}=e^{2T_0-2s}$, we have
$
b(s)=\frac{B(\lambda , s)}{x(s)+y(s)}=\frac{B(\lambda , s)}{e^{s-2T_0}+e^{-s}},
$
and therefore
\[
\adjustbox{max width=\linewidth}{$\displaystyle
-42\frac{e^{2T_0-2s}}{e^{s-2T_0}+e^{-s}}\le b(s)\le -\frac{1}{2}\frac{e^{2T_0-2s}}{e^{s-2T_0}+e^{-s}}.
$}
\]
Since $\tau\le 1$, we have $s\in[T_0, 2T_0]$, and $e^{s-2T_0}\ge e^{-s}$. Hence
\[
\adjustbox{max width=\linewidth}{$\displaystyle
\begin{gathered}
    -42e^{4T_0-3s}=-42\frac{e^{2T_0-2s}}{e^{s-2T_0}}\le
    -42\frac{e^{2T_0-2s}}{e^{s-2T_0}+e^{-s}}\le b(s)\le \\
    \le -\frac{1}{2}\frac{e^{2T_0-2s}}{e^{s-2T_0}+e^{-s}}
    \le -\frac{1}{2}\frac{e^{2T_0-2s}}{2e^{s-2T_0}}=-\frac{1}{4}e^{4T_0-3s}.
\end{gathered}
$}
\]
\end{proof}

\begin{Lemma} \label{l.finalmu}
Given constants $C^*>0$ and $C_1\ge C_2>0$, there is a constant $C>0$ such that for all
large enough $T_0$ the following holds. Suppose $\kappa(s)$, $s\in [T_0, 2T_0]$
is a solution of the equation
\begin{eqnarray} \label{e.solutionmu}
\frac{d\kappa}{ds}=-d(s)\kappa-b(s), \ \ |\kappa(T_0)|\le 2C^*e^{T_0},
\end{eqnarray}
and the coefficients $d(s), b(s)$ satisfy the following estimates for $s\in[T_0, 2T_0]$ :
\[
\adjustbox{max width=\linewidth}{$\displaystyle
4\le d(s)\le 9, \ \ \ \text{and}\ \ b(s)\in[-C_1e^{4T_0-3s}, -C_2e^{4T_0-3s}].
$}
\]
Then $\kappa(2T_0)\in (0, Ce^{-2T_0})$.
\end{Lemma}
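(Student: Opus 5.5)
We will solve the linear equation \eqref{e.solutionmu} explicitly by variation of constants. Writing $D(s,\sigma)=\int_\sigma^s d(r)\,dr$, we have
\[
 \kappa(2T_0)=e^{-D(2T_0,T_0)}\kappa(T_0)-\int_{T_0}^{2T_0}e^{-D(2T_0,\sigma)}\,b(\sigma)\,d\sigma .
\]
The proof then splits into two terms: a decaying homogeneous term, and a forcing term that is positive because $b<0$. The plan is to show that the forcing term is bounded above and below by constant multiples of $e^{-2T_0}$, and that the homogeneous term is $o(e^{-2T_0})$.

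For the homogeneous term we use $d\ge4$, which gives $D(2T_0,T_0)\ge4T_0$. Hence
\[
 |e^{-D(2T_0,T_0)}\kappa(T_0)|\le 2C^*e^{T_0}e^{-4T_0}=2C^*e^{-3T_0}.
\]

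For the forcing term, the upper bound uses $d\ge4$ and $-b\le C_1e^{4T_0-3\sigma}$:
\[
 \int_{T_0}^{2T_0}e^{-4(2T_0-\sigma)}C_1e^{4T_0-3\sigma}\,d\sigma
 =C_1e^{-4T_0}\int_{T_0}^{2T_0}e^{\sigma}\,d\sigma\le C_1e^{-2T_0}.
\]
The lower bound is the step that requires care. Using only $d\le9$ gives the integrand
\[
 e^{-9(2T_0-\sigma)}C_2e^{4T_0-3\sigma}=C_2e^{-14T_0}e^{6\sigma}.
\]
This integrand concentrates near $\sigma=2T_0$, and the integral is at least
\[
 C_2e^{-14T_0}\,\frac{e^{12T_0}-e^{6T_0}}{6}\ge\frac{C_2}{12}\,e^{-2T_0}
\]
for large $T_0$. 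So the crude bound $d\le9$ already suffices, because the dominant contribution comes from $\sigma$ within $O(1)$ of $2T_0$, where the exponent $D(2T_0,\sigma)$ is bounded.

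Combining the three estimates, for $T_0$ large we get
\[
 \kappa(2T_0)\ge\frac{C_2}{12}e^{-2T_0}-2C^*e^{-3T_0}>0
\]
and
\[
 \kappa(2T_0)\le C_1e^{-2T_0}+2C^*e^{-3T_0}<Ce^{-2T_0}
\]
with $C=C_1+1$. The only potential obstacle is making sure the lower bound on the forcing integral beats the homogeneous error. This holds because the homogeneous term is smaller by the factor $e^{-T_0}$. That gap is exactly what the hypothesis $|\kappa(T_0)|\le2C^*e^{T_0}$, supplied by Lemma~\ref{l.explicitupperbound}, combined with $d\ge4$ from Lemma~\ref{lemmaimproved:estimate_on_d}, provides.
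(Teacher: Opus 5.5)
Your proof is correct and follows the same route as the paper. Both use variation of constants, bound the homogeneous term by $2C^*e^{-3T_0}$ using $d\ge4$, bound the forcing integral above via $d\ge4$ (giving $C_1e^{-2T_0}$), and bound it below via $d\le9$ (giving $\frac{C_2}{6}(e^{-2T_0}-e^{-8T_0})$). The only cosmetic difference is your choice $C=C_1+1$ for large $T_0$, where the paper takes any $C>C_1+2C^*$.
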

\begin{proof}
Variation of constants and $4\le d\le9$ give
\begin{align*}
 \kappa(2T_0)
 &\le 2C^*e^{-3T_0}
       +C_1\int_{T_0}^{2T_0}e^{-4(2T_0-s)}e^{4T_0-3s}\,ds\\
 &\le 2C^*e^{-3T_0}+C_1e^{-2T_0},\\
 \kappa(2T_0)
 &\ge-2C^*e^{-3T_0}
       +C_2\int_{T_0}^{2T_0}e^{-9(2T_0-s)}e^{4T_0-3s}\,ds\\
 &=-2C^*e^{-3T_0}+\frac{C_2}{6}
                           (e^{-2T_0}-e^{-8T_0}).
\end{align*}
The lower bound is positive for large $T_0$, and the upper bound
is less than $Ce^{-2T_0}$ for any fixed $C>C_1+2C^*$.
\end{proof}

\section{Evolution of curves in the general case}\label{sec:revised-general}
We retain the strategy of the simplified system, but use fixed barriers
and integral estimates. This avoids differentiating moving roots with
respect to the rapidly varying phase.
Throughout this section, $r=x+y$, $a=1+A$, $b=1+B$, and
\begin{equation}\label{e.generalsyst}
 F=xr^3a(x,y,t,e),\qquad F^{(2)}=-yr^3b(x,y,t,e).
\end{equation}
A constant positive factor multiplying both equations is harmless: it
is included in $ds/dt$ below. We assume $A,B\in O_\eta$ for some
$\eta>0$, with two spatial derivatives uniformly bounded as specified in
Lemma~\ref{lem:remainder-calculus}. Parameter estimates additionally
require the same bounds for $\partial_e A,\partial_e B$. For the Sitnikov
normal form $\eta=4$; the local argument does not require this particular
value. We work in $0<x,y\le\delta$, with $\delta$ fixed and sufficiently
small. All constants may depend on $\delta$, the remainder bounds and
the initial slope and curvature bounds, but not on the passage length.

\subsection{Time conventions and uniform passage estimates}
All slopes and curvatures are those of images at a common physical time.
Only after their differential equations have been computed do we
reparametrize each solution by $ds=r^3dt$. In particular,
\begin{equation}\label{e.generalsystlb}
 x_s=ax,\qquad y_s=-by,\qquad t_s=r^{-3},\qquad
 \tau_s=-(a+b)\tau,\quad \tau=y/x.
\end{equation}
A fixed rescaled-time map is not the map being differentiated.

\begin{Lemma}\label{lem:integrable-errors}
For a passage contained in this square, and every $p>0$,
\[
 \int_0^S r(s)^p\,ds\le C_p\delta^p.
\]
Consequently, uniformly on the whole passage,
\[
 x(s)\asymp x(0)e^s,\quad y(s)\asymp y(0)e^{-s},\quad
 x(s)y(s)\asymp x(0)y(0).
\]
Here $u\asymp v$ means $C^{-1}v\le u\le Cv$ with a uniform constant.
\end{Lemma}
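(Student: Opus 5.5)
We start from the rescaled system \eqref{e.generalsystlb}: $x_s=ax$, $y_s=-by$ with $a=1+A$, $b=1+B$ and $A,B\in O_\eta$. Since $\delta$ is small, $|A|,|B|\le C r^\eta\le 1/2$ on the square. Hence $x$ increases and $y$ decreases along the passage, both at exponential rates comparable to $e^{s}$ and $e^{-s}$. The plan is to bound $\int_0^S r^p\,ds$ directly, and then feed $\int |A|\,ds$ and $\int|B|\,ds$ into the logarithmic equations.

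\textbf{Step 1: crude exponential bounds.} From $x_s\ge x/2$ and $y_s\le -y/2$ we get
\[
 x(s)\ge x(0)e^{s/2},\qquad y(s)\le y(0)e^{-s/2},
\]
valid as long as the orbit stays in the square. Split the passage at the time $s_1$ where $x=y$; it is unique because $\tau=y/x$ is strictly decreasing, since $\tau_s=-(a+b)\tau$ with $a+b\ge1$. Before $s_1$ we have $r\le 2y\le 2y(0)e^{-s/2}\le 2\delta e^{-s/2}$. After $s_1$ we have $r\le 2x$. Running time backwards from the exit time $S$, monotonicity gives $x(s)\le x(S)e^{-(S-s)/2}\le\delta e^{-(S-s)/2}$. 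Therefore
\[
 \int_0^S r^p\,ds\le (2\delta)^p\Bigl(\int_0^\infty e^{-ps/2}ds+\int_{-\infty}^S e^{-p(S-s)/2}ds\Bigr)=\frac{4(2\delta)^p}{p}=:C_p\delta^p .
\]
The bound is independent of $S$, and also of the phase and the parameter, because only the uniform bound $|A|,|B|\le1/2$ was used.

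\textbf{Step 2: sharp asymptotics.} Write $(\log x)_s=1+A$ and $(\log y)_s=-(1+B)$. Then
\[
 \log\frac{x(s)}{x(0)e^{s}}=\int_0^sA\,d\sigma,
\]
and since $|A|\le C r^\eta$, Step 1 with $p=\eta$ bounds this uniformly by $CC_\eta\delta^\eta$. The same argument gives $y(s)\asymp y(0)e^{-s}$. For the product, $(\log xy)_s=A-B$ is integrable in the same way, so $x(s)y(s)\asymp x(0)y(0)$. The constants depend only on $\delta$, $\eta$ and the $O_\eta$ constants.

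\textbf{Main obstacle.} The only delicate point is the apparent circularity: we want to use Step 1 to control the error terms, while Step 1 needs the orbit to remain in the square. This is resolved by restricting to passages contained in the square, as the statement does. On such passages the pointwise bound $|A|,|B|\le1/2$ holds throughout, and it gives the monotonicity and the crude rates without any bootstrap. A second subtlety is that $A$ and $B$ depend on the fast phase $t$. The argument uses only sup bounds on $A$ and $B$ at each point, never their time derivatives, so the $t$-dependence causes no difficulty.
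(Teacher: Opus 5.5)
Your proof is correct and is essentially the paper's argument: with $1/2\le a,b\le 3/2$, monotone exponential growth and decay make $\int_0^S r^p\,ds$ bounded independently of $S$, and integrating $(\log x)_s=1+A$ and $(\log y)_s=-1-B$ then gives the three comparisons. The only difference is technical. The paper substitutes $ds=dx/(ax)\le 2\,dx/x$ to get $\int_0^S x^p\,ds\le 2\delta^p/p$ (and likewise for $y$) over the whole passage, then uses $(x+y)^p\le C_p(x^p+y^p)$. This avoids your split at the diagonal, which in your version must also cover passages that never cross $x=y$; that case is trivial.
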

\begin{proof}
Take $1/2\le a,b\le3/2$. Since $x$ increases and $y$ decreases,
\[
 \int_0^S x^pds\le2\int_{x(0)}^{x(S)}x^{p-1}dx
 \le2\delta^p/p,
 \qquad \int_0^S y^pds\le2\delta^p/p.
\]
Use $(x+y)^p\le C_p(x^p+y^p)$ and integrate
$(\log x)_s=1+A$, $(\log y)_s=-1-B$.
The logarithmic errors are bounded by $C\delta^\eta$, uniformly in $S$.
\end{proof}
For an incoming point with $y(0)$ in a fixed compact interval of
$(0,\delta]$ and an outgoing point with $x(S)$ in such an interval,
\[
 S=\log\frac{x(S)}{x(0)}+O(\delta^\eta).
\]
The physical time is $\int_0^S r(s)^{-3}ds$ (with the constant factor
restored if necessary), not $S$. To study the Poincar\'e map we evaluate
these equations at physical times $NP$, where $P$ is the fixed forcing
period. No estimate $N=S+O(1)$ is asserted.

\subsection{Fixed barriers for slopes}
Let $\mathcal P(\tau,\lambda)=-3\tau/(1+\tau)-5\lambda-
3\lambda^2/(1+\tau)$. Differentiating $F,F^{(2)}$ at fixed $t,e$ gives
\begin{equation}\label{eq:revised-slope}
 \lambda_s=\mathcal P(\tau,\lambda)+E,\qquad
 |E|\le Cr^\eta\left(\frac{\tau}{1+\tau}+|\lambda|
                   +\frac{\lambda^2}{1+\tau}\right).
\end{equation}
The factors $y$ in $F^{(2)}_x$ and $x$ in $F_y$ are essential here.
Put $h(\tau)=\tau/(1+\tau)$.

\begin{Lemma}\label{l.absorbgen}\label{lem:absorbgen}\label{lem:absorbdiagonalgen}
For sufficiently small $\delta$, the intervals
$[-2h(\tau),-h(\tau)/2]$ are forward invariant for the slope equation.
Every bounded incoming slope enters these intervals in uniformly bounded
rescaled time, provided the initial ratio $\tau(0)$ is sufficiently large.
For a diagonal initial jet $\tau(0)=1$, $\lambda(0)=1$, one has
$-2h(\tau)\le\lambda\le\tau$ throughout the outgoing passage.
Provided the local passage lasts long enough, after a uniformly bounded further time, in both cases
\begin{equation}\label{eq:outgoing-cone}
 0<\tau\le1/20,\qquad -\frac65\tau\le\lambda\le-\frac45\tau.
\end{equation}
In the incoming case the bound on this further time is measured from
crossing a fixed sufficiently small ratio, not from the initial point.
\end{Lemma}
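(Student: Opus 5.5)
The plan is to work throughout with the rescaled slope equation \eqref{eq:revised-slope}, $\lambda_s=\mathcal P(\tau,\lambda)+E$, together with $\tau_s=-(a+b)\tau$ from \eqref{e.generalsystlb}. The argument is a barrier comparison with fixed functions of $\tau$ alone, so no moving root has to be differentiated.

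\emph{Forward invariance.} Set $\lambda=-ch(\tau)$ with $c\in\{2,1/2\}$, and write $\dot h=h'(\tau)\tau_s=-(a+b)\tau/(1+\tau)^2=-(2+O(r^\eta))h/(1+\tau)$. At $c=1/2$ the unperturbed field is
\[
\mathcal P=-3h+\tfrac52h-\tfrac34h^2/(1+\tau)=-h\bigl(\tfrac12+\tfrac34 h/(1+\tau)\bigr).
\]
The barrier derivative is $-\tfrac12\dot h=h/(1+\tau)+O(r^\eta h)$. The field must lie strictly below this, which holds with margin at least $\tfrac12h$. At $c=2$ the field is
\[
\mathcal P=-3h+10h-12h^2/(1+\tau)=h\bigl(7-12h/(1+\tau)\bigr)\ge 4h,
\]
since $h/(1+\tau)\le 1/4$. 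The barrier derivative is $-2\dot h\le 4h/(1+\tau)+O(r^\eta h)$, so we need $7-12h/(1+\tau)>4/(1+\tau)$. Writing $z=1/(1+\tau)$ and $h=1-z$, this becomes $7-12z+12z^2-4z>0$, a quadratic with negative discriminant. Hence both margins are a fixed multiple of $h$. On the interval, $|E|\le Cr^\eta h$ because $|\lambda|\lesssim h$ and $\lambda^2\lesssim h$. Choosing $\delta$ small makes $Cr^\eta$ smaller than the margins, so the interval is forward invariant.

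\emph{Entry for incoming slopes.} When $\tau$ is large, $h\approx 1$ and $\lambda_s=-5(\lambda+3/5)+O(1/\tau)+O(\lambda^2/\tau)+O(\delta^\eta(1+\lambda^2))$. A bounded $\lambda$ therefore relaxes exponentially into $[-1.9,-0.6]\subset[-2h,-h/2]$ in bounded time. The quadratic term could push toward $-\infty$ only if $\lambda^2/\tau$ were comparable to $5|\lambda|$, which is excluded once $\tau(0)\gg\lambda^{*}$. This is where the hypothesis that $\tau(0)$ is large enters, and $\tau$ stays large over bounded times.

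\emph{Diagonal jet.} Here $\tau(0)=1$ and $\lambda(0)=1$. The lower barrier $-2h$ is invariant as above, and $\lambda>-2h$ holds initially. For the upper bound $\lambda\le\tau$, check at $\lambda=\tau$ that
\[
\mathcal P=-3h-5\tau-3\tau h<-(2+O(\delta^\eta))\tau=\dot\tau,
\]
with an error of size $O(r^\eta\tau)$; this is clear. To reach the cone \eqref{eq:outgoing-cone} from these regions, follow Lemma~\ref{lem:absorbdiagonal}. Once $\tau\le1$, the interval contains the stable root $\lambda_0\approx-0.6\tau$. Along the flow, $d=\lambda-\lambda_0$ contracts at rate at least $5/2-O(\delta^\eta)$, while $\lambda_0$ shrinks only at rate $\approx2$. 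Hence $d/\tau\to0$ exponentially in bounded time, giving $-\tfrac65\tau\le\lambda\le-\tfrac45\tau$ after $\tau\le 1/20$ is reached (which takes bounded time, since $\tau$ decays at rate about $2$). In the incoming case the clock is started when $\tau$ crosses a fixed small ratio.

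The main obstacle is this last step. One must verify that the cone $[-\tfrac65\tau,-\tfrac45\tau]$ is itself invariant for small $\tau$ despite $E$. The plan is to use barriers $\lambda=-c\tau$ with $c=6/5,4/5$ and to use $\mathcal P(\tau,-c\tau)=(-3+5c)\tau+O(\tau^2)$ against the barrier derivative $2c\tau$. The margins $|{-3}+3c|\tau=\tfrac35\tau$ are of order $\tau$, which dominates $Cr^\eta\tau$. If these margins turn out to be too thin, shrink $\delta$ further.
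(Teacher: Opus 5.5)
Your forward-invariance computation (margins $7-16z+12z^2>0$ at the lower barrier and at least $h/2$ at the upper one) is correct and is the paper's argument. So is your relaxation argument for large $\tau$ toward $\lambda_s\approx-5\lambda-3$, and so is the upper barrier $\lambda=\tau$ for the diagonal jet, where the margin is $-6\tau$.

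The gap is in the last step, entering the outgoing cone \eqref{eq:outgoing-cone}. Your claim that $d=\lambda-\lambda_0$ satisfies $d/\tau\to0$ is false. The bound $d_s\le-\frac52 d$ in Lemma~\ref{lem:absorbdiagonal} holds only while $d>0$, and that lemma proves entry into the wide interval $[2\lambda_0,\frac9{10}\lambda_0]$, not convergence to $\lambda_0$. In general
$d_s=-\frac{3}{1+\tau}(\lambda-\lambda_1)d-\frac{d\lambda_0}{ds}+E$. The drift of the moving root, $\frac{d\lambda_0}{ds}\approx-2\lambda_0\approx1.2\tau$, is of the same order as $\tau$. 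Balancing it against the linear rate, which is about $5$, gives $d\approx-0.4\tau$, so $\lambda\approx-\tau$: the solution lags the root by a fixed multiple of $\tau$. Moreover, even if $d/\tau\to0$ held, it would put $\lambda$ near $-0.6\tau$, which lies outside $[-\frac65\tau,-\frac45\tau]$. So the inference contradicts the conclusion you want.

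Your barrier check at $c=4/5$ and $c=6/5$ is correct, but it gives only invariance of the cone, not entry in bounded time. The missing idea is to work directly with $k=-\lambda/\tau$, which satisfies
$k_s=\frac{3}{1+\tau}-3k+\frac{3k^2\tau}{1+\tau}+O(r^\eta)(1+|k|+k^2)$. Its model part tends to $3-3k$ as $\tau\to0$, so the attracting value is $k=1$, not $0.6$.

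For $\tau\le1/20$ the model part has the following properties:
\begin{itemize}
\item it is strictly decreasing in $k$ on $[-1,2]$;
\item it exceeds $0.54$ at $k=4/5$ and is below $-0.53$ at $k=6/5$.
\end{itemize}
Hence, after shrinking $\delta$, $k$ moves toward $[4/5,6/5]$ with speed at least $0.4$ everywhere on $[-1,2]$ outside that interval. It remains to check that, at the moment $\tau=1/20$, the broad cone $[-2h,-h/2]$ gives $k\in[\frac{1}{2(1+\tau)},\frac{2}{1+\tau}]$ and the diagonal region $[-2h,\tau]$ gives $k\in[-1,\frac{2}{1+\tau}]$. Both lie in $[-1,2]$, which yields entry within a uniformly bounded further time. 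This is how the paper completes the proof.
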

\begin{proof}
For the model equation, at $\lambda=-kh$ the difference between the
vector field and the moving boundary, divided by $h$, is
\[
 -3+5k-3k^2\frac{\tau}{(1+\tau)^2}-\frac{2k}{1+\tau}.
\]
For $k=2$, on putting $z=(1+\tau)^{-1}$, this is
$7-16z+12z^2\ge5/3$. For $k=1/2$ it is at most $-1/2$.
The errors in both comparisons are bounded by $C\delta^\eta$.
Thus both boundary inequalities remain strict.
For bounded incoming slopes and large $\tau$, the equation is uniformly
close, on any fixed bounded slope interval and bounded time interval,
to $\lambda_s=-5\lambda-3$. Its solutions enter the interior of
$[-2,-1/2]$ in a time depending only on the initial bound. Choose that
time first, then choose $\delta$ small and $\tau(0)$ large enough.
Continuous dependence on this fixed interval, followed by the strict
boundary inequalities, proves entry and subsequent invariance.

For the diagonal, the lower boundary is the same, while at
$\lambda=\tau$ the model difference $\lambda_s-\tau_s$ equals
$-6\tau$; the error is $O(\delta^\eta\tau)$ for $\tau\le1$.
This proves $|\lambda|\le2\tau$. Put $k=-\lambda/\tau$. On bounded
$k$ intervals its equation is
\[
 k_s=\frac{3}{1+\tau}-3k+
             \frac{3k^2\tau}{1+\tau}+O(r^\eta)(1+|k|+k^2).
\]
Once $\tau\le1/20$, the model right hand side is greater than $0.54$
at $k=4/5$ and less than $-0.53$ at $k=6/5$.
On the bounded interval $-1\le k\le2$ it is strictly decreasing in $k$
for $\tau\le1/20$. Thus outside $[4/5,6/5]$ it points toward this
interval with speed at least $0.4$, after reducing $\delta$.
Both the incoming cone and the diagonal cone put $k$ in $[-1,2]$
when $\tau=1/20$. This proves the last assertion and its uniformity.
\end{proof}

\subsection{Stretching and projective contraction}
\begin{Lemma}\label{lem:stretchgen}
For an incoming--outgoing passage as above, write $\varepsilon=x(0)$.
For any two initial slopes $|\lambda_1(0)|,|\lambda_2(0)|\le L$,
\begin{equation}\label{e.x5gen}
 |\lambda_1(S)-\lambda_2(S)|\asymp
 \varepsilon^5|\lambda_1(0)-\lambda_2(0)|.
\end{equation}
For every nonzero initial vector $v$ with $|v_y/v_x|\le L$, the
differential of the fixed physical-time map satisfies
\[
 \|D\phi_{t(S),t(0)}v\|\asymp\varepsilon^{-5/2}\|v\|.
\]
The same assertions hold when the terminal physical time is $NP$,
provided the orbit ends in the fixed outgoing strip and stays in the
local square during the passage.
\end{Lemma}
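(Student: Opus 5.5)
The plan is to work with the linear variational equation along the base orbit in physical time and reparametrize only afterwards. Let $(u,v)$ be a tangent vector at a common physical time, so that $\dot u=F_xu+F_yv$ and $\dot v=F^{(2)}_xu+F^{(2)}_yv$. Set $\lambda=v/u$. In rescaled time $ds=r^3dt$ this gives the slope equation \eqref{eq:revised-slope}. It also gives the companion equation
\[
 (\log|u|)_s=\frac{F_x+F_y\lambda}{r^3}
 =1+\frac{3x}{r}(1+\lambda)+O(r^\eta)(1+|\lambda|).
\]
These are obtained from \eqref{e.generalsyst} by the product rule and the weighted bounds of Lemma~\ref{lem:remainder-calculus}.

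First I handle the difference of two slopes. Put $w=\lambda_1-\lambda_2$. Subtracting the two slope equations gives
\[
 w_s=-\Bigl(5+\frac{3(\lambda_1+\lambda_2)}{1+\tau}\Bigr)w+O(r^\eta)(1+|\lambda_1|+|\lambda_2|)\,w .
\]
This is linear in $w$, so $\log|w(S)/w(0)|$ is the integral of the coefficient. By Lemma~\ref{l.absorbgen}, both slopes enter the cone $[-2h,-h/2]$ after a uniformly bounded time. On the initial bounded interval the coefficient is bounded, so it contributes only $O(1)$. On the cone the coefficient equals $-5+O(h)+O(r^\eta)$. The $O(r^\eta)$ part integrates to $O(\delta^\eta)$ by Lemma~\ref{lem:integrable-errors}.

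The main obstacle is the $O(h)$ term. The point is that $h=\tau/(1+\tau)\le y/x$, and the integral of $h$ over the part of the passage where $\tau\le1$ is bounded, because $\tau$ decays exponentially ($\tau_s=-(a+b)\tau$). Where $\tau\ge1$, however, $h\approx1$. So $\lambda_1+\lambda_2$ must be handled exactly rather than through the $O(h)$ bound.

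I therefore plan to estimate $\int_0^S\lambda\,ds$ more carefully. In the cone, $\lambda\in[-2h,-h/2]$, and after the further bounded time $\lambda\approx-\tau$ by \eqref{eq:outgoing-cone}. The expected way through is that the incoming phase, where $\tau$ is large and $\lambda\approx-0.6$, contributes to both $\log|w|$ and $\log|u|$ in a way that combines with the exact exponents. Specifically, the time spent with $\tau\ge1$ is about $\tfrac12\log\tau(0)=\tfrac12 S+O(1)$. During this time the $w$-coefficient is about $-5+3.6/(1+\tau)$. The correction $3.6/(1+\tau)$ is integrable in $s$, since $\int(1+\tau)^{-1}ds$ over $\tau\ge1$ is bounded. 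Hence the net rate is $-5$ up to an $O(1)$ integral. Then $|w(S)|\asymp e^{-5S}|w(0)|$, and $e^S\asymp\varepsilon^{-1}$ by the relation $S=\log(x(S)/x(0))+O(\delta^\eta)$, which gives \eqref{e.x5gen}.

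For the norm, $(\log|u|)_s=1+3x(1+\lambda)/r+\dots$. Splitting in the same way, the term $3x/r$ is near $0$ while $\tau$ is large. It is near $3(1+\lambda)\approx3$ once $\tau$ is small, that is, on a stretch of length about $S/2$; the integrable corrections are $O(1)$. This gives
\[
 \log|u(S)/u(0)|=S+\tfrac32S+O(1)=\tfrac52S+O(1).
\]
Since the outgoing slope is small and the incoming slope is bounded, $\|v\|\asymp|u|$ at both ends, so $\|D\phi v\|\asymp\varepsilon^{-5/2}\|v\|$.

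All constants come from bounded-time transients and integrals of $r^\eta$, $(1+\tau)^{-1}$ or $\tau$ over the corresponding phases, so none depends on $S$.

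For a terminal time $NP$ the argument is unchanged. The estimates hold pointwise along the orbit up to any stopping time at which the orbit lies in the outgoing strip, and the physical time enters only through the variational equation.
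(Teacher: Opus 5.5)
Your proposal is correct and follows essentially the paper's argument. You subtract the two Riccati equations, write the variational equation for $u$, and show that every correction to the rates $-5$ and $5/2$ is integrable uniformly in $S$. The only difference is bookkeeping. You split the passage at $\tau=1$ and count the two half-durations $S/2+O(1)$, whereas the paper uses the single integrable bound $|\lambda_i|/(1+\tau)\le 2\tau/(1+\tau)^2$ over the whole passage and integrates the $\tfrac32\frac{x-y}{x+y}$ term exactly via $(\log r)_s=(x-y)/r+O(r^\eta)$.
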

\begin{proof}
The Riccati equation is quadratic in $\lambda$. Subtracting the two
exact equations gives
\[
 (\lambda_1-\lambda_2)_s=
 \left[-5-\frac{3(\lambda_1+\lambda_2)}{1+\tau}
             +O(r^\eta)(1+|\lambda_1|+|\lambda_2|)\right]
 (\lambda_1-\lambda_2).
\]
All slopes remain bounded. After a bounded initial interval the broad
cone gives $|\lambda_i|/(1+\tau)\le2\tau/(1+\tau)^2$.
Since $-(\log\tau)_s=a+b\ge1$,
\[
 \int\frac{\tau}{(1+\tau)^2}\,ds
 \le\int_0^\infty\frac{d\tau}{(1+\tau)^2}=1.
\]
Lemma~\ref{lem:integrable-errors} therefore gives
$|\lambda_1(S)-\lambda_2(S)|\asymp e^{-5S}|\lambda_1(0)-\lambda_2(0)|$.
This includes identical initial slopes by uniqueness.

For the first component $u$ of a tangent vector, the physical-time
variational equation, reparametrized along the base orbit, gives
\[
 (\log|u|)_s=\frac{4+\tau+3\lambda}{1+\tau}+O(r^\eta)
 =\frac52+\frac32\frac{x-y}{x+y}+\frac{3\lambda}{1+\tau}
       +O(r^\eta).
\]
Also $(\log r)_s=(x-y)/r+O(r^\eta)$. Integration yields
\[
 |u(S)/u(0)|\asymp e^{5S/2}\bigl(r(S)/r(0)\bigr)^{3/2}.
\]
The endpoint ratio is bounded above and below. Bounded slopes identify
$|u|$ with the vector norm. Finally $e^S\asymp\varepsilon^{-1}$.
These are derivatives at the fixed terminal physical time; no derivative
of an orbit-dependent exit map is substituted for them.
\end{proof}

\subsection{Curvature estimates}
\begin{Lemma}\label{lemma:muformula-general}
The exact curvature equation has the form
\[
 \kappa_s=-d^g\kappa-\frac{B^g}{r},
 \qquad
 d^g=\frac{9+6\tau+9\lambda}{1+\tau}+O(r^\eta)(1+|\lambda|),
\]
where $B^g=B(\tau,\lambda)+E_B$, $B$ is the polynomial expression
in Lemma~\ref{lemma:muformula}, and
\[
 |E_B|\le Cr^\eta
       \left(\frac{\tau}{1+\tau}+|\lambda|+\lambda^2+|\lambda|^3\right).
\]
\end{Lemma}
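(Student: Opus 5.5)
The plan is to start from the exact curvature identity \eqref{eq:exact-jets} and substitute $F=xr^3a$, $F^{(2)}=-yr^3b$ with $a=1+A$, $b=1+B$ and $A,B\in O_\eta$. Then we divide by $r^3$ (the factor $ds=r^3dt$) and compare with the model coefficients computed for the simplified system in Lemma~\ref{lemma:muformula}. Every term in \eqref{eq:exact-jets} is a second or first spatial derivative of $F$ or $F^{(2)}$, multiplied by a power of $\lambda$ and possibly by $\kappa$. Thus the only issue is to split each derivative into its model part (the $A=B=0$ case) and an error part, and to bound the error part with the correct weights.

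First we treat the coefficient of $\kappa$, namely $F^{(2)}_y-2F_x-3F_y\lambda$. Differentiating once gives, for example,
\[
 F_x=r^3a+3xr^2a+xr^3A_x,\qquad F_y=3xr^2a+xr^3A_y,
\]
and analogously for $F^{(2)}_y$. Dividing by $r^3$, the model part reproduces $-(9+6\tau+9\lambda)/(1+\tau)$ exactly, as in Lemma~\ref{lemma:muformula}. The error consists of terms $A$, $B$, $(x/r)A$, $rA_x$, $rA_y$, and so on. By the weighted bounds of Lemma~\ref{lem:remainder-calculus}, $|A|\le Cr^\eta$ and $r|\nabla A|\le Cr^\eta$, and similarly for $B$. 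Hence the error is $O(r^\eta)(1+|\lambda|)$, which is the stated form of $d^g$.

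Next we treat the inhomogeneous part. Its four pieces are
\[
 F^{(2)}_{xx},\qquad (2F^{(2)}_{xy}-F_{xx})\lambda,\qquad (F^{(2)}_{yy}-2F_{xy})\lambda^2,\qquad -F_{yy}\lambda^3.
\]
After division by $r^3$, each is a model term of homogeneity $-1$ plus an error. The structural point is the weight of the $\lambda^0$ term. Since $F^{(2)}=-yr^3b$ carries a factor $y$, we have
\[
 F^{(2)}_{xx}=-y\,\partial_x^2(r^3b).
\]
Its error is therefore $y\cdot O(r^{\eta+1})$, and after dividing by $r^3$ this becomes $r^{-1}O(r^\eta)\,y/r$, with $y/r=\tau/(1+\tau)$. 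This is exactly the $\tau/(1+\tau)$ weight in $E_B$, and it is the analogue of the factors $x$, $y$ emphasized after \eqref{eq:revised-slope}. The coefficients of $\lambda$, $\lambda^2$ and $\lambda^3$ only need an unweighted bound: each second derivative of $xr^3A$ or $yr^3B$ is $O(r^{1+\eta})$, using second-derivative bounds $|\nabla^2A|\le Cr^{\eta-2}$. Writing the whole inhomogeneity as $-B^g/r$ and collecting terms gives $B^g=B(\tau,\lambda)+E_B$ with the claimed bound on $E_B$.

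The main obstacle is bookkeeping rather than conceptual. One point is to confirm that the time derivative $\partial_t$ never enters, since the jets are taken at fixed $t,e$, as noted after \eqref{eq:exact-jets}. The other is to check that no term is produced without a factor of $y$ at order $\lambda^0$. This is the step where I would carefully expand $\partial_x^2(yr^3b)$ and verify that the $\lambda^0$ coefficient of the model matches $6\tau/(1+\tau)$ in Lemma~\ref{lemma:muformula}.
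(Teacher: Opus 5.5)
Your proposal is the paper's argument: substitute into the exact jet identity (\ref{eq:exact-jets}), note that the $\lambda$-independent source term $F^{(2)}_{xx}=-y\,\partial_x^2(r^3b)$ keeps the factor $y$, which produces the weight $\tau/(1+\tau)$, and bound everything else with the weighted derivative estimates. One exponent needs correcting: the second derivatives of $xr^3A$ and $yr^3B$ are $O(r^{2+\eta})$, not $O(r^{1+\eta})$, as your own bound $|\nabla^2A|\le Cr^{\eta-2}$ already shows, and this power is exactly what the conversion $E_B\sim(\text{source error})/r^2$ requires to give $|E_B|\le Cr^\eta(\cdots)$ rather than only $Cr^{\eta-1}(\cdots)$.
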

\begin{proof}
Apply the exact jet identity (\ref{eq:exact-jets}) and differentiate
$x r^3A$ and $-y r^3B$ twice. The term independent of $\lambda$ in
the curvature source is $F^{(2)}_{xx}$ and retains the factor $y$.
The other terms contain respectively $\lambda,\lambda^2,\lambda^3$.
The weighted derivative bounds give precisely the stated estimate.
\end{proof}
\begin{Lemma}\label{lemma:estimate_on_d-general}
\label{lemmaimproved:estimate_on_d-general}\label{l.ineqforB-general}
After entry into the broad cone one has $d^g\ge5/2$ and $|B^g|\le C$.
In the outgoing cone (\ref{eq:outgoing-cone}), after reducing $\delta$,
\begin{equation}\label{eq:outgoing-curvature-coeff}
 8\le d^g\le10,\qquad -40\tau\le B^g\le-5\tau.
\end{equation}
\end{Lemma}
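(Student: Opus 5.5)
The plan is to treat both assertions as perturbations of the model coefficients of Lemma~\ref{lemma:muformula}, bounding the model part explicitly and absorbing the error terms of Lemma~\ref{lemma:muformula-general} by shrinking $\delta$. The errors are small because $r\le 2\delta$, so $r^\eta\le C\delta^\eta$, and because the slope is uniformly bounded. The key point is that each model bound below holds with a fixed numerical margin, which then swallows errors of size $C\delta^\eta$ (respectively $C\delta^\eta\tau$).

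\emph{Broad cone.} Assume $-2h(\tau)\le\lambda\le -h(\tau)/2$, so $|\lambda|\le 2$. Put $z=(1+\tau)^{-1}$, so that $h=1-z$. In the model coefficient $(9+6\tau+9\lambda)/(1+\tau)$ use the lower boundary $\lambda\ge -2(1-z)$:
\[
 \frac{9+6\tau}{1+\tau}+\frac{9\lambda}{1+\tau}
 \ \ge\ 6+3z-18z(1-z)=18z^2-15z+6 .
\]
This quadratic has minimum $6-225/72=23/8$ at $z=15/36$. The error $O(r^\eta)(1+|\lambda|)$ is at most $3C\delta^\eta$, so $d^g\ge 23/8-3C\delta^\eta\ge 5/2$ once $\delta$ is small.

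For $B^g$ I use the factorization from the proof of Proposition~\ref{lemma:estimate_on_B}:
\[
 B=6\,\frac{1+\lambda}{1+\tau}\bigl(\tau(1+3\lambda)+\lambda(3+\lambda)\bigr).
\]
Each factor is bounded because $\tau/(1+\tau)\le1$ and $|\lambda|\le2$. The error $E_B$ is bounded by $C\delta^\eta$ times a bounded quantity. Hence $|B^g|\le C$.

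\emph{Outgoing cone.} Assume (\ref{eq:outgoing-cone}) and write $\lambda=-k\tau$ with $k\in[4/5,6/5]$ and $\tau\le 1/20$.

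For $d$ the numerator satisfies
\[
 9-4.8\tau\ \le\ 9+(6-9k)\tau\ \le\ 9+6\tau .
\]
Dividing by $1+\tau\in[1,1.05]$ places the model $d$ in roughly $[8.3,9.3]$. The error $O(\delta^\eta)$ then keeps $d^g$ in $[8,10]$ after reducing $\delta$.

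For $B$, the prefactor $6(1+\lambda)/(1+\tau)$ lies in about $[5.3,6]$. The bracket equals
\[
 \tau(1-3k)+\tau^2(k^2-3k).
\]
Here $1-3k\in[-2.6,-1.4]$, and the quadratic term has size at most $0.2\tau$ since $\tau\le 1/20$ and $|k^2-3k|\le 3.6$. So the bracket lies in roughly $[-2.8\tau,-1.2\tau]$, and the model $B$ lies in about $[-17\tau,-6.3\tau]$.

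The essential remaining point is that in this cone $E_B$ is $O(\delta^\eta\tau)$, not merely $O(\delta^\eta)$. This follows from the bound of Lemma~\ref{lemma:muformula-general}, since every term there is $O(\tau)$: $\tau/(1+\tau)\le\tau$, and $|\lambda|,\lambda^2,|\lambda|^3\le 2\tau$. Hence $B^g\in[-40\tau,-5\tau]$ for small $\delta$.

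The main obstacle is exactly this use of the proportionality $|\lambda|\asymp\tau$ to make the error relative. Without the factor $y$ retained in $F^{(2)}_{xx}$, which is why the error bound carries the term $\tau/(1+\tau)$ rather than a constant, the sign of $B^g$ could not be controlled as $\tau\to0$. Everything else is elementary numerics.
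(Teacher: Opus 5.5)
Your proposal is correct and follows essentially the paper's proof: the same lower bound $18z^2-15z+6\ge 23/8$ in the broad cone, explicit numerical bounds for $d$ and $B/\tau$ in the outgoing cone, and the same key observation that $E_B=O(r^\eta\tau)$ there. Your use of the factorization of $B$ instead of expanding $B/\tau$ directly is an immaterial difference; the only cosmetic slip is that in the broad cone, for large $\tau$, it is $(1+\tau)^{-1}$ times the bracket $\tau(1+3\lambda)+\lambda(3+\lambda)$ that is bounded, not the bracket itself.
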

\begin{proof}
In the broad cone $\lambda\ge-2\tau/(1+\tau)$, so the model value
of $d$ is at least $6-15z+18z^2\ge23/8$, $z=(1+\tau)^{-1}$.
The polynomial expression for $B$ is bounded there.
In the outgoing cone write $\lambda=-k\tau$, $4/5\le k\le6/5$.
Then
\[
 d=\frac{9+(6-9k)\tau}{1+\tau},\qquad
 \frac B\tau=\frac{6-(18+24\tau)k+(24+18\tau)k^2\tau
                         -6k^3\tau^2}{1+\tau}.
\]
For $0\le\tau\le1/20$, the first expression lies strictly between
$8$ and $10$, and the second strictly between $-40$ and $-5$.
For example its upper bound follows from
$6-18(4/5)+(24+18/20)(6/5)^2/20<-6$ before division by $1+\tau$.
The error in $B$ is $O(r^\eta\tau)$, not merely $O(r^\eta)$;
hence these strict bounds survive uniformly as $\tau\to0$.
\end{proof}

\begin{Lemma}\label{l.explicitupperbound-general}
Let $x=y=h_*$ at the diagonal crossing of an incoming orbit. At entry
into the outgoing cone one has $|\kappa|\le C/h_*$. The same estimate
holds for a diagonal initial jet with $x(0)=y(0)=h_*$ and $\kappa(0)=0$.
\end{Lemma}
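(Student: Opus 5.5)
We prove Lemma~\ref{l.explicitupperbound-general} by integrating the exact curvature equation of Lemma~\ref{lemma:muformula-general} in rescaled time, splitting the passage into an incoming phase, a transitional phase, and entry into the outgoing cone. The key quantity is the source $B^g/r$. Before the diagonal crossing, $r\ge y$ and $y$ decreases from a fixed incoming height to $h_*$. After the crossing, until entry into the outgoing cone, the ratio $\tau$ runs from $1$ down to $1/20$. By Lemma~\ref{lem:integrable-errors}, $\tau_s=-(a+b)\tau$ with $a+b\asymp2$, so this second phase lasts a uniformly bounded rescaled time, and on it $r\asymp h_*$ because $xy\asymp h_*^2$ and $\tau$ is bounded above and below.

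For the incoming orbit we use Lemma~\ref{l.absorbgen}. Entry into the broad cone occurs after a uniformly bounded rescaled time $s_a$. On $[0,s_a]$ the slope is bounded, hence so are $d^g$ and $B^g$, and $r\ge y(s)\asymp y(0)e^{-s}$ is bounded below. The linear equation therefore gives $|\kappa(s_a)|\le K$, independent of the passage length.

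After $s_a$, Lemma~\ref{lemma:estimate_on_d-general} gives $d^g\ge5/2$ and $|B^g|\le C$. We do not use the comparison of Lemma~\ref{l.upperbound}, since it only yields the crude bound $e^{T_0}$; instead we apply variation of constants with the damping kept:
\[
 |\kappa(s)|\le e^{-\frac52(s-s_a)}K+C\int_{s_a}^{s}e^{-\frac52(s-\sigma)}\frac{d\sigma}{r(\sigma)}.
\]
While $\tau\ge1$ we have $r\asymp y\asymp y(0)e^{-\sigma}$, so the integrand is at most $Ce^{-\frac52(s-\sigma)}e^{\sigma}$. Since $5/2>1$, the integral is dominated by its upper endpoint and is $\le C'e^{s}\asymp C'/y(s)$. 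At the diagonal crossing this gives $|\kappa|\le C/h_*$. On the bounded transitional phase, $r\asymp h_*$, $d^g$ is bounded, and $|B^g|\le C$, so the same variation-of-constants estimate preserves the bound $C/h_*$ up to entry into the outgoing cone.

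For the diagonal initial jet, Lemma~\ref{l.absorbgen} gives $-2h(\tau)\le\lambda\le\tau$ throughout, so $|\lambda|$ and hence $|B^g|$ and $|d^g|$ are bounded. The orbit starts at $\tau=1$ with $\kappa(0)=0$ and reaches $\tau=1/20$ in bounded time with $r\asymp h_*$, so Gronwall gives $|\kappa|\le C/h_*$ at entry. Note that $d^g$ need not be positive here, since $\lambda$ may be positive, but this is harmless because the time interval is bounded.

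The main obstacle is the incoming phase when the passage is long. We need the $1/r$ source accumulated over an unbounded time to produce only $C/y(s)$ and not grow with the passage length. This is exactly where the lower bound $d^g\ge5/2>1$ after entry into the broad cone is essential, together with the fact that $y$ decays at rate $e^{-s}$ only up to the factor from Lemma~\ref{lem:integrable-errors}, which is uniformly bounded in $S$. Constants must be tracked so that they depend only on the remainder bounds, $\delta$, and the initial slope and curvature bounds.
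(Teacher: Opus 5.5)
Your proof is correct and follows essentially the paper's argument: a bounded-time transient into the broad cone, then variation of constants against the source $B^g/r$ using the exponential growth of $1/y$ up to the diagonal crossing, and finally a bounded interval with $r\asymp h_*$ up to entry into the outgoing cone (for the diagonal jet, the whole passage to entry is such an interval). One correction to your commentary: the damping $d^g\ge5/2>1$ is not what controls the incoming phase. Since $1/r\le Ce^{\sigma}/y(0)$, already $\int_{s_a}^{s}e^{\sigma}\,d\sigma\le e^{s}$, so $d^g\ge0$ suffices; this is all the paper uses, via $y(u)\ge h_*e^{(s_*-u)/2}$.
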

\begin{proof}
Before entry into the broad cone only a bounded rescaled time elapses;
the initial point is in the fixed incoming strip. The linear curvature
equation therefore gives a uniform bound at this entry. Thereafter,
$d^g\ge5/2$ and $|B^g|\le C$, so variation of constants bounds
$|\kappa(s)|$ by a constant plus $C\int_0^s r(u)^{-1}du$.
Before the diagonal, $y(u)\ge h_*\exp((s_*-u)/2)$, where $s_*$
is the diagonal crossing time. Thus this integral up to $s_*$ is at
most $2/h_*$. From the diagonal to entry into the outgoing cone the
rescaled time is bounded and $x\asymp h_*$, $y\asymp h_*$, with
constants independent of the incoming point. This adds at most $C/h_*$.
For a diagonal initial jet the entire initial interval is bounded, and
the same variation-of-constants argument applies, without needing a
positive lower bound on $d^g$ before entry.
\end{proof}

\begin{Lemma}\label{l.estimateonb-general}\label{l.finalmu-general}
For sufficiently long incoming--outgoing passages, and for diagonal
initial jets sufficiently close to the origin, the outgoing curvature
satisfies
\[
 0<c\,\frac{y}{x^2}\le\kappa\le C\,\frac{y}{x^2}.
\]
In particular, on a fixed outgoing strip $0<\kappa\le C y$.
\end{Lemma}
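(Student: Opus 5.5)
We follow the scheme of Lemma~\ref{l.finalmu}, with the fixed barriers of Lemma~\ref{l.absorbgen} in place of the moving roots. Let $s_1$ be the entry time into the outgoing cone (\ref{eq:outgoing-cone}). By Lemma~\ref{l.explicitupperbound-general}, $|\kappa(s_1)|\le C/h_*$, where $h_*$ is the height at the diagonal crossing. On the incoming side, Lemma~\ref{lem:integrable-errors} gives $h_*^2\asymp x(s)y(s)$ along the passage. At entry $\tau(s_1)\asymp1$, so also $x(s_1)\asymp y(s_1)\asymp h_*$. On $[s_1,S]$ the orbit stays in the cone (\ref{eq:outgoing-cone}). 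Lemma~\ref{lemma:estimate_on_d-general} then gives $8\le d^g\le10$ and $-40\tau\le B^g\le-5\tau$. Hence the source term $-B^g/r$ is positive and comparable to $\tau/r\asymp y/x^2$, using $r\asymp x$ in the outgoing cone.

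We solve $\kappa_s=-d^g\kappa-B^g/r$ by variation of constants on $[s_1,S]$:
\[
 \kappa(S)=\kappa(s_1)e^{-\int_{s_1}^S d^g}+\int_{s_1}^S e^{-\int_u^S d^g}\,\frac{-B^g(u)}{r(u)}\,du .
\]
By Lemma~\ref{lem:integrable-errors}, $x(u)\asymp h_*e^{u-s_1}$ and $y(u)\asymp h_*e^{-(u-s_1)}$. Therefore $y/x^2$ at time $u$ is $\asymp h_*^{-1}e^{-3(u-s_1)}$. The integrand lies between $c\,e^{-10(S-u)}h_*^{-1}e^{-3(u-s_1)}$ and $C\,e^{-8(S-u)}h_*^{-1}e^{-3(u-s_1)}$. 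Since $10>3$ and $8>3$, both integrals are dominated by $u$ near $S$. This gives $\int\asymp h_*^{-1}e^{-3(S-s_1)}\asymp y(S)/x(S)^2$, once $S-s_1$ exceeds a fixed constant (the lower bound uses only the last unit interval). The homogeneous term is at most $C h_*^{-1}e^{-8(S-s_1)}$. Relative to the main term this is a factor $e^{-5(S-s_1)}$, so it is absorbed for long passages.

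For incoming orbits, "sufficiently long passage" means $x(0)\to0$. Then $h_*\to0$ and $S-s_1\asymp\log(x(S)/h_*)\to\infty$, since $x(S)$ lies in a fixed strip. For diagonal jets, $h_*=x(0)=y(0)\to0$ gives the same conclusion. Both cases produce $c\,y/x^2\le\kappa\le C\,y/x^2$ at the terminal rescaled time. On the fixed outgoing strip $x$ is bounded below, which gives $0<\kappa\le Cy$.

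The main obstacle is the time convention. The estimate must hold at the physical terminal time $NP$, and the rescaling $ds=r^3dt$ is only applied after the jet equations have been written for a common physical time. Since the bounds are pointwise in $s$ along each orbit, evaluating at the $s$ corresponding to $t=NP$ is legitimate. The only requirement is that the endpoint lie in the outgoing strip. Uniformity also needs the error in $B^g$ to be $O(r^\eta\tau)$, as Lemma~\ref{lemma:estimate_on_d-general} records; otherwise the sign of the source term would be lost as $\tau\to0$.
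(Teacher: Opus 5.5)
Your argument is correct and is essentially the paper's proof. Both start from the entry bound $|\kappa|\le C/h_*$ of Lemma~\ref{l.explicitupperbound-general}, apply variation of constants with the outgoing-cone coefficient bounds (\ref{eq:outgoing-curvature-coeff}), and let the remaining outgoing time tend to infinity. The paper merely packages your explicit exponential bookkeeping into the normalized variable $w=\kappa x^2/y$, which satisfies $w_s=-Dw+Q$ with $4\le D\le8$ and $4\le Q\le40$; your rate comparison $8>3$ then becomes the statement $D\ge4>0$.
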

\begin{proof}
In the outgoing cone put $w=\kappa x^2/y$. The preceding equations give
\[
 w_s=-D(s)w+Q(s),\qquad
 D=d^g-2a-b,\qquad Q=-\frac{B^g}{\tau(1+\tau)}.
\]
By (\ref{eq:outgoing-curvature-coeff}) we may arrange
$4\le D\le8$ and $4\le Q\le40$.
At entry, $x,y\asymp h_*$ and the preceding lemma implies $|w|\le C_0$.
Variation of constants therefore gives, after elapsed time $v$,
\[
 -C_0e^{-4v}+\tfrac12(1-e^{-8v})\le w(v)
 \le C_0e^{-4v}+10.
\]
For $v$ larger than a constant depending only on $C_0$, this puts $w$
between two positive constants. The remaining outgoing time tends to
infinity as $h_*\to0$, by Lemma~\ref{lem:integrable-errors}.
This proves the assertion, including strict positivity.
\end{proof}
The slope cone, stretching lemma and curvature estimate prove the local
spatial conclusions of Theorems~\ref{t.5} and~\ref{t.c2lyambdalemma}
under the stated normal-form hypotheses. The parameter conclusion is
proved in Section~\ref{sec:parameter-revised}. In particular, none of
these conclusions relies on uniform finite-time continuity over an
unbounded time interval.

To justify that the diagonal image is a graph on a fixed outgoing interval
for all large $N$, parametrize the initial diagonal by $h>0$. Its tangent
has slope one. The diagonal cone prevents a vertical tangent, and the
variational equation for its $x$-component keeps that component strictly
positive. Thus $h\mapsto x(t;h,h)$ is strictly increasing while the local
passage is defined. For fixed $t$, it tends to zero as $h\to0$; for a
fixed positive initial $h$ it reaches the outgoing boundary in finite
time. The intermediate value theorem therefore gives the required
local branch. Uniform bounded-time exit for $h$ bounded away from zero
shows that the initial $h$ corresponding to an outgoing point tends
to zero as $t\to\infty$, uniformly on compact outgoing intervals and
compact parameter sets. The same argument applies to an incoming graph
once its initial point is sufficiently close to the stable axis, using
Lemma~\ref{lem:stretchgen} for nonvanishing of its projected tangent.

\section{Splitting estimates for the RPC3BP}

Melnikov expansions for this problem were studied by Mart\'inez and
Pinyol~\cite{MP}. We use the differentiable estimates of Guardia,
Mart\'in and Seara~\cite{GMS2016}, in the normalization specified below.
These estimates control the actual invariant-manifold splitting and
its derivatives, which are the quantities required in the tangency
construction.

\subsection{Mass dependence and differentiable splitting estimates}
\label{ss:rpc-splitting-repair}

On a fixed compact section, after fixing an energy and a transverse
coordinate, smooth dependence gives
\[
 D_\mu=\mu\,\partial_\mu D_0+O_C(\mu^2).
\]
This Taylor formula is useful when the mass tends to zero at fixed
energy. For a fixed positive mass and increasing energy, we instead
use the relative splitting estimates in the following proposition.

\begin{Proposition}[Differentiable splitting input from \cite{GMS2016}]
\label{prop:rpc-actual-splitting}
Use the scaling of \cite[Sections 2--3]{GMS2016}, with angular
momentum at infinity $G_0>0$, $r=G_0^2\widetilde r$,
$P_r=G_0^{-1}\widetilde y$, and the time rescaling $t=G_0^3\widetilde t$. Here $v$ is the
unperturbed homoclinic time used to parametrize the generating functions,
and $\xi$ is their auxiliary angular variable.
Thus the rotating Hamiltonian is $-G_0$ before scaling and the
Jacobi constant in (\ref{Jacobi}) is $C=2G_0$.
The symbol $\theta$ in this splitting calculation denotes the phase
$\xi+G_0^3v$, rather than the angular time $-\varphi$.
Let $L$ be the modified Melnikov
potential defined by the convergent integral (28) of that paper.
For $0<\mu\le1/2$ and sufficiently large $G_0$,
\begin{align*}
 L&=L^{[0]}+2\sum_{\ell\ge1}L^{[\ell]}\cos(\ell\theta),\\
 L^{[1]}&=-\frac{\sqrt\pi}{4\sqrt2}\,
       \mu(1-\mu)(1-2\mu)G_0^{-3/2}e^{-G_0^3/3}
                                      (1+O(G_0^{-1})),\\
 L^{[2]}&=2\sqrt\pi\,\mu(1-\mu)G_0^{1/2}e^{-2G_0^3/3}
                                      (1+O(G_0^{-1})).
\end{align*}
In particular, differentiating the cosine series gives first- and
second-harmonic amplitudes $2|L^{[1]}|$ and $4|L^{[2]}|$.
Thus the leading equal-mass amplitude is
$8\sqrt{\pi}\,\mu(1-\mu)G_0^{1/2}e^{-2G_0^3/3}$.
This is the normalization used in
Proposition~\ref{prop:normalized-potential-parameter}.
On the common compact domain $v_-<v<v_+$ of the invariant-manifold
generating functions, put $D=T_1^u-T_1^s$. There is a constant
$E=E(\mu,G_0)$ such that, for $0\le m+n\le2$,
\begin{equation}\label{eq:rpc-generating-error}
 \left|\partial_v^m\partial_\xi^n(D-L-E)\right|
 \le K\mu^2\left((1-2\mu)G_0^{-2+3m}e^{-G_0^3/3}
             +G_0^{-1/2+3m}e^{-2G_0^3/3}\right).
\end{equation}
The stable and unstable graphs on $\varphi=\varphi_0$ satisfy
\begin{equation}\label{eq:rpc-graph-from-generating}
 Y^u-Y^s=\widetilde y_h(v)^{-1}
 \left(\partial_vD-\widetilde r_h(v)^{-2}\partial_\xi D\right)
       \big|_{\xi=\varphi_0-\widetilde\alpha_h(v)}.
\end{equation}
\end{Proposition}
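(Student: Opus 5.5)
The proof is a translation of the results of \cite{GMS2016} into the present normalization, not a new splitting computation. I would first fix the dictionary between the coordinates of \eqref{Hamiltonian} and the scaled coordinates of \cite[Sections 2--3]{GMS2016}. Near the parabolic branch the energy relation $2P_\varphi=C+2E$ with $E=0$ gives angular momentum at infinity $G_0=C/2$, so $C=2G_0$ and the rotating Hamiltonian equals $-G_0$. The scalings $r=G_0^2\widetilde r$, $P_r=G_0^{-1}\widetilde y$, $t=G_0^3\widetilde t$ are the Kepler scalings that normalize the unperturbed parabola $(\widetilde r_h(v),\widetilde y_h(v),\widetilde\alpha_h(v))$ to be independent of $G_0$. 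In these variables the rotation frequency becomes $G_0^3$. This is why the natural phase is $\theta=\xi+G_0^3v$ and why the exponentials $e^{-\ell G_0^3/3}$ appear. I would check explicitly that the symplectic form is preserved up to the constant factor absorbed in the time rescaling, so that generating functions transform without extra Jacobians.

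Second, I would compute the Fourier coefficients of the Melnikov potential $L$. Writing $\Delta H$ through its Legendre expansion (Lemma~\ref{apriori-estimates-noE}) and evaluating along the scaled parabola, the $\ell$-th harmonic is an integral of the form $\int_{\mathbb R}\widetilde r_h(v)^{-n-1}e^{i\ell(G_0^3v-\widetilde\alpha_h(v))}\,dv$. Its leading asymptotics come from deforming the contour to the singularity of the parabola at $v=\pm i/3$ in the normalized time. For $\ell=1$, the quadrupole term $c_2$ gives no first harmonic. The leading contribution therefore comes from the octupole term, whose coefficient $c_3$ is a multiple of $\mu(1-\mu)(1-2\mu)$. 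For $\ell=2$, the quadrupole term $c_2=\mu(1-\mu)$ dominates. The stated amplitudes, with their powers $G_0^{-3/2}$ and $G_0^{1/2}$ and relative errors $O(G_0^{-1})$, are the formulas of \cite{GMS2016}. I would cite them, checking only the factor $2$ coming from the cosine normalization $L^{[0]}+2\sum_{\ell\ge1} L^{[\ell]}\cos(\ell\theta)$. Differentiating this series in $\theta$ then gives the amplitudes $2|L^{[1]}|$ and $4|L^{[2]}|$. In the equal-mass case this yields $4\cdot 2\sqrt\pi\,\mu(1-\mu)G_0^{1/2}e^{-2G_0^3/3}$.

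Third, I would obtain \eqref{eq:rpc-generating-error} from the main theorem of \cite{GMS2016}. That theorem bounds $T_1^u-T_1^s-L$, modulo an additive constant $E$, in exponentially weighted norms on a complex strip of width close to $1/3$ in $v$. The principal obstacle lies here: that paper states its bounds on a complex domain and for each harmonic separately, whereas here real derivative bounds of order $m+n\le 2$ are required. I would Fourier-decompose the error in $\xi$. For each harmonic I would apply Cauchy estimates on a strip of width $\sim G_0^{-3}$ around the real $v$-interval $(v_-,v_+)$. Each $v$-derivative then costs a factor $G_0^{3}$, which produces the factor $G_0^{3m}$ in the bound. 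The first harmonic carries the factor $\mu^2(1-2\mu)$ and the second carries $\mu^2$, because the error is quadratic in the perturbation. The $\xi$-derivatives cost only powers of $\ell$, and the sum over harmonics $\ell\ge3$ is dominated by the $\ell=2$ term.

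Finally, \eqref{eq:rpc-graph-from-generating} follows from the generating-function representation of the invariant manifolds. The manifolds are Lagrangian graphs $Y=\partial_v T/\widetilde y_h(v)$, up to the correction $-\widetilde r_h^{-2}\partial_\xi T$ coming from the change of variables $(\widetilde r,\varphi)\mapsto(v,\xi)$ with $\varphi=\xi+\widetilde\alpha_h(v)$. Subtracting the stable graph from the unstable one on the section $\varphi=\varphi_0$ eliminates the common unperturbed part and gives the displayed formula. The constant $E$ disappears because only derivatives of $D$ enter.
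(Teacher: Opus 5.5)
Your proposal follows the paper's route: the coefficient formulas come from Proposition~3.1 of \cite{GMS2016}, the error bound \eqref{eq:rpc-generating-error} from its Theorem~3.2, the graph formula \eqref{eq:rpc-graph-from-generating} from its equation~(29), and $C=2G_0$ from $H=-G_0$ at parabolic infinity. The one difference is your Cauchy-estimate upgrade in the third step, which the paper does not need: it cites Theorem~3.2 as already giving the real bounds for $0\le m+n\le2$, including the zeroth-order one, so nothing has to be reconstructed from complex-domain, harmonic-by-harmonic estimates.
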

\begin{proof}
The coefficient formulas are Proposition 3.1 of \cite{GMS2016};
(\ref{eq:rpc-generating-error}) is their Theorem 3.2, including its
zeroth-order assertion. Formula (\ref{eq:rpc-graph-from-generating})
is their equation (29) applied to the difference. The energy identity
follows from $H=-G_0$ at parabolic infinity and $C=-2H$.
\end{proof}

Let us make explicit what derivatives this result controls. On a
compact subinterval of $(v_-,v_+)$, $\widetilde y_h$ is bounded away
from zero and all coefficients in
(\ref{eq:rpc-graph-from-generating}) are smooth and bounded. Define
\[
 {\cal D}_v=\partial_v-\widetilde\alpha_h'(v)\partial_\xi,
 \qquad
 {\cal E}_0=\mu^2\left((1-2\mu)G_0^{-2}e^{-G_0^3/3}
              +G_0^{-1/2}e^{-2G_0^3/3}\right).
\]
Replacing $D$ by $L$ in the graph formula makes an error bounded
by $K_1G_0^3{\cal E}_0$; its first $v$ derivative is bounded by
$K_1G_0^6{\cal E}_0$. Indeed, differentiating the restriction uses
${\cal D}_v$, and only derivatives of $D-L-E$ of order at most two
occur. Equivalently, the graph error and its derivative divided by
the phase scale $G_0^3$ have the same bound $K_1G_0^3{\cal E}_0$.
These are estimates on the actual graph difference, not formal
derivatives of a scalar big-O term.

The Fourier tail can likewise be differentiated in the phase.
The coefficient bound proved in the appendix of \cite{GMS2016}
has the form $|L^{[\ell]}|\le K G_0^{\ell-3/2}
 e^{-\ell G_0^3/3}$ for $\ell\ge3$, with a uniform constant.
Consequently, for each fixed integer $j\ge0$,
\begin{equation}\label{eq:rpc-phase-tail}
 \left\|\partial_\theta^j
       \left(2\sum_{\ell\ge3}L^{[\ell]}\cos(\ell\theta)\right)
 \right\|_\infty
 \le K_jG_0^{3/2}e^{-G_0^3}.
\end{equation}
Indeed, put $q=G_0e^{-G_0^3/3}\le1/2$ and use
$\sum_{\ell\ge3}\ell^jq^\ell\le C_jq^3$. This is a genuine
termwise-differentiation argument using an absolutely convergent
majorant. It is not an estimate for derivatives in $G_0$ or $\mu$.

For $\mu\ne1/2$ fixed, the first harmonic dominates as $G_0\to\infty$.
At $\mu=1/2$, symmetry under interchange of the primaries removes
all odd harmonics and the second harmonic is leading. Thus a single
mass-independent first-harmonic formula cannot describe the full
parameter range. The preceding estimates give the transverse
homoclinic intersections established in \cite[Theorem 2.2]{GMS2016}
for every fixed mass ratio and sufficiently large $G_0$; ratios
above $1/2$ follow by relabeling the primaries.

\begin{Remark}[Limits of the splitting input]
The cited differentiable estimate has order two for generating
functions, hence order one for the graph splitting. It must not be
silently upgraded to a second graph derivative or a derivative with
respect to the energy parameter. In particular, transverse
homoclinic intersections for all large $C$ do not by themselves
produce quadratic tangencies unfolding in $C$. The latter is the
additional input needed for the fixed-mass RPC3BP conclusion.
\end{Remark}

\

In the next two sections we use the properties of the local dynamics obtained above to construct a set of oscillatory motions of Hausdorff dimension close to two for an open set of parameters and to complete the proof of our main results.

\section{Oscillatory motions arising from hyperbolic dynamics}\label{s.oscfromhyp}

Existence of a hyperbolic set, denoted $\Lambda $, homoclinically related to $O_{\infty}$ implies existence of oscillatory motions. Here we show that the Hausdorff dimension of the set of oscillatory motions in this case is not less than the Hausdorff dimension of the hyperbolic set $\Lambda $. Namely, we prove the following result.

\begin{Theorem} \label{2-diml-hd}
Suppose that stable (unstable) manifold of $O_{\infty}$
has a point of transverse intersection with an unstable (stable) manifold of
some saddle periodic point of a zero-dimensional basic set $\Lambda$
(a transitive locally maximal hyperbolic set). Choose a periodic saddle
$p$ in this basic set. Then
\[
\adjustbox{max width=\linewidth}{$\displaystyle
\dim_H\left\{q\in\mathbb R^2:\{p,O_\infty\}\subset\alpha(q)\cap\omega(q)\right\}\ge\dim_H\Lambda.
$}
\]
\end{Theorem}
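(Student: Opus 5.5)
The plan is to realize points whose orbits follow long free blocks inside a horseshoe approximating $\Lambda$. These blocks are interrupted by sparse, prescribed excursions that visit shrinking neighborhoods of $O_\infty$ and return to a neighborhood of $p$, in both time directions. We then show that the resulting set has Hausdorff dimension at least $\dim_H\Lambda-\varepsilon$ for every $\varepsilon>0$. This follows Steps 7--8 of the Scheme of the proof.

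First, from the assumed transverse intersections of $W^{s}(O_\infty)$ with $W^u$ of a saddle of $\Lambda$, and of $W^u(O_\infty)$ with $W^s$ of a saddle of $\Lambda$, we produce transverse homoclinic points of $p$ whose orbits pass arbitrarily close to $O_\infty$. Transitivity of $\Lambda$ and the hyperbolic inclination lemma carry the intersections from the given saddle to $p$. We take an incoming unstable arc of $p$ crossing $W^s(O_\infty)$ transversally. By Theorem~\ref{t.5}, its images under the local passage converge in $C^1$ to $W^u(O_\infty)$ on outgoing strips away from the vertex. This uses the slope cone \eqref{e.C1-lb-lemma} together with the stretching estimate \eqref{stretching} and the projective estimate \eqref{projective-hyperbolicity}. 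Since $W^u(O_\infty)$ crosses $W^s$ of $\Lambda$ transversally, and hence $W^s(p)$ after another inclination-lemma step, the $n$-th image crosses $W^s(p)$ transversally for every large $n$. This gives a sequence of transverse homoclinic points $h_n$ of $p$ whose orbits enter the $\rho_n$-neighborhood of $O_\infty$ with $\rho_n\to0$.

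Next, we fix $\varepsilon>0$. Using Markov partitions, we approximate $\Lambda$ by a return horseshoe $\Lambda_0\subset\Lambda$ of a power $f^{m}$, conjugate to a full shift, with $p\in\Lambda_0$ and $\dim_H\Lambda_0>\dim_H\Lambda-\varepsilon$. Each $h_n$, together with $\Lambda_0$, spans a larger horseshoe $\Lambda_n$ by the Birkhoff--Smale construction. We then define $\Omega$ as the set of points of $\Lambda_n$-type itineraries that agree with free $\Lambda_0$-symbols except at a very sparse set of times $\pm k_j$, $k_{j+1}\gg k_j$. At those times we insert the excursion word through $h_{j}$, followed by a block of repeated symbols of $p$ of length $j$. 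Every orbit in $\Omega$ then comes within $\rho_j$ of $O_\infty$ and within $o(1)$ of $p$ infinitely often in both time directions. Hence $\{p,O_\infty\}\subset\alpha(q)\cap\omega(q)$ for all $q\in\Omega$. Note that $\omega(q)\ni O_\infty$ also forces $\limsup$ of the distance to be infinite.

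The main obstacle is the dimension estimate $\dim_H\Omega\ge\dim_H\Lambda_0$. Our approach is to consider the deletion map $\Pi:\Omega\to\Lambda_0$ that erases the inserted words, and to show it is surjective and $\beta$-Hölder for every $\beta<1$. Then $\dim_H\Omega\ge\beta\dim_H\Lambda_0$. Two points of $\Omega$ sharing the itinerary on $[-K,K]$ lie in a rectangle whose sides are contracted by the product of derivatives along that block. The inserted excursions contribute a bounded-distortion factor from the hyperbolic segments and a factor $\rho_j^{\pm c}$ from passages near $O_\infty$, controlled through \eqref{stretching} and \eqref{projective-hyperbolicity}. The $p$-blocks contribute an exponential factor of length $j$. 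Choosing $k_j$ so sparse that $\sum_{k_j\le K}(\log\rho_j^{-1}+j)=o(K)$ makes these costs subexponential relative to the free part. Distances in $\Omega$ and in $\Lambda_0$ therefore compare up to factors $e^{o(K)}$, which gives Hölder continuity with any exponent below one. This includes cylinders straddling gaps, once neighboring cylinders are uniformly separated.

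Finally, we let $\varepsilon\to0$ to conclude.
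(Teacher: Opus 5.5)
Your proposal is correct and follows essentially the same route as the paper: you obtain transverse homoclinic points of $p$ accumulating at $O_\infty$ from the $C^1$ passage estimates of Theorem~\ref{t.5}, take a full-shift return horseshoe approximating $\dim_H\Lambda$, insert excursions on a schedule chosen after their (nonuniform) costs so that the accumulated cost is $o(K)$, and show that the deletion map is $\beta$-H\"older for every $\beta<1$. Lemma~\ref{lem:sparse-deletion} makes precise two points you leave implicit. First, rather than living in separate horseshoes $\Lambda_n$, all excursion branches are full crossings of one fixed rectangle with a common cross-map row bound $q$, so that itineraries using infinitely many distinct excursions still code unique points with the rectangle structure you invoke. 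Second, each free block costs only a bounded distortion $C_0$ independent of its length, and the lower distance bound is obtained by integrating the expansion over the whole pulled-back graph interval, gaps included.
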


We prove this result in Section~\ref{ss:sparse-excursions} by
prescribing sparse excursions in the homoclinic class of a hyperbolic
periodic point. The older enlargement, thermodynamic, and holonomy
arguments are discussed afterwards, with their status explicitly stated.
The new dimension proof also guarantees infinitely many returns to a
fixed compact neighborhood of $p$ in both time directions. Thus the
points counted in the theorem are oscillatory, not merely recurrent
to the compactification boundary.

\subsection{Sparse excursions in a hyperbolic homoclinic class}
\label{ss:sparse-excursions}

The dimension argument can be carried out without a compact symbolic
extension containing the degenerate saddle. We use a fixed hyperbolic
horseshoe and insert prescribed finite excursions between sufficiently
long free blocks. The constants for different excursions need not be
uniform. The lengths of the free blocks are chosen after these constants.

{For a related use of approximation by hyperbolic sets to obtain
full-dimensional exceptional sets, see Barrientos, Nakano, Raibekas
and Roldan~\cite{BNRR2022}, particularly their discussion of surface
diffeomorphisms and the standard map. Their results concern points
with nonconvergent Birkhoff averages. Here we impose a different
condition: excursions toward parabolic infinity and returns to a
finite periodic orbit in both time directions. The construction below
establishes the required dimension estimate for these prescribed
excursions.}

We first record the hyperbolic constructions used below. All rectangles
in this subsection are contained in the smooth phase space, away from
infinity. Closures at infinity refer to the specified compactification.
For a periodic saddle $p$, its homoclinic class $H(p,f)$ is the
closure of transverse intersections of the stable and unstable
manifolds of its periodic orbit. We write $H(p)$ when the map is fixed.

For a hyperbolic basic set $\Lambda$, write
$h_\Lambda^s=\dim_H(\Lambda\cap W^s_{\rm loc}(x))$ and
$h_\Lambda^u=\dim_H(\Lambda\cap W^u_{\rm loc}(x))$ for its
stable and unstable slice dimensions, which are independent of
$x\in\Lambda$.

\begin{Lemma}[A return horseshoe and finite excursions]
\label{lem:hyperbolic-return-data}
Let $f$ be a $C^{1+\alpha}$ surface diffeomorphism, $\alpha>0$, and
let $\Lambda$ be a zero-dimensional basic set containing a periodic
saddle $p$. Given $\epsilon>0$, there are a coordinate rectangle
$Q=[-1,1]^2$ near $p$, an integer $L$, and finitely many return
branches $F_i=f^L$, $1\le i\le m$, with the following properties.
Their source strips are pairwise disjoint, as are their image strips;
they give a full-shift horseshoe $K\subset Q$ with
\[
 \dim_H K>\dim_H\Lambda-\epsilon.
\]
Moreover, its stable and unstable slice dimensions may be required to satisfy
\[
 h_K^u>h_\Lambda^u-\epsilon,\qquad
 h_K^s>h_\Lambda^s-\epsilon.
\]
Each branch has a cross representation on all of $Q$,
\[
 u=A_i(u',v),\qquad v'=B_i(u',v),
\]
whose component derivative row norms are at most $q<1/10$. The
coordinate projections of the strips are taken with strict crossing
margins, and the cross maps extend to neighborhoods of $Q$.

If $U$ is an open set containing a transverse homoclinic point of $p$, and $V$ is
any neighborhood of $p$, there is another branch $G=f^T$ from a
vertical strip across $Q$ onto a horizontal strip across $Q$, with
cross-component row norms at most $q$, such that every orbit segment
in this branch visits both $U$ and $V$. Its cross maps also extend
past the boundary. No uniform bound on $T$ or on higher derivative
constants as $U,V$ vary is asserted.
\end{Lemma}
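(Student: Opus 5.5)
The plan is to build $K$ from a Markov partition of $\Lambda$ and the standard dimension approximation for basic sets of surface diffeomorphisms. Then $G$ comes from a transverse homoclinic orbit through $U$ via the inclination lemma.

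\emph{First step.} Fix a small Markov partition of $\Lambda$ and a rectangle $R$ of it containing $p$. For large $n$, consider the admissible words of length $n$ that begin and end in $R$. Using $C^{1+\alpha}$ bounded distortion, the unstable slice dimension $h^u_\Lambda$ is the zero of the pressure $P(-t\log\|Df|_{E^u}\|)$. Counting these words gives a family of first-return-type branches $f^{L}$, all with the same length $L$, which keep a proportion of the cylinders whose Bowen--Ruelle sums are at least $e^{-\epsilon' L}$. For this step I will follow the McCluskey--Manning approximation, or equivalently the standard construction of subhorseshoes with dimension close to that of $\Lambda$ (as used in \cite{Go}). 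Equal lengths are obtained by padding each word with a fixed connecting word inside $R$; this costs only $O(1)$ in the exponent. Pairwise disjointness of the source strips and of the image strips follows from the Markov property and distinct words. The result is a full shift $K$ whose unstable and stable slice dimensions are each within $\epsilon/2$ of $h^u_\Lambda$ and $h^s_\Lambda$. Since $\dim_H\Lambda=h^u_\Lambda+h^s_\Lambda$ for basic sets of surface diffeomorphisms, and likewise for $K$, the bound $\dim_H K>\dim_H\Lambda-\epsilon$ follows.

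\emph{Second step: cross representations.} Choose $C^{1+\alpha}$ coordinates on a neighborhood $Q$ of $p$ in which $W^s_{\rm loc}(p)$ and $W^u_{\rm loc}(p)$ are the axes, rescaled to $Q=[-1,1]^2$. Each branch $f^L$ maps a vertical strip across $Q$ onto a horizontal strip across $Q$. By uniform hyperbolicity, the $u$-direction contracts under $f^{-L}$ and the $v$-direction contracts under $f^{L}$. Thus the map $(u',v)\mapsto(u,v')$ is well defined by the implicit function theorem, and its partial derivatives are $O(\theta^L)$ with $\theta<1$. The mixed partials are small because the cone fields are invariant. Taking $L$ large, which the first step allows by increasing $n$, makes all row norms less than $q<1/10$. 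The strict crossing margins come from using a slightly larger rectangle and restricting to it. The extensions past the boundary come from $f$ being defined on a neighborhood.

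\emph{Third step: the excursion branch $G$.} Let $z\in U$ be a transverse homoclinic point of $p$. Take an iterate $f^{-a}z\in W^u_{\rm loc}(p)$ and $f^{b}z\in W^s_{\rm loc}(p)$, both in $Q$. Consider a thin vertical strip along $W^s_{\rm loc}(p)$. First iterate it by $f^{k}$ near $p$: by the $\lambda$-lemma its image $C^1$-accumulates on $W^u_{\rm loc}(p)$ and passes near $f^{-a}z$. Then apply $f^{a+b}$, which carries a neighborhood of $f^{-a}z$ transversally onto a neighborhood of $f^bz$. Finally apply $f^{k'}$ near $p$ to stretch the image across $Q$ horizontally. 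Transversality of $z$ makes the composite $G=f^{k+a+b+k'}$ a horseshoe-type crossing branch. Its orbit segments pass near $z\in U$ and spend long times near $p$, hence in $V$. Choosing $k,k'$ large gives the row-norm bound $q$, as in the second step.

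The main obstacle is the first step: getting both slice dimensions simultaneously close with a \emph{full} shift of equal-length branches. I expect to handle this with a single pressure-counting argument for the two-sided potential. Bounded distortion makes the cylinder diameters in each direction comparable to products of derivatives, so keeping cylinders that are good in both directions loses only a subexponential factor.
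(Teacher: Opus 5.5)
There is a gap at the step you flag as the main obstacle. You never produce a single full shift whose unstable \emph{and} stable slice dimensions are both close to $h^u_\Lambda$ and $h^s_\Lambda$. The rest of your outline matches the paper, including the construction of $G$ from a transverse homoclinic orbit with long entrance and exit passages near $p$.

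Write $J^u=\|Df|_{E^u}\|$ and $J^s=\|Df|_{E^s}\|$. Selecting cylinders by their weights for one potential controls only that potential. The equilibrium states of $-h^u_\Lambda\log J^u$ and of $h^s_\Lambda\log J^s$ are in general different measures. They coincide in the area-preserving case, but the lemma is stated for arbitrary $C^{1+\alpha}$ maps. Words typical for one of these measures carry an exponentially small part of the partition sum of the other. So keeping only cylinders that are ``good in both directions'' can lose a definite exponential factor, leaving the stable zero strictly below $h^s_\Lambda$. A single ``two-sided potential'' does not help, because the two slice dimensions are zeros of two different pressure functions. Bounded distortion only makes each cylinder's side lengths comparable to the derivative products; it gives no reason for a selected subfamily to retain both growth rates.

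The missing observation, which is how the paper argues, is that no selection is needed. Take \emph{all} admissible words of length $l$ that begin and end at the Markov rectangle of $p$, with $l$ in the period class if the Markov graph is periodic. Any concatenation of such loops is admissible, so they already form a full shift of equal-length branches. No padding is needed, and you should not restrict to first returns: for a fixed partition their fixed-length growth rate is strictly smaller.

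Finitely many connecting words of bounded length join every admissible word to that rectangle, and H\"older potentials have bounded variation on cylinders. Hence the loop partition sums have the same exponential growth as the unrestricted ones: $l^{-1}\log Z^u_l(t)\to P_{f|\Lambda}(-t\log J^u)$, and likewise for the stable potential, locally uniformly in $t$. Bounded distortion identifies $\log Z_l$ with the pressure of $f^l|K_l$ up to $O_t(1)$, so both zeros converge simultaneously.

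Two smaller corrections. First, the transverse cross derivatives $\partial A_i/\partial v$ and $\partial B_i/\partial u'$ are not $O(\theta^L)$. They tend to the slopes of stable and unstable leaves in $Q$ relative to the axes. So you must take the rectangle small enough that $E^s$ and $E^u$ have slopes below, say, $q/2$ there; cone invariance alone does not give this. Second, disjointness of the strips requires moving the rectangle boundaries off the Cantor set, since adjacent Markov cylinders share boundary curves.
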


\begin{proof}
We recall the return-word construction for a hyperbolic basic set;
see \cite{kh,PT} for Markov rectangles and the hyperbolic inclination
lemma, and \cite{Bo2,PV} for the pressure and dimension facts used
here. Choose a sufficiently small Markov rectangle at $p$, with
$C^{1+\alpha}$ coordinates straightening its local stable and
unstable manifolds. For a periodic $p$, use its period iterate for
this local choice; all branch times below remain iterates of $f$.
Long admissible words beginning and ending in this rectangle define
full crossing return strips. Distinct strips have disjoint compact
closures after choosing the rectangle boundaries outside the Cantor
set. The branch charts extend with $C^{1+\alpha}$ regularity across
these boundaries.
Uniform hyperbolicity makes the longitudinal cross derivatives small
for long words; small variation of the stable and unstable directions
in the rectangle makes the two transverse cross derivatives small.
This gives the stated row bounds.

For completeness, the dimension approximation uses all sufficiently
long return words, rather than a single typical family of words.
In a finite transitive Markov graph the partition sums over loops at
a fixed vertex have the same exponential growth rate as unrestricted
partition sums, along the appropriate period class. Indeed, bounded
connecting words join each admissible word to that vertex. The added
length is bounded, the number of possible connectors is finite, and
H\"older potentials have bounded variation on cylinders. Applying
these observations to the geometric potentials for $f$ and $f^{-1}$
shows that the pressures per unit time of the full shifts on long
return words converge to the corresponding pressures on $\Lambda$.
More explicitly, if $Z_l^u(t)$ is the sum of the weights
$\exp(-tS_l\log J^u)$ over the selected loops, bounded cylinder
variation gives
\[
 P_{f^l|K_l}(-t\log J^u_{f^l})=\log Z_l^u(t)+O_t(1),\qquad
 l^{-1}\log Z_l^u(t)\longrightarrow P_{f|\Lambda}(-t\log J^u).
\]
Fixed entrance and exit lengths only replace $l$ by $l+O(1)$.
The pressure convergence is locally uniform in $t$, since expansion
and contraction rates on the original basic set are uniformly bounded.
The same assertions hold for inverse stable expansion. Their zeroes
therefore converge to the unstable and stable slice dimensions. For a $C^{1+\alpha}$ surface basic set the slice formula is
$\dim_H\Lambda=h_\Lambda^u+h_\Lambda^s$; see
\cite{MM,PV}. The dependence on the map used here concerns
hyperbolic continuations, not continuity through a loss of
hyperbolicity; compare the correction \cite{MMerr}. A fixed refinement to a smaller rectangle, or
fixed entrance and exit words, does not change these pressure limits.
Thus one can obtain both the dimension estimate and the stated
strict crossing and cone bounds. If the Markov graph has a period,
use return lengths in its period class.

For the last assertion choose a transverse homoclinic point in $U$
and a finite central segment of its orbit. Its two tails approach
the orbit of $p$. Adding sufficiently long hyperbolic entrance and
exit segments gives, by transversality and the inclination lemma,
a full return strip in $Q$. In cross coordinates its two components
converge in $C^1$ to the local stable and unstable graphs of $p$.
The rectangle was chosen so that their slopes are small; hence the
same row bounds hold after increasing the two tail lengths. At a
fixed point of the central segment the whole crossing strip can be
made arbitrarily close to the chosen homoclinic point. In the long
tails the whole strip also passes arbitrarily close to $p$. This
ensures the visits to $U$ and $V$. These are finite-time assertions:
for this branch, compactness and invertibility give finite derivative
bounds and positive lower bounds for the nonzero longitudinal cross
derivatives. The return length need not be a first return time.
\end{proof}

\begin{Lemma}[Deleting sparse prescribed branches]
\label{lem:sparse-deletion}
Let $F_1,\ldots,F_m$ be the finite return family in
Lemma~\ref{lem:hyperbolic-return-data}, with horseshoe $K$. Let
$G_j^+$ and $G_j^-$, $j\ge1$, be any two sequences of additional
full crossing branches in $Q$ satisfying the same cross-component
row bound $q$. Each additional branch is a $C^1$ diffeomorphism on
a neighborhood of its compact strip. There is a fixed schedule for
inserting these branches, in order in positive and negative time,
such that the resulting set $E\subset Q$ has a continuous deletion
map $\pi:E\to K$ which is onto and satisfies
\begin{equation}\label{eq:sparse-holder}
 d(\pi x,\pi y)\le C_\beta d(x,y)^\beta
 \quad (x,y\in E),\qquad 0<\beta<1.
\end{equation}
In particular, $\dim_H E\ge\dim_H K$.
\end{Lemma}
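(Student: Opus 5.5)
The plan is to code points of $E$ symbolically and compare them with points of $K$ after deleting the inserted excursions. Fix a rapidly increasing schedule of insertion positions $n_1<n_2<\cdots$ in positive time and $m_1<m_2<\cdots$ in negative time. A point of $E$ is defined by a bi-infinite itinerary. At position $n_j$ (respectively $-m_j$) the branch $G_j^+$ (respectively $G_j^-$) is applied. At all other positions a free symbol in $\{1,\dots,m\}$ selects $F_i$. Existence and uniqueness of the point with a given itinerary follow from the standard cross-map contraction argument. Every branch, inserted or not, has cross-component row norms at most $q<1/10$. So the composed cross maps along any finite word are contractions, and the vertical and horizontal strips nest with widths at most $q^k$ after $k$ steps. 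This gives a homeomorphism from itineraries onto $E$. The deletion map $\pi$ removes the inserted symbols and sends the point to the point of $K$ with the remaining free itinerary. By construction $\pi$ is continuous and onto.

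The key estimate compares strip widths for a word in $E$ with those for its deleted word in $K$. Fix the free word $w$ of length $k$ on the unstable side. Its cylinder in $K$ is a strip of width $|I_K(w)|$. The corresponding cylinder in $E$ additionally passes through the inserted branches $G_j^+$ with $n_j\le$ the current length. Composing with an extra branch multiplies widths by a factor between $c_j$ and $q$. Here $c_j>0$ is the lower bound of the nonzero longitudinal cross derivatives of $G_j^\pm$, which Lemma~\ref{lem:hyperbolic-return-data} supplies but not uniformly in $j$. Bounded distortion on the fixed family $F_i$ from the $C^{1+\alpha}$ hypothesis handles the free part. The distortion of each individual $G_j$ is only a fixed constant $D_j$, since each is a $C^1$ diffeomorphism near a compact strip and is used at most once per itinerary. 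Hence
\[
 |I_E| \ge |I_K(w)| \prod_{n_j\le k} (c_j D_j^{-1}).
\]
Meanwhile $|I_K(w)|\le \rho^{k}$ for some $\rho<1$, and $|I_K(w)|\ge \rho_0^{k}$ for some $\rho_0>0$. Choose the schedule after the constants: take $n_j$ so large that $\sum_{i\le j}\log(D_i/c_i)\le \varepsilon_j n_j$ with $\varepsilon_j\to 0$. Then the product is at least $|I_K(w)|^{1+o(1)}$ as $k\to\infty$. The same applies to the stable side with $m_j$.

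The H\"older bound then follows from the usual gap argument. Take $x,y\in E$ whose itineraries first differ at free depth $k$ on the unstable side. Then $d(\pi x,\pi y)\le C|I_K(w)|$, where $w$ is the common free prefix. We must also bound $d(x,y)$ from below by roughly $|I_E(w')|$, with $w'$ one symbol longer. The strict crossing margins in Lemma~\ref{lem:hyperbolic-return-data} ensure that distinct strips of the same generation are separated by gaps comparable to their widths; the gaps are images of the fixed margins. I would carry this out on entire intervals, including gaps, as indicated in Step~8 of the scheme. Combining the two bounds with the product estimate gives
\[
 d(\pi x,\pi y)\le C\, d(x,y)^{1/(1+o(1))}
\]
for points at small distance, that is, the bound \eqref{eq:sparse-holder} for every $\beta<1$ with $C_\beta$ absorbing the finitely many early depths. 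Treat the two coordinates separately via the product structure of the cross representation, and take the maximum. Since $\beta$-H\"older onto maps satisfy $\dim_H\pi(E)\le \beta^{-1}\dim_H E$, letting $\beta\to1$ yields $\dim_H E\ge\dim_H K$.

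The main obstacle is the lower bound on $d(x,y)$ when the first disagreement happens just after an inserted block. There the local strip widths are governed by the non-uniform constants $c_j,D_j$, and the gaps near $G_j$ are controlled only by its own $C^1$ bounds and margins. I would first try to absorb this by noting that each $G_j$ contributes a single bounded factor at one depth. The sparsity condition on $n_j$ relative to all previous constants then makes this factor subexponential in the depth. If that is insufficient, I would insert each $G_j$ only after a free block long enough that the subsequent cylinder widths dominate its distortion.
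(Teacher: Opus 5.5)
Your proposal is correct and follows essentially the same route as the paper: coding by the cross-map contraction, a cost $D_j$ for each prescribed excursion valid on its whole strip (including gaps), a bounded comparison cost for each free block against the deleted $K$-orbit, a schedule chosen after these constants so the accumulated cost is $o(k)$, separation lower bounds obtained by pulling back entire intervals, and the bracket to combine horizontal and vertical plaques. Two small remarks. The obstacle you flag is already resolved by your own schedule condition, since at depth $k=n_j$ the accumulated cost is at most $\varepsilon_j n_j$. The free part costs a bounded constant per free block rather than one global distortion constant: the $E$-orbit and the $K$-orbit are only $Cq^{\min\{i,r-i\}}$-close inside a block of length $r$, and tangent slopes must be tracked as well. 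The sparsity condition absorbs this cost too.
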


\begin{proof}
The case of one ordinary branch is immediate, since $K$ is a single
point. Assume there are at least two. We use the maximum norm in
the coordinates of $Q$; metric comparison constants on this fixed
rectangle do not affect the conclusion.
The extra branches and their return times are prescribed independently
of all free symbols. They need not be disjoint from the ordinary
branches, since the schedule specifies which family is used at each
step. Within a free step the ordinary source and image strips are
disjoint.

\emph{Coding and graphs.}
Write $(A_n,B_n)$ for the cross maps at step $n$ of a prescribed
bi-infinite sequence of branches. The equations
\[
 u_n=A_n(u_{n+1},v_n),\qquad
 v_n=B_{n-1}(u_n,v_{n-1})
\]
define a contraction of $Q^{\mathbb Z}$ in the supremum metric,
with constant $q$. Thus each sequence has exactly one orbit.
Agreement on $2r+1$ centered steps gives distance at most $Cq^r$
at the central step. In particular the coding from the compact
space of free symbol sequences is continuous. It is injective:
step times are independent of free symbols, and distinct free symbols
use disjoint strips. Its image $E$ is compact. Removing all prescribed
steps identifies the free symbol sequence with a unique point of $K$.

A fixed past defines a horizontal graph across $Q$, and a fixed
future defines a vertical graph. One can obtain these graphs by
solving the half-infinite cross equations with $u_0$, respectively
$v_0$, prescribed, or by graph transforms. Their Lipschitz constants
are at most $\rho=1/2$. For example, the image of $v=g(u)$ is given by
\[
 U=A(u',g(U)),\qquad \widetilde g(u')=B(u',g(U)).
\]
For $\operatorname{Lip}(g)\le\rho$, the first equation is a
contraction and the new slope is at most $q/(1-q\rho)<\rho$.
The graph transform contracts the $C^0$ distance by a constant less
than one. The inverse graph transform gives the vertical assertion.
These observations also prove that any past and future can be joined
by a unique bracket belonging to $E$. Lipschitz regularity is enough
for the interval estimates below; differentiations on these graphs
are understood almost everywhere.

\emph{The bounded cost of one free block.}
Here is the estimate that prevents an error proportional to the
length of each free block. Suppose two orbit segments follow the
same $r$ ordinary branches and remain in $Q$ at their endpoints.
Their points at step $i$ satisfy
\begin{equation}\label{eq:free-block-position}
 |z_i-\bar z_i|\le Cq^{\min\{i,r-i\}},\qquad 0\le i\le r.
\end{equation}
To see this, iterate the cross-equation contraction in the interior
of the common block. After $\min\{i,r-i\}$ iterations the value at
$i$ has not used data outside the block, and changing the boundary
data changes it by at most that power of $q$ times the diameter.

For a horizontal tangent of slope $s$, its image slope and its
horizontal expansion factor are
\[
 s'=B_{u'}+\frac{B_v s A_{u'}}{1-A_vs},\qquad
 J^u=\left|\frac{1-A_vs}{A_{u'}}\right|.
\]
The derivative of the slope expression with respect to $s$ has
absolute value at most
\[
 q_{\rm sl}=\frac{q^2}{(1-q\rho)^2}<1.
\]
For the finite ordinary family, $A_{u'}$ is bounded away from zero
on the closed cross domains. Its coefficients are H\"lder continuous
with some exponent $0<\alpha_0\le1$. Consequently, along the two
segments, their horizontal slopes satisfy
\[
 |s_{i+1}-\bar s_{i+1}|
 \le q_{\rm sl}|s_i-\bar s_i|
   +C\bigl(|z_i-\bar z_i|^{\alpha_0}
                  +|z_{i+1}-\bar z_{i+1}|^{\alpha_0}\bigr).
\]
Summing this inequality and using
(\ref{eq:free-block-position}) gives a bound independent of $r$
for the sum of slope differences. The same is then true for the
logarithmic expansion differences:
\begin{equation}\label{eq:free-block-cost}
 \left|\sum_{i=0}^{r-1}\log J_i^u
       -\sum_{i=0}^{r-1}\log\bar J_i^u\right|\le C_0.
\end{equation}
Both initial slopes may be arbitrary in the horizontal cone. The
constant depends only on the finite ordinary family. Reversing the
argument gives the analogous estimate for vertical expansion under
inverse branches. Also $J^u\ge(1-q\rho)/q>1$, and the same lower
bound holds for inverse vertical expansion.

\emph{The cost of a prescribed branch.}
For every $G_j^\pm$ choose $D_j\ge1$ bounding its logarithmic
horizontal expansion on the horizontal cone and the logarithmic
vertical expansion of its inverse on the vertical cone. Enlarge it
to cover both branches with index $j$. These bounds are finite:
the branches are diffeomorphisms, the cross domains are compact,
and the denominators $1-A_vs$ and their reversed counterparts are
bounded away from zero. No uniform bound on the numbers $D_j$ is
required. Notice that this is a bound on all points of each branch,
including intervals which contain gaps of the coded set. When discussing
stable and unstable holonomies we also enlarge $D_j$ to bound the
logarithms of the derivative norms of all partial iterates along the
$j$th excursion and all inverse partial iterates, on its compact
branch. There are finitely many such iterates for each $j$. This
extra requirement does not change the choice of schedule below.

\emph{Cylinder diameters and separation across gaps.}
Take $x,y\in E$ with the same past, and suppose their first different
free future symbol occurs after $k$ common free symbols. Let
$\bar x=\pi x$, $\bar y=\pi y$, and write
\[
 S_k^u(\bar x)=\sum_{i=0}^{k-1}\log J_F^u(F^i\bar x),
\]
where $F$ denotes the piecewise ordinary return map on $K$ and the
expansion is measured in horizontal coordinates on its unstable
plaque. There is $a>0$ such that $S_k^u\ge ak$.

If $j$ prescribed steps occur before this first different free step,
set
\[
 B(k)=C_0(j+1)+\sum_{i=1}^jD_i.
\]
There are constants $c,C>0$, independent of the code and $k$, such
that
\begin{equation}\label{eq:sparse-cylinder-comparison}
 |\bar x-\bar y|\le C e^{-S_k^u(\bar x)},\qquad
 |x-y|\ge c e^{-S_k^u(\bar x)-B(k)}.
\end{equation}
We justify the lower bound on the intervening interval. At the first
different free step the two points lie in distinct ordinary source
strips. Their distance is bounded below by the positive minimum
separation of these finitely many compact strips. Since they lie on
one horizontal graph, their horizontal coordinate separation has a
uniform positive lower bound $g$ as well.

Pull back the entire graph interval between them through the common
prefix. Each full crossing branch maps a graph interval onto a full
horizontal graph; its inverse is monotone in the horizontal coordinate.
Thus every point in this pulled-back interval follows the common
prefix, even when it lies outside the final coded set. On each free
block compare its expansion with the corresponding block of the
ordinary orbit of $\bar x$. The words agree throughout that block,
so (\ref{eq:free-block-cost}) applies to every such interval point.
Every prescribed step contributes at most $D_i$. The total horizontal
expansion on the interval is therefore at most
$\exp(S_k^u(\bar x)+B(k))$. Integrating the derivative over the
interval proves the lower bound in
(\ref{eq:sparse-cylinder-comparison}). The upper bound follows by
applying (\ref{eq:free-block-cost}) to the single ordinary prefix of
length $k$: its final interval has length at most the width of $Q$.
Cone bounds compare horizontal separation with distance on the graph.
This argument does not assume a distortion estimate only on the Cantor
set. The same interval proof applies backwards to pairs with the
same future.

\emph{Choice of the schedule.}
After the excursion branches and $D_j$ have been fixed, choose
strictly increasing integers $N_j$ so that
\begin{equation}\label{eq:sparse-schedule}
 \frac{C_0(j+1)+\sum_{i=1}^jD_i}{N_j}\le\frac1j.
\end{equation}
In each direction insert the $j$th prescribed step after exactly
$N_j$ free steps. For $N_j\le k<N_{j+1}$, the quantity $B(k)$ is
bounded by the numerator in (\ref{eq:sparse-schedule}); hence
$B(k)=o(k)$. This includes prefixes ending immediately after an
excursion. A bounded initial prefix can be absorbed in constants.
For $0<\beta<1$, eventually
$\beta B(k)\le(1-\beta)S_k^u$. Equation
(\ref{eq:sparse-cylinder-comparison}) then gives
\[
 |\pi x-\pi y|\le C_\beta|x-y|^\beta
\]
for all pairs on the same horizontal plaque. The backward estimate
proves it on every vertical plaque, with constants uniform over
all pasts and futures.

For general $x,y\in E$, let $z\in E$ have the past of $x$ and the
future of $y$. Then $x,z$ share a horizontal graph and $z,y$ share
a vertical graph. Their cones imply
\[
 \max\{|x-z|,|z-y|\}\le C_\rho|x-y|.
\]
Indeed, in the maximum norm the horizontal and vertical displacement
vectors cannot cancel by more than a factor $\rho<1$. Combining the
two plaque estimates with the triangle inequality proves
(\ref{eq:sparse-holder}). Since $\pi$ maps onto $K$,
$\dim_H K\le\beta^{-1}\dim_H E$ for every $\beta<1$.
Letting $\beta\uparrow1$ completes the proof.
\end{proof}

\begin{Proposition}[Dimension preserved by prescribed excursions]
\label{prop:dimension-excursions}
Let $f$ be a $C^{1+\alpha}$ surface diffeomorphism and $\Lambda$ a
zero-dimensional basic set containing a periodic saddle $p$.
Suppose transverse homoclinic points of $p$ accumulate at a boundary
point $O$ of a specified compactification. Then
\begin{equation}\label{eq:excursion-dimension}
 \dim_H\{x:\{p,O\}\subset\alpha_f(x)\cap\omega_f(x)\}
       \ge\dim_H\Lambda.
\end{equation}
Limit sets involving $O$ are taken in the compactification, whereas
Hausdorff dimension is measured in the smooth phase space. The same
conclusion holds if $\Lambda$ is homoclinically related to $p$.
\end{Proposition}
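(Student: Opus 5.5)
The plan is to combine Lemma~\ref{lem:hyperbolic-return-data} with Lemma~\ref{lem:sparse-deletion}. Fix $\epsilon>0$. Lemma~\ref{lem:hyperbolic-return-data} gives a rectangle $Q$ near $p$ and a full-shift return horseshoe $K\subset Q$ with $\dim_H K>\dim_H\Lambda-\epsilon$. If $\Lambda$ is only homoclinically related to $p$, I will first replace it by a basic set containing both $\Lambda$ and the orbit of $p$. Such a set is a transitive hyperbolic set built from finitely many transverse heteroclinic connections, and enlarging the set can only increase dimension. So I may assume $p\in\Lambda$.

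Next I choose the excursion branches. Fix a compactification neighborhood basis $W_j$ of $O$ and neighborhoods $V_j$ of $p$ shrinking to $p$. By hypothesis, transverse homoclinic points of $p$ accumulate at $O$, so each $W_j$ contains such a point. Let $U_j$ be a small open set around it, contained in $W_j$ and in the smooth phase space. The last part of Lemma~\ref{lem:hyperbolic-return-data} then produces a full crossing branch $G_j=f^{T_j}$ across $Q$, with cross row bounds $q$, whose orbit segments visit both $U_j$ and $V_j$. I set $G_j^+=G_j^-=G_j$. Lemma~\ref{lem:sparse-deletion} then yields a schedule and a set $E\subset Q$ with $\dim_H E\ge\dim_H K$. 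Note that $E$ consists of points of the smooth phase space, since every branch lives in $Q$ and orbit segments are finite compositions of $f$.

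The third step is to verify the limit sets. Take $x\in E$. In positive time its orbit runs through infinitely many prescribed steps $G_j$ with $j\to\infty$. Each such step passes through $U_j\subset W_j$ and through $V_j$. Hence $O\in\omega(x)$ in the compactification and $p\in\omega(x)$. The negative-time schedule gives the same conclusion for $\alpha(x)$: running the coded orbit backwards, the inverse passes through the same segments of $G_j$. Therefore $E$ lies in the set on the left side of (\ref{eq:excursion-dimension}), and letting $\epsilon\to0$ gives the inequality.

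The main obstacle is ensuring that orbits really have $O$ and $p$ in both limit sets, rather than merely visiting the neighborhoods once. This is handled by letting $U_j$ and $V_j$ shrink with $j$ while inserting infinitely many excursions. The construction relies on Lemma~\ref{lem:sparse-deletion} being uniform over arbitrary, non-uniform constants $D_j$; that uniformity is exactly what its schedule (\ref{eq:sparse-schedule}) provides. A secondary point to check is that the homoclinic-class enlargement in the ``homoclinically related'' case preserves the $C^{1+\alpha}$ hypotheses of Lemma~\ref{lem:hyperbolic-return-data}. This holds because that enlargement is again a basic set of the same map $f$.
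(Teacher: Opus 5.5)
Your proposal is correct and follows the paper's proof essentially step for step. You take the return horseshoe $K$ from Lemma~\ref{lem:hyperbolic-return-data} and excursion branches through transverse homoclinic points in shrinking neighborhoods of $O$ and through shrinking neighborhoods of $p$, inserted on both sides of the schedule of Lemma~\ref{lem:sparse-deletion}; in the homoclinically related case you first enlarge to a zero-dimensional basic set containing $\Lambda$ and the orbit of $p$, as the paper also does (the paper additionally notes the constructed points can be taken in $H(p)$, which the inequality does not need).
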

\begin{proof}
Choose a return horseshoe $K$ as in
Lemma~\ref{lem:hyperbolic-return-data}, with dimension greater than
$\dim_H\Lambda-\epsilon$. Choose transverse homoclinic points in
successively smaller neighborhoods $U_j$ of $O$. That lemma supplies
branches visiting $U_j$ and a $1/j$-neighborhood of $p$.
Use these branches on both sides of the schedule in
Lemma~\ref{lem:sparse-deletion}. Every resulting full orbit visits
these neighborhoods at unbounded positive and negative iterate times.
It therefore has both $p$ and $O$ in its two limit sets. These points
can also be taken in $H(p)$: truncate a prescribed itinerary in both
directions and close its two ends by the local unstable and stable
axes of $p$. The finite cross equations give a transverse homoclinic
point of $p$ following that finite itinerary. The uniform cross
contraction makes these points converge to the point with the full
itinerary as both truncation lengths tend to infinity. The lemma
gives a compact subset of these points of dimension at least
$\dim_H K$. Let $\epsilon\downarrow0$.

If $\Lambda$ is homoclinically related to $p$, first include it and
the orbit of $p$ in a hyperbolic basic set, using the transverse
connections and the hyperbolic inclination lemma. A finite Markov
extension realizes these connections; its hyperbolicity follows by
adding sufficiently long entrance and exit segments in the two
hyperbolic sets. Apply the preceding argument to this basic set,
whose dimension is at least $\dim_H\Lambda$.
\end{proof}

\begin{Corollary}[The nested hyperbolic-set criterion]
\label{cor:nested-oscillatory}
Suppose a periodic saddle $p$ has zero-dimensional basic sets
$\Lambda_n$ homoclinically related to it, with
$\dim_H\Lambda_n\to2$, and transverse homoclinic points of $p$
accumulating at $O_\infty$. Then the set of two-sided oscillatory
orbits has Hausdorff dimension two in the two-dimensional phase
section. In particular this applies to a nested sequence of basic
sets containing the orbit of $p$. Mere set-theoretic containment in
$H(p)$ is not substituted for the homoclinic relation.
\end{Corollary}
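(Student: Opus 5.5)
The corollary will follow directly from Proposition~\ref{prop:dimension-excursions}, applied once for each $n$. Fix $n$. Since $\Lambda_n$ is homoclinically related to $p$, and transverse homoclinic points of $p$ accumulate at $O=O_\infty$ in the McGehee compactification, the last sentence of that proposition applies to $\Lambda_n$. It gives
\[
 \dim_H\{x:\{p,O_\infty\}\subset\alpha(x)\cap\omega(x)\}\ge\dim_H\Lambda_n .
\]
Letting $n\to\infty$ and using $\dim_H\Lambda_n\to2$ yields the lower bound two. The upper bound is trivial, because the section is two-dimensional and Hausdorff dimension is measured in the smooth phase space.

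The next step is to check that every point of this set is two-sided oscillatory.
\begin{itemize}
\item Since $O_\infty\in\omega(x)$, the forward orbit enters arbitrarily small neighborhoods of infinity. Hence $\limsup$ of the distance is infinite.
\item Since $p\in\omega(x)$ and $p$ lies in the smooth phase space at finite distance, the forward orbit returns infinitely often to a fixed compact neighborhood of $p$. Hence $\liminf$ is finite.
\item The same two arguments apply to $\alpha(x)$ in negative time.
\end{itemize}
For the Poincar\'e maps, the returns are indexed by iterates, which occur at physical times $NP\to\pm\infty$. So these are oscillatory motions in Chazy's sense for the flow as well. For the RPC3BP, the remark in Step~9 about physical times tending to infinity covers the same point.

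For the nested case, each $\Lambda_n$ contains the orbit of $p$. The stable and unstable manifolds of $p$ then meet those of every periodic point of $\Lambda_n$ transversally, by transitivity of the basic set and local product structure. Thus the homoclinic-relation hypothesis holds automatically. I will stress that this uses the hyperbolic structure of $\Lambda_n$, not mere containment in $H(p)$, as the statement itself warns.

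The main point to watch is the regularity hypothesis of Proposition~\ref{prop:dimension-excursions}: the map must be $C^{1+\alpha}$ near $\Lambda_n$ and near the excursion branches. This holds because all rectangles lie in the smooth phase space away from infinity. The degenerate saddle enters only through the transverse homoclinic points accumulating at $O_\infty$, and these are finite-time objects.
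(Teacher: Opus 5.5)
Your proposal is correct and follows the paper's own argument. You apply Proposition~\ref{prop:dimension-excursions} to each $\Lambda_n$, use recurrence to the finite point $p$ for bounded returns and to $O_\infty$ for unbounded excursions, and combine $\dim_H\Lambda_n\to2$ with the trivial upper bound. In the nested case you do not need the homoclinic-relation argument: $p\in\Lambda_n$ already places you in the first case of that proposition.
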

\begin{proof}
Apply Proposition~\ref{prop:dimension-excursions} to each $\Lambda_n$.
Recurrence to $p$, which lies in the finite part of phase space,
ensures returns to a fixed bounded region; recurrence to $O_\infty$
ensures unbounded excursions. The lower bound tends to two, the
ambient dimension. Smooth coordinate changes on a compact rectangle
near $p$ preserve the computed Hausdorff dimension.
\end{proof}

\begin{proof}[Proof of Theorem~\ref{2-diml-hd} by sparse excursions]
Let $p$ be a saddle in the basic set $\Lambda$ with the stated
connections to $O_\infty$. We first verify that transverse
homoclinic points of $p$ accumulate at $O_\infty$; this step uses
only the local inclination statement already proved for the
parabolic passage, away from its vertex.

An arc of $W^u(p)$ crosses $W^s(O_\infty)$ transversely. Its long
passage images converge in $C^1$, on a fixed outgoing arc away from
$O_\infty$, to $W^u(O_\infty)$. At its transverse intersection
with $W^s(p)$, the nearby images consequently cross $W^s(p)$
transversely. These are homoclinic points of $p$. Their passage
segments enter arbitrarily small neighborhoods of $O_\infty$:
for any fixed finite initial segment of the limiting stable orbit,
the incoming points converge to that orbit, and this stable orbit
approaches $O_\infty$. Iterates of transverse homoclinic points
are again transverse homoclinic points to the periodic orbit.
Thus the required accumulation follows. If the two connections use
different periodic saddles of $\Lambda$, their hyperbolic homoclinic
relation supplies both connections for a common saddle.

Proposition~\ref{prop:dimension-excursions} now gives the desired
bound, with recurrence to a finite periodic orbit as well as to
infinity in both time directions. Neither a symbolic enlargement
containing $O_\infty$ nor a nonhyperbolic version of Manning's theorem
is used.
\end{proof}

\begin{Remark}[Scope of the dimension construction]
The sparse-excursion proof uses compact hyperbolic return rectangles
in the smooth phase space. It prescribes both visits to infinity and
returns to a finite periodic orbit. An alternative coding that includes
$O_\infty$ would require additional hypotheses: in the smooth McGehee
extension the line $q=0$ consists of fixed points, so the maximal
invariant set of an open neighborhood of $O_\infty$ contains an interval.
Such a set cannot be a zero-dimensional finite-type symbolic extension.
The construction above avoids this obstruction by using only finite
excursions through the parabolic region.
\end{Remark}

\begin{Remark}[Finite returns and physical oscillation]
\label{rem:finite-returns}
Membership of $O_\infty$ in both limit sets alone does not imply
oscillation: an orbit tending to infinity can have that property in
one time direction without returning to any bounded region. In the
construction above the $j$th excursion is required to visit both a
shrinking neighborhood of $O_\infty$ and a $1/j$-neighborhood of a
fixed finite periodic point $p$. The visits occur at unbounded
positive and negative iterate times. Thus a single compact
neighborhood of the finite periodic orbit is visited infinitely
often in both directions.

Let $R$ be the physical distance defining oscillation (for the
Sitnikov problem, $R=|z|$). It is bounded on this compact neighborhood,
whereas $R\to\infty$ on approach to $O_\infty$. Consequently the
constructed orbits satisfy
\[
 \liminf_{n\to+\infty}R(f^n x)<\infty,
 \qquad \limsup_{n\to+\infty}R(f^n x)=\infty,
\]
and the same assertions as $n\to-\infty$. For the fixed-period
Poincar\'e map these iterates occur at physical times $nP$, so the
same lower- and upper-limit assertions hold for the continuous-time
motion. For a return map with variable return times one must in
addition verify that these physical return times tend to infinity;
no such extra hypothesis is needed for the period map used here.
\end{Remark}

\section{Newhouse domains in the Sitnikov problem and construction of a thick hyperbolic set}

Here we show that for an open set of parameters $f_e$ has a locally maximal hyperbolic set
with persistent tangencies, homoclinically related to $O_{\infty}$, of Hausdorff dimension
close to two, and use it to complete the proof of the main results of the paper. In order to
do so, we first use $C^2$-lambda lemma for dynamics near $O_{\infty}$ (i.e. Theorem \ref{t.c2lyambdalemma}) and the explicit form of the Melnikov function for the Sitnikov
problem from \cite{GP} to prove (in Subsection \ref{ss.existenceoftangencies}) that there
are arbitrarily small values of the parameter $e$ that correspond to a quadratic tangency
between $W^s(O_{\infty})$ and $W^u(O_{\infty})$ that unfolds generically with $e$. Then
in Subsection \ref{ss.saddlesnearhomoclinicpoint} we show that there are small values of
$e$ such that $f_e$ has a hyperbolic saddle close to a homoclinic orbit of $O_\infty$
whose certain finite parts of stable and unstable manifolds are $C^2$-close to the stable
and unstable manifolds of $O_{\infty}$. Unfolding of quadratic tangencies in two-dimensional
conservative case leads to appearance of hyperbolic sets of Hausdorff dimension close to 2,
see \cite{Go}, and we apply this fact to these saddles in Subsection \ref{ss.bifurcation}.
Finally, in Subsection \ref{ss.summingup} we apply the standard genericity arguments and
the results of Section \ref{s.oscfromhyp} to these hyperbolic sets of large Hausdorff
dimension and prove the main results of the paper, Theorems \ref{Sitnikov-main} and \ref{parameter-Newhouse-Sitnikov}.

\subsection{Existence of quadratic tangencies between $W^u(O_{\infty})$ and $W^s(O_{\infty})$}\label{ss.existenceoftangencies}
We retain the construction using the images of the diagonal, but first
specify the coordinates in which the Melnikov formula is used. Its phase
is the time along the unperturbed separatrix, not an affine function of
the McGehee coordinate.

For the Sitnikov map on the full $(z,\dot z)$-plane, distinguish the
parabolic points $O_+$ and $O_-$ corresponding to $z=+\infty$ and
$z=-\infty$. The local coordinates of Section 2 at $O_+$ have
$z=2/q^2$, $\dot z=-p$. Their expanding axis $y=0$ is the local
branch of $W^u(O_+)$ with $\dot z<0$. At $e=0$ this branch continues
to $O_-$ in forward time: it coincides with the corresponding global
branch of $W^s(O_-)$. This is the pair of branches to which the splitting
formula below applies. The two ends must therefore be distinguished, even when the local
notation $O_\infty$ is used for either one separately.

Choose the origin of physical time at a symmetry of the primaries, so
$r_e(-t)=r_e(t)$. On the time-zero section the involution
\[
 R(z,\dot z)=(z,-\dot z),\qquad R f_e R=f_e^{-1},
\]
fixes both $O_+$ and $O_-$. In the exact local coordinates used above,
$R(x,y)=(y,x)$ at this phase. Thus
$\Delta=\{x=y>0\}$ is a portion of $\operatorname{Fix}R$.
The spatial symmetry $J(z,\dot z)=(-z,-\dot z)$ commutes with $f_e$
and exchanges the two parabolic points.

\begin{Theorem}\label{quad-tangency} (quadratic tangency)
There exists a sequence $e_n>0$, $e_n\to0$, such that
$W^s(O_\infty)$ and $W^u(O_\infty)$ have a quadratic tangency
that unfolds generically with $e$. On the full Sitnikov phase plane
this assertion holds for each choice $O_\infty=O_+$ and $O_\infty=O_-$.
\end{Theorem}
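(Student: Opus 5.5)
We will obtain the tangencies by comparing two curves near $O_+$: the global graph of $W^s(O_-)$ returned by symmetry, and the local passage images of the reversibility line $\Delta$. Since $R$ fixes $O_\pm$, reverses $f_e$, and fixes $\Delta$ pointwise, any point of $f_e^N(\Delta)\cap W^u(O_+)$ produces a symmetric homoclinic orbit. Likewise, tangencies between $W^u(O_+)$ and $f_e^N(\Delta)$ correspond, via $R$, to tangencies between $W^u(O_+)$ and $W^s(O_+)$. Indeed $f^{-N}$ of such a point lies on $\Delta=\mathrm{Fix}\,R$, so its $R$-image lies on $W^s(O_+)$ with the same contact order. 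The case of $O_-$ then follows by applying the symmetry $J$.

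\emph{Step 1 (global splitting graph).} Follow the branch of $W^u(O_+)$ with $\dot z<0$ to a neighborhood of $O_-$. Then return near $O_+$ using $J$, which exchanges the two ends and maps $W^s(O_-)$ to $W^s(O_+)$. Equivalently, work in the local coordinates at $O_-$ and take the incoming piece of $W^u(O_+)$ there. In the fixed incoming strip $U_{\rm in}$ at $O_-$, write this piece as a graph over the stable axis. Its $x$-coordinate is $x=e\,M(\theta)+O(e^2)$ in $C^2$, where $\theta$ is the phase along the unperturbed separatrix and $M$ is the Melnikov function of \cite{GP}. This function is $2\pi$-periodic with nondegenerate zeros and nonzero $\partial_eM$ at those zeros. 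The graph is $C^2$ with bounded slope and curvature, as required for Theorem~\ref{t.5}.

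\emph{Step 2 (local image of the diagonal and first contact).} By Theorem~\ref{t.c2lyambdalemma}, on a fixed outgoing interval the curve $f_e^N(\Delta)$ is a graph $y=y_N(x,e)$ with $y_N>0$, $-\partial_xy_N<Cy_N$, $0<\partial_x^2y_N<Cy_N$ and $|\partial_ey_N|\le Cy_N$. Moreover $y_N\to0$ uniformly as $N\to\infty$. So $f_e^N(\Delta)$ is a strictly convex, nearly flat graph hugging $W^u(O_+)$.

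Pull the global graph of Step 1 back (or push $f^N_e(\Delta)$ forward) to a common fundamental domain. The contact problem then becomes the comparison of two graphs over the phase variable: $g_e(\theta)=eM(\theta)+O(e^2)$ and $h_{N,e}(\theta)$. Here $h_{N,e}$ is the transported local curve, which has size $\asymp y_N$ and has positive convexity of relative size $O(1)$ with respect to $y_N$. Fix $N$ and decrease $e$ from a value where the graphs cross to a value where they are disjoint (possible because $\max_\theta M>0$ and $y_N>0$). At the first-contact parameter $e_N$ the two graphs touch at an interior point where $g-h$ attains a maximum equal to zero. Since $e_N\asymp y_N\to0$, this yields the sequence $e_n\to0$.

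\emph{Step 3 (quadraticity and generic unfolding).} This step is the main obstacle. At the contact, $g$ has second derivative $e M''(\theta_*)+O(e^2)$ with $M''(\theta_*)<0$, because the contact occurs near a nondegenerate maximum of $M$. Meanwhile $h''=O(y_N)=O(e)$ but has the opposite sign. We must show that $eM''$ dominates, which requires a careful conversion of the curvature bound $0<\kappa_N<Cy_N$ into the $\theta$-coordinate. This is where the constant in \eqref{e.shape} must be compared against the Melnikov constants. If the comparison is only at the same order, we will instead use the strict positivity of $\kappa_N$ together with the sign of $M''$: both contributions have the same sign in $g-h$, so $(g-h)''<0$ strictly, and no smallness is needed.

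For the unfolding, $\partial_e(g-h)=M(\theta_*)+O(e)-\partial_eh$. Since $M(\theta_*)\asymp y_N/e$ is bounded below by the contact condition, and $|\partial_eh|\le Cy_N$ by \eqref{e.derivine}, the derivative is nonzero for large $N$. Throughout, the $O(e^2)$ Melnikov remainder must be controlled in $C^2$ and in $e$, uniformly on the fundamental domain. We expect this to follow from smooth dependence on $e$ of the compact global piece of the manifolds.
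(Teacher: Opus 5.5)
Your architecture is the paper's: compare the Melnikov graph with the local passage images of $\operatorname{Fix}R$, take the first contact as $e$ increases from $0$, then use reversibility and $J$. But Step~3 has a genuine gap, and it comes from Step~2's use of a fixed ``common fundamental domain''. On such a domain, the bounds you actually have for the transported height in the phase variable are $|h'|+|h''|\le C_1h$ with $C_1=O(1)$. At a contact $h(\theta_*)\approx e\sin\theta_*$, so $|h''(\theta_*)|$ can be as large as $C_1e\sin\theta_*$, which is the same size as $|eM''(\theta_*)|$. Nothing then excludes $(g-h)''(\theta_*)=0$.

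Your fallback does not repair this. The sign $\kappa_N>0$ refers to $\partial_x^2y_N$ in the outgoing coordinate $x$. Passing to the phase variable, dividing by an amplitude, or transporting by $f_e^L$ adds terms involving $\lambda_N$ and $y_N$ of the same order and of indefinite sign. For example, the model outgoing shape $y=c/x$ is convex in $x$. But $u_x=\frac1{2x^4}(1+O_4)$ gives $x_\theta\approx2x^4$ and $x_{\theta\theta}\approx16x^7$, so $\partial_\theta^2y\approx(2c/x^3)(2x^4)^2-(c/x^2)(16x^7)=-8cx^5<0$. Hence the two contributions need not have the same sign.

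The missing idea is to make the iterated diagonal essentially flat in phase by choosing the comparison lobe deep in the tail. Work in the outgoing strip at $O_+$, where the returning branch of $W^s(O_-)$ is a graph $y=W_e(x)=a(x)\bigl(e\sin\theta(x)+e^2\rho\bigr)$ with $a\asymp1/x$. Take a lobe $\theta\in[-\pi/6,7\pi/6]+2\pi j$ with $j$ very negative, fixed before $e_0$ and $N$. Use the scale-invariant bounds $|\partial_xy_N|\le Cy_N/x$ and $|\partial_x^2y_N|\le Cy_N/x^2$, with $C$ independent of the depth; these come from the outgoing cone and $\kappa\le Cy/x^2$. The strip-dependent version $\kappa_N<Cy_N$ would not suffice.

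Combined with $x_\theta=O(x^4)$, $x_{\theta\theta}=O(x^7)$, $|a_x/a|\le C/x$ and $|a_{xx}/a|\le C/x^2$, these give, for $H_N=y_N/a$, the bounds $|\partial_\theta H_N|\le C\delta^3H_N$ and $|\partial_\theta^2H_N|\le C\delta^6H_N$. Both can be made $\le H_N/100$. The two contact equations for $G_N=e\sin\theta+e^2\rho-H_N$ then force $|\cos\theta_N|<1/2$, hence $\partial_\theta^2G_N<-e_N/2$ and $\partial_eG_N>1/2$. The endpoint values $\sin\theta=-1/2$ also make the maximizer interior, which a fundamental domain does not guarantee. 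On such a lobe your sign idea can also be made to work, but only in the coordinate $x$: at the contact $\partial_x^2W_e\approx-ea\theta_x^2\sin\theta<0$, while $\kappa_N>0$.

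Finally, fix the bookkeeping in your first paragraph. The local images of $\Delta$ stay in $\{y>0\}$ and never meet the axis $W^u(O_+)$. The contact you actually compute is between $f_e^N(\Delta)$ and the returning branch of $W^s(O_-)$. Pulling back by $f_e^{-N}$ and applying $R$ gives a tangency of $W^s(O_-)$ with $W^u(O_-)$, and $J$ then handles $O_+$.
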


\begin{proof}[Proof of Theorem~\ref{quad-tangency}]

We will split the proof into several steps:

\subsubsection*{The splitting formula in the local coordinates}
Let $H_0(z,v)=v^2/2-(z^2+1/4)^{-1/2}$ be the unperturbed Hamiltonian.
Parametrize the incoming positive-$z$ tail of $H_0=0$ by its physical
time $u$. On $y=0$ at $e=0$ this gives a function $u=u(x)$ with
\begin{equation}\label{eq:melnikov-phase-local}
 \begin{gathered}
 u_x=\frac{1}{2x^4}(1+O_4),\qquad
 x_u=2x^4(1+O_4),\qquad x_{uu}=O(x^7),\\
 u(x)\longrightarrow-\infty\quad(x\to0).
 \end{gathered}
\end{equation}
These identities follow from (\ref{sitnik-1}) restricted to the
unperturbed expanding axis. All error estimates in this subsection
include the displayed spatial derivatives.

The first-order energy splitting of the two branches is a nonzero
sinusoid in $u$. We use the Melnikov calculation of Garc\'ia and
P\'erez-Chavela~\cite{GP}; see also \cite[Lemma B.1]{GMP2025} for
its formulation in coordinates measuring the energy displacement
to first order. We need only its first variation and the nonvanishing of
its coefficient. A constant shift of $u$ absorbs its sign and the
choice of the origin on the unperturbed orbit. Denote the resulting
phase by $\vartheta=u+\vartheta_0$ and the positive coefficient by
$\sigma$.

Write the global branch $W^s(O_-)$ in the parameter-dependent local
chart at $O_+$ as $y=W_e(x)$. The unstable branch $W^u(O_+)$ is exactly
$y=0$. At $e=0$, the derivative of the energy in the transverse direction is
\[
 \partial_y H_0(x,0)=-8x(1+O_4).
\]
Consequently, after the constant phase shift just described,
\begin{equation}\label{eq:correct-local-melnikov}
 W_e(x)=a(x)\bigl(e\sin\vartheta(x)+e^2\rho(\vartheta(x),e)\bigr),
 \qquad a(x)=\frac{\sigma}{8x}(1+O_4)>0.
\end{equation}
In particular,
\begin{equation}\label{eq:melnikov-amplitude-bounds}
 |a_x/a|\le C/x,\qquad |a_{xx}/a|\le C/x^2.
\end{equation}
To justify the coordinate conversion, differentiate the two branches
at $e=0$, when they coincide. A change of their common longitudinal
parametrization contributes only a tangent vector, annihilated by
$dH_0$. Parameter derivatives of the chart cancel in the difference
of the two branches. The transverse difference is therefore the
energy splitting divided by $\partial_yH_0(x,0)$. The minus sign is
absorbed in $\vartheta_0$. This proves the first-order coefficient
in (\ref{eq:correct-local-melnikov}); it does not identify $x$ with $u$.

Fix a compact phase interval on this tail before letting $e\to0$.
Lemma~\ref{lem:uniform-separatrices} and finite-time continuation of
its invariant graphs imply joint smoothness in the graph coordinate
and $e$ on this interval. The continuation is nonsingular: the
unperturbed orbit has nonzero velocity on every finite segment crossing
$z=0$, and $r_e(t)>0$ for the small eccentricities considered here.
Taylor's formula in $e$, in the $C^2$ norm of the graph coordinate,
therefore gives (\ref{eq:correct-local-melnikov}) with
\begin{equation}\label{eq:melnikov-remainder-bounds}
 \sup\bigl(|\rho|+|\rho_\vartheta|+|\rho_{\vartheta\vartheta}|
                     +|\rho_e|\bigr)\le M.
\end{equation}
Here and below the bound is on the chosen compact phase interval and
$0\le e\le e_0$. The constant $M$ may depend on that interval; no
uniform remainder estimate all the way to infinity is asserted.
The derivative estimates follow from smooth dependence and Taylor's
formula, not from differentiating an unspecified scalar $O(e^2)$ term.

\subsubsection*{A positive lobe and an interior first contact}
We now give the contact argument with constants independent of the
iterate. Choose an integer $j$ sufficiently negative and use the phase
coordinate
\[
 \theta=\vartheta(x)-2\pi j,\qquad
 I=[-\pi/6,7\pi/6],\qquad \theta_c=\pi/2.
\]
Since $u(x)\to-\infty$ and $u_x>0$, the corresponding $x$-interval
lies arbitrarily close to zero as $j\to-\infty$. Write its upper
endpoint as $\delta$. Formula (\ref{eq:melnikov-phase-local}) implies
that its length is $O(\delta^4)$ and that $x\asymp\delta$ on this
interval. Work in a slightly larger local square and fix this interval
once and for all. In particular, all subsequent choices of small
$e_0$ and large $N$ are made after the phase interval has been fixed.

For sufficiently large $N$, let $y=\Delta_N(x,e)>0$ be the local
passage component of $f_e^N(\Delta)$ over this interval. The results of
Section~\ref{sec:revised-general}, uniformly for small $e$, give
\begin{equation}\label{eq:diagonal-relative-bounds}
 |\partial_x\Delta_N|\le C\Delta_N/x,\qquad
 |\partial_x^2\Delta_N|\le C\Delta_N/x^2,
 \qquad |\partial_e\Delta_N|\le K\Delta_N.
\end{equation}
The constants in the two spatial bounds can be fixed for all sufficiently
small local squares; making the square smaller only changes the
threshold for $N$. Indeed, the outgoing slope cone and the equation
for $w=\mu x^2/y$ give those constants directly. The parameter constant
$K$ is fixed after the local interval has been chosen.
Moreover, $\Delta_N\to0$ uniformly on this interval and on a sufficiently
small compact parameter interval as $N\to\infty$.

Put
\[
 H_N(\theta,e)=\frac{\Delta_N(x(\theta),e)}{a(x(\theta))}.
\]
This is positive. The chain rule, (\ref{eq:melnikov-phase-local}),
(\ref{eq:melnikov-amplitude-bounds}), and
(\ref{eq:diagonal-relative-bounds}) show that
\[
 |\partial_\theta H_N|\le C\delta^3H_N,
 \qquad |\partial_\theta^2H_N|\le C\delta^6H_N,
 \qquad |\partial_eH_N|\le K H_N.
\]
For example, the second derivative is the sum of the second
$x$-derivative of $\Delta_N/a$, multiplied by $x_\theta^2=O(x^8)$,
and its first $x$-derivative multiplied by $x_{\theta\theta}=O(x^7)$.
This explains why a nonlinear change to the Melnikov phase does not
lose the required curvature control.

Take the phase interval close enough to infinity that, with
$\epsilon=1/100$,
\begin{equation}\label{eq:normalized-diagonal-bounds}
 |H_{N,\theta}|\le\epsilon H_N,
 \qquad |H_{N,\theta\theta}|\le\epsilon H_N.
\end{equation}
Decrease $e_0>0$ so that
$\exp(\epsilon |I|+Ke_0)<2$. With
$d_N=H_N(\theta_c,0)>0$, integration of the logarithmic derivative
bounds gives
\begin{equation}\label{eq:height-comparison}
 \frac12d_N\le H_N(\theta,e)\le2d_N,
 \qquad \theta\in I,\quad0\le e\le e_0,
 \qquad d_N\longrightarrow0.
\end{equation}
In these coordinates the difference of the two graphs, divided by the
positive factor $a$, is
\[
 G_N(\theta,e)=e\sin\theta+e^2\rho(\theta,e)-H_N(\theta,e).
\]
Reduce $e_0$ further so that $Me_0<1/4$. At either endpoint of $I$,
$\sin\theta=-1/2$, and hence
\[
 G_N(\theta,e)\le-e/4-H_N(\theta,e)<0.
\]
At $e=0$ the function is negative everywhere. On the other hand,
for $N$ sufficiently large that $4d_N<e_0$ and $16Md_N<1$,
\[
 G_N(\theta_c,4d_N)\ge4d_N-16Md_N^2-2d_N>0.
\]
The maximum of $G_N(\cdot,e)$ over the compact interval $I$ is a
continuous function of $e$. Let $e_N$ be its first zero in $(0,4d_N)$.
At a maximizing point $\theta_N$ we have
\begin{equation}\label{eq:first-contact}
 G_N(\theta_N,e_N)=0,\qquad
 \partial_\theta G_N(\theta_N,e_N)=0.
\end{equation}
The endpoint inequalities show that $\theta_N$ is an interior point.
Also $0<e_N<4d_N\to0$. Thus existence and interiority have been proved
before using any curvature calculation.

\subsubsection*{Quadraticity and nondegenerate unfolding}
At a contact, (\ref{eq:height-comparison}) gives
\[
 \sin\theta_N=H_N(\theta_N,e_N)/e_N+O(e_N)
 \ge1/8-Me_N>0
\]
for all sufficiently large $N$. The derivative equation in
(\ref{eq:first-contact}) and (\ref{eq:normalized-diagonal-bounds}) give
\[
 |\cos\theta_N|\le Me_N+
       \epsilon H_N(\theta_N,e_N)/e_N
 \le Me_N+\epsilon(1+Me_N)<1/2.
\]
It follows that $\theta_N\in(\pi/3,2\pi/3)$ and
$\sin\theta_N>\sqrt3/2$. Consequently,
\begin{align*}
 \partial_\theta^2G_N(\theta_N,e_N)
 &\le-\frac{\sqrt3}{2}e_N+Me_N^2+
                 \epsilon e_N(1+Me_N)<-e_N/2,\\
 \partial_eG_N(\theta_N,e_N)
 &\ge\frac{\sqrt3}{2}-2Me_N-Me_N^2-
                 K e_N(1+Me_N)>1/2,
\end{align*}
again for all sufficiently large $N$. Thus the contact is quadratic.
Since $W_e-\Delta_N=aG_N$, its second $x$-derivative at contact is
$a\theta_x^2G_{N,\theta\theta}\ne0$, and its parameter derivative
there is $aG_{N,e}\ne0$.
The implicit function theorem continues the critical point locally
as $\theta=\theta_N(e)$. The signed critical value
$g_N(e)=G_N(\theta_N(e),e)$ satisfies
$g_N'(e_N)=G_{N,e}(\theta_N,e_N)>1/2$.
This is the required nondegenerate unfolding of the contact between
$W^s(O_-)$ and $f_e^N(\Delta)$.

\subsubsection*{From contact with the diagonal to a homoclinic tangency}
Pull both arcs back by $f_e^{-N}$. The diagonal image becomes $\Delta$,
and the other arc remains an arc of $W^s(O_-)$ by invariance.
For each fixed $N$ this is a smooth parameter-dependent local
diffeomorphism. At a quadratic contact it multiplies the second-order
normal separation and the parameter derivative of the critical value
by nonzero factors. Thus the pulled-back contact is quadratic and
unfolds nondegenerately relative to $\Delta$.

We spell out the symmetry step. In the original physical coordinates
near the pulled-back point, write this stable arc as
\[
 \dot z=h_N(z,e).
\]
At the contact $(z_N,e_N)$ it satisfies
\[
 h_N=0,\qquad \partial_z h_N=0,\qquad
 \partial_z^2h_N\ne0,\qquad \partial_eh_N\ne0.
\]
Here the last two nonvanishing statements are invariant under the
preceding coordinate changes: at contact both the separation and its
first spatial derivative vanish, so derivatives of the coordinate
change add no terms to the first nonzero normal jets.
Reversibility gives $R(W^s(O_-))=W^u(O_-)$, and its reflected arc is
$\dot z=-h_N(z,e)$. Their difference is exactly $2h_N(z,e)$.
Hence these stable and unstable arcs have a quadratic homoclinic
tangency with nonzero unfolding speed at $e=e_N$. The spatial symmetry
$J$ supplies the same conclusion for $O_+$.
Taking a subsequence if necessary gives distinct parameters tending to
zero and proves Theorem~\ref{quad-tangency}.
\end{proof}

\begin{Remark}
This proof combines the first Melnikov coefficient, smooth dependence
on compact arcs, and the relative local estimates proved above. The approximation
by periodic saddles in the next subsection is a separate step.
\end{Remark}

\subsection{Construction of a hyperbolic periodic saddle with a quadratic
homoclinic tangency}\label{ss.saddlesnearhomoclinicpoint}

We retain the approximation by periodic saddles, but distinguish two
steps: constructing a family of return branches with parameter estimates,
and transferring a generically unfolding tangency to those branches.
Spatial $C^2$ convergence alone is not sufficient for the second step.
For example, the graph differences
\[
 d_l(x,e)=x^2+e-l^{-1}\sin(le)
\]
converge uniformly in $e$, in the spatial $C^2$ norm, to $x^2+e$.
Nevertheless their only tangency parameter near zero is $e=0$, where
the unfolding derivative is zero. Indeed, $e-l^{-1}\sin(le)$ is
strictly increasing, vanishes only at zero, and has derivative zero
there. Each $d_l$ is real analytic. Thus analyticity does not remove
the need for a uniform parameter estimate.

The required estimate follows by extending the graph equation to
incoming curves. We do not need convergence of mixed derivatives in
space and the parameter. In this subsection $J\Subset(0,1)$ is a
fixed compact parameter interval, and all passages start at the same
forcing phase and end at a fixed physical time $T=NP$.
The normal-form hypotheses are those of Section~\ref{sec:revised-general},
including their uniform parameter bounds. Constants may depend on
$J$, on the fixed incoming and outgoing strips, and on uniform bounds
for the incoming curves; they do not depend on $N$.

\begin{Lemma}[Parameter estimate for an incoming curve]
\label{lem:incoming-parameter}
Let $\delta_{\rm loc}$ be the fixed local-square size.
Let $y=\gamma_e(x)$, $0\le x\le\varepsilon_0$, be a smooth family
of incoming curves with uniformly bounded first and second spatial
derivatives and
\[
 0<b_0\le\gamma_e(x)\le b_1<\delta_{\rm loc}.
\]
Write its local image at time $T$ over a fixed outgoing interval
$0<a_0\le x\le a_1<\delta_{\rm loc}$ as $y=Y(x,T,e)$. For all sufficiently
large $T=NP$, this component is defined and
\begin{align}
 \|Y(\cdot,T,e)\|_{C^2_x}&\le C T^{-2/3},
 \label{eq:incoming-spatial}\\
 |\partial_eY(x,T,e)|&\le C Y(x,T,e)
                 \bigl(1+\|\partial_e\gamma_e\|_\infty\bigr).
 \label{eq:incoming-parameter}
\end{align}
If $\theta$ varies only the incoming curve and not the vector field,
then
\begin{equation}\label{eq:incoming-shape-parameter}
 |\partial_\theta Y(x,T,e,\theta)|
       \le C Y(x,T,e,\theta)\|\partial_\theta\gamma_{e,\theta}\|_\infty.
\end{equation}
The same statements hold for reversed passages, with $x$ and $y$
interchanged.
\end{Lemma}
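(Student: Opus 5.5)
We work throughout in the rescaled time $ds=r^3dt$ of Section~\ref{sec:revised-general}, but differentiate only the fixed physical-time map at $T=NP$.

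\emph{Graph property and existence.} These come from the argument at the end of Section~\ref{sec:revised-general}. Parametrize the incoming curve by $x$. Lemma~\ref{lem:stretchgen} keeps the projected tangent nonzero, so the $x$-coordinate of the image at time $T$ is monotone in the initial point. The intermediate value theorem then gives a branch over $[a_0,a_1]$ once $T$ is large.

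\emph{Spatial bound \eqref{eq:incoming-spatial}.} For a point ending in the outgoing strip we have $x(0)y(0)\asymp x(S)y(S)$ by Lemma~\ref{lem:integrable-errors}. The physical time is $T=\int r^{-3}ds$, and it is dominated by the stretch near the diagonal, where $r\asymp h_*$ and $h_*^2\asymp x(0)y(0)$. This gives $T\asymp h_*^{-3}$ up to a bounded factor. Hence $Y\asymp x(0)\asymp h_*^2\asymp T^{-2/3}$. The slope bound $|\lambda|\le C\tau\le CY$ comes from the outgoing cone \eqref{eq:outgoing-cone}. The curvature bound $0<\kappa\le CY$ comes from Lemma~\ref{l.finalmu-general}. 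Together these give the $C^2$ estimate. The one point to check is that $T\asymp h_*^{-3}$ holds uniformly in $e\in J$ and over the incoming family. I would verify this by splitting $\int r^{-3}ds$ into the pre-diagonal and post-diagonal parts, where $r$ is comparable to $h_*e^{|s-s_*|}$.

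\emph{Parameter bounds.} These are the main step. Fix the outgoing abscissa $x$ and the time $T$, and differentiate in $e$. There are two contributions.

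The first comes from the variation of the initial point along $\partial_e\gamma_e$. It is a vector of size $\|\partial_e\gamma_e\|$ at the incoming strip. By Lemma~\ref{lem:stretchgen} it is stretched by $x(0)^{-5/2}$ and lands within angle $O(Y)$ of the image tangent; this is the projective contraction $x(0)^5$ acting on a bounded initial angle. After re-solving for the point above the fixed outgoing $x$, the remaining vertical component has size $O(Y)$. A cleaner way to see this: in cross coordinates $(x_{\rm out}, y_{\rm in})\mapsto(x_{\rm in},y_{\rm out})$, the vertical height $y_{\rm out}$ depends on $x_{\rm in}$ only through $\log y_{\rm out}\approx\log y_{\rm in}-S$. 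So $\partial\log Y$ with respect to a bounded change of the incoming height is bounded, and this gives \eqref{eq:incoming-shape-parameter}.

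The second contribution is the variation of the vector field. I would handle it through the variational equation for $(\partial_e\log x,\partial_e\log y)$ along the orbit. The source terms are $\partial_eA$ and $\partial_eB\in O_\eta$, multiplied by the rescaled-time density. The integrable error bound of Lemma~\ref{lem:integrable-errors} keeps $\int r^\eta ds$ bounded, so $\partial_e\log y$ changes by $O(1)$. The fixed terminal physical time also moves $S$. I would show $\partial_e S=O(1)$ by differentiating $T=\int r^{-3}ds$ at fixed $T$, where the relative variation of $r^{-3}$ is integrable after weighting. This fixed-$T$ constraint is where I expect the difficulty: $T$ is huge, so one must show that the relative error is weighted by $r^{-3}/T$ rather than by $r^{-3}$ alone. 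The second contribution then yields $|\partial_eY|\le CY$.

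\emph{Reversed passages.} These follow by applying the same argument to the inverse flow, with the roles of $x$ and $y$ exchanged.
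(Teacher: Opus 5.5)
Your existence argument and the spatial bound \eqref{eq:incoming-spatial} follow the paper: the height comes from $T\asymp(x(0)y(0))^{-3/2}$, the slope from the outgoing cone, and the curvature from Lemma~\ref{l.finalmu-general}.

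Your Lagrangian argument for \eqref{eq:incoming-shape-parameter} is a different route from the paper's. It can be repaired, but it does not work as written. The variation $(0,\partial_\theta\gamma)$ is vertical, so Lemma~\ref{lem:stretchgen} does not apply to it; in fact its image has norm of order $x(0)^{-3/2}$. Also, an angle $O(Y)$ combined with stretching $x(0)^{-5/2}$ would give a normal component much larger than $Y$. The fix is to write $(0,1)=(1,\lambda+1)-(1,\lambda)$, where $\lambda$ is the slope of $\gamma$. The image of $(1,\lambda)$ is tangent to the image curve. Lemma~\ref{lem:stretchgen} gives the image of $(1,\lambda+1)$ norm $\asymp x(0)^{-5/2}$ and slope difference $\asymp x(0)^5$. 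So its normal component is at most $Cx(0)^{5/2}$, and $|\partial_\theta Y|\le Cx(0)^{5/2}\|\partial_\theta\gamma\|_\infty$. This is more than enough, since $x(0)\asymp Y$.

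The genuine gap is the variation of the vector field, which is the substance of \eqref{eq:incoming-parameter}. There are two problems.

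First, along the orbit of a fixed initial point, $\partial_eS$ is not $O(1)$. Differentiating $T=\int_0^Sr^{-3}ds$ gives $\partial_eS=3r(S)^3\int_0^Sr^{-3}\,\partial_e\log r\,ds$. Near the diagonal, where almost all of the physical time is spent, $\partial_e\log r$ is in general a nonzero quantity of size $\delta_{\rm loc}^\eta$, accumulated on the incoming part. So $\partial_eS$ is of order $T$, and the terminal point moves along its orbit by an amount proportional to $T$. The bound $|\partial_eY|\le CY$ appears only after two further steps: the initial point is re-solved to keep the outgoing $x$ fixed, and this large displacement is projected onto the normal of the image curve. You identify this as the difficulty but give no mechanism for the cancellation.

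Second, $A$ and $B$ depend on $t$, with only $|A_t|\le Cr^\eta$. At fixed $s$, the variational equation for $(\partial_e\log x,\partial_e\log y)$ also contains the term $A_t\,\partial_et(s)$. Near the diagonal crossing height $h_*$, $\partial_et(s)$ can be of order $\delta_{\rm loc}^\eta h_*^{-3}$. The crude bound on this contribution is then $\delta_{\rm loc}^\eta h_*^{\eta-3}$, which is unbounded for $\eta<3$; the RPC3BP normal form has $\eta=2$. So boundedness of $\int r^\eta ds$ does not give $\partial_e\log y=O(1)$.

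The missing idea is to differentiate the graph instead of the orbit. The paper writes the image as $y=Y(x,t,e)$, which satisfies $Y_t+FY_x=F^{(2)}$, and differentiates at fixed $(x,t)$. This builds in both the fixed terminal time and the fixed outgoing abscissa. Set $V=\partial_eY$, $\lambda=Y_x$ and $W=V/Y$. Along characteristics one obtains $W_s=KW+L$, where
\[
 K=-\frac{3(y+x\lambda)}{x+y}+O(r^\eta)(1+|\lambda|),\qquad
 L=-B_e-\frac{\lambda}{\tau}A_e .
\]
After a bounded initial interval, the slope cone does two things:
\begin{enumerate}
\item It makes the main term of $K$ nonpositive for $\tau\ge1$ and bounds its positive part by $C\tau$ for $\tau\le1$, so $\int K_+\,ds\le C$.
\item It keeps $\lambda/\tau$ bounded, so $\int|L|\,ds\le C\delta_{\rm loc}^\eta$.
\end{enumerate}
Variation of constants from $W(0)=\partial_e\gamma_e/\gamma_e$ then gives \eqref{eq:incoming-parameter}. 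Taking $L=0$ gives \eqref{eq:incoming-shape-parameter}. Neither $\partial_eS$ nor any $t$-derivative of the coefficients ever occurs.
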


\begin{proof}
The slope, stretching, and curvature estimates in
Section~\ref{sec:revised-general} apply uniformly to this bounded
class of incoming graphs. They give a nonvanishing projection to the
outgoing coordinate and
\[
 |Y_x|\le CY/x,\qquad |Y_{xx}|\le CY/x^2
\]
on the fixed outgoing strip, for all sufficiently long passages.
Existence over the entire outgoing interval follows by varying the
initial $x$ from zero: its image starts on the stable axis and passes
through that interval before leaving the local square. The projected
tangent does not vanish. The initial point of every such long
passage tends to the stable axis uniformly in the stated class;
otherwise the monotone local flow has a uniformly bounded exit time.

Here is a useful quantitative height estimate. Let $x(0)=\xi$ and
$y(0)=b\in[b_0,b_1]$, and put $m=\xi b$.
Lemma~\ref{lem:integrable-errors} gives
\[
 r(s)\asymp\xi e^s+b e^{-s},\qquad x(s)y(s)\asymp m.
\]
With $v=s-\tfrac12\log(b/\xi)$, the physical passage time is
comparable to
\[
 m^{-3/2}\int (e^v+e^{-v})^{-3}\,dv.
\]
The integration interval contains $[-1,1]$ for all sufficiently long
incoming--outgoing passages. The integral over $\mathbb R$ is finite,
so $T\asymp m^{-3/2}$. Since the two longitudinal endpoint coordinates
are bounded above and below, both $\xi$ and $Y$ are comparable to
$T^{-2/3}$. This proves (\ref{eq:incoming-spatial}). A fixed leading
factor in the vector field changes only the comparison constants.

For the parameter derivative, the graph equation is
$Y_t+F Y_x=F^{(2)}$. Put $V=\partial_eY$, $\lambda=Y_x$, and $W=V/Y$.
Differentiating at fixed $x,t$, and only then introducing
$ds=c r^3dt$ when the leading constant is $c$, gives
\[
 \begin{gathered}
 W_s=K W+L,\qquad \tau=y/x,\\
 K=-\frac{3(y+x\lambda)}{x+y}+O(r^\eta)(1+|\lambda|),
 \qquad L=-B_e-\frac\lambda\tau A_e.
 \end{gathered}
\]
Initially $|W(0)|\le b_0^{-1}\|\partial_e\gamma_e\|_\infty$.
For a uniformly bounded initial interval of rescaled time, the slope
is bounded and $\tau\ge1$, for all sufficiently long passages.
After this interval, Lemma~\ref{lem:absorbgen} gives
\[
 -\frac{2\tau}{1+\tau}\le\lambda
                     \le-\frac{\tau}{2(1+\tau)}.
\]
For $\tau\ge1$ the lower bound implies
$y+x\lambda\ge y(y-x)/(x+y)\ge0$, so the main term of $K$ is
nonpositive. For $\tau\le1$ its positive part is bounded by $3\tau$.
Consequently, on every subinterval of the passage,
\[
 \int K_+\,ds\le C,\qquad
 \int|L|\,ds\le C\delta_{\rm loc}^\eta.
\]
Indeed, $|\lambda|/\tau$ is uniformly bounded, the outgoing integral
of $\tau$ is bounded because $\tau_s\le-\tau$, and the integral of
$r^\eta$ is bounded by Lemma~\ref{lem:integrable-errors}.
It is the positive part of $K$, not its absolute value, that must
be bounded on the incoming part of the passage.

Variation of constants gives
\[
 |W(S)|\le C\bigl(b_0^{-1}\|\partial_e\gamma_e\|_\infty
                                      +\delta_{\rm loc}^\eta\bigr),
\]
which proves (\ref{eq:incoming-parameter}). If only the initial graph
varies, the forcing $L$ is zero and the same argument proves
(\ref{eq:incoming-shape-parameter}). Reversing time and interchanging
the coordinates preserves the normal-form hypotheses and proves the
last assertion. All differentiations above concern a fixed physical
time; the rescaled time is used only to estimate the variational equation.
\end{proof}

\begin{Lemma}[Local passage in cross coordinates]\label{lem:local-cross}
For $a\in[a_0,a_1]$, $b\in[b_0,b_1]$, and $T=NP$ sufficiently large,
there is a unique local passage
\[
 (U_N(a,b,e),b)\longmapsto(a,V_N(a,b,e)).
\]
The functions are smooth for each finite $N$, and, uniformly on this
fixed endpoint rectangle and $J$,
\begin{equation}\label{eq:local-cross-estimate}
 \|U_N\|_{C^1_{a,b}}+\|V_N\|_{C^1_{a,b}}
       +\|\partial_eU_N\|_\infty+\|\partial_eV_N\|_\infty
       \le C N^{-2/3}.
\end{equation}
In addition, images of incoming graphs in a fixed bounded $C^2$
class satisfy (\ref{eq:incoming-spatial}).
\end{Lemma}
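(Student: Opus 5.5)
We treat the local passage as a boundary value problem. The start is prescribed on the incoming strip through its height $y(0)=b$. The end is prescribed on the outgoing strip through $x(T)=a$. The unknowns are the initial longitudinal coordinate $\xi=U_N(a,b,e)$ and the terminal height $V_N=y(T)$.

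\emph{Existence and uniqueness.} Fix $b$ and consider the horizontal incoming segment $y=b$, $0\le x\le\varepsilon_0$. This is a graph in the bounded $C^2$ class of Lemma~\ref{lem:incoming-parameter}, with zero slope and curvature. By the graph argument after Lemma~\ref{l.finalmu-general}, the map $\xi\mapsto x(T;\xi,b,e)$ is strictly increasing wherever the passage stays in the square. Its derivative is positive by the stretching estimate of Lemma~\ref{lem:stretchgen}, applied to the horizontal tangent vector. The map tends to $0$ as $\xi\to0$ and reaches the outgoing strip for a fixed positive $\xi$. The intermediate value theorem and monotonicity then give a unique $\xi=U_N(a,b,e)$ with $x(T)=a$. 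We set $V_N(a,b,e)=Y(a,T,e)$, where $Y$ is the image graph of this horizontal segment. Smoothness for fixed $N$ follows from the implicit function theorem, because $\partial_\xi x(T)\ne0$.

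\emph{Size bounds.} The height estimate in the proof of Lemma~\ref{lem:incoming-parameter} gives $T\asymp(\xi b)^{-3/2}$, so $\xi,V_N\asymp T^{-2/3}\asymp N^{-2/3}$ uniformly on the rectangle and $J$. Lemma~\ref{lem:incoming-parameter} also yields $|\partial_aV_N|=|Y_x|\le CY/a\le CN^{-2/3}$ directly. For $\partial_bV_N$ and $\partial_eV_N$, vary the incoming segment through the parameter $\theta=b$, i.e. take $\gamma_{e,\theta}\equiv\theta$. Then (\ref{eq:incoming-shape-parameter}) gives $|\partial_bV_N|\le CV_N$, and (\ref{eq:incoming-parameter}) gives $|\partial_eV_N|\le CV_N$.

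\emph{Bounds for $U_N$.} We use the reversed passage, as permitted by the last sentence of Lemma~\ref{lem:incoming-parameter}. Since the backward flow from time $T$ has reversed forcing phase, we pass to the map $f^{-N}$. The vertical outgoing segment $x=a$ is transported backward to an incoming-side graph $x=\widetilde U(y,a,e)$ with $\widetilde U(b,a,e)=U_N(a,b,e)$. The reversed versions of (\ref{eq:incoming-spatial})--(\ref{eq:incoming-shape-parameter}) then bound $\partial_bU_N$, $\partial_aU_N$ and $\partial_eU_N$ by $CU_N\le CN^{-2/3}$. The remaining clause, that images of bounded $C^2$ incoming graphs satisfy (\ref{eq:incoming-spatial}), is exactly the earlier lemma.

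\emph{Main obstacle.} The hardest step is justifying the reversed-passage application. The normal-form hypotheses must hold for $f^{-N}$ with the terminal phase fixed at $T=NP$. The slope, curvature and $W$-variational estimates must also remain uniform when $x$ and $y$ exchange roles. I would check that the remainder classes $O_\eta$ are symmetric under $(x,y,t)\mapsto(y,x,-t)$, since Lemma~\ref{lem:integrable-errors} holds for any phase. I would also check that the positive part of $K$ stays integrable, using the absorbing cone of Lemma~\ref{lem:absorbgen} in reversed time.
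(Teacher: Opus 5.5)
Your proposal is correct, and its skeleton is the paper's. The paper also applies Lemma~\ref{lem:incoming-parameter} to the horizontal graphs $y=b$ and then to their time-reversed counterparts. That gives the sizes $U_N,V_N=O(N^{-2/3})$, the bounds on $\partial_aV_N$ and $\partial_bU_N$, and both parameter derivatives. The reversed application you flag as the main obstacle is the final assertion of that lemma, which the paper invokes in the same way.

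The routes differ only for the two remaining derivatives, $\partial_aU_N$ and $\partial_bV_N$.

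\begin{itemize}
\item The paper takes $\partial_aU_N$ from the projected stretching estimate of Lemma~\ref{lem:stretchgen}: $\partial_aU_N=(\partial_{x(0)}x(T))^{-1}=O(U_N^{5/2})$.
\item It takes $\partial_bV_N$ from Liouville's formula. The divergence identity $\operatorname{div}(F,F^{(2)})/(cr^3)=3(\log r)_s+O(r^\eta)$ gives $|\det D\phi_T|\asymp1$. Implicit differentiation at fixed $a$ then gives $\partial_bV_N=\det D\phi_T/\partial_{x(0)}x(T)=O(U_N^{5/2})$.
\end{itemize}

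You obtain both from the shape-parameter estimate \eqref{eq:incoming-shape-parameter}. Forward, you vary the horizontal segment $\gamma_\theta\equiv\theta$; backward, you use its reversed analogue for the vertical segments $x=a$. This yields only $O(N^{-2/3})$.

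Your route is more economical: it uses a single lemma throughout and needs neither the volume argument nor the stretching lemma for the endpoint derivatives. The paper's route gives the much smaller bound $O(N^{-5/3})$ for these two derivatives, which reflects the strong stretching of the degenerate passage. That extra strength is not needed for \eqref{eq:local-cross-estimate}, so your weaker bound suffices.
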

\begin{proof}
Apply the preceding lemma first to the horizontal graphs $y=b$ and
then to their time-reversed counterparts. It gives
$U_N,V_N=O(N^{-2/3})$, the parameter bounds,
$\partial_aV_N=O(V_N)$, and $\partial_bU_N=O(U_N)$.
The projected stretching estimate gives
\[
 \partial_aU_N=(\partial_{x(0)}x(T))^{-1}=O(U_N^{5/2}).
\]
To obtain the remaining derivative, note that
\[
 \operatorname{div}(F,F^{(2)})/(c r^3)
       =3(x-y)/r+O(r^\eta)
       =3(\log r)_s+O(r^\eta).
\]
Liouville's formula and the bounded endpoint ratio therefore give
$|\det D\phi_T|\asymp1$ on these passages. Implicit differentiation
at fixed terminal $a$ now gives
\[
 \partial_bV_N=\frac{\det D\phi_T}{\partial_{x(0)}x(T)}
                         =O(U_N^{5/2}).
\]
This proves (\ref{eq:local-cross-estimate}). The final assertion is
(\ref{eq:incoming-spatial}). We deliberately do not assert convergence
of all second endpoint derivatives of $U_N,V_N$: it is the curvature
of images of bounded incoming graphs that will be used below.
\end{proof}

\begin{Lemma}[A transverse return at each fixed positive parameter]
\label{lem:transverse-return}
For all sufficiently small fixed $e>0$, each of $O_+$ and $O_-$ has
a transverse homoclinic point on its physical branches. Such a point
has a smooth continuation on a parameter interval about $e$.
\end{Lemma}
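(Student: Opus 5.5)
We combine the first-order heteroclinic splitting between the two parabolic ends with the local passage estimates near each end. The symmetries $R$ and $J$ then let us close the heteroclinic chain into a homoclinic loop.

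\textbf{Step 1: transverse heteroclinic connections.} At $e=0$ the branch of $W^u(O_+)$ with $\dot z<0$ coincides with a branch of $W^s(O_-)$. By \eqref{eq:correct-local-melnikov}, for small $e>0$ the graph difference is
\[
 W_e(x)=a(x)\bigl(e\sin\vartheta(x)+e^2\rho\bigr),
\]
with $a>0$ and $\rho$ bounded in $C^2$ by \eqref{eq:melnikov-remainder-bounds}. We fix a compact phase interval containing a zero $\vartheta=\pi$ of $\sin$. There $\partial_\vartheta(e\sin\vartheta+e^2\rho)=-e+O(e^2)\ne0$, so for all sufficiently small $e>0$ the branches $W^u(O_+)$ and $W^s(O_-)$ cross transversely at a point depending smoothly on $e$ (implicit function theorem). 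Applying $J$ gives a transverse intersection of $W^u(O_-)$ and $W^s(O_+)$. Reversibility under $R$ gives the companion connections.

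\textbf{Step 2: closing the cycle.} We take an arc $\gamma$ of $W^u(O_+)$ through the point of $W^u(O_+)\pitchfork W^s(O_-)$. Near $O_-$, $\gamma$ is an incoming curve crossing the stable axis of the local chart at $O_-$ with bounded slope. By Theorem~\ref{t.5} and Lemma~\ref{lem:incoming-parameter}, the long passage images $f^N\gamma$ converge in $C^1$ to $W^u(O_-)$ on a fixed outgoing arc away from the vertex. That arc contains the transverse intersection of $W^u(O_-)$ with $W^s(O_+)$. Hence for large $N$ the images $f^N\gamma\subset W^u(O_+)$ cross $W^s(O_+)$ transversely, giving a transverse homoclinic point of $O_+$ on its physical branches. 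Applying $J$ gives the same for $O_-$.

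\textbf{Step 3: continuation in $e$.} Transversality is an open condition. The arcs involved are finite pieces of the invariant manifolds, which depend smoothly on $e$ by Lemma~\ref{lem:uniform-separatrices} together with finite-time continuation; note that the homoclinic arc is now fixed, so no uniformity in $N$ is needed. The implicit function theorem therefore continues the transverse homoclinic point smoothly on a parameter interval about the given $e$.

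\textbf{Main obstacle.} The delicate point is Step~2: the outgoing arc where $C^1$ convergence holds must contain the heteroclinic intersection point. Since \eqref{eq:correct-local-melnikov} is valid on any fixed compact phase interval, we choose the intersection point on a fixed arc bounded away from $O_-$ (transported along the orbit if necessary) before taking $N$ large. Theorem~\ref{t.5} is then applied with the outgoing strip $U$ chosen to contain it. The small-$e$ threshold from Step~1 must be fixed first, and $N$ chosen afterwards.
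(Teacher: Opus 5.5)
Your proposal is correct and follows essentially the paper's proof. A simple zero of the Melnikov term gives the transverse connection $W^u(O_+)\cap W^s(O_-)$, and a symmetry gives the reverse connection. A long local passage, using the $C^1$ inclination estimate on a fixed outgoing strip, closes these into a transverse homoclinic point, and $J$ transfers it to the other end. Finite-time smooth dependence plus transversality then gives the continuation in $e$.

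The only differences are cosmetic. You use $J$ rather than the reverser $R$ for the reverse connection; both work, since $J$ commutes with $f_e$ and exchanges the ends, while $R$ conjugates $f_e$ to $f_e^{-1}$ and fixes them. You also close the loop near $O_-$ to get a homoclinic point of $O_+$ first, whereas the paper closes it near $O_+$ to get one of $O_-$ first.
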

\begin{proof}
The splitting formula (\ref{eq:correct-local-melnikov}), on a fixed
compact phase interval about a simple zero of its sine term, gives
a transverse intersection from $O_+$ to $O_-$ for every sufficiently
small $e>0$. Indeed, after division by $e a(x)>0$, its graph difference
is $\sin\vartheta+e\rho$, whose zero is simple by the spatial
remainder bound. Reversibility supplies a transverse connection in
the opposite direction.

Take a compact arc of $W^u(O_-)$ through the latter connection to
$W^s(O_+)$. Its forward local passage near $O_+$ is $C^1$ close,
on a fixed outgoing strip, to $W^u(O_+)$. Continue a suitable part
a fixed finite number of iterates to the first transverse connection.
For a sufficiently long passage it crosses $W^s(O_-)$ transversely.
This is a homoclinic point of $O_-$. The spatial symmetry gives one
for $O_+$. Only the inclination estimate away from the parabolic
point is used. At a fixed positive $e$, all iterates just chosen are
finite. Smooth dependence of the compact separatrix arcs and
transversality give continuation on a sufficiently small interval
about that parameter. No uniform return time as $e\to0$ is required.
\end{proof}

Fix $n$ sufficiently large and one of the points $O_\infty=O_\pm$.
Choose a transverse homoclinic point $q^*_{e_n}$ by
Lemma~\ref{lem:transverse-return}, and then fix a compact interval
$J$ about $e_n$, contained in $(0,1)$, on which it continues.
All assertions below are uniform on this fixed interval; $J$ may
depend on $n$.

\begin{Proposition}[Periodic saddles and parameter-uniform approximation]
\label{saddle-sequence}
For all sufficiently large $N$ there is a family of hyperbolic
periodic saddles $s_{e,N}$, $e\in J$, tending uniformly to $q_e^*$.
The saddles depend smoothly on $e$ and are homoclinically related to
$O_\infty$. On fixed compact stable and unstable arcs containing the
parabolic tangency point, their invariant manifolds have graph
representations satisfying
\begin{equation}\label{e.c2limit}
 \|w_N^{s,u}-w^{s,u}\|_{C^2_x}
       +\|\partial_e(w_N^{s,u}-w^{s,u})\|_{C^0}
       \le C N^{-2/3}.
\end{equation}
Here the norms are uniform on $J$, the graph coordinate is fixed,
and $w^{s,u}$ represent the corresponding arcs of $W^{s,u}(O_\infty)$.
The periods divide $N+L$, where $L$ is fixed after choosing $J$ and
the transverse orbit.
\end{Proposition}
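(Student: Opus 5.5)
The periodic saddle will come from a fixed point of a single return map made of two pieces: a long local passage near $O_\infty$, written in the cross coordinates of Lemma~\ref{lem:local-cross}, and a fixed global map $g_e=f_e^L$ that carries a neighbourhood of the outgoing strip point on $W^u(O_\infty)$ through $q^*_e$ back to the incoming strip near $W^s(O_\infty)$. Transversality of the homoclinic point $q^*_e$ (Lemma~\ref{lem:transverse-return}) makes $g_e$ a smooth diffeomorphism, uniformly on $J$. In its coordinates the unstable axis $y=0$ near $x=a^*$ is sent to a curve crossing the stable axis $x=0$ transversally at height $b^*$.

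\emph{Fixed point.} Write the return $f_e^{N+L}$ in cross form, with $(a,b)$ the outgoing abscissa and incoming ordinary. The passage sends $(U_N(a,b,e),b)$ to $(a,V_N(a,b,e))$, and $g_e$ sends $(a,V_N)$ to a point whose $x$-coordinate must equal $U_N$ and whose $y$-coordinate must equal $b$. By (\ref{eq:local-cross-estimate}) all $N$-dependent terms are $O(N^{-2/3})$ in $C^1$, including their $e$-derivatives. The fixed-point equations are therefore a $C^1$-small perturbation of the system ``$g_e(a,0)$ lies on $x=0$ with height $b$''. That limiting system has the nondegenerate solution $(a^*(e),b^*(e))$ given by the transverse crossing.

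The implicit function theorem then gives a unique solution, smooth in $e$ and converging uniformly to $q^*_e$, which is the saddle $s_{e,N}$. Its period divides $N+L$.

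\emph{Hyperbolicity.} I will use the cone estimates. The differential of the passage has the stretching of Lemma~\ref{lem:stretchgen}: expansion of order $U_N^{-5/2}$ along the horizontal cone and $|\det|\asymp1$. Composing with the bounded map $Dg_e$ sends the horizontal cone transversally into a cone around the unstable direction. This produces invariant cones with expansion $\gg1$, so the orbit is a saddle. Transverse homoclinic relation with $O_\infty$ follows from the same cone argument: the local unstable manifold of $s_{e,N}$ crosses $W^s(O_\infty)$ near the incoming strip.

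\emph{Manifold estimates (\ref{e.c2limit}).} The local unstable manifold of $s_{e,N}$ near the incoming strip is a graph in a bounded $C^2$ class, by invariance under the graph transform with the uniform cone bounds. Applying Lemma~\ref{lem:incoming-parameter} once more, its image under the passage is a graph $Y$ with $\|Y\|_{C^2}\le CN^{-2/3}$ over the outgoing interval. By (\ref{eq:incoming-parameter}) its $e$-derivative is $O(Y)=O(N^{-2/3})$, provided the parameter derivative of the incoming curve is bounded. That incoming curve is the $g_e$-image of the previous outgoing piece. Its bounds therefore come from a self-consistent graph-transform fixed point, whose derivative in $e$ is controlled by contraction: the incoming-to-incoming map has contraction $O(N^{-2/3})$ in the $\theta$-sensitivity of (\ref{eq:incoming-shape-parameter}). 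Transporting the resulting outgoing graph by the fixed finite iterates to the compact arc containing the tangency changes the bounds only by constants depending on $J$, which gives (\ref{e.c2limit}) for $w^u$. The stable side follows by the reversed passage.

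\emph{Main obstacle.} The hard step is the uniform parameter derivative $\partial_e$ of the invariant manifolds. Spatial $C^2$ closeness is not enough for this, as the $d_l$ example shows. I will handle it by differentiating the graph-transform fixed point in $e$ and using (\ref{eq:incoming-parameter}) and (\ref{eq:incoming-shape-parameter}) together, so that the derivative satisfies a contracting linear equation with forcing $O(N^{-2/3})$.
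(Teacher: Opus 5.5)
Your proposal is correct and follows essentially the paper's route: a uniform implicit-function (contraction) argument in the cross coordinates of Lemma~\ref{lem:local-cross} for the fixed point, cone and determinant bounds for hyperbolicity, $C^2$ control of the invariant graphs through the passage curvature estimate (\ref{eq:incoming-spatial}) inside the graph transform, and absorption in $\|\partial_e\widetilde g_e\|_\infty\le C\varepsilon_N(1+\|\partial_e g_e\|_\infty)$ for the parameter derivative. Two points should be made explicit. First, the bounded $C^2$ class must come from that curvature estimate, since slope cones and the cross maps give no second-derivative control. Second, the outgoing interval to which you apply Lemma~\ref{lem:incoming-parameter} must be a full strip containing a fundamental segment of the unstable axis, or the preimage of the tangency arc; finitely many iterates of a short arc near the orbit of $q_e^*$ need not reach the tangency arc.
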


\begin{proof}
\emph{The return rectangle.} Choose smooth parameter-dependent
coordinates $(u,v)$ on a fixed rectangle $Q=[-d,d]^2$ about $q_e^*$,
with the unstable branch of $O_\infty$ given by $v=0$ and its stable
branch by $u=0$. Transversality permits simultaneous straightening.
Choose fixed nonnegative integers $L_1,L_2$ so that
\[
 H_e=f_e^{L_1}:Q\longrightarrow\hbox{the incoming strip},\qquad
 K_e=f_e^{L_2}:\hbox{the outgoing strip}\longrightarrow Q
\]
are the relevant finite-time maps near the two tails of the
homoclinic orbit. In the indicated charts write
$H_e=(h_1,h_2)$ and $K_e=(k_1,k_2)$. Then
\[
 h_1(0,v,e)=0,\qquad h_{1,u}(0,v,e)\ne0,
 \qquad k_2(a,0,e)=0,\qquad k_{1,a}(a,0,e)\ne0.
\]
Shrink $Q,J$ so these transversality bounds are uniform and the
longitudinal coordinates of both strips lie in fixed compact
subintervals of $(0,\delta_{\rm loc})$. Enlarge their ambient coordinate
neighborhoods slightly so that all bounds hold up to the chosen
rectangle boundaries.

Using Lemma~\ref{lem:local-cross}, the branch
$F_{e,N}=K_e\circ f_e^N\circ H_e=f_e^{N+L_1+L_2}$ is determined by
\[
 h_1(u,v,e)=U_N(a,h_2(u,v,e),e),\qquad
 u'=k_1(a,V_N(a,h_2(u,v,e),e),e).
\]
For each $(u',v)\in Q$ the limiting equations as $N\to\infty$ have
$u=0$ and the unique solution of $k_1(a,0,e)=u'$. The Jacobian in
$(u,a)$ is invertible uniformly. The implicit function theorem,
on this fixed compact rectangle, gives the cross representation
\[
 u=A_N(u',v,e),\qquad v'=B_N(u',v,e).
\]
It represents actual orbit segments staying in the local passage
square between the two fixed global maps. The estimates
(\ref{eq:local-cross-estimate}) and the identities along the axes give
\begin{equation}\label{eq:return-norm}
 \|A_N\|_{C^1_{u',v}}+\|B_N\|_{C^1_{u',v}}
 +\|\partial_eA_N\|_\infty+\|\partial_eB_N\|_\infty
       \le C N^{-2/3}.
\end{equation}
For the parameter derivatives, differentiate the displayed implicit
equations at fixed $u',v$. The potentially order-one terms cancel
because $h_1(0,v,e)=0$ and $k_2(a,0,e)=0$ for every $e$.
The remaining terms contain $U_N,V_N$ or one of their first spatial
or parameter derivatives. Thus their bounds are precisely those
in (\ref{eq:local-cross-estimate}). This calculation does not use a
complex parameter neighborhood.

\emph{The saddle and its cones.} Increase $\varepsilon_N=C N^{-2/3}$
if necessary to bound the row sums of the spatial derivatives in
(\ref{eq:return-norm}). For large $N$ the map
\[
 (u,v)\longmapsto(A_N(u,v,e),B_N(u,v,e))
\]
is a contraction of $Q$ into its interior, with constant
$\varepsilon_N<1/4$. Its fixed point is the unique fixed point of
this return branch. It tends to $(0,0)$ in the moving coordinates,
and the implicit function theorem gives smooth parameter dependence.
Moreover,
\[
 |\partial_e s_{e,N}|\le
 (1-\varepsilon_N)^{-1}
 \|\partial_e(A_N,B_N)\|_\infty=O(\varepsilon_N)
\]
in those coordinates.

For tangent vectors the cross equations give
\[
 |du|\le\varepsilon_N(|du'|+|dv|),\qquad
 |dv'|\le\varepsilon_N(|du'|+|dv|).
\]
If $|dv|\le|du|$, then
\[
 |du'|\ge\frac{1-\varepsilon_N}{\varepsilon_N}|du|,
 \qquad |dv'|\le\frac{\varepsilon_N}{1-\varepsilon_N}|du'|.
\]
The reverse inequalities give the expanding vertical cone for the
inverse map. Hence the fixed point of the return is a hyperbolic
saddle.

\emph{Spatial control of its invariant graphs.} The unstable graph
transform sends $v=g(u)$ to the graph of its returning image. In
cross coordinates it is
\[
 \widetilde g(u')=B_N(u',g(U(u')),e),\qquad
 U(u')=A_N(u',g(U(u')),e).
\]
The equation for $U$ is a contraction. The first-derivative bounds
in (\ref{eq:return-norm}) give a $C^0$ contraction of the graph
transform, with constant $C\varepsilon_N$, on graphs in a fixed
small slope cone.

To estimate curvature, we use the actual passage, rather than
unproved second-derivative estimates for the cross maps. Choose a
fixed sufficiently small $C^2$ ball of graphs across $Q$. The map
$H_e$ sends these graphs to incoming graphs with a uniform $C^2$
bound and with their height bounded away from zero. By
(\ref{eq:incoming-spatial}), their local images have $C^2$ norm
at most $C\varepsilon_N$ over the outgoing strip. The fixed smooth
map $K_e$ sends the zero graph to $v'=0$, and its longitudinal
projection has derivative bounded away from zero. The chain rule
therefore gives
\[
 \|\widetilde g\|_{C^2}\le C\varepsilon_N.
\]
For large $N$ this is inside the chosen ball. Iterating the graph
transform from the zero graph gives its unique invariant graph.
The uniform second-derivative bound passes to the $C^1$ limit as a
Lipschitz bound on the first derivative. The invariant graph is the
unstable manifold of the hyperbolic fixed point and hence is $C^2$;
its $C^2$ norm is consequently bounded by $C\varepsilon_N$.
The inverse graph transform gives the same bound for the stable
graph. Thus no assertion about convergence of every second endpoint
derivative of $U_N,V_N$ is needed.

\emph{Parameter control.} Let $g_e$ now be a smooth family of graphs
in this same $C^2$ ball. Its image under $H_e$, written over the
incoming coordinate, has parameter derivative bounded by
$C(1+\|\partial_e g_e\|_\infty)$. Lemma~\ref{lem:incoming-parameter}
and the final change of graph coordinate under $K_e$ give
\begin{equation}\label{eq:return-graph-parameter}
 \|\partial_e\widetilde g_e\|_\infty
       \le C\varepsilon_N(1+\|\partial_e g_e\|_\infty).
\end{equation}
The constant is independent of the size of the parameter derivative.
To check the last change of coordinate, solve
$u'=k_1(a,Y(a,e),e)$ for $a$. Its denominator
$k_{1,a}+k_{1,y}Y_a$ is uniformly bounded away from zero; also
$k_2(a,0,e)=0$. Every term in the derivative of the resulting height
contains $Y$, $Y_a$, or $Y_e$, and has the required bound.

For each fixed $N$, the parameter-dependent hyperbolic manifold
theorem gives the derivative of the invariant graph. Substituting
that graph in (\ref{eq:return-graph-parameter}) and absorbing
$C\varepsilon_N\|\partial_e g_e\|_\infty$ yields
\[
 \|\partial_e g_e\|_\infty\le 2C\varepsilon_N
\]
for all large $N$. The reverse passage gives the stable analogue.
This is a bound on parameter derivatives at a fixed graph coordinate.
It is not inferred from spatial convergence or from analyticity.

\emph{Continuation to the tangency and transverse connections.}
The images under $H_e$ of the invariant unstable graphs cross the
local stable axis. Their local images at time $NP$ cover any fixed
slightly larger outgoing strip, by the existence assertion in
Lemma~\ref{lem:incoming-parameter}. Choose such a strip containing
a fundamental segment of the unstable axis. This is possible since
near the origin one iterate displaces the axis coordinate by
$O(x^4)$, whereas the chosen strip has fixed positive width.
A finite continuation of a suitable portion of this strip reaches
the prescribed compact unstable arc at the tangency. The continuation
is fixed independently of $N$; after shrinking $J$ it works uniformly
in $e$. Applying the spatial and parameter estimates to the full
outgoing strip, followed by this finite continuation, proves the
unstable part of (\ref{e.c2limit}). The same construction backwards
proves the stable part. This use of a full outgoing strip is necessary:
a very short arc about $q_e^*$ need not itself cover a fundamental
segment under iteration.

Finally the unstable graph across $Q$ crosses the stable axis
$u=0$ transversely, and the stable graph crosses the unstable axis
$v=0$ transversely. Since these axes are branches of the invariant
manifolds of $O_\infty$, the saddles have the required transverse
connections in both directions. This completes the proof.
\end{proof}

\begin{figure}[htbp]
\begin{center}
\includegraphics[width=5.4in]{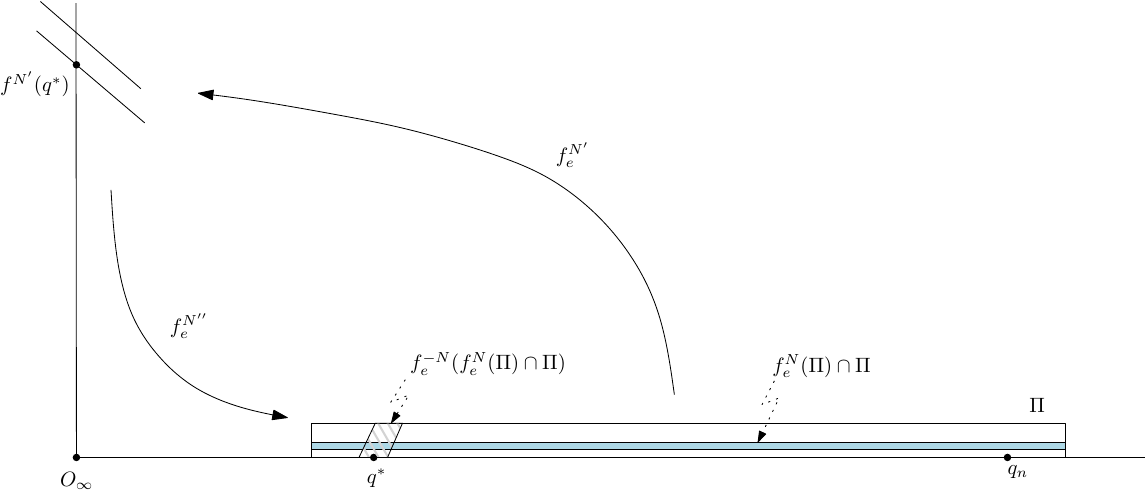}
\caption{The return rectangle near the transverse homoclinic point.
The cross-map and graph-transform estimates are uniform on a fixed
parameter interval about each $e_n>0$.}
\label{BirkhoffSmaleFig}
\end{center}
\end{figure}

\begin{Proposition}[Transfer of the unfolding]\label{saddle}
For every sufficiently large $n$, fix the tangency parameter $e_n$
from Theorem~\ref{quad-tangency}. By Proposition~\ref{saddle-sequence}, the return branches and their
continuations exist on a fixed neighborhood of $e_n$. There are parameters $e_{l,n}\to e_n$ at which
$s_{e_{l,n},l}$ has a quadratic homoclinic tangency that unfolds
generically. These saddles are homoclinically related to $O_\infty$.
\end{Proposition}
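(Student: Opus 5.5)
The plan is to compare, on fixed compact arcs near the parabolic tangency point, the graph difference for the saddle $s_{e,l}$ with the graph difference for $O_\infty$, and to show that the nondegenerate critical value persists under a perturbation that is small in $C^2_x$ and in the parameter derivative. Fix $n$, the parameter $e_n$, and the compact interval $J$ about $e_n$ supplied by Lemma~\ref{lem:transverse-return} and Proposition~\ref{saddle-sequence}. After shrinking $J$, write the relevant arcs of $W^u(O_\infty)$ and $W^s(O_\infty)$ near the tangency as graphs $w^u(x,e)$ and $w^s(x,e)$ in a fixed graph coordinate. Put $D(x,e)=w^u-w^s$. By Theorem~\ref{quad-tangency}, at $(x_n,e_n)$ one has
\[
 D=0,\qquad D_x=0,\qquad D_{xx}\ne0,\qquad D_e\ne0 .
\]
The nonvanishing of the last two quantities is invariant under the smooth chart changes, exactly as argued at the end of the proof of Theorem~\ref{quad-tangency}. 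For the saddles, set $D_l=w^u_l-w^s_l$. By (\ref{e.c2limit}),
\[
 \|D_l-D\|_{C^2_x}+\|\partial_e(D_l-D)\|_{C^0}\le C l^{-2/3}
\]
uniformly on $J$. This parameter control is precisely what the counterexample $d_l$ in Section~\ref{ss.saddlesnearhomoclinicpoint} shows cannot be dropped.

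The first step is to locate the critical points. Since $D_{xx}(x_n,e_n)\ne0$, choose a small rectangle $[x_n-\rho,x_n+\rho]\times[e_n-\rho,e_n+\rho]$ on which $|D_{xx}|\ge c>0$. On it, $\partial_x D_l$ is strictly monotone in $x$ for large $l$, because $D_l\to D$ in $C^2_x$. Moreover $\partial_xD_l$ changes sign between $x_n\pm\rho$ for $e$ near $e_n$, since $D_x(x_n\pm\rho,e)$ has opposite signs of size about $c\rho$, uniformly for $e$ close to $e_n$. This gives a unique critical point $x_l(e)$, and $x_l(e)\to x_n(e)$ uniformly. The spatial $C^2$ bounds alone do not give a derivative of $x_l(e)$ with respect to $e$, so I avoid differentiating it. I work instead with the critical value
\[
 g_l(e)=D_l(x_l(e),e) .
\]

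The key step, and the one I expect to be the main obstacle, is a lower bound on the rate of change of $g_l$ using only the $C^0$ bound on $\partial_e(D_l-D)$. Note first that $D_l$ is smooth in $(x,e)$ for each fixed $l$, by the smooth dependence in Proposition~\ref{saddle-sequence}. Then $x_l(e)$ is smooth by the implicit function theorem, since $\partial_x^2D_l\ne0$, and by the envelope identity
\[
 g_l'(e)=\partial_eD_l(x_l(e),e).
\]
This expression involves no mixed derivative. Combining $|\partial_eD_l-\partial_eD|\le Cl^{-2/3}$ with the uniform continuity of $\partial_eD$ and $x_l(e)\to x_n(e)$ gives $|g_l'(e)|\ge|D_e(x_n,e_n)|/2$ on a fixed neighborhood of $e_n$, with constant sign. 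In addition, $g_l(e)\to g(e)=D(x_n(e),e)$ uniformly, and $g$ changes sign at $e_n$ because $g'(e_n)=D_e\ne0$. Hence $g_l$ has a unique zero $e_{l,n}$ near $e_n$, and $e_{l,n}\to e_n$. At $e_{l,n}$, the conditions $D_l=\partial_xD_l=0$ and $\partial_x^2D_l\ne0$ give a quadratic homoclinic tangency of $s_{e_{l,n},l}$. The condition $g_l'(e_{l,n})\ne0$ gives the generic unfolding.

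It remains to check that the tangency is genuinely homoclinic for the same saddle. The graphs $w^{u,s}_l$ are arcs of $W^{u,s}(s_{e,l})$, obtained by finite continuation of the invariant graphs from the return rectangle. The homoclinic relation with $O_\infty$ is already supplied by Proposition~\ref{saddle-sequence}, through the transverse crossings of the axes, and these crossings persist on $J$.
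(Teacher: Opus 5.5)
Your proposal is correct and follows essentially the same route as the paper. You obtain a unique critical branch $x_l(e)$ of $D_l$ from $D_{xx}\ne0$ and the $C^2_x$ convergence in (\ref{e.c2limit}), and then pass to the critical value $g_l(e)=D_l(x_l(e),e)$, whose derivative $g_l'(e)=\partial_eD_l(x_l(e),e)$ converges uniformly to $g'$ by the $C^0$ parameter bound; monotonicity and the sign change then give a unique nondegenerate zero $e_{l,n}\to e_n$, and the homoclinic relation to $O_\infty$ is taken from Proposition~\ref{saddle-sequence}, exactly as in the paper.
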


\begin{proof}
Let $d(x,e)=w^u(x,e)-w^s(x,e)$ be the signed graph difference at
the parabolic tangency, and put $d_l=w_l^u-w_l^s$. Translate the
contact to $(0,e_n)$. Then
\[
 d(0,e_n)=d_x(0,e_n)=0,\qquad
 d_{xx}(0,e_n)\ne0,\qquad d_e(0,e_n)\ne0.
\]
On a sufficiently small fixed rectangle, $d_{xx}$ and $d_e$ have
constant signs and are bounded away from zero. The convergence
(\ref{e.c2limit}) gives the same bounds for $d_{l,xx}$ and $d_{l,e}$.
The equation $d_{l,x}(x,e)=0$ therefore has a unique solution
$x=x_l(e)$ near zero; this solution is $C^1$ for each $l$. The
solutions converge uniformly to the critical branch $x(e)$ of $d$.
Indeed, the mean value theorem bounds their difference by a constant
times $\|d_{l,x}-d_x\|_\infty$.

Set $g_l(e)=d_l(x_l(e),e)$ and $g(e)=d(x(e),e)$. Then
\[
 g_l\longrightarrow g\quad\hbox{uniformly},\qquad
 g_l'(e)=d_{l,e}(x_l(e),e)\longrightarrow d_e(x(e),e)=g'(e)
\]
uniformly on a smaller parameter interval. Since $g(e_n)=0$ and
$g'(e_n)\ne0$, its values at the two endpoints have opposite signs.
The intermediate value theorem and strict monotonicity give a
unique zero $e_{l,n}$ of $g_l$, and $e_{l,n}\to e_n$. At this zero,
$d_{l,xx}\ne0$ proves quadraticity, and $g_l'\ne0$ proves generic
unfolding. The transverse connections were proved in
Proposition~\ref{saddle-sequence}.
\end{proof}

\begin{Remark}
The periodic-saddle approximation uses spatial $C^2$
estimates for images of curves, but only $C^1$ estimates for the
endpoint cross maps. The parameter derivative is controlled by the
inhomogeneous graph equation and an absorption argument on the
invariant graphs. In particular, the conclusion does not require
uniform convergence of mixed spatial-parameter derivatives, nor
uniformity of the chosen parameter intervals as $e_n\to0$.
\end{Remark}

\subsection{Conservative homoclinic bifurcations and hyperbolic sets of
large Hausdorff dimension}\label{ss.bifurcation}

In order to construct transitive invariant sets of full Hausdorff
dimension we use the notion of a {\it homoclinic class.}

Let $P$ be a hyperbolic saddle of a diffeomorphism $f$.
The {\it homoclinic class} $H(P,f)$ is the closure of the transverse
intersections of $W^s(\mathcal O(P))$ and $W^u(\mathcal O(P))$.
For a periodic saddle, these manifolds denote the unions over its
periodic orbit; this convention makes the class invariant under $f$.

It is known that $H(P,f)$ is a transitive invariant set of $f$, see
\cite{N5}. Moreover, consider all basic sets (locally maximal
transitive hyperbolic sets) that contain the saddle $P$. A homoclinic
class $H(P,f)$ is a smallest closed invariant set that contains all
of them.

\begin{Theorem}[Local form of the construction in \cite{Go}]
\label{t.thick-sets}
Let $\{f_a\}$ be a smooth family in
$\operatorname{Diff}^{\infty}(M^2,\mathrm{Leb})$ unfolding a quadratic
homoclinic tangency of a hyperbolic fixed point $s_0$ at $a=0$,
with nonzero normal unfolding speed. Let $W$ be an open neighborhood
of the closure of this homoclinic orbit together with $s_0$, and let
$J$ be any sufficiently small open parameter interval about zero.
For every $\delta>0$ there is a nonempty open set
$\mathcal U_\delta\subset J$ accumulating at zero such that:
\begin{enumerate}[(1)]
\item For each $a\in\mathcal U_\delta$ there is a zero-dimensional
hyperbolic basic set $\Delta_a\subset W$ containing the continuation
$s_a$, with $\dim_H\Delta_a>2-\delta$, and exhibiting persistent
homoclinic tangencies whose orbits remain in $W$.
\item There is a dense subset $D_\delta\subset\mathcal U_\delta$
such that, for every $a\in D_\delta$, the saddle $s_a$ has a
quadratic homoclinic tangency unfolding with nonzero normal speed
in the parameter $a$. Its entire orbit remains in $W$.
\end{enumerate}
\end{Theorem}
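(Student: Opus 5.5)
The plan is to reduce Theorem~\ref{t.thick-sets} to the conservative homoclinic-bifurcation theory of Duarte~\cite{Du2,Du3,Du4} together with the dimension refinement in~\cite{Go}. The main work is to check that their conclusions can be localized to $W$ and to a prescribed small interval $J$.

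\emph{Step 1: renormalization.} Fix area-preserving $C^\infty$ linearizing coordinates near $s_0$. This is possible after a small perturbation of the parameter only in the non-resonant case; otherwise use finitely smooth Sternberg-type coordinates, which suffice for $C^2$ estimates. Take a fundamental domain around the tangency point. For large $n$, consider return maps $f_a^{n}\circ T_a$ near the tangency, where $T_a$ is the transition map along the homoclinic orbit. Rescale space by the eigenvalue powers and reparametrize $a$ via $a=\lambda^{-2n}(\bar a+O(\lambda^{-n}))$, using the nonzero unfolding speed. The rescaled first-return maps then converge in $C^r$ to the conservative H\'enon family $(x,y)\mapsto(y,\,-x+\bar a-y^2)$, or to its conjugate. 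Each such orbit stays in an arbitrarily small neighborhood of the homoclinic orbit union $s_0$, hence in $W$. The parameter windows $a_n\to0$ lie in $J$ for large $n$.

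\emph{Step 2: thick hyperbolic sets, then Newhouse persistence.} Choose a H\'enon parameter $\bar a$ where the area-preserving H\'enon map has a horseshoe $\Delta_{\bar a}$ with $\dim_H>2-\delta/2$. By~\cite{Go}, such parameters exist; equivalently, use Duarte's result that thickness of conservative H\'enon horseshoes tends to infinity at the appropriate parameter limit. This is an open condition, so the horseshoe persists for the true return maps at an open set of $a$ near each $a_n$. Hausdorff dimension of basic sets depends continuously on the map, so $\dim_H>2-\delta$ holds. The remaining requirement is that $\Delta_a$ contain $s_a$. Both $\Delta_a$ and $s_a$ are homoclinically related to the renormalized periodic orbit: the tangency unfolds into transverse intersections linking $W^{u}(s_a)$ and $W^{s}(s_a)$ with the horseshoe. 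A finite Markov extension then gives a single basic set in $W$ containing both, with dimension at least that of the horseshoe.

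Large thickness of the stable and unstable Cantor sets gives persistent tangencies by Newhouse's gap lemma, once the unstable and stable foliations of the enlarged set are made to intersect tangentially in a linked way. Duarte's construction supplies exactly this: conservative thickness larger than one, plus a secondary tangency with linked Cantor sets. Taking $\mathcal U_\delta$ as the union over $n$ of these open windows gives a nonempty open set accumulating at $0$.

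\emph{Step 3: part (2).} For $a\in\mathcal U_\delta$, tangencies between $W^s(\Delta_a)$ and $W^u(\Delta_a)$ are dense. Since $s_a\in\Delta_a$, the inclination lemma approximates leaves through points of $\Delta_a$ by leaves of $W^{s,u}(s_a)$. A parameter perturbation then produces a tangency of $s_a$ itself at a dense set of $a$. The nondegeneracy of these tangencies, meaning quadraticity and nonzero normal speed, I would take from Duarte's statement that the tangencies unfold generically in a generically unfolding family, checked through the monotone motion of the Cantor sets relative to $a$.

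I expect this part to be the main obstacle: ensuring nonzero normal speed in the given one-parameter family, rather than in some auxiliary family. I would first try to show that the relative velocity of the stable and unstable foliations in the renormalized coordinates is bounded below, inherited from $d_e\neq0$. That bound yields transversal crossing of the tangency locus, and hence a nondegenerate unfolding at the dense set $D_\delta$.
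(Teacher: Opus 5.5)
Your Steps 1--2 apply the renormalization of \cite[Theorem 8]{Go} directly at the original tangency: Mora--Romero convergence to the conservative H\'enon family, combined with Duarte's near-identity thickness estimates. Up to one correction, this is a reasonable route to part (1). The correction is that a saddle of an area-preserving map is always resonant, since $\lambda=\lambda^{2}\lambda^{-1}$. The resonant cubic term therefore cannot in general be removed, and no parameter perturbation changes this. The coordinates for this renormalization are the Moser normal form $(x,y)\mapsto(U(xy)x,\,U(xy)^{-1}y)$, not a linearization.

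The genuine gap is in part (2).
\begin{itemize}
\item Step 3 does not produce any tangency of $s_a$. Approximating the leaves of a persistent tangency of $\Delta_a$ by leaves of $W^{s,u}(s_a)$ gives a contact of $s_a$ at a nearby parameter only if the stable and unstable Cantor sets on the tangency line move relative to each other, with controlled speed, as $a$ varies.
\item Even with such motion, the resulting contacts need not be quadratic, nor unfold with nonzero speed.
\item Persistence gives a tangency for each $a$; it does not give that tangencies between $W^s(\Delta_a)$ and $W^u(\Delta_a)$ are ``dense''.
\item Deferring to Duarte does not close the gap as your construction stands. His theorem gives a dense set of nondegenerately unfolding tangencies of $s_a$ only on his own Newhouse domain $\mathcal N$, and nothing in your Steps 1--2 places your renormalization windows inside $\mathcal N$.
\end{itemize}

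The paper avoids this by reversing your order.
\begin{itemize}
\item It first takes from Duarte, via the first step of \cite[Theorem 4]{Go}, a Newhouse domain $\mathcal N$ with a wild basic set containing $s_a$. It also takes a dense set $D_0\subset\mathcal N$ of quadratic, generically unfolding tangencies of $s_a$.
\item Only then does it renormalize, at parameters of $D_0$, i.e.\ at secondary tangencies of $s_a$ itself. This gives horseshoes of dimension $>2-\delta$ on open sets that can be taken inside $\mathcal N$.
\item Transverse connections join each horseshoe to $s_a$, and so to the wild set, in a larger basic set. The persistent tangencies can therefore be taken from the wild set rather than re-proved for the new horseshoe.
\item $D_\delta=D_0\cap\mathcal U_\delta$ is then automatically dense in $\mathcal U_\delta$, and $\mathcal U_\delta$ accumulates at zero because $D_0$ does.
\end{itemize}
The paper also explicitly confines the finite transitions and saddle passages to $W$. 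This keeps the tangency orbits in (2) inside $W$, a point your proposal does not address.
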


\begin{proof}[Explanation of the local form]
We use the construction of \cite[Theorem 4 and Section 6]{Go}.
Its first step, obtained there from Duarte's conservative bifurcation
theorem, supplies a wild basic set containing the continued saddle
and a dense set $D_0$ of parameters with quadratic homoclinic
tangencies of that saddle unfolding generically. Thus quadraticity
and nonzero speed are supplied by this construction, rather than
inferred from the word ``tangency.'' The subsequent renormalization of Mora and Romero~\cite{MR1},
in the form recalled in \cite[Theorem 8]{Go},
near parameters in $D_0$ produces open parameter sets with horseshoes
of dimension greater than $2-\delta$. The transverse connections
join each horseshoe to the continued saddle in a larger basic set.
Restricting $D_0$ to the resulting open set gives (2).

We record why the construction can be confined to $W$. Choose a
small saddle rectangle and neighborhoods of the finitely many points
of the central homoclinic segment, all with closures in $W$. The
remaining two tails lie in the saddle rectangle. The return maps
used in the construction are compositions of this finite transition
and arbitrarily long local saddle passages. Shrink the transition
neighborhoods so that every intermediate image stays in $W$.
For all sufficiently small parameters these inclusions persist.
The rescaled return domains converge to the tangency; hence their
invariant horseshoes and all intermediate iterates stay in these
chosen neighborhoods. The same applies to the wild set and the
transverse connecting strips used to adjoin the saddle. Increasing
the local passage lengths gives the required constructions in any
prescribed $J$. This proves the asserted localization, including
localization of the tangency orbits.

The invariant sets are realized by finitely many crossing rectangles
with disjoint stable and unstable strips. Their two slices are Cantor
sets (or finite sets), so the resulting basic sets are zero-dimensional.
The finite connecting construction preserves this property. These
observations give precisely the localized conclusions used here;
the additional residual conclusions of \cite[Theorem 4]{Go} are not
needed below.
\end{proof}

\begin{Remark}[Parameter domains and residual sets]
\label{rem:parameter-density}
First fix one tolerance $\delta_0>0$, for example $\delta_0=1$,
and choose a domain $\mathcal N=\mathcal U_{\delta_0}$. This domain
is kept fixed while the desired dimension tends to two. The initial
tangency parameter belongs to its closure and need not be an interior
point. The domains obtained in later applications with tolerance $1/k$
are auxiliary subsets used to prove density inside this fixed domain.

Here is the density argument used in our applications. Fix an open
parameter domain $\mathcal N$ with a dense set $D$ of nondegenerately
unfolding quadratic tangencies of a continued saddle $p_a$. Suppose
its selected transverse connections to infinity persist on this domain.
For $k\ge1$ let $V_k$ be the set of parameters admitting a
zero-dimensional hyperbolic basic set containing $p_a$ and of
dimension greater than $2-1/k$.
Hyperbolic continuation and continuity of dimension make $V_k$ open.
Given an open interval $J\subset\mathcal N$, choose $a_*\in D\cap J$.
Apply part (1) of the theorem at $a_*$ with $\delta=1/k$. Its open
parameter sets accumulate at $a_*$, so one of their points lies in $J$.
The continued saddle retains the selected transverse connections there.
Thus $V_k$ is dense in $\mathcal N$, and $\bigcap_kV_k$ is residual
in this fixed domain. The tangencies used at $a_*$ are those in
part (2) of Theorem~\ref{t.thick-sets}.
\end{Remark}

In this application a generic unfolding means a smooth conservative
family with a quadratic contact and nonzero normal splitting speed.
More explicitly, if $d(x,a)$ is a signed difference of the two local
manifold graphs at the contact $(x_*,a_*)$, the conditions are
$d=d_x=0$, $d_{xx}\ne0$, and $d_a\ne0$ there. These are the
nondegeneracy conditions used in the conservative renormalization in
\cite[Section 6]{Go}. The first-contact and periodic-saddle arguments
above verify both nonzero derivatives. A cutoff that agrees with the
family near the full tangency orbit preserves them.

\paragraph{Local return maps and periodic saddles.}
We explain the application of this theorem to the return maps used
here. The construction in \cite[Section 6]{Go} takes place in a
neighborhood of the orbit of a hyperbolic saddle and its homoclinic
tangency. In the RPC3BP these orbits, after the periodic-saddle
approximation, have compact closure in the collision-free phase
space. This closure does not contain parabolic infinity: both
ends of the homoclinic orbit converge to the finite periodic saddle.

One can realize the relevant local return maps as restrictions of
global area-preserving maps as follows. On the orbit tubes used in
the large-$C$ construction, the rotating angle is monotone. Indeed,
on the global pieces the scaled Kepler continuation has
$r\ge c_0G^2$ and $|P_\varphi|\le c_1G$; the local tails satisfy
the same inequalities after decreasing the fixed local square.
Thus $1-P_\varphi/r^2\ge1/2$ for large $G$. These strict
inequalities persist for the periodic approximations and on a
small compact parameter interval about each tangency. For the
fixed-energy, small-mass construction the same bounds follow by
continuation from the Kepler orbit before choosing that interval.
All constants here may depend on the particular tangency.

On such a tube solve the energy equation for $P_\varphi$ and use
$\theta=-\varphi$ as time. The reduced equations in the canonical
section coordinates $(r,P_r)$ are Hamiltonian with the smooth,
$2\pi$-periodic Hamiltonian $P_\varphi(r,P_r,\theta)$. Choose a
smooth periodic cutoff equal to one on a neighborhood of the
compact orbit tube and supported in a slightly larger tube where
this Hamiltonian is defined. Multiply by the cutoff and extend
by zero to the whole $(r,P_r)$-plane. Its time-$2\pi$ map is a
global smooth area-preserving diffeomorphism, and agrees with the
physical return map wherever the orbit remains in the smaller tube.
The cutoff can be fixed on a sufficiently small parameter interval,
so this is a smooth family and its unfolding derivative is unchanged.
Choose the section neighborhood $W$ in Theorem~\ref{t.thick-sets}
so small that every one-period flow segment starting there remains
in the tube where the cutoff is one. Such a neighborhood exists by
compactness of the retained orbit and continuous dependence of the
flow. The localization in that theorem puts the basic sets and all
iterates of their tangency orbits in $W$. The cutoff and physical
maps therefore agree along these entire orbits, so the basic sets
and tangencies also belong to the physical return map. The selected
transverse connections to infinity persist separately by openness
for the physical family. No extension through infinity or a
collision is involved. The Sitnikov time map is already a global
area-preserving diffeomorphism; the same cutoff argument is optional
there.

If the saddle has period $m$, first apply the theorem to $f^m$.
Its local stable and unstable manifolds and the nonzero normal
unfolding derivative are those of the periodic orbit of $f$.
The produced sets are returned to $f$ by taking
$\bigcup_{j=0}^{m-1}f^j\Lambda$. This finite saturation is
hyperbolic, has the same Hausdorff dimension, and has the required
transverse connections. Transitivity is preserved by the cyclic
saturation; mixing is not required. Thus using a periodic saddle
or a locally defined physical return map introduces no change in
the dimension or parameter conclusions used below.

 In dissipative case Newhouse \cite{n1} showed that near every  surface
diffeomorphism with a homoclinic tangency there are open sets
(nowadays called {\it Newhouse domains}\,) of maps with persistence
homoclinic tangencies. Moreover, in these open sets there are
residual subsets of maps with infinitely many attracting periodic
orbits. Later Robinson \cite{R} showed that this result can be
formulated in terms of generic one parameter unfolding of a
homoclinic tangency.

In area preserving case  Duarte \cite{Du1}, \cite{Du2}, \cite{Du4} showed
that homoclinic tangencies also lead to similar phenomena, the role of sinks
is played by elliptic points. Theorem \ref{t.thick-sets} is a stronger version
of the Duarte's result: we can control the Hausdorff dimension of the hyperbolic
sets and homoclinic classes that appear in the construction.

\subsection{Summing up}\label{ss.summingup}

Let $\{f_e\}$ be the $2\pi$-shift along the orbits of the Sitnikov
problem (\ref{Sitnikov}). Denote by $OS(e)\subset \mathbb{R}^2$ the set
of oscillatory motions of the Sitnikov problem (i.e. the set of initial
conditions $(z, \dot z)\in \mathbb{R}^2$ at time zero that correspond to
a two-sided oscillatory motion). Here we construct
an open set $\mathcal{N}\subset (0,1)$ accumulating at zero, and
a residual subset $R\subset \mathcal{N}$ such that for $e\in R$
we have $\text{\rm dim}_H\,OS(e)=2$. The flow-box argument below then proves
Theorem \ref{Sitnikov-main}.

Due to Proposition \ref{saddle} there exists a sequence of parameters
$\{e_n\}_{n\in \mathbb{N}}\subset (0,1)$, $e_n\to 0$ as $n\to \infty$, such that $f_{e_n}$
has a hyperbolic periodic saddle $s_n$ that satisfies the following properties:

\

a) in McGehee coordinates $s_n$ is homoclinically related to $O_{\infty}$;

b) $s_n$ has a quadratic homoclinic tangency that unfolds monotonically with $e$.

\

For each $n$, choose a compact interval $J_n$ with $e_n$ in its
interior on which the saddle and its selected transverse connections
to infinity continue. The periodic-saddle construction supplies such
an interval, which may depend on $n$. Set
$I_n=\operatorname{int}J_n\cap(e_n/2,2e_n)$.
Apply Theorem~\ref{t.thick-sets} at $e_n$ with the fixed tolerance
$\delta_0=1$ and with parameter interval $I_n$. This gives a nonempty
open set $\mathcal N_n\subset I_n$ accumulating at $e_n$ and a dense
set of quadratic tangencies unfolding nondegenerately. Choose a
countable dense subset $D_n$ of this set; this is possible because
$\mathcal N_n\subset\mathbb R$ has a countable base.
The domains $\mathcal N_n$ are now fixed independently of $k$.
Their union $\mathcal N=\bigcup_n\mathcal N_n$ is a Newhouse domain
for $\{f_e\}$ and accumulates at zero.
This proves Theorem \ref{parameter-Newhouse-Sitnikov}.

Applying the density argument in Remark~\ref{rem:parameter-density}
on each fixed $\mathcal N_n$ shows that for every
$n, k\in \mathbb{N}$ there exists an open dense subset
$V_{k, n}\subset \mathcal{N}_n$ such that for each $e\in V_{k, n}$
the map $f_e$ has a hyperbolic basic set
$\Lambda_{k,n}(e)$, $s_n(e)\in \Lambda_{k,n}(e)$, with
$\text{\rm dim}_H\Lambda_{k,n}(e)>2-\frac{1}{k}$.
Set $V_k=\cup_{n}V_{k,n}$, $V_k$ is open and dense in $\mathcal{N}$.
The transverse connections to $O_\infty$ are retained in the parameter
neighborhoods under consideration. The strengthened conclusion of
Theorem~\ref{2-diml-hd} supplies both excursions to infinity and returns
to the finite orbit of $s_n(e)$; see also Remark~\ref{rem:finite-returns}.
Thus for $e\in V_k$ we have
$\text{\rm dim}_H\,OS(e)>2-\frac{1}{k}$. Therefore for
$e\in R=\cap_{k\in \mathbb{N}} V_k$ we have $\text{\rm dim}_H\,OS(e)=2$.
We recall the product formula for a nonempty compact
$E\subset\mathbb R^2$ and a nondegenerate bounded interval $I$:
\begin{equation}\label{eq:interval-product-dimension}
 \dim_H(E\times I)=\dim_HE+1.
\end{equation}
For completeness, the lower bound is the product-measure argument
underlying Marstrand's product inequality~\cite{Mar}: a Frostman
measure of exponent $s<\dim_HE$, multiplied by interval length,
has ball masses bounded by a constant times $r^{s+1}$. If
$\dim_HE=0$, use $\{x\}\times I$ for any $x\in E$.
For the upper bound, cover $E$ by balls of diameters $d_i<1$ with
$\sum_i d_i^s$ arbitrarily small, where $s>\dim_HE$. The cylinder
over the $i$th ball is covered by at most $K_I/d_i$ balls of
diameter $K d_i$. This gives vanishing $(s+1)$-dimensional content.
Let $s\downarrow\dim_HE$ for the upper bound and
$s\uparrow\dim_HE$ for the lower bound.

To pass to the extended phase space, fix one of the compact scheduled
sets $E$ in the time-zero section. The extended vector field has time
component $1$. For a sufficiently short interval $I$ about zero, the
map $(x,t)\mapsto\Phi^t(0,x)$ is a smooth flow-box diffeomorphism
on a neighborhood of $E\times I$, with bounded derivatives and inverse
derivatives on a smaller compact neighborhood. Hence its image of
$E\times I$ has Hausdorff dimension $\dim_HE+1$. Every point in this
image is two-sided oscillatory, since a finite time shift preserves
oscillation. The section sets have dimensions arbitrarily close to two,
so these flow boxes give dimension at least three. The ambient dimension
is three. This proves Theorem~\ref{Sitnikov-main}.

\section{Variation with respect to parameters}\label{sec:parameter-revised}
The estimate needed in Theorem~\ref{t.c2lyambdalemma} concerns the
height of an image curve at fixed $x$ and fixed physical time. We prove
it directly in these variables. This also accounts for the dependence
of the initial point on $e$ needed to keep the terminal $x$ fixed.

Assume the parameter-uniform normal-form hypotheses of
Section~\ref{sec:revised-general}, including
$\partial_e A,\partial_e B\in O_\eta$. The forcing period and the leading
constant multiplying the vector field are independent of $e$.
For the initial diagonal, write its local image at time $t$ as
$y=Y(x,t,e)$. The slope barriers show that the projection to $x$ has
nonzero derivative throughout this outgoing passage. Hence $Y$ is
well defined on each local image interval, is differentiable in $e$,
and satisfies the graph equation
\begin{equation}\label{sitnik-e}
 Y_t+F(x,Y,t,e)Y_x=F^{(2)}(x,Y,t,e),\qquad Y(x,0,e)=x.
\end{equation}
Put $V=\partial_eY$ and $\lambda=Y_x$. Differentiation at fixed $x,t$
gives the full equation
\begin{equation}\label{sitnik-e-variation}
 V_t+F V_x=(F^{(2)}_y-F_y\lambda)V+F^{(2)}_e-F_e\lambda,\qquad V(x,0,e)=0.
\end{equation}
All derivatives of $F,F^{(2)}$ in this formula are partial derivatives
before substitution $y=Y(x,t,e)$.

\begin{Proposition}\label{prop:parameter-graph-bound}
There exists $C$, independent of the initial diagonal point and the
length of the local passage, such that
\[
 |\partial_eY(x,t,e)|\le C\delta^\eta Y(x,t,e).
\]
Thus at every fixed iterate $N$ whose local image meets the outgoing
strip, $y_N(x,e)=Y(x,NP,e)$ satisfies (\ref{e.derivine}), uniformly in $N$.
\end{Proposition}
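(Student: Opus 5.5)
We will estimate $W=V/Y$ along characteristics, mirroring the proof of Lemma~\ref{lem:incoming-parameter}, with zero initial datum. The characteristics of (\ref{sitnik-e-variation}) are exactly the orbits of the flow $\dot x=F$: along an orbit $(x(t),Y(x(t),t,e))$ we have $\frac{d}{dt}V=(F^{(2)}_y-F_y\lambda)V+F^{(2)}_e-F_e\lambda$. Differentiating the graph equation (\ref{sitnik-e}) in $x$ shows that $\frac{d}{dt}Y=F^{(2)}$ along the same curve. Hence $W=V/Y$ satisfies
\[
 \frac{dW}{dt}=\Bigl(F^{(2)}_y-F_y\lambda-\frac{F^{(2)}}{Y}\Bigr)W+\frac{F^{(2)}_e-F_e\lambda}{Y},\qquad W(0)=0.
\]
With $F=xr^3a$, $F^{(2)}=-yr^3b$ and $ds=r^3dt$, this becomes $W_s=KW+L$. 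Here $K=-3(y+x\lambda)/(x+y)+O(r^\eta)(1+|\lambda|)$ and $L=-B_e-(\lambda/\tau)A_e$, the same coefficients as in Lemma~\ref{lem:incoming-parameter}. The factor $y$ in $F^{(2)}$ cancels the division by $Y$, and $F_e=xr^3A_e$ gives $F_e\lambda/Y=r^3A_e\lambda/\tau$.

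Next we bound the integrals of $K_+$ and $|L|$ uniformly in the passage length. For the diagonal jet, Lemma~\ref{lem:absorbdiagonalgen} gives $-2h(\tau)\le\lambda\le\tau$ for $\tau\le1$, and entry into the outgoing cone (\ref{eq:outgoing-cone}) after bounded time. On the cone, $y+x\lambda=y(1-k)\cdot$ with $k\in[4/5,6/5]$, so $K_+\le C\tau+O(r^\eta)$. Since $\tau_s\le-\tau$, the integral of $\tau$ is bounded, and Lemma~\ref{lem:integrable-errors} bounds $\int r^\eta ds\le C\delta^\eta$. Before cone entry only a bounded rescaled time elapses and $K$ is bounded. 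Thus $\int K_+\,ds\le C$.

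For $L$, use $\partial_eA,\partial_eB\in O_\eta$, so $|A_e|,|B_e|\le Cr^\eta$. The ratio $|\lambda|/\tau$ is bounded: in the diagonal phase $|\lambda|\le2\tau$, and in the cone $|\lambda|\le\frac65\tau$. Hence $|L|\le Cr^\eta$ and $\int|L|\,ds\le C\delta^\eta$. Variation of constants with $W(0)=0$ then gives
\[
 |W(S)|\le e^{\int K_+}\int|L|\le C\delta^\eta.
\]
This is the claim. Evaluating at $t=NP$ gives (\ref{e.derivine}) uniformly in $N$.

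The main obstacle is the bookkeeping. All differentiations must be done at fixed physical $t$ and fixed $x$, with the rescaling applied only along individual characteristics, and the $e$-dependence of the initial point hitting a fixed terminal $x$ must be absorbed. The PDE form (\ref{sitnik-e-variation}) handles the latter automatically, because $V$ is defined at fixed $x$. A second delicate point is a $K_+$ contribution near the diagonal phase, where $\tau\approx1$ and $\lambda$ may be positive. That phase lasts a bounded rescaled time, so it contributes only a constant.
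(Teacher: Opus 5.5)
Your proposal is correct and follows essentially the paper's argument: $W=V/Y$ along characteristics, $W_s=KW+L$ with the same $K$ and $L$, the diagonal cone bound $|\lambda|\le2\tau$, $\int\tau\,ds\le1$, Lemma~\ref{lem:integrable-errors}, and variation of constants with $W(0)=0$. The differences are cosmetic. The paper bounds $|K|\le C(\tau+r^\eta)$ on the whole passage directly from $|\lambda|\le2\tau$, so your split into a pre-cone phase and the outgoing cone, and your use of $K_+$, are unnecessary but harmless. Also, $dY/dt=F^{(2)}$ along characteristics is the graph equation itself, not its $x$-derivative.
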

\begin{proof}
Along a graph characteristic, put $W=V/Y$. Since $Y$ evolves by $F^{(2)}$,
(\ref{sitnik-e-variation}), reparametrized along this characteristic,
becomes
\[
 W_s=K(s)W+L(s),\quad
 K=\frac{F^{(2)}_y-F_y\lambda-F^{(2)}/y}{r^3},\quad
 L=\frac{F^{(2)}_e-F_e\lambda}{yr^3}.
\]
A fixed leading constant is included in the definition of $s$ if present.
With $F=xr^3(1+A)$, $F^{(2)}=-yr^3(1+B)$, we obtain
\[
 K=-\frac{3(y+x\lambda)}r+O(r^\eta)(1+|\lambda|),
 \qquad L=-B_e-\frac{\lambda}{\tau}A_e.
\]
The diagonal cone in Lemma~\ref{lem:absorbdiagonalgen} gives
$0<\tau\le1$ and $|\lambda|\le2\tau$ throughout the passage.
Consequently
\[
 |K|\le C(\tau+r^\eta),\qquad |L|\le Cr^\eta,
 \qquad \int_0^S\tau\,ds\le1.
\]
By Lemma~\ref{lem:integrable-errors}, $\int_0^S|K|ds\le C$ and
$\int_0^S|L|ds\le C\delta^\eta$. Since $W(0)=0$, variation of constants
gives $|W(S)|\le e^{\int|K|}\int|L|\le C\delta^\eta$.
The terminal time $NP$ is fixed while differentiating in $e$; the
orbit-dependent number $S$ is used only to estimate this derivative.
\end{proof}
\begin{Remark}
This proves the parameter estimate for the exact normal-form class.
The parameter-uniform input for the Sitnikov family is supplied by
Lemma~\ref{lem:uniform-separatrices} and the exact normal-form calculation. The incoming-curve extension needed for the periodic saddles is proved in
Lemma~\ref{lem:incoming-parameter}. Neither estimate, by itself, gives bounds
for mixed derivatives such as $\partial_x\partial_eY$; these must be
proved separately wherever a later argument requires them.
\end{Remark}

\section{A criterion for quadratic tangency}
\label{sec:general-contact-repair}

We use a first-contact criterion. Existence of a contact, its order, and its unfolding are
three separate assertions. A sign condition on $M''$ alone proves
none of them. In particular, the implication
$M>c$, $M'<-c'\Rightarrow M''>0$ can hold vacuously when $M<0$
everywhere, although a positive diagonal image cannot meet such a
graph for a positive sufficiently small parameter.

\begin{Lemma}[First contact with a positive lobe]\label{lem:separation}
Let $I=[-\pi/6,7\pi/6]$, and let $m$ be a $C^2$ function with
$\|m-\sin\|_{C^2(I)}\le1/100$. Suppose $H_N(\theta,a)>0$
and $\rho(\theta,a)$ are smooth on $I\times[0,a_0]$ and,
with constants independent of $N$,
\begin{gather*}
 \tfrac12d_N\le H_N\le2d_N,\qquad d_N\downarrow0,\\
 |H_{N,\theta}|+|H_{N,\theta\theta}|\le\tfrac1{100}H_N,
 \qquad |H_{N,a}|\le K H_N,\\
 |\rho|+|\rho_\theta|+|\rho_{\theta\theta}|+|\rho_a|\le M.
\end{gather*}
Here $\theta$ is a phase variable, $a$ is the unfolding parameter,
and $\rho$ is the remainder in the global graph.
Set $F_N(\theta,a)=a m(\theta)+a^2\rho-H_N$.
For all sufficiently large $N$ there are
$\theta_N\in\operatorname{int}I$ and $0<a_N<4d_N$ with
\[
 F_N=F_{N,\theta}=0,\qquad
 F_{N,\theta\theta}<-a_N/2,\qquad F_{N,a}>1/2
\]
at $(\theta_N,a_N)$. Hence these two graphs have a quadratic
contact which unfolds nondegenerately with $a$.
\end{Lemma}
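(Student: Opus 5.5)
The proof repeats the first-contact argument from the proof of Theorem~\ref{quad-tangency}, with $\sin$ replaced by a $C^2$-close function $m$. \emph{Step 1 (boundary signs and a positive interior value).} At the endpoints of $I$ one has $\sin\theta=-1/2$, so $m\le-1/2+1/100<-1/4$ there. After reducing $a_0$ so that $Ma_0<1/8$, we get $F_N\le -a/8-H_N<0$ at both endpoints for all $a\in[0,a_0]$. At $a=0$ we have $F_N=-H_N<0$ on all of $I$. At $\theta_c=\pi/2$ we have $m\ge 99/100$. For $N$ large, so that $4d_N<a_0$ and $16Md_N<1/4$, this gives
\[
F_N(\theta_c,4d_N)\ge 4d_N\cdot\tfrac{99}{100}-16Md_N^2-2d_N>0 .
\]

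\emph{Step 2 (first contact).} The function $\Phi(a)=\max_{I}F_N(\cdot,a)$ is continuous, negative at $a=0$, and positive at $a=4d_N$. Let $a_N\in(0,4d_N)$ be its first zero and $\theta_N$ a maximizer. Step 1 puts $\theta_N$ in the interior of $I$. Hence $F_N=F_{N,\theta}=0$ at $(\theta_N,a_N)$, and $a_N\to0$.

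\emph{Step 3 (locating $\theta_N$).} From $F_N=0$ and $H_N\ge d_N/2>a_N/8$ we get $m(\theta_N)=H_N/a_N-a_N\rho\ge 1/8-Ma_N$. From $F_{N,\theta}=0$ we get
\[
|m'(\theta_N)|\le Ma_N+\tfrac1{100}\,\frac{H_N}{a_N}.
\]
The main obstacle is here. The ratio $H_N/a_N$ is only known to lie between $1/8$ and $1/(1-\text{small})$ via $m\le 1+1/100$. So I would bound it by $m(\theta_N)+Ma_N\le 1.02$. Then $|\cos\theta_N|\le |m'|+1/100<1/20$ for large $N$. Together with $\sin\theta_N>0$ from the $m$ bound, this forces $\theta_N$ within roughly $0.06$ of $\pi/2$, where $m\ge\sin-1/100>0.97$.

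\emph{Step 4 (quadraticity and unfolding).} The second derivative satisfies
\[
F_{N,\theta\theta}\le a_N m''+Ma_N^2+\tfrac1{100}H_N \le a_N\bigl(-0.97+0.01+Ma_N+0.0102\bigr)<-a_N/2 .
\]
For the parameter derivative,
\[
F_{N,a}=m+2a_N\rho+a_N^2\rho_a-H_{N,a}\ge 0.97-2Ma_N-Ma_N^2-K\,(1.02)\,a_N>1/2
\]
for large $N$. These two inequalities give a quadratic contact that unfolds nondegenerately: by the implicit function theorem the critical branch $\theta_N(a)$ exists, and its critical value has derivative $F_{N,a}\neq0$, exactly as in the proof of Theorem~\ref{quad-tangency}.
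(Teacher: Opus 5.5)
Your proposal is correct and matches the paper's proof step by step. It uses the same endpoint and $a=0$ sign checks, and takes the first zero of $a\mapsto\max_I F_N(\cdot,a)$ with an interior maximizer; the value and derivative equations then localize $\theta_N$ near $\pi/2$ before you bound $F_{N,\theta\theta}$ and $F_{N,a}$. The only differences are in the numerical constants: you localize via $|\cos\theta_N|<1/20$, whereas the paper uses $|\cos\theta_N|<1/2$ and $\sin\theta_N>\sqrt3/2$, and this does not affect the argument.
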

\begin{proof}
Decrease $a_0$ so that $Ma_0<1/10$.
At either endpoint of $I$, $m\le-0.49$, so $F_N<0$.
At $a=0$ it is negative everywhere, whereas
\[
 F_N(\pi/2,4d_N)\ge3.96d_N-16Md_N^2-2d_N>0
\]
for large $N$. The continuous function
$a\mapsto\max_{\theta\in I}F_N(\theta,a)$ has a first
zero $a_N\in(0,4d_N)$. Its maximizing point is interior,
so $F_N=F_{N,\theta}=0$ there. This proves existence without any
curvature assumption on a contact that has not yet been found.

The value equation gives
$m(\theta_N)=H_N/a_N+O(Ma_N)\ge1/8-o(1)$,
and $H_N/a_N\le1.01+o(1)$. The derivative equation gives
\[
 |m'(\theta_N)|\le\tfrac1{100}H_N/a_N+Ma_N
                              \le0.0101+o(1).
\]
Thus $\sin\theta_N>0$ and $|\cos\theta_N|<1/2$ for large $N$.
It follows that $\sin\theta_N>\sqrt3/2$. Consequently,
\begin{align*}
 F_{N,\theta\theta}/a_N
 &\le-\sqrt3/2+0.01+Ma_N
                         +0.01(1.01+Ma_N)<-1/2,\\
 F_{N,a}
 &\ge\sqrt3/2-0.01-2Ma_N-Ma_N^2
                         -Ka_N(1.01+Ma_N)>1/2.
\end{align*}
The implicit function theorem continues the critical point locally.
The derivative of its critical value is $F_{N,a}$ because
$F_{N,\theta}=0$. This is the asserted unfolding.
\end{proof}

\begin{Proposition}[The reversible geometric application]
\label{prop:general-reversible-contact}
Let $f_a$ be a smooth family reversible under a fixed smooth
involution $R$, and let $R$ fix a saddle $O$ (hyperbolic or parabolic)
with smooth stable and unstable arcs. Suppose that the signed
difference between an arc of $W^s(O)$ and an arc of
$f_a^N(\operatorname{Fix}R)$, in a common graph coordinate,
is a nonvanishing smooth factor times a smooth function
$F(u,a)$ which, at $(u_*,a_*)$, satisfies
\[
 F=F_u=0,\qquad F_{uu}\ne0,\qquad F_a\ne0.
\]
Then $f_{a_*}$ has a quadratic homoclinic tangency to $O$
which unfolds nondegenerately. In particular this applies to the
contacts supplied by Lemma~\ref{lem:separation} and
Proposition~\ref{prop:energy-contact}.
\end{Proposition}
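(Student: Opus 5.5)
The proof will follow the symmetry step already carried out at the end of the proof of Theorem~\ref{quad-tangency}, now in the abstract setting. First, the contact between the arc $\gamma^s\subset W^s(O)$ and the arc $\gamma_N=f_a^N(\operatorname{Fix}R)$ is pulled back by $f_a^{-N}$. For fixed $N$ this is a smooth parameter-dependent local diffeomorphism near the contact point. It sends $\gamma_N$ to a piece of $\operatorname{Fix}R$ and $\gamma^s$ to another arc of $W^s(O)$, by invariance.

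The key observation is that the hypotheses $F=F_u=0$, $F_{uu}\ne0$, $F_a\ne0$ are invariant under two operations. The first is multiplication by a nonvanishing smooth factor. The second is a smooth change of graph coordinates, possibly depending on $a$. At the contact the separation and its first spatial derivative vanish. Hence the chain-rule terms produced by derivatives of the factor or the coordinate change are multiplied by $F$ or $F_u$ and drop out. So the second normal derivative and the $a$-derivative of the critical value are each multiplied by a nonzero number. I will verify this by writing the transformed separation as $\phi(u,a)F(\psi(u,a),a)$ and differentiating once in $a$ and twice in $u$ at $(u_*,a_*)$.

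Next, near the pulled-back point $z_*\in\operatorname{Fix}R$, I choose smooth coordinates $(\xi,\eta)$ in which $R(\xi,\eta)=(\xi,-\eta)$. This is possible because a smooth involution with a one-dimensional fixed curve is locally linearizable by Bochner's theorem. In these coordinates $\operatorname{Fix}R=\{\eta=0\}$ and the stable arc is a graph $\eta=h(\xi,a)$. By the invariance above, $h$ satisfies $h=h_\xi=0$, $h_{\xi\xi}\ne0$ and $h_a\ne0$ at $(\xi_*,a_*)$. Reversibility $Rf_aR=f_a^{-1}$ together with $R(O)=O$ gives $R(W^s(O))=W^u(O)$. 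So $R(\gamma^s)$ is an arc of $W^u(O)$, namely the graph $\eta=-h(\xi,a)$. The difference of the unstable and stable graphs is then exactly $-2h$. This has a quadratic zero at $a_*$ with critical value moving at nonzero speed, i.e.\ a quadratic homoclinic tangency of $O$ that unfolds nondegenerately. The final sentence of the statement follows because Lemma~\ref{lem:separation} produces precisely such an $F$ (namely $F_N$ in the phase variable, with the positive amplitude $a(x)$ as the factor). Proposition~\ref{prop:energy-contact} is assumed to produce the analogous data.

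The main obstacle is the invariance step when the coordinate change depends on the parameter. One must check that $\partial_a$ of the critical value, rather than $\partial_a$ at a fixed point, is what is preserved. I will handle this by continuing the critical point $u(a)$ via the implicit function theorem using $F_{uu}\ne0$, and noting that the derivative of the critical value equals $F_a$ there. Under the change of coordinates the critical point moves smoothly, and the critical value is multiplied by $\phi\ne0$; since $F$ vanishes at the contact, the extra term involving $\phi_a$ does not contribute. A secondary point is that $O$ is parabolic, so $W^s(O)$ is only a physical branch. Reversibility still maps stable to unstable branches, because $R$ fixes $O$ and conjugates forward and backward asymptotics.
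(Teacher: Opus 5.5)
Your proposal is correct and follows essentially the same route as the paper: pull back by $f_a^{-N}$, straighten $R$ to $(u,v)\mapsto(u,-v)$ near the regular fixed curve, write the stable arc as $v=h(u,a)$, and note that the contact and unfolding conditions survive nonvanishing factors and parameter-dependent coordinate changes because $F=F_u=0$ at the contact. Then $R(W^s(O))=W^u(O)$ gives the unstable arc $v=-h$, so the two graphs differ by $\pm 2h$; your explicit Bochner linearization and the chain-rule check on $\phi(u,a)F(\psi(u,a),a)$ just spell out what the paper states in one sentence each.
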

\begin{proof}
Pull the contact back by the finite diffeomorphism $f_a^{-N}$.
It is now a contact of $W^s(O)$ with $\operatorname{Fix}R$.
Near this regular fixed curve of the reflection, choose smooth
coordinates in which $R(u,v)=(u,-v)$. Write the stable arc as
$v=h(u,a)$. At contact,
\[
 h=h_u=0,\qquad h_{uu}\ne0,\qquad h_a\ne0.
\]
To justify invariance of these statements, a smooth change of graph
coordinates multiplies the first nonzero normal jet by a nonzero
factor. Terms differentiating the factor or the moving longitudinal
coordinate vanish because the separation and its first derivative
are zero at contact. Reversibility and $R(O)=O$ give the unstable
arc $v=-h(u,a)$. Their difference is $2h$, proving both
quadraticity and nondegenerate unfolding. The point is homoclinic
because it belongs to the stable and unstable manifolds of the same
saddle. If $R$ exchanges two different saddles, this last conclusion
must instead be stated as a heteroclinic one.
\end{proof}

\begin{Remark}[Curvature and orientation]
For a graph $y=y(x)$ with $y'\ne0$, direct differentiation gives
\[
 \frac{dx}{dy}=\frac1{y'},\qquad
 \frac{d^2x}{dy^2}=-\frac{y''}{(y')^3}.
\]
The signed curvature of an oriented parametrized curve is
$(x'y''-y'x'')/((x')^2+(y')^2)^{3/2}$. Thus changing to a
y-parametrization requires tracking the orientation; one cannot
reuse the same signed-curvature formula with $x$ and $y$ merely
interchanged. At $y'=1$, the second derivatives of the inverse
graphs have opposite signs.

In the outgoing cone the valid local estimate in this manuscript is
$8\le d^g\le10$, together with $-40\tau\le B^g\le-5\tau$;
see (\ref{eq:outgoing-curvature-coeff}). These bounds alone do not
establish a contact. The first-contact argument above proves its
existence using the stated graph bounds.
The Sitnikov proof verifies these hypotheses by choosing its tail
interval sufficiently close to infinity and reducing the relative
derivative constant if necessary. For another model the phase
lobe, derivative bounds, and parameter derivative must be verified
before this criterion can be applied.
\end{Remark}

\subsection{RPC3BP: the parameter input and the two reductions}
\label{ss:rpc-parameter-status}

We separate the parameter-dependent global input from the dimension
argument. For a fixed Jacobi constant $C$, consider the following
hypothesis, denoted $(H_C)$: there is a nonempty open set
$\mathcal N_C$ of mass ratios with a dense subset of parameters at
which $f_{\mu,C}$ has a quadratic homoclinic tangency to a hyperbolic
periodic saddle, unfolding nondegenerately with $\mu$, and that
saddle has transverse connections in both directions to $O_\infty$.
All connections and finite orbit segments are in the collision-free
physical domain of the return map.

\begin{Proposition}[From the global input to full dimension]
\label{prop:rpc-conditional-completion}
Hypothesis $(H_C)$ implies the dimension conclusion of
Theorem~\ref{RPC3BP-main} and the fixed-$C$ Newhouse assertion in
Theorem~\ref{parameter-Newhouse-RPC3BP}.
\end{Proposition}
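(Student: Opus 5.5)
The plan is to mirror the Sitnikov ``summing up'' argument of Section~\ref{ss.summingup}, using $(H_C)$ in place of Proposition~\ref{saddle}. The Newhouse assertion is essentially immediate. $(H_C)$ supplies an open set $\mathcal N_C$ and a dense subset $D\subset\mathcal N_C$ of parameters at which a hyperbolic periodic saddle $p_\mu$ has a quadratic homoclinic tangency unfolding nondegenerately in $\mu$. Any open set with a dense set of tangency parameters is, by definition, a parameter Newhouse domain. If the hypothesis does not already give accumulation at zero and inclusion in $(0,1/2)$, these will be inherited from the construction of $(H_C)$; here I treat them as part of the hypothesis.

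For the dimension conclusion, I first transfer the theorems stated for global area-preserving diffeomorphisms to the physical return map $f_{\mu,C}$. I would use the cutoff construction described in the paragraph ``Local return maps and periodic saddles'' in Section~\ref{ss.bifurcation}. At each tangency parameter $\mu_*\in D$, the orbit of the saddle together with its tangency has compact closure in the collision-free region, away from infinity. The rotating angle is monotone on a tube around it, so the reduced Hamiltonian $P_\varphi(r,P_r,\theta)$ can be cut off to give a global smooth conservative family. That family agrees with $f_{\mu,C}$ on a neighborhood $W$ and preserves the nonzero unfolding derivative. If $p$ has period $m$, apply the construction to $f^m$ and saturate by the orbit afterwards.

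Next I would apply Theorem~\ref{t.thick-sets} with $\delta=1/k$. Its localization clause keeps the resulting basic sets $\Delta_\mu\ni p_\mu$ inside $W$, so they are basic sets of the physical return map. The density argument of Remark~\ref{rem:parameter-density} then shows that $V_k=\{\mu\in\mathcal N_C:$ some zero-dimensional basic set containing $p_\mu$ has $\dim_H>2-1/k\}$ is open and dense. Here one uses that the transverse connections of $p_\mu$ to $O_\infty$ persist by openness; this step needs care, since the selected connection must be continued across the whole domain, and if necessary I would shrink $\mathcal N_C$ to components on which the continuation is defined. For $\mu\in\mathcal R_C=\bigcap_kV_k$, Theorem~\ref{2-diml-hd} (equivalently Corollary~\ref{cor:nested-oscillatory}) gives a set of section points of dimension $>2-1/k$ for every $k$. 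These points accumulate on both $p_\mu$ and $O_\infty$ in both time directions.

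The step I expect to be the main obstacle is converting this into \emph{physical} two-sided oscillation for a return map with variable return time, as flagged in Remark~\ref{rem:finite-returns}. I would argue that each return through $\varphi=0$ to $\varphi=-2\pi$ takes physical time at least $2\pi/\sup|\dot\varphi|$. This is bounded below uniformly on the compact tube and near infinity, where $\dot\varphi\to-1$. Hence iterates $n\to\pm\infty$ correspond to times $t\to\pm\infty$. The distance $r$ is bounded near the orbit of $p_\mu$ and unbounded near $O_\infty$, which yields the $\liminf/\limsup$ conditions. Finally, a flow box $(x,t)\mapsto\Phi^t(x)$ over a compact scheduled set $E$ in the section, together with the product formula \eqref{eq:interval-product-dimension}, raises the dimension to three on $\Pi_C$. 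Since the ambient dimension is three, this gives equality.
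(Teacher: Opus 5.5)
Your proposal follows essentially the same route as the paper. It obtains open dense sets $V_k$ from Theorem~\ref{t.thick-sets} via the density argument of Remark~\ref{rem:parameter-density}, applies Theorem~\ref{2-diml-hd} on $\bigcap_kV_k$, and uses a uniform positive lower bound on physical return times to turn iterate recurrence into physical oscillation. It then uses the flow-box product formula \eqref{eq:interval-product-dimension} to pass from dimension two to three.

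The one wrinkle is that $(H_C)$ does not provide a single saddle $p_\mu$ continued over $\mathcal N_C$, since the saddle may change with the tangency parameter. Instead of shrinking $\mathcal N_C$, define $V_k$ as the paper does: the parameters admitting \emph{some} hyperbolic basic set of dimension $>2-1/k$ with transverse connections to infinity in both directions. With that definition, openness and density follow immediately, with no global continuation needed.
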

\begin{proof}
For each positive integer $k$, let $V_k$ consist of parameters in
$\mathcal N_C$ admitting a hyperbolic basic set of dimension
$>2-1/k$ with transverse connections to infinity in both directions.
The set $V_k$ is open: a fixed basic set has a hyperbolic continuation,
its dimension is continuous, and the finitely many selected transverse
connections persist. To prove density, take an arbitrary open interval
in $\mathcal N_C$ and a generic tangency parameter in it, supplied by
$(H_C)$. The conservative unfolding construction in
Theorem~\ref{t.thick-sets} produces, arbitrarily close to this
parameter, basic sets containing the continued saddle and having
dimension $>2-1/k$. Work in a sufficiently small parameter interval
so that its transverse connections to infinity persist. Thus $V_k$
meets the given interval.

For every $\mu\in\bigcap_kV_k$, Theorem~\ref{2-diml-hd} gives
oscillatory subsets of the section with dimensions approaching two,
including recurrence to a finite periodic orbit. Their union has
dimension two. Each such set lies in a compact rectangle in the
smooth collision-free section. A sufficiently short flow box over
that rectangle is smoothly diffeomorphic, with bounded derivatives
and bounded inverse derivatives, to its product with a time interval.
By \eqref{eq:interval-product-dimension}, its product with an
interval has dimension one greater;
hence the oscillatory set on the energy surface has dimension three.

There is no finite-time accumulation of the prescribed returns.
On a compact transverse rectangle the smooth flow has a uniform
positive time during which a local flow box cannot meet the section
again. Each visit therefore consumes a positive minimum physical
time. Infinitely many visits force physical time to tend to infinity
in each direction. The excursions and finite returns consequently
have the same oscillatory interpretation for this variable-return-time
map as for the flow. Finally, the dense tangencies in $(H_C)$ are
exactly the Newhouse property used in the stated definition.
\end{proof}

\paragraph{Available global result.}
Garrido, Mart\'in and Paradela~\cite[Theorem 1.1 and
Sections 2.3--2.6]{GMP2025} establish mass-parameter Newhouse domains
associated to infinity, using secondary tangencies and a renormalized
return map. This supplies an independent route to the global input
in their energy convention. Their argument establishing the domains
and the homoclinic connections does not use the oscillatory dimension
transfer in this manuscript; their later discussion of that transfer
must not be used as an independent proof of it.

Applying that theorem to the fixed-energy conclusion requires
reconciling the energy normalizations. Our constant is
$C=-2H_{\rm rot}$, and the parabolic branch used in
\cite{GMS2016} has $H_{\rm rot}=-G_0$, hence $C=2G_0>0$.
In \cite[Section 1.1]{GMP2025} the displayed conserved quantity is
$\mathcal J=H_{\rm inertial}-G=H_{\rm rot}$, followed by a
large-$J$ convention. With the displayed definitions this gives
$J=-C/2$, not $C/2$. We do not silently identify a positive large
value of one signed constant with a positive large value of the other.
One must check the branch and sign convention in that global theorem,
or give a direct construction on the $C>0$ branch. The latter is
provided in Section~\ref{ss:fixed-energy-completion} below. This
direct construction avoids identifying the two signed constants.

\paragraph{The fixed-mass assertion.}
For the fixed-mass conclusion, fix $\mu$ and vary $C$.
Theorem 2.2 of \cite{GMS2016} supplies transverse homoclinic
intersections for all sufficiently large $C$. It supplies neither
a sequence of quadratic tangencies unfolding in $C$ nor a
Newhouse domain on that parameter line. The mass-parameter theorem
in \cite{GMP2025} does not change this quantifier order.
In particular, a nonzero derivative of a splitting with respect to
$\mu$ says nothing by itself about its derivative with respect to $C$.

The sufficient additional input is a sequence $C_n\to\infty$ of
quadratic tangencies to hyperbolic saddles homoclinically related to
infinity, with nonzero unfolding derivative in $C$. The conservative
unfolding theorem and the preceding proof would then yield the
claimed Newhouse domains and residual full dimension. An alternative
is to construct secondary parabolic tangencies and prove their
parameter-uniform transfer to periodic saddles. Either route needs
quantitative spatial and $C$-derivative control at the relevant
exponentially small scale. The equal-mass case must use its second
harmonic. These estimates and the tangency construction are supplied
in Section~\ref{ss:rpc-uniform-energy}, following the reductions below.

\subsection{A criterion for unfolding in the Jacobi constant}
\label{ss:energy-contact-criterion}

We now give a more precise reduction of the fixed-mass assertion.
The result below is an abstract contact criterion, with a complete
proof. Its hypotheses on the RPC3BP passage maps are not assumed to
follow from compact-parameter estimates. 

The idea is to compare the exponential decrease of the splitting
amplitude with the algebraic dependence of a long local passage on its
duration. We keep the leading exponential in an explicit amplitude;
we never differentiate an unspecified asymptotic remainder.

\begin{Proposition}[Contact along an unbounded energy sequence]
\label{prop:energy-contact}
Fix $k>0$, $a_0>0$, and $p\in\mathbb R$, and set
\[
 A(C)=a_0 C^p\exp(-kC^3/24),\qquad
 I=[-\pi/6,7\pi/6].
\]
Choose $0<a<b$ such that $a^3<16/k<b^3$, and put
\[
 J_N=[a(\log N)^{1/3},b(\log N)^{1/3}].
\]
Suppose that, for every sufficiently large integer $N$, the difference
of two graphs, divided by a positive smooth factor, is
\[
 F_N(\theta,C)=A(C)(\sin\theta+\varepsilon(\theta,C))
                 -H_N(\theta,C),\qquad (\theta,C)\in I\times J_N.
\]
Assume that these functions are jointly $C^2$ in $(\theta,C)$
on neighborhoods of their domains. Suppose that there are constants $B\ge1$, $q\ge0$, independent
of $N$, such that
\begin{equation}\label{eq:energy-height-size}
 B^{-1}C^{-q}N^{-2/3}\le H_N(\theta,C)
                  \le BC^qN^{-2/3}.
\end{equation}
Assume also that, uniformly on these domains as $N\to\infty$,
\begin{align}
 \|\varepsilon(\cdot,C)\|_{C^2(I)}&=o(1),
 &|\partial_C\varepsilon(\theta,C)|&=o(C^2),
 \label{eq:energy-splitting-jets}\\
 |\partial_\theta H_N|+|\partial_\theta^2 H_N|
   &\le \tfrac1{100}H_N,
 &|\partial_C\log H_N|&=o(C^2).
 \label{eq:energy-height-jets}
\end{align}
Then there are $C_N\in\operatorname{int}J_N$ and
$\theta_N\in\operatorname{int}I$ such that
\[
 F_N=\partial_\theta F_N=0,\qquad
 \partial_\theta^2F_N<0,\qquad \partial_CF_N<0
\]
at $(\theta_N,C_N)$. Thus the graphs have a quadratic contact which
unfolds nondegenerately in $C$. Moreover,
\begin{equation}\label{eq:energy-contact-location}
 C_N^3=\frac{16}{k}\log N+O(\log\log N).
\end{equation}
\end{Proposition}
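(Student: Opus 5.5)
The plan is to follow the first-contact argument of Lemma~\ref{lem:separation}, but with the roles reversed: the parameter now shrinks the splitting amplitude rather than growing it. We normalize by the amplitude and set
\[
 G_N(\theta,C)=\frac{F_N(\theta,C)}{A(C)}=\sin\theta+\varepsilon(\theta,C)-R_N(\theta,C),
 \qquad R_N=\frac{H_N}{A(C)}>0.
\]
Since $A>0$, zeros and $\theta$-critical points of $F_N$ and $G_N$ coincide. At a zero, $\partial_C F_N = A\,\partial_C G_N$, and $\partial_\theta^2F_N = A\,\partial^2_\theta G_N$. We have
\[
 \partial_C\log A=p/C-kC^2/8 .
\]
Together with \eqref{eq:energy-height-jets} this gives
\[
 \partial_C\log R_N=\frac{kC^2}{8}\bigl(1+o(1)\bigr)>0
\]
uniformly on $I\times J_N$, so $R_N$ is strictly increasing in $C$ at every $\theta$.

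Next we locate the contact window. By \eqref{eq:energy-height-size},
\[
 \log R_N=-\tfrac23\log N+\tfrac{k}{24}C^3+O(\log C)
\]
uniformly. At $C=a(\log N)^{1/3}$ the exponent is $(ka^3/24-2/3)\log N\to-\infty$, so $R_N=o(1)$. Hence $G_N(\pi/2,C)\ge 1-o(1)-o(1)>0$ there. At $C=b(\log N)^{1/3}$ we get $R_N\to\infty$, so $G_N<0$ on all of $I$. At the endpoints of $I$ we have $\sin\theta=-1/2$, so $G_N<0$ for every $C\in J_N$.

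The function
\[
 C\mapsto\max_{\theta\in I}G_N(\theta,C)
\]
is continuous, positive at the left end of $J_N$ and negative at the right end. Let $C_N$ be its largest zero in $\operatorname{int}J_N$ (either the first or the last zero works), and let $\theta_N$ be a maximizer. Because $G_N<0$ at the endpoints of $I$, the point $\theta_N$ is interior. Hence $G_N=\partial_\theta G_N=0$ there, and so $F_N=\partial_\theta F_N=0$.

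We then extract the quantitative information at the contact, as in Lemma~\ref{lem:separation}.

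\emph{Size of $R_N$.} The value equation gives $R_N=\sin\theta_N+\varepsilon\le1+o(1)$. Since $A$ does not depend on $\theta$, the relative bounds on $H_N$ transfer to
\[
 |\partial_\theta R_N|+|\partial^2_\theta R_N|\le\tfrac1{100}R_N .
\]

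\emph{Position of $\theta_N$.} The derivative equation gives $|\cos\theta_N|\le 0.0101+o(1)$. Combined with $\sin\theta_N=R_N-\varepsilon>-o(1)$, this forces $\sin\theta_N\ge0.99$, and therefore $R_N\ge0.98$.

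\emph{Quadraticity.} We get
\[
 \partial_\theta^2G_N=-\sin\theta_N+\partial_\theta^2\varepsilon-\partial^2_\theta R_N<-\tfrac12 .
\]

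\emph{Unfolding.} Substituting $\sin\theta_N+\varepsilon=R_N$,
\[
 \partial_C G_N=(\partial_C\log A)(\sin\theta_N+\varepsilon)+\partial_C\varepsilon-R_N\,\partial_C\log H_N
 =R_N\bigl(-kC^2/8+o(C^2)\bigr)+o(C^2).
\]
This is negative because $R_N\ge0.98$. The joint $C^2$ hypothesis then lets the implicit function theorem continue the critical point, with critical-value derivative $\partial_CF_N<0$.

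Finally, the location estimate \eqref{eq:energy-contact-location} follows from $R_N\in[0.98,1.02]$ at the contact. Indeed, $\log R_N=O(1)$ in the expansion of $\log R_N$ above gives $kC_N^3/24=\tfrac23\log N+O(\log C_N)$, and $\log C_N=O(\log\log N)$ on $J_N$.

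The step I expect to be most delicate is the unfolding derivative, which is the genuinely new point compared with Lemma~\ref{lem:separation}. It requires a uniform lower bound on $R_N$ at the contact, so that the term $-kC^2R_N/8$ dominates the $o(C^2)$ errors from $\partial_C\varepsilon$ and $\partial_C\log H_N$. This is why the $\theta$-derivative step must first pin $\sin\theta_N$ near one before $\partial_C$ is examined.
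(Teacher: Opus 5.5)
Your proposal is correct and follows essentially the same route as the paper. Both arguments take an interior zero of $C\mapsto\max_I F_N$, pin $\sin\theta_N$ near one from the derivative equation, get quadraticity and the sign of $\partial_C F_N$ from $\partial_C\log A=-kC^2/8+p/C$ dominating the $o(C^2)$ errors, and locate the contact from $H_N/A\asymp1$. Your normalization by $A$ rather than by $H_N$, and your use of $\sin\theta_N>-o(1)$ in place of the unique zero of cosine in $I$, are only cosmetic differences.
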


\begin{proof}
For sufficiently large $N$, the $C^2$ norm of $\varepsilon$ is at most
$1/100$. At the endpoints of $I$, the function $F_N$ is negative.
At the left endpoint $C=a(\log N)^{1/3}$ of $J_N$, the height bound gives
\[
 \frac{A(C)}{H_N(\pi/2,C)}
 \ge \frac{a_0}{B}C^{p-q}N^{2/3-ka^3/24}\longrightarrow\infty.
\]
Consequently $F_N(\pi/2,C)>0$ there. At the right endpoint
$C=b(\log N)^{1/3}$, uniformly in $\theta$,
\[
 \frac{A(C)}{H_N(\theta,C)}
 \le a_0BC^{p+q}N^{2/3-kb^3/24}\longrightarrow0.
\]
Therefore $F_N<0$ throughout $I$ at that endpoint.
The continuous function $C\mapsto\max_I F_N(\theta,C)$ has an
interior zero $C_N$. Any maximizing point $\theta_N$ is interior and
satisfies $F_N=\partial_\theta F_N=0$.

At this point $H_N/A=\sin\theta_N+\varepsilon\le1.01$. Hence
\[
 |\cos\theta_N|
 \le |\varepsilon_\theta|+|H_{N,\theta}|/A
 \le .01+.0101<.021.
\]
The only zero of the cosine in $I$ is $\pi/2$. It follows that
$\sin\theta_N>.99$ and $.98<H_N/A<1.01$. In particular,
\[
 \frac{F_{N,\theta\theta}}{A}
 =-\sin\theta_N+\varepsilon_{\theta\theta}
                       -H_{N,\theta\theta}/A
 <-.99+.01+.0101<-.96.
\]
This proves quadraticity. At the contact, differentiating in $C$
with the phase held fixed gives
\begin{align*}
 \frac{F_{N,C}}{H_N}
 &=\frac{A'}{A}+\frac{A}{H_N}\varepsilon_C
                          -\partial_C\log H_N\\
 &=-\frac{kC_N^2}{8}+\frac{p}{C_N}+o(C_N^2)<0.
\end{align*}
The derivative of the critical value is this same derivative since
$F_{N,\theta}=0$. Multiplication by a positive smooth factor does not
change the signs at a contact, nor does a smooth parameter-dependent
change of the phase destroy nondegeneracy: the extra first-derivative
term vanishes there.

Finally, the height bounds and $.98<H_N/A<1.01$ imply
\[
 -\frac{kC_N^3}{24}+p\log C_N+\log a_0
 =-\frac23\log N+O(\log C_N).
\]
Since $C_N\in J_N$, this is exactly
\eqref{eq:energy-contact-location}.
\end{proof}

\begin{Remark}
The exponent $2/3$ is an explicit hypothesis about the chosen graph
height. It must be checked after division by the spatial amplitude
and after the change of phase. If a different normalization gives
$N^{-\beta}$, $\beta>0$, the same proof replaces $16/k$ by
$24\beta/k$. Polynomial factors in $C$ affect only the
$O(\log\log N)$ term.
\end{Remark}

\paragraph{What the Fourier estimates provide.}
In the straightened coordinates of \cite[Section 6]{GMS2016}, the
difference of generating functions has the form
\[
 \Upsilon(G_0^3w+\xi),\qquad
 \Upsilon(\theta)=\Upsilon^{[0]}+
                   2\sum_{\ell\ge1}\Upsilon^{[\ell]}\cos(\ell\theta).
\]
The coefficient estimates in the proof of Lemma 6.8, together with
Proposition 3.1 of that paper, yield a useful strengthening in the
phase variable. For each fixed $\mu\in(0,1/2)$, the derivative
$\Upsilon'$ divided by its explicit leading first-harmonic amplitude
has, up to orientation, the form $\sin\theta+O(G_0^{-1/2})$ in every
fixed $C^j$ phase norm. For $\mu=1/2$, all odd harmonics vanish; using
the second harmonic and its phase gives $\sin\theta+O(G_0^{-1})$ in
every fixed $C^j$ norm. Constants here may depend on the fixed mass and
on $j$.

Indeed, the first-harmonic error is $O(G_0^{-2}e^{-G_0^3/3})$,
whereas its leading coefficient has size
$G_0^{-3/2}e^{-G_0^3/3}$. At equal masses the corresponding sizes are
$G_0^{-1/2}e^{-2G_0^3/3}$ and
$G_0^{1/2}e^{-2G_0^3/3}$. Every fixed number of phase derivatives
multiplies the remaining Fourier coefficients by a fixed power of
$\ell$; the exponential decay in $\ell G_0^3$ still dominates this
factor. This proves the assertions for the straightened potential.
Passing to the actual section graphs also requires differentiating
the coordinate change and the graph prefactor. We do not identify
these two objects without that step.

Since $C=2G_0$, the leading exponent is $-C^3/24$ for unequal masses
and $-C^3/12$ for equal masses. Thus $k=1$ and $k=2$, respectively,
in Proposition~\ref{prop:energy-contact}, provided the graph
normalization introduces only polynomial factors into the leading
amplitude.

\paragraph{Parameter control for the normalized potential.}
Section~\ref{ss:complex-splitting-control} constructs the splitting
potential holomorphically on fixed domains for $|z-G|<cG^{-2}$.
A uniform bound on this complex disk, combined with the small
remainder on its real diameter, yields the required parameter
derivative by interpolation. The transport inverse is controlled
before complexifying its oscillatory integral representation.

\paragraph{Uniformity of the local passage.}
The required range is not a fixed compact interval of energies:
it is $C\asymp(\log N)^{1/3}$. A statement valid for
$N\ge N_0(C)$, without a bound on $N_0(C)$, does not suffice.
For example, the threshold $N_0(C)=\exp(C^6)$ would exceed $N$
throughout the proposed range for large $N$.

A sufficient local statement would give the height bounds
\eqref{eq:energy-height-size}--\eqref{eq:energy-height-jets} for
$N\ge BC^s$, with constants independent of $C$. Such a threshold is
compatible with the entire interval $J_N$ for large $N$. One must
also control the $C$ dependence of the selected phase interval and
of the factor dividing the physical graph height. A polynomial
bound on the height alone does not control its logarithmic derivative.

\paragraph{Application to the RPC3BP.}
To apply Proposition~\ref{prop:energy-contact}, we verify its
estimates for the invariant-manifold graph and the iterated symmetry
graph of the RPC3BP. Section~\ref{ss:complex-splitting-control}
establishes the complex-parameter estimates for the splitting
potential on fixed domains. Section~\ref{ss:rpc-uniform-energy}
converts these estimates to section graphs and proves the required
coordinate and local passage bounds on the joint $(C,N)$ range.
Proposition~\ref{prop:energy-contact} then gives the contacts,
their quadraticity, their nondegenerate unfolding, and their
parameter location. The reversibility argument in
Proposition~\ref{prop:general-reversible-contact} turns these
contacts into parabolic homoclinic tangencies; this application is
completed in Proposition~\ref{prop:rpc-fixed-mass-tangencies}.

After each tangency has been constructed, its transfer to a periodic
saddle uses the parameter estimates of
Section~\ref{ss.saddlesnearhomoclinicpoint} on a neighborhood of
that particular finite $C_N$. The neighborhood and its constants
may depend on $N$. Thus this transfer does not require estimates
uniform as $C_N\to\infty$.

\subsection{Complex parameter control for the straightened splitting}
\label{ss:complex-splitting-control}

The complex extension can be treated without moving the singularities
of the unperturbed separatrix. We record the operator argument first.
Throughout this subsection the mass $\mu$ is fixed, and $G=G_0=C/2$
denotes the real angular momentum at parabolic infinity. The variable
$z$ denotes its complex continuation. Constants are not
claimed to be uniform as $\mu\to0$ or as an unequal mass tends to $1/2$.

\begin{Remark}[Inputs from the real-parameter construction]
\label{rem:gms-inputs}
We specify the real-parameter results of \cite{GMS2016} used below
and the additional arguments needed for their complex-parameter transfer.
All norms and domains are fixed at the real center $G$; the finite
domain margins are chosen before the disk radius constant $c$.
Endpoint values are inherited from the preceding construction.
Uniformity is in $G$ for each fixed mass, with a separate equal-mass
normalization. The source numbers in this remark refer to \cite{GMS2016}.

The transport inverses of Lemmas 5.6 and 5.11 and formula (66)
are transferred using Lemma~\ref{lem:transport-resolvent}, fixed Fourier
endpoint data, and the shear identities \eqref{eq:angular-shear}.
For the domain (46) and the weighted spaces of Sections 5.3--5.4,
the additional ingredients are the strip estimate
\eqref{eq:shear-strip-bound}, the embeddings
\eqref{eq:transfer-weight-embeddings}, and the integrable-weight
inverse \eqref{eq:integrable-weight-inverse}.
The Hamilton--Jacobi constructions of Propositions 5.7 and 5.12
are extended by uniform contractions at infinity and on the bounded
domain, giving \eqref{eq:complex-hj-bounds}.

The coordinate changes of Lemmas 5.14--5.15 and 5.17 and Theorem 5.16
use the conjugation \eqref{eq:shear-composition}, a ball of size
$K\mu G^{-4}$, and a contraction constant $K\mu G^{-1}$.
For the flow extension and return in Propositions 5.20--5.21,
formula (84), and Theorem 5.22, we use the unchanged fundamental
matrix and a normalized inverse-function argument with inherited
homogeneous data. The transport straightening in formulas (86), (90)
and Lemmas 6.4 and 6.6 is extended with an order-zero bound
$K\mu G^{-4}$, preserving the symmetric normalization and equal-mass
periodicity. Finally, the exponential estimates of Proposition 3.1
and Lemmas 6.7--6.8 are transferred by vertex evaluation and the
order-zero error bound $K\mu^2G^{-7/2}$, leading to
Proposition~\ref{prop:normalized-potential-parameter}.

The real-parameter results alone do not give a complex-parameter
theorem. The operator and fixed-point arguments below supply that
extension on $|z-G|<cG^{-2}$, and the final interpolation yields its
parameter derivative. The subsequent coordinate matching is proved
separately in Section~\ref{ss:rpc-uniform-energy}.
\end{Remark}

\begin{Lemma}[Perturbing the transport frequency]
\label{lem:transport-resolvent}
Let $G>0$ be real, let $L_G=\partial_v-G^3\partial_\xi$, and suppose
that a right inverse $K_G:Y_G\to X_G$ satisfies
\[
 \|K_Gh\|_{X_G}\le M\|h\|_{Y_G},\qquad
 \|\partial_\xi u\|_{Y_G}\le G^{-3}\|u\|_{X_G}.
\]
The spaces and their boundary normalization are fixed at $G$.
For $d=z^3-G^3$ with $M|d|G^{-3}<1$, put
\[
 K_z=(\operatorname{Id}-dK_G\partial_\xi)^{-1}K_G.
\]
Then $L_zK_z=\operatorname{Id}$, $K_z$ is holomorphic in $z$, and
\[
 \|K_z\|_{Y_G\to X_G}\le\frac{M}{1-M|d|G^{-3}}.
\]
In particular the bound is uniform for $|z-G|<cG^{-2}$ and large $G$.
\end{Lemma}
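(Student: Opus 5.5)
The plan is to treat $K_z$ as a Neumann-series perturbation of the fixed right inverse $K_G$, using the exact algebraic identity $L_z=L_G-d\,\partial_\xi$. Since $L_z=\partial_v-z^3\partial_\xi$ and $z^3=G^3+d$, we have $L_z=L_G-d\partial_\xi$. First I will check that $T:=dK_G\partial_\xi$ is a bounded operator on $X_G$ with small norm. For $u\in X_G$, the hypothesis gives $\|\partial_\xi u\|_{Y_G}\le G^{-3}\|u\|_{X_G}$. Composing with the bound on $K_G$ then yields
\[
 \|Tu\|_{X_G}\le |d|MG^{-3}\|u\|_{X_G}.
\]
Hence $\operatorname{Id}-T$ is invertible on $X_G$ by the Neumann series, with $\|(\operatorname{Id}-T)^{-1}\|\le(1-M|d|G^{-3})^{-1}$. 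This immediately gives the stated norm bound for $K_z=(\operatorname{Id}-T)^{-1}K_G$.

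Next I verify the right-inverse identity. Put $u=K_zh$, so that $u-dK_G\partial_\xi u=K_Gh$, i.e.
\[
 u=K_G(h+d\,\partial_\xi u).
\]
Applying $L_G$ and using $L_GK_G=\operatorname{Id}$ on $Y_G$ gives $L_Gu=h+d\partial_\xi u$. Here it matters that $h+d\partial_\xi u\in Y_G$, which holds by the $\partial_\xi$ hypothesis. Therefore
\[
 L_zu=L_Gu-d\partial_\xi u=h.
\]
The boundary normalization is inherited automatically because $u$ lies in the range of $K_G$, which is the normalization fixed at $G$.

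For holomorphy, $d=z^3-G^3$ is a polynomial in $z$ and $T=d\cdot(K_G\partial_\xi)$ with a fixed bounded operator $K_G\partial_\xi$. The Neumann series $\sum_n d^n(K_G\partial_\xi)^nK_G$ therefore converges in operator norm, locally uniformly on $\{M|d|G^{-3}<1\}$. As a uniform limit of operator-valued polynomials in $z$, it is holomorphic there.

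Finally, for uniformity on $|z-G|<cG^{-2}$, write $|d|=|z-G|\,|z^2+zG+G^2|\le cG^{-2}\cdot 3G^2(1+o(1))$, which is at most $4c$ for large $G$. Then $M|d|G^{-3}\le 4cMG^{-3}\to0$, so the bound is at most $2M$ once $G$ is large (indeed for any fixed $c$). The only point needing care, which I regard as the main obstacle, is making sure $\partial_\xi$ genuinely maps $X_G$ into $Y_G$ with that gain, so that $T$ is an operator on a fixed space independent of $z$. This is exactly the stated hypothesis, so the argument is routine operator algebra.
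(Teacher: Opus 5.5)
Your proposal is correct and follows essentially the same route as the paper: bound $\|dK_G\partial_\xi\|_{X_G\to X_G}\le M|d|G^{-3}$, invert $\operatorname{Id}-dK_G\partial_\xi$ by a Neumann series (which also gives holomorphy in $z$), rewrite $u=K_zh$ as $u=K_Gh+dK_G\partial_\xi u$ and apply $L_G$ to obtain $L_zu=L_Gu-d\partial_\xi u=h$, and finally note $|d|\le4c$ on $|z-G|<cG^{-2}$ for large $G$. Your extra remarks on the linearity of $K_G$, the membership $h+d\partial_\xi u\in Y_G$, and locally uniform convergence of the series only make explicit what the paper's short proof leaves implicit.
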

\begin{proof}
The norm of $dK_G\partial_\xi$ on $X_G$ is at most $M|d|G^{-3}$.
The Neumann series therefore converges in operator norm and is
holomorphic in $z$. If $u=K_zh$, then
$u=K_Gh+dK_G\partial_\xi u$. Applying $L_G$ gives
$L_Gu=h+d\partial_\xi u$, as required. Finally,
$|z-G|<cG^{-2}$ implies $|d|\le4c$ for large $G$.
\end{proof}

We write $\kappa_{\rm dom}$ for the domain parameter denoted by
$\kappa$ in \cite{GMS2016}; it is unrelated to the second-jet
coordinate used in the local passage estimates.
In the notation of \cite{GMS2016}, Lemma 5.6 supplies this pair of
estimates with $Y_G=Z_{8/3,\rho,\sigma}$ and
$X_G=\widetilde Z_{5/3,\rho,\sigma}$. Lemma 5.11 supplies it with
$Y_G=Y_{2,\rho,\kappa_{\rm dom},\delta,\sigma}$ and
$X_G=\widetilde Y_{1,\rho,\kappa_{\rm dom},\delta,\sigma}$, for the inverse
with zero data at the specified Fourier-dependent endpoints. Indeed,
the definition of each strengthened norm includes
$G^3\|\partial_\xi u\|_{\nu+1,\sigma}$. Thus the parameter
perturbation is small on the same spaces, with no loss of a Fourier
strip. This proves the needed inverse estimate both at infinity and
for these bounded-domain inverses.

For clarity, nonzero prescribed endpoint data must be included as
a separate homogeneous solution. They are not annihilated by an
arbitrary right inverse. At infinity the decaying normalization used
in the fixed-point construction is retained by the above series.
For a Fourier coefficient with prescribed value $h_\ell(z)$ at
a fixed endpoint $v_\ell$, the homogeneous term is
\[
 e^{i\ell z^3(v-v_\ell)}h_\ell(z).
\]
Its quotient by the corresponding exponential at the real center
is $e^{i\ell(z^3-G^3)(v-v_\ell)}$. On a bounded domain this
costs at most $e^{Kc|\ell|}$, absorbed by a fixed Fourier-strip
margin. The data $h_\ell(z)$ are obtained by evaluation of the
preceding holomorphic stage. Thus retaining the data preserves
both holomorphy and the stated weighted bounds; a zero-data inverse
alone would not establish analytic continuation of the chosen
manifold.

\paragraph{Fixed spaces on the bounded domains.}
We give the transfer argument for the remaining stages, including
composition and the nonzero endpoint data. All domains and norms in
this argument are fixed at the real center $G$. Choose the finite
sequence of strictly nested domains used in Sections 5.4--6.2 of
\cite{GMS2016}, with strict inequalities in all the domain and Fourier
margins. Their union is contained in $|v|\le R$, with $R$ independent
of $G$. In the following formulas $\|\cdot\|_{\nu,\sigma}$ denotes
the weighted Fourier norm of \cite{GMS2016} on the indicated domain:
\[
 \|h\|_{\nu,\sigma}=
 \sum_{\ell\in\mathbb Z}e^{|\ell|\sigma}
 \sup_v\left|
 |v-i/3|^{\nu-\ell/2}|v+i/3|^{\nu+\ell/2}
 h^{[\ell]}(v)\right|.
\]
Here and below the Fourier series may be formal. All identities
are identities of their analytic Fourier coefficients.

Put $d=z^3-G^3$ and define
\[
 H_z(v,\xi)=(v,\xi-dv),\qquad S_zh=h\circ H_z.
\]
Direct differentiation gives
\begin{equation}\label{eq:angular-shear}
 L_GS_z=S_zL_z,\qquad
 S_z\partial_v=(\partial_v+d\partial_\xi)S_z,\qquad
 S_z\partial_\xi=\partial_\xi S_z.
\end{equation}
Moreover $S_z(hk)=(S_zh)(S_zk)$, coefficientwise. Thus the
weighted product estimate is unchanged. If $\sigma'<\sigma$ and
$|d|R\le\sigma-\sigma'$, then
\begin{equation}\label{eq:shear-strip-bound}
 \|S_z^{\pm1}h\|_{\nu,\sigma'}\le\|h\|_{\nu,\sigma}.
\end{equation}
Indeed the multiplier of the $\ell$th coefficient is
$e^{\mp i\ell dv}$, of modulus at most $e^{|\ell||d|R}$.
The singularity weights are not changed.

We solve each bounded-domain equation for $S_zh$ in a fixed
Banach space. Known coefficients are first transferred from a
larger strip by \eqref{eq:shear-strip-bound}; the solution is
transferred back only after the fixed point has been obtained.
Choose $c$ so that $4cR$ is smaller than each reserved shear
margin. There are only finitely many such margins, all independent
of $G$. In particular, \eqref{eq:shear-strip-bound} is not applied
anew at each fixed-point iteration.

For the strengthened norms put
\[
 \lVert\!\lvert u\rvert\!\rVert_{\nu,\sigma}
 =\|u\|_{\nu,\sigma}+\|\partial_vu\|_{\nu+1,\sigma}
       +G^3\|\partial_\xi u\|_{\nu+1,\sigma}.
\]
Replacing $\partial_v$ by $\partial_v+d\partial_\xi$ gives an
equivalent norm, in both directions, with constant at most
$1+|d|G^{-3}$. Consequently the derivative estimates in the
Hamilton--Jacobi construction hold in the fixed transformed spaces.
The weighted embeddings used below are
\begin{equation}\label{eq:transfer-weight-embeddings}
 \|h\|_{\nu+\eta,\sigma}\le K\|h\|_{\nu,\sigma},\qquad
 \|h\|_{\nu-\eta,\sigma}\le KG^{3\eta}\|h\|_{\nu,\sigma},
 \qquad \eta\ge0.
\end{equation}
These follow from boundedness of the domain and its distance
$\kappa_{\rm dom} G^{-3}$ from the singularities.

\paragraph{Inverses and endpoint data.}
Let $K_G$ be one of the bounded-domain Fourier inverses in (66) or (90) of \cite{GMS2016}, with all endpoints fixed at $G$. In the transformed
equation the inverse is exactly $K_G$, not an operator between
successively narrower strips. In the original variables it is
\[
 K_z=S_z^{-1}K_GS_z.
\]
For example, for a nonzero harmonic with endpoint $v_\ell$,
\[
 (K_zh)^{[\ell]}(v)=
 \int_{v_\ell}^v e^{i\ell z^3(v-t)}h^{[\ell]}(t)\,dt.
\]
This follows by cancelling the factors $e^{\pm i\ell dv}$ inside
the integral. The zero harmonic is unaffected by $S_z$, including
the symmetric normalization in (90) of \cite{GMS2016}. The integrable-weight estimate needed for Theorem 5.16 of \cite{GMS2016} is
proved below. The symmetric inverse on the boomerang domain is
covered by Lemma 6.4 of \cite{GMS2016}. Conjugation transfers these established
bounds to the transformed spaces with the same constants.

Prescribed data $h_\ell(z)$ become
$e^{-i\ell dv_\ell}h_\ell(z)$. Their homogeneous extension is
therefore
\[
 e^{i\ell G^3(v-v_\ell)}e^{-i\ell dv_\ell}h_\ell(z)
\]
in the transformed equation. The endpoint multiplier costs only
a reserved fixed strip margin. The real-center homogeneous
extension estimates now apply verbatim. At each extension stage
these are the values of the preceding solution, not newly chosen
zero data. Uniqueness on the overlap therefore identifies the
extension with the same invariant manifold. This also fixes its
parameter dependence. At infinity we use
Lemma~\ref{lem:transport-resolvent}, rather than $S_z$, since the
half-plane there is unbounded.

\paragraph{The integrable-weight inverse on the extension domain.}
We justify separately the estimate needed for the change to flow
coordinates. On the unstable extension domain $D^u_{\rho,\kappa_{\rm dom},\delta}$
of (46) of \cite{GMS2016}, with fixed sufficiently large $\kappa_{\rm dom}$, the inverse
in (66) of \cite{GMS2016} satisfies
\begin{equation}\label{eq:integrable-weight-inverse}
 \|K_Gh\|_{0,\sigma}\le K_\nu\|h\|_{\nu,\sigma},
 \qquad 0<\nu<1.
\end{equation}
The constant is independent of $G$. This assertion is distinct
from the weight-lowering estimate for $\nu>1$ in Lemma 5.11 of \cite{GMS2016}.

Here is a contour proof. Write $a_i=\tan\beta_i>0$ and
$h_0=1/6-\delta$, so that the domain is described by
\[
 -\rho<\Re v,\qquad
 |\Im v|<-a_1\Re v+1/3-\kappa_{\rm dom} G^{-3},\qquad
 |\Im v|>a_2\Re v+h_0.
\]
Choose $\rho>h_0/a_2$ and fix
$0<a_*<\min(a_1,a_2)$. For a positive Fourier index, join the
bottom left vertex to $v$ by a vertical segment on $\Re t=-\rho$
and a segment of slope $a_*$ ending at $v$. The intermediate
vertex is
\[
 t_*=-\rho+i\bigl(\Im v-a_*(\Re v+\rho)\bigr).
\]
Both segments lie in the closure of the domain. For the outer
boundaries this follows from $a_*<a_1$; for the removed central
wedge it follows from $a_*<a_2$, checking the upper and lower
components separately. The vertical segment lies to the left of
that wedge. Boundary contours are interpreted as limits of interior contours.
The coefficientwise primitives have the required endpoint limits
by the integrable bounds below; for the coefficients occurring in
the construction they also agree with the analytic extensions from
the preceding stage. Along this path, oriented toward $v$,
\[
 \Im(v-t)\ge c_*\operatorname{length}(\gamma_{t,v}),
 \qquad c_*=\frac{a_*}{\sqrt{1+a_*^2}}>0.
\]
For negative Fourier indices use the conjugate path from the top
left vertex, with decreasing imaginary part.

Put
\[
 w_\ell(t)=|t-i/3|^{-\ell/2}|t+i/3|^{\ell/2}.
\]
The distance from the domain to either singularity is at least
$c_1\kappa_{\rm dom} G^{-3}$, where $c_1>0$ depends only on the fixed
boundary angles. Consequently, along either path,
\[
 |d\log w_\ell(t)|\le
       K|\ell|G^3\kappa_{\rm dom}^{-1}|dt|.
\]
For $\kappa_{\rm dom}$ sufficiently large, the exponential in the inverse
therefore dominates the change of Fourier weight:
\[
 \left|e^{i\ell G^3(v-t)}\right|
       \frac{w_\ell(v)}{w_\ell(t)}\le1.
\]
The paths have bounded length and at most two straight segments.
Since the singularities are separated and $0<\nu<1$, integration
along any such segment gives
\[
 \int_\gamma |t-i/3|^{-\nu}|t+i/3|^{-\nu}|dt|
       \le K_\nu.
\]
Indeed, near one singularity the other factor is bounded, and
arclength along a straight segment reduces the estimate to the
integrability of $|s|^{-\nu}$ on a bounded interval. Away from
both singularities the integrand is bounded. The same estimate
applies to the zero harmonic, using a polygonal path from
$-\rho$ to $v$ inside the domain; its exponential and Fourier
weight are both one. Multiplying the coefficientwise integral
estimate by $e^{|\ell|\sigma}$ and summing proves
\eqref{eq:integrable-weight-inverse}. Reflection proves the stable
version. In particular, $\nu=1/2$ gives exactly the inverse bound
used in the coordinate-change contraction below.

\paragraph{The Hamilton--Jacobi estimates.}
For $|z-G|<cG^{-2}$ we have $|z|\asymp G$ and
$|z^3-G^3|\le4c$. The expansion of the perturbing potential is
holomorphic on this disk. Indeed its expansion parameter on the
bounded singular domains satisfies
$|z^{-2}r_h^{-1}|\le KG^{-1/2}$; on the domain at infinity it is
smaller still. Thus the absolutely convergent potential expansion
and its differentiated versions have the same bounds as at real
$G$. Its leading part $U_0$ has only harmonics $0,\pm2$ and size
$O(\mu G^{-4})$; its remainder $U_1$ has size $O(\mu G^{-6})$
in the respective weighted norms. The shear changes neither the
harmonic support nor the powers of $G$.

At infinity, Proposition 5.7 of \cite{GMS2016}, with the inverse supplied by
Lemma~\ref{lem:transport-resolvent}, gives
\[
 \lVert\!\lvert Q\rvert\!\rVert_{5/3}\le K\mu G^{-6},\qquad
 \operatorname{Lip}\mathcal F_\infty\le K\mu G^{-4}.
\]
The proof is the same quadratic product estimate, multiplied by
the inverse bound $(1-M|d|G^{-3})^{-1}$. The leading linear
correction $Q_0$ and the linear remainder $L_1$ are constructed
with the same inverse and the same decaying normalization.

Here are the powers in the bounded-domain nonlinear estimate.
Write $u=S_zQ$, $u_0=S_zQ_0$, and
\[
 D_z=\partial_v+(d-r_h^{-2})\partial_\xi.
\]
The quadratic term, up to its irrelevant overall sign, is
\[
 N_z(w)=\frac{(D_zw)^2}{2y_h^2}
                    +\frac{(\partial_\xi w)^2}{2r_h^2}.
\]
The transferred linear estimates give
\[
 \lVert\!\lvert u_0\rvert\!\rVert_{1/2}\le K\mu G^{-4},\qquad
 \|S_z(U_1\circ A)\|_2\le K\mu G^{-6},
\]
where $A(v,\xi)=(r_h(v),\xi+\alpha_h(v))$ denotes the
substitution in the potential. Since $u_0$ has only three
harmonics, also $\|\partial_\xi u_0\|_{1/2}\le K\mu G^{-4}$.
Near either singularity $r_h^{-2}$ has weight $1$ and
$y_h^{-2}$ has weight $-1$. These bounds hold on the domains
of this stage, which avoid the zero of $y_h$. Thus
\[
 \|D_zu_0\|_{3/2}\le K\mu G^{-4},\qquad
 \|D_zu\|_2\le K\lVert\!\lvert u\rvert\!\rVert_1,\qquad
 \|\partial_\xi u\|_2\le G^{-3}\lVert\!\lvert u\rvert\!\rVert_1.
\]
For the second inequality the term $r_h^{-2}\partial_\xi u$
first has weight $3$ and size $G^{-3}\lVert\!\lvert u\rvert\!\rVert_1$;
\eqref{eq:transfer-weight-embeddings} cancels this $G^{-3}$.
Factoring differences of squares now gives, on a ball of radius
$K\mu G^{-6}$,
\begin{align*}
 \|N_z(u_0)\|_2&\le K\mu^2G^{-8},\\
 \|N_z(u_0+u)-N_z(u_0+\widetilde u)\|_2
 &\le K\mu G^{-5/2}\lVert\!\lvert u-\widetilde u\rvert\!\rVert_1.
\end{align*}
For example, the product containing $D_zu_0$ has weight
$3/2+2-1=5/2$; lowering it to $2$ costs $G^{3/2}$ and gives
$\mu G^{-4}G^{3/2}=\mu G^{-5/2}$. The product containing
$D_zu$ instead costs $G^3$ and gives $\mu G^{-3}$.
The angular products give the same or smaller bounds, using the
factor $G^{-3}$ just displayed. The additional $d\partial_\xi$
term is bounded by $|d|G^{-3}$ in the strengthened norm.
Applying the real-center inverse and retaining the endpoint data
therefore proves the complex version of Proposition 5.12 of \cite{GMS2016}:
\begin{equation}\label{eq:complex-hj-bounds}
 \lVert\!\lvert u\rvert\!\rVert_1\le K\mu G^{-6},\qquad
 \operatorname{Lip}\mathcal F_{\rm HJ}\le K\mu G^{-5/2},\qquad
 \|S_z(Q-L_1)\|_1\le K\mu^2G^{-8}.
\end{equation}
For the last estimate, subtract the linear equation for $L_1$.
The endpoint difference is $O(\mu^2G^{-8})$ by the preceding
stage, the forcing $N_z(u_0)$ has this same bound, and the
remaining nonlinear difference is $O(\mu^2G^{-17/2})$.

\paragraph{Composition and the flow coordinates.}
The exact composition identity is
\begin{equation}\label{eq:shear-composition}
 S_z\bigl(h\circ(\operatorname{Id}+g)\bigr)
 = (S_zh)\circ(\operatorname{Id}+\widehat g),\qquad
 \widehat g=(S_zg_1,S_zg_2+dS_zg_1).
\end{equation}
It is obtained by computing
$H_z^{-1}\circ(\operatorname{Id}+g)\circ H_z$.
In the vector norm of \cite{GMS2016}
$\|g\|_* =\|g_1\|_0+G^{-3}\|g_2\|_0$, the triangular change
$(a,b)\mapsto(a,b+da)$ and its inverse have norm at most
$1+|d|G^{-3}$. In the ordinary vector norm their norms are at
most $1+|d|$. Both bounds are uniform.

The composition estimates of Lemmas 5.14--5.15 of \cite{GMS2016} consequently
apply on the fixed reduced domains. In particular, for a known
coefficient $F$ on the larger domain and
$\|g\|_*,\|\widetilde g\|_*\le\eta G^{-3}$, they give
\[
 \|F\circ(\operatorname{Id}+g)
       -F\circ(\operatorname{Id}+\widetilde g)\|_\nu
 \le KG^3\|F\|_\nu\|g-\widetilde g\|_*.
\]
The domain margins, including the logarithmic Fourier-weight
margin in those lemmas, are fixed before choosing $c$.
The unknown changes have size $O(\mu G^{-4})=o(G^{-3})$,
so all the composition conditions hold for large $G$.
For the change from Hamilton--Jacobi to flow coordinates, the
forcing has weight $1/2$ and norm $K\mu G^{-4}$ (Lemma 5.17 of \cite{GMS2016}). Equations \eqref{eq:shear-composition} and the inverse
estimate used in Theorem 5.16 of \cite{GMS2016} give a ball of radius
$K\mu G^{-4}$ and contraction constant $K\mu G^{-1}$.

For the subsequent flow extension write
$\Gamma=\Gamma_0+\Gamma_1$. The transformed base is
\[
 S_z\Gamma_0=(r_h,y_h,\xi-dv+\alpha_h,1).
\]
The frequency occurs in the unperturbed vector field only as the
additive angular velocity $-z^3$. Its derivative along the base
is the same matrix $DX_0(\Gamma_0)$ as at $G$: the unperturbed
field is independent of the angle. The fundamental matrix
$\Psi(v)$ and its inverse are therefore unchanged. On the flow
extension domain they are uniformly bounded, and the inverse is
\[
 h\longmapsto\Psi K_G(\Psi^{-1}h).
\]
The homogeneous data are transformed exactly as above, with the
additional fixed factors $\Psi(v)\Psi^{-1}(v_\ell)$.
On this domain, which stays a fixed distance from the separatrix
singularities, the nonlinear forcing satisfies
\begin{align*}
 \|\mathcal R_z(w)\|&\le K(\mu G^{-4}+\|w\|^2),\\
 \|\mathcal R_z(w)-\mathcal R_z(\widetilde w)\|
 &\le K(\mu G^{-4}+\|w\|+\|\widetilde w\|)
                          \|w-\widetilde w\|.
\end{align*}
These are Taylor estimates for the unperturbed field and the
analytic perturbation, on a fixed larger domain. They give a
ball of radius $K\mu G^{-4}$ and contraction constant
$K\mu G^{-4}$, as in Proposition 5.20 of \cite{GMS2016}.

For completeness, the return to Hamilton--Jacobi coordinates can
be made by a uniformly normalized inverse-function argument.
On the domain of Proposition 5.21 of \cite{GMS2016} the base projection is
\[
 F_{0,z}(v,\xi)=(r_h(v),\xi-dv+\alpha_h(v)),\qquad
 DF_{0,z}=\begin{pmatrix}y_h&0\\\alpha_h'-d&1\end{pmatrix}.
\]
Here $y_h$ is bounded away from zero, and this matrix and its
inverse are uniformly bounded. If $E_z$ is the projected
perturbation, then $\|E_z\|_0\le K\mu G^{-4}$. On the
reserved smaller domain, the weighted Cauchy estimate gives the
sufficient bound $\|DE_z\|_0\le K\mu G^{-1}$; this allows
even the worst $G^3$ cost of a $v$ derivative. The required change
$\operatorname{Id}+f$ solves
\[
 f=-DF_{0,z}^{-1}\bigl(
 F_{0,z}\circ(\operatorname{Id}+f)-F_{0,z}-DF_{0,z}f
                   +E_z\circ(\operatorname{Id}+f)\bigr).
\]
The term in parentheses has norm
$O(\|f\|^2+\mu G^{-4})$ and Lipschitz constant
$O(\|f\|+\|\widetilde f\|+\mu G^{-1})$.
The ball has radius $K\mu G^{-4}$, while the contraction constant
is at most $K\mu G^{-1}$. This proves existence, uniqueness, and holomorphy with
$\|f\|\le K\mu G^{-4}$. In particular, no smallness of
$\alpha_h'$ itself is being assumed. Uniqueness on the overlap
identifies this change with the inverse of the earlier one.
Recovering the generating-function derivatives by (84) of \cite{GMS2016}
and joining the two domains gives the estimates of Theorem 5.22 of \cite{GMS2016} uniformly for the complex parameter.

\paragraph{Straightening and the exponential remainder.}
Write the difference equation as in (86) of \cite{GMS2016}, and set
$F=(A-B)/(1+B)$. The transferred bounds of Lemma 6.6 of \cite{GMS2016} are
\[
 \|S_zA\|_{1/2}+\|S_zB\|_{1/2}\le K\mu G^{-4}.
\]
By \eqref{eq:transfer-weight-embeddings},
$\|S_zB\|_0\le K\mu G^{-5/2}<1/2$. Hence the geometric
series for $(1+S_zB)^{-1}$ converges in the order-zero algebra,
and $\|S_zF\|_{1/2}\le K\mu G^{-4}$.
If $C$ is the original straightening displacement and $c_1=S_zC$,
its exact transformed equation is
\[
 L_Gc_1=(S_zF)(v+c_1,\xi+dc_1).
\]
Thus it is a two-component composition, even though $C$ is a
scalar displacement. Formula \eqref{eq:shear-composition}, the
composition estimate, and Lemma 6.4 of \cite{GMS2016} give
\[
 \|c_1\|_0\le K\mu G^{-4},\qquad
 \operatorname{Lip}\mathcal F_{\rm str}\le K\mu G^{-1}.
\]
The symmetric normalization is retained because $H_z$ commutes
with $(v,\xi)\mapsto(-v,-\xi)$. At equal masses it also
preserves $\pi$-periodicity in $\xi$.

All fixed-point maps above are holomorphic in $z$ on fixed spaces
and are contractions with a uniform constant less than $1/2$
for sufficiently large $G$. Their iterates, including the endpoint
data inherited from the preceding stage, converge locally uniformly
in $z$. Their solutions are therefore holomorphic. Returning by
$S_z^{-1}$ on a final smaller strip preserves every stated power
of $G$. This proves the required transfer of the construction,
without assuming convergence of the Fourier series near $\pm i/3$.

We also record explicitly the size of the error used in the
exponential estimate. The nonlinear remainder $Q-L_1$ has
weight $1$ and size $K\mu^2G^{-8}$ by
\eqref{eq:complex-hj-bounds}, so its order-zero bound is
$K\mu^2G^{-5}$. The change of argument of the splitting
difference has weight $3/2$ and size
\[
 K\|C\|_0\|\partial_v(T_1^u-T_1^s)\|_{3/2}
       \le K\mu^2G^{-8}.
\]
Here the derivative bound follows from the near-singularity
Hamilton--Jacobi estimates and the regular flow-extension estimates;
the composition takes place on the reserved smaller domain.
Lowering the weight to zero costs $G^{9/2}$ and gives
$K\mu^2G^{-7/2}$. Hence the straightened difference minus its
linear Melnikov potential has order-zero norm at most
$K\mu^2G^{-7/2}$, uniformly on the complex disk. This is the
complex counterpart of the estimate used in Lemma 6.8 of \cite{GMS2016}.

\begin{Proposition}[Parameter derivative of the normalized potential]
\label{prop:normalized-potential-parameter}
Fix $\mu\in(0,1/2]$. The straightened splitting potential of
\cite{GMS2016} can be chosen locally holomorphically in the angular
momentum parameter near every sufficiently large real $G$. Normalize
its phase derivative by its explicit leading harmonic, and choose
the orientation of the phase so that the leading term is $\sin\theta$.
Denote the result by $\sin\theta+\varepsilon_G(\theta,z)$.
For every fixed nonnegative integer $j$, uniformly at real $z=G$,
\[
 \|\varepsilon_G(\cdot,G)\|_{C^j}=O(G^{-1/2}),\qquad
 \|\partial_z\varepsilon_G(\cdot,G)\|_{C^j}=O(G^{7/4})
 \quad (0<\mu<1/2).
\]
At $\mu=1/2$, use the second harmonic and its phase. The respective
bounds are $O(G^{-1})$ and $O(G^{3/2})$. In both cases the parameter
derivative is $o(G^2)$.
\end{Proposition}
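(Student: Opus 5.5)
The plan is to combine the holomorphic construction of the preceding paragraphs with the real-axis Fourier estimates, and to get the $z$-derivative by a Cauchy estimate on a disk of radius comparable to $G^{-2}$, improved by interpolation.

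Fix $\mu$ and a large real $G$. By the transfer argument above (Lemma~\ref{lem:transport-resolvent}, the shear conjugation $S_z$, \eqref{eq:complex-hj-bounds} and the straightening step), the straightened difference $\Upsilon_z$ is holomorphic in $z$ on $|z-G|<cG^{-2}$. There it differs from its Melnikov potential by at most $K\mu^2G^{-7/2}$ in the order-zero norm, on a fixed Fourier strip $\sigma'$.

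\textbf{Step 1: the normalized error on the disk.} Divide the phase derivative by the explicit leading harmonic amplitude $\mathcal A(z)$; for unequal masses this is $2|L^{[1]}|$, continued holomorphically in $z$. Write
\[
 \sin\theta+\varepsilon_G(\theta,z)=\Upsilon_z'(\theta)/\mathcal A(z).
\]
The Fourier coefficients of $\Upsilon_z$ are estimated by contour shifting to the vertices near $\pm i/3$, as in Lemma~6.8 of \cite{GMS2016}. For $|z-G|<cG^{-2}$ this gives $|e^{-\ell z^3/3}|\asymp e^{-\ell G^3/3}$, since $|z^3-G^3|\le4c$. So the harmonics $\ell\ge2$ and the remainder, divided by $|\mathcal A(z)|\asymp G^{-3/2}e^{-G^3/3}$, remain bounded on the whole disk. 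Phase derivatives of any fixed order cost only powers of $\ell$, which are absorbed as in \eqref{eq:rpc-phase-tail}. This gives $\sup_{\rm disk}\|\varepsilon_G(\cdot,z)\|_{C^j}\le K$, and on the real diameter the sharper bound $O(G^{-1/2})$ stated earlier. The sharper bound comes from the error $\mu^2G^{-2}e^{-G^3/3}$ of the first coefficient relative to $G^{-3/2}e^{-G^3/3}$; the tail is smaller still.

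\textbf{Step 2: interpolation for the derivative.} The crude Cauchy estimate gives only $\|\partial_z\varepsilon_G\|\le KG^{2}$, which is not $o(G^2)$, so the real-axis smallness has to enter. The approach is a Hadamard three-lines type estimate. Map the disk to a rectangle or strip containing the real segment $|z-G|<cG^{-2}/2$. The harmonic majorant of $\log|\varepsilon|$ is then $\log(KG^{-1/2})$ near the real axis and $\log K$ on the boundary. Evaluating at imaginary distance $\rho=G^{-2}G^{-\alpha}$ gives $|\varepsilon|\lesssim G^{-1/2(1-O(G^{-\alpha}))}$. This is not yet enough on its own. The better route is to work with the real-axis bound at two nearby real points, which by Step 1 holds uniformly along the real diameter, together with a second-derivative bound. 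The Cauchy estimate on the disk gives $|\partial_z^2\varepsilon|\le KG^{4}$. The Landau--Kolmogorov inequality on the real segment of length $\asymp G^{-2}$ then yields
\[
 \|\partial_z\varepsilon\|\le K\|\varepsilon\|^{1/2}\|\partial_z^2\varepsilon\|^{1/2}
 \le K\,G^{-1/4}G^{2}=KG^{7/4},
\]
with the boundary term $\|\varepsilon\|G^2=O(G^{3/2})$ being smaller. The same computation applies to each fixed $C^j$ phase norm, because $\partial_\theta^j$ commutes with $\partial_z$ and Step 1 holds in every $C^j$ norm. At $\mu=1/2$ one uses the second harmonic, with $\mathcal A\asymp G^{1/2}e^{-2G^3/3}$ and real-axis error $O(G^{-1})$; this gives $G^{-1/2}G^2=G^{3/2}$. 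In both cases the derivative is $o(G^2)$.

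\textbf{Main obstacle.} The delicate point is Step 1's uniform disk bound. One must check that the holomorphic continuation of the leading coefficient, including its $(1+O(G^{-1}))$ factor, stays comparable to the real amplitude. One must also check that the order-zero remainder $K\mu^2G^{-7/2}$, once converted through the vertex evaluation to an exponentially weighted Fourier bound, still carries the full factor $e^{-G^3/3}$ on the disk. For this I would redo the vertex evaluation of Lemma~6.8 of \cite{GMS2016} coefficientwise, with $z$ in place of $G$. The shear multiplier $e^{i\ell dv}$ costs at most $e^{4cR|\ell|}$, which is absorbed by the reserved strip margin. Local holomorphic choice and phase orientation are fixed at the real center and propagate by uniqueness of the fixed points.
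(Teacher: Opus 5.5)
Your proposal is correct and follows the paper's proof. It uses the same three ingredients: a uniform $C^j$ bound for the normalized error on the disk $|z-G|<cG^{-2}$ from the vertex-evaluated Fourier coefficients; the $O(G^{-1/2})$ (resp.\ $O(G^{-1})$) bound on the real diameter for the same continued family; and an interpolation combining the Cauchy bound $\|\partial_z^2\varepsilon\|\le KG^4$ with a Taylor (Landau--Kolmogorov) estimate on a real segment of length comparable to $G^{-2}$, which is exactly the paper's lemma $|f'(G)|\le Kr^{-1}\sqrt{M\eta}$.

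The one point you leave implicit is the disk bound at $\mu=1/2$. With second-harmonic normalization it needs the odd harmonics to vanish for complex $z$ as well, since the first-harmonic bound $KG^{-3/2}e^{-G^3/3}$ is not small relative to $G^{1/2}e^{-2G^3/3}$. The paper gets this vanishing from $\pi$-periodicity of the complexified fixed-point equations; the identity theorem, applied from the real segment where symmetry kills the odd harmonics, works equally well.
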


\begin{proof}
We give the bounds that are needed in addition to the operator
construction above. On $|z-G|<cG^{-2}$, the nonzero Fourier
coefficients of the straightened potential satisfy
\begin{align*}
 |\Upsilon_z^{[1]}|&\le K G^{-3/2}e^{-G^3/3},\\
 |\Upsilon_z^{[2]}|&\le K G^{1/2}e^{-2G^3/3},\\
 |\Upsilon_z^{[\ell]}|&\le K^{|\ell|}G^{3|\ell|/2-3}
                         e^{-|\ell|G^3/3},\qquad |\ell|\ge3.
\end{align*}
To see this directly from the weighted construction, $Q_0$ has
only harmonics $0,\pm2$ and weighted norm
$O(\mu G^{-4})$ of order $1/2$. The remaining correction has
weighted norm $O(\mu G^{-6})$ of order $1$. Evaluation at the
vertices $v=\pm i(1/3-\kappa_{\rm dom} G^{-3})$ costs respectively
$G^{3/2+3|\ell|/2}$ and $G^{3+3|\ell|/2}$, up to constants
exponential in $|\ell|$. The straightening error has order-zero
weighted norm $O(\mu^2G^{-7/2})$, as in the proof of Lemma 6.8 of \cite{GMS2016}, and contributes at most
$K^{|\ell|}G^{3|\ell|/2-7/2}$ before the exponential factor.
The kernel identity for the straightened transport equation gives
the factor $e^{-|\ell|G^3/3}$ by the same vertex evaluation as in
Lemma 6.7 of \cite{GMS2016}. The bounded shear costs only an additional
$e^{O(|\ell|c)}$, absorbed into $K^{|\ell|}$.

For unequal masses the positive leading amplitude for the phase
derivative is
\[
 A_1(z)=\frac{\sqrt\pi}{2\sqrt2}\mu(1-\mu)(1-2\mu)
                 z^{-3/2}e^{-z^3/3}.
\]
For equal masses it is
\[
 A_2(z)=8\sqrt\pi\mu(1-\mu)z^{1/2}e^{-2z^3/3};
\]
in this case the new phase is $2\theta+\pi$, if $\theta$ denotes
the old phase. The powers of $z$ use the branch positive on the
positive real axis. Since $z^3-G^3=O(c)$, these amplitudes are
nonzero and their moduli are comparable with the corresponding
real amplitudes at $G$.

At equal masses all odd coefficients vanish for complex $z$ as
well: the potential and the fixed-point equations are $\pi$-periodic
in the original phase. The displayed coefficient estimates now
give, for every fixed $j$,
\begin{equation}\label{eq:complex-normalized-bound}
 \sup_{|z-G|<cG^{-2}}
       \|\varepsilon_G(\cdot,z)\|_{C^j}\le M_j,
\end{equation}
with $M_j$ independent of $G$. The extra powers of $|\ell|$ from
phase derivatives are absorbed by the exponential Fourier decay.
The real-parameter estimates of Proposition 3.1 and Lemma 6.8 of \cite{GMS2016},
with the same fixed domain margins, give $O(G^{-1/2})$ on the real
diameter of a smaller disk, or $O(G^{-1})$ at equal masses.
Here the domains and the endpoint normalization are held fixed
at the real center. This causes no loss of the real estimate:
for real $z$ in this disk the singularity-distance parameter changes
from $\kappa_{\rm dom}$ to $\kappa_{\rm dom}(z/G)^3=\kappa_{\rm dom}+O(G^{-3})$.
All these domains lie between the same two strictly separated
choices of margins. The contractions and the proof of the real
remainder bound therefore apply to this same continued family,
not to an independently renormalized potential at each real $z$.

We use the following elementary interpolation argument, which avoids
requiring a small remainder throughout the complex disk. If $f$ is
holomorphic on $|z-G|<r$, $|f|\le M$ there, and $|f|\le\eta$ on
the real segment $|z-G|<r/2$, then
\[
 |f'(G)|\le K r^{-1}\sqrt{M\eta}
\]
when $0<\eta\le M$; the case $\eta=0$ follows by the identity
theorem. Indeed, Cauchy's estimate bounds $f''$ on the disk of
radius $r/2$ by $KM/r^2$. Taylor's formula along a real interval
of length $h\le r/4$ gives
$|f'(G)|\le2\eta/h+KMh/r^2$. Choose
$h=(r/4)\sqrt{\eta/M}$.

Apply this argument to each phase derivative of the normalized
remainder, with $r$ a fixed multiple of $G^{-2}$,
$\eta=O(G^{-1/2})$, and $M=M_j$. It gives $O(G^{7/4})$.
At equal masses $\eta=O(G^{-1})$ gives $O(G^{3/2})$.
\end{proof}

\begin{Remark}
This argument proves the parameter estimate for the straightened
splitting potential. It is stronger than mere real analyticity:
the size of the complex neighborhood and the uniform normalized
bound are essential. It does not yet identify this normalized
potential with the graph difference used in
Proposition~\ref{prop:energy-contact}. That difference also contains
a spatial prefactor and a parameter-dependent phase coordinate.
Their derivatives, and the local-passage height in that same
coordinate, are estimated in Section~\ref{ss:rpc-uniform-energy}
in the joint energy--passage range.
\end{Remark}

\paragraph{The coordinate matching required for the application.}
Proposition~\ref{prop:normalized-potential-parameter} supplies the
parameter estimate for the potential. Its application also requires
uniform local-passage bounds in the actual energy-dependent
straightening coordinates. In those coordinates the global invariant
curve must be written as
\[
 y=a(\theta,C)A(C)(\sin\theta+\varepsilon(\theta,C)),\qquad
 x=x(\theta,C),
\]
with $a>0$, and estimate the height
$H_N=Y_N(x(\theta,C),C)/a(\theta,C)$.
In particular its parameter derivative is
\[
 \partial_C\log H_N
 =\frac{Y_{N,C}}{Y_N}
  +\frac{Y_{N,x}}{Y_N}x_C-\partial_C\log a.
\]
Controlling the first term does not control the last two. Suitable
polynomial size bounds, small phase derivatives, and
$|x_C|/x+|\partial_C\log a|=o(C^2)$ would finish this part of the
reduction. Section~\ref{ss:rpc-uniform-energy} proves these estimates
and the uniform passage bounds directly on a fixed scaled square. We first complete the
fixed-energy part, which needs no joint-limit argument.

\subsection{Completion for fixed Jacobi constant and varying mass}
\label{ss:fixed-energy-completion}

For the fixed-energy assertion the preceding joint-limit estimates
are unnecessary. We fix $C$ first and let the mass tend to zero.
All local coordinate intervals and all constants in the following
proof may depend on this fixed $C$. This order of choices is the
reason the argument does not prove the fixed-mass assertion.

\begin{Proposition}\label{prop:rpc-fixed-energy-tangencies}
There exists $C_*>0$ such that, for every fixed $C>C_*$, there is a
sequence of positive mass ratios tending to zero at which the
RPC3BP return map has quadratic homoclinic tangencies to infinity,
unfolding nondegenerately with $\mu$. There is also a sequence of
positive mass ratios tending to zero with quadratic homoclinic
tangencies of hyperbolic periodic saddles, unfolding nondegenerately
with $\mu$, whose stable and unstable manifolds have transverse
connections to infinity in both directions.
\end{Proposition}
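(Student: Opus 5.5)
The plan is to mimic the Sitnikov argument of Theorem~\ref{quad-tangency} and Propositions~\ref{saddle-sequence}--\ref{saddle}, with the mass $\mu$ in the role of $e$ and $C>C_*$ fixed throughout.

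\emph{Step 1: the splitting in local coordinates.} Fix $C$ large. At $\mu=0$ the Kepler parabolic branch is a homoclinic loop to $O_\infty$, and the return map $f_{0,C}$ is integrable; the parabolic-orbit time $u$ along this loop defines a phase exactly as in \eqref{eq:melnikov-phase-local}, using the normal form \eqref{RPC3BP-1}. Lemma~\ref{lem:uniform-separatrices} gives parameter-uniform local graphs on a compact $(\mu,C)$ set. Differentiating the two branches in $\mu$ at $\mu=0$ on a compact tail interval, as in the derivation of \eqref{eq:correct-local-melnikov}, should give
\[
 W_\mu(x)=a(x)\bigl(\mu\,m(\vartheta(x))+\mu^2\rho(\vartheta,\mu)\bigr),\qquad a>0,
\]
where $m$ is the normalized derivative of the Melnikov potential $\partial_\mu D_0$. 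For $C$ large, the Fourier expansion in Proposition~\ref{prop:rpc-actual-splitting}, restricted to first order in $\mu$ (the factor $\mu(1-\mu)(1-2\mu)$ has derivative $1$ at $\mu=0$), shows that the first harmonic dominates: after normalization and a phase shift, $\|m-\sin\|_{C^2}\le1/100$ by \eqref{eq:rpc-phase-tail}. This fixes $C_*$. It is the only place large $C$ is used. Since $C$ is fixed, the relevant estimates from \cite{GMS2016} concern $\partial_\mu$ at $\mu=0$ and are the unscaled Melnikov integral of \cite{MP}.

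\emph{Step 2: first contact and symmetry.} I would use the reversor $R(r,P_r)=(r,-P_r)$ on $\{\varphi=0\}$ (the RPC3BP in rotating coordinates is reversible under $(\varphi,P_r,t)\mapsto(-\varphi,-P_r,-t)$). Its fixed curve is the diagonal $\Delta$ in the local straightened coordinates. Theorem~\ref{t.c2lyambdalemma} and Proposition~\ref{prop:parameter-graph-bound}, with $e$ replaced by $\mu$ and the hypotheses supplied by \eqref{RPC3BP-1}, give the bounds \eqref{eq:diagonal-relative-bounds} for $f_\mu^N(\Delta)$. Choosing the phase interval near infinity so that the normalized heights satisfy \eqref{eq:normalized-diagonal-bounds}, Lemma~\ref{lem:separation} yields $\mu_N\in(0,4d_N)$ with a quadratic contact unfolding with nonzero speed. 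Proposition~\ref{prop:general-reversible-contact} then converts this into a homoclinic tangency to $O_\infty$. Here $R$ fixes the single end $O_\infty$, so the conclusion is homoclinic rather than heteroclinic. The step I expect to be delicate is checking that $R$ acts as $(x,y)\mapsto(y,x)$ in the exact invariant-manifold coordinates at phase $\varphi=0$. It holds because $R$ exchanges the stable and unstable graphs; I would verify it from the symmetry of $\sigma_s,\sigma_u$.

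\emph{Step 3: periodic saddles.} At each fixed $\mu_N>0$, transverse homoclinic points of $O_\infty$ exist: the simple zeros of $m+\mu\rho$ on a tail interval give them directly, as in Lemma~\ref{lem:transverse-return} without the two-end detour. On a compact parameter interval about $\mu_N$, Lemmas~\ref{lem:incoming-parameter} and \ref{lem:local-cross} apply verbatim to \eqref{RPC3BP-1}. Proposition~\ref{saddle-sequence} then produces hyperbolic saddles homoclinically related to $O_\infty$ with the estimate \eqref{e.c2limit}, and Proposition~\ref{saddle} transfers the nondegenerate tangency to parameters $\mu_{l,N}\to\mu_N$.

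The main obstacle is Step 1: one must justify the first-order coefficient of the actual graph splitting, including the spatial prefactor $a$ and the nonlinear phase, uniformly on the chosen tail interval. One must also check that collision-free monotonicity of $\varphi$ holds along the finite homoclinic segments. This follows from continuation from the Kepler parabola, whose periapsis is $C^2/8$.
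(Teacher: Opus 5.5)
Your route is the paper's. At fixed $G=C/2$ you write the global stable graph on a fixed tail as a positive prefactor times $\mu\,m_G(\vartheta_C(x))$ plus an $O(\mu^2)$ remainder. You straighten equivariantly for the reverser, so that its fixed curve on $\varphi=0$ is $x=y$, and feed the diagonal passage bounds into Lemma~\ref{lem:separation}. You then convert the contact by Proposition~\ref{prop:general-reversible-contact}, take transverse points from simple zeros of $m_G$, and run the periodic-saddle transfer on a compact mass interval about each $\mu_N$. The paper closes the step you call the main obstacle by differentiating the exact graph formula (29) of \cite{GMS2016} at $\mu=0$. There the section phase $G^3v-\alpha_h(v)$ contributes the factor $G^3-r_h^{-2}$ to the positive prefactor. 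Because the two branches coincide at $\mu=0$, the parameter-dependent coordinates add no first-order term. Your Sitnikov-style argument, with the conserved $P_\varphi$ of the Kepler problem in place of $H_0$, would do the same job.

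The step that fails as written is the bound $\|m-\sin\|_{C^2}\le1/100$ via \eqref{eq:rpc-phase-tail}. Your $m$ is the $\mu$-derivative at $\mu=0$ of the phase derivative of the potential, normalized by $a_*(G)\asymp G^{-3/2}e^{-G^3/3}$. The first and second harmonics carry explicit factors $\mu(1-\mu)(1-2\mu)$ and $\mu(1-\mu)$. However, \eqref{eq:rpc-phase-tail} bounds the harmonics $\ell\ge3$ only by $K_jG^{3/2}e^{-G^3}$, uniformly in $\mu$ and with no factor $\mu$. After division by $\mu$ it says nothing about their contribution to $m$; the paper stresses that this is not an estimate for derivatives in $\mu$. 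You need the additional fact that the first mass derivatives of the higher harmonics obey the same Fourier decay. The paper gets this from the explicit mass polynomials of \cite{GMS2016}: the Legendre coefficients $c_n(\mu)=\mu(1-\mu)\bigl((1-\mu)^{n-1}-(-\mu)^{n-1}\bigr)$ of $\Delta H$ all contain the factor $\mu(1-\mu)$. Alternatively, Cauchy's estimate on a small complex mass disk in the convergent potential expansion gives it. With this supplement, your Step~1 and the rest of the argument go through.
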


\begin{proof}
Put $G=C/2$ and keep $G$ fixed throughout the proof. The actual
splitting and the modified Melnikov potential have the same
first derivative at $\mu=0$. Indeed the difference, apart from its
irrelevant constant Fourier coefficient, is $O(\mu^2)$ by the
weighted error estimates in Lemma 6.8 of \cite{GMS2016}. Fourier decay permits
any fixed number of phase derivatives of this assertion. The
leading first harmonic of the phase derivative at $\mu=0$ has
positive amplitude
\[
 a_*(G)=\frac{\sqrt\pi}{2\sqrt2}G^{-3/2}e^{-G^3/3}.
\]
Let $\theta\in\mathbb T=\mathbb R/(2\pi\mathbb Z)$ denote the phase.
After division by $a_*(G)$ and the choice of phase orientation,
the normalized phase function $m_G(\theta)$ satisfies
\[
 \max_{0\le j\le2}\sup_{\theta\in\mathbb T}
 \left|\frac{d^j}{d\theta^j}
       \bigl(m_G(\theta)-\sin\theta\bigr)\right|
       \longrightarrow0\qquad\text{as }G\to\infty.
\]
Thus $m_G(\theta)$ and its first two phase derivatives converge
uniformly to $\sin\theta$, $\cos\theta$, and $-\sin\theta$,
respectively. In the section graph below, this phase is evaluated
at $\theta=\vartheta_C(x)$.
Here the $O(G^{-1})$ correction to the first harmonic is given by
Proposition 3.1 of \cite{GMS2016}. The higher harmonics are exponentially smaller.
Their first mass derivatives have the same Fourier decay: this
follows either from the explicit mass polynomials in the Appendix
of \cite{GMS2016}, or by Cauchy's estimate on a fixed small complex
mass disk in that convergent potential expansion. Thus this step
does not differentiate an unspecified $O$-term in the mass.
Choose $C_*$ so that the displayed $C^2$ difference is less than
$1/100$ for $C>C_*$.

Use the exact local normal form \eqref{RPC3BP-1}, with $\mu$ as
parameter in a small compact interval containing zero. The
straightening can be chosen equivariantly for the reverser
\[
 (r,P_r,\varphi,P_\varphi)\mapsto
       (r,-P_r,-\varphi,P_\varphi).
\]
On the section $\varphi=0$ it exchanges $x$ and $y$, fixes the
parabolic point at infinity, and its fixed curve is $x=y$.
Equivariance follows directly by choosing the two local separatrix
graphs as images of one another under the reverser in the
axis-straightening construction.

At $\mu=0$ the global stable branch and the local unstable branch
coincide. On a fixed outgoing tail interval the global stable graph
therefore has the expansion
\begin{equation}\label{eq:rpc-mass-tail-expansion}
 y=\mu a_C(x)m_G(\vartheta_C(x))+
                  \mu^2 r_C(x,\mu),\qquad a_C(x)>0.
\end{equation}
We choose the phase orientation to make $a_C$ positive. To verify
the form of the linear term, differentiate the exact graph formula
(29) of \cite{GMS2016} at $\mu=0$. At this parameter the
straightened phase is the Kepler phase $G^3v+\xi$. On the section,
$\xi=-\alpha_h(v)$, so its derivative contributes the factor
$G^3-r_h(v)^{-2}$. The remaining factors are the nonzero radial
velocity and the transverse coordinate change. They multiply
the derivative of the Melnikov potential by a positive spatial
factor after the above choice of orientation. A parameter-dependent
coordinate change cannot add a first-order splitting term when
the two branches coincide at $\mu=0$.

The Kepler tail and the smooth axis change give
\[
 a_C(x)\asymp x^{-1},\quad
 |a_C'(x)|\le K_C a_C(x)/x,\quad
 |a_C''(x)|\le K_C a_C(x)/x^2,
\]
\[
 |\vartheta_C'(x)|\asymp x^{-4},\qquad
 |\vartheta_C''(x)|\le K_Cx^{-5}.
\]
The constants in $a_C$ include $a_*(G)$, which is positive and
fixed. These bounds also follow from
$r_h(v)\asymp |v|^{2/3}$ and $y_h(v)\asymp |v|^{-1/3}$,
with the physical scaling $r=G^2r_h$ and $x\asymp r^{-1/2}$.
The remainder $r_C$ and its needed spatial and mass derivatives
are bounded on each fixed tail interval. This follows from smooth
parameter dependence of the local manifolds and their finite
global continuation to that interval. The interval is chosen
before the mass range is reduced; no bound all the way to the
vertex is being inferred from this Taylor expansion.

Choose a whole phase lobe on such an interval, sufficiently far
out that its inverse phase coordinate satisfies
$x_\theta=O(x^4)$ and $x_{\theta\theta}=O(x^7)$ and that the
relative phase derivatives below are less than $1/100$.
Let $y=\Delta_N(x,\mu)$ be the outgoing component of the image
of $x=y$. The local results already proved, on this fixed compact
mass range, give
\begin{gather*}
 \Delta_N>0,\qquad \Delta_N\to0,\qquad
 |\Delta_{N,\mu}|\le K_C\Delta_N,\\
 |\Delta_{N,x}|\le K_C\Delta_N/x,\qquad
 |\Delta_{N,xx}|\le K_C\Delta_N/x^2.
\end{gather*}
Divide the difference between \eqref{eq:rpc-mass-tail-expansion}
and this graph by $a_C(x)$, and use the fixed Kepler phase.
It has exactly the form
\[
 F_N(\theta,\mu)=\mu m_G(\theta)+\mu^2\rho_C(\theta,\mu)
                         -H_N(\theta,\mu),\qquad
 H_N=\Delta_N/a_C.
\]
The remainder has bounded derivatives on the chosen compact
rectangle. The relative phase derivatives of $H_N$ are bounded
by $K_C\delta^3$ and $K_C\delta^6$; its logarithmic mass derivative
is bounded by a constant depending on $C$ and the chosen interval.
Reduce the mass interval so that integration of these logarithmic
bounds gives the height comparison in Lemma~\ref{lem:separation}.
That lemma produces quadratic contacts at positive
$\mu_N\to0$, with nonzero mass-unfolding derivative.

Proposition~\ref{prop:general-reversible-contact} turns these
contacts with an iterated symmetry curve into homoclinic tangencies
to the same parabolic point. There are transverse homoclinic
intersections on the neighboring phase lobes for small positive
mass, by the simple zeros of $m_G$ and the expansion above
(equivalently, by Theorem 2.2 of \cite{GMS2016}). The periodic-saddle construction
and parameter transfer in Section~\ref{ss.saddlesnearhomoclinicpoint}
now apply with $\mu$ in place of the Sitnikov eccentricity. Their
inputs are the verified local normal form, smooth finite global
maps, transverse homoclinic intersections, and the quadratic
contact with nonzero unfolding derivative. Each contact is treated
on its own compact mass interval. Choose the periodic approximation
so close that the new tangency parameter lies within $\mu_N/2$
of $\mu_N$. These parameters remain positive and tend to zero.
The transverse connections to infinity are part of that construction.
\end{proof}

\begin{Corollary}\label{cor:rpc-fixed-energy-completion}
For every $C>C_*$, hypothesis $(H_C)$ holds for a nonempty open
set $\mathcal N_C\subset(0,1/2)$ accumulating at zero.
Consequently the fixed-$C$ conclusions of
Theorems~\ref{RPC3BP-main} and \ref{parameter-Newhouse-RPC3BP}
hold without an additional global hypothesis.
\end{Corollary}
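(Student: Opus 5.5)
The corollary follows by building, from Proposition~\ref{prop:rpc-fixed-energy-tangencies}, an open parameter set carrying dense nondegenerate tangencies, and then invoking Proposition~\ref{prop:rpc-conditional-completion}. Fix $C>C_*$. The proposition gives positive mass ratios $\mu_n\to0$ and hyperbolic periodic saddles $s_n$ of $f_{\mu_n,C}$. Each $s_n$ has a quadratic homoclinic tangency unfolding nondegenerately in $\mu$, together with transverse connections to $O_\infty$ in both directions, and all the relevant orbits lie in the collision-free physical domain. For each $n$ I will fix a compact interval $J_n\subset(0,1/2)$ about $\mu_n$. On $J_n$ the saddle continues hyperbolically and the finitely many selected transverse connections persist; openness of transversality and hyperbolic continuation give this. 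I will also take $J_n\subset(\mu_n/2,2\mu_n)$, so that the resulting sets accumulate only at zero.

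Next I will apply Theorem~\ref{t.thick-sets} at $\mu_n$ with the fixed tolerance $\delta_0=1$ and parameter interval $\operatorname{int}J_n$. The theorem applies to a global area-preserving family, so I use the cutoff realization of the local return map described in the paragraph on local return maps and periodic saddles. The cutoff Hamiltonian agrees with the physical one on a tube around the compact tangency orbit, and rotation is monotone there. The localization in Theorem~\ref{t.thick-sets} keeps the basic sets and all tangency orbits inside the neighborhood $W$, where the cutoff map and the physical map coincide. If the saddle has period $m$, the construction is first applied to $f^m$ and the result is then saturated cyclically. The theorem produces a nonempty open set $\mathcal N_{C,n}\subset J_n$ accumulating at $\mu_n$, together with a dense subset $D_n$ of parameters at which the continued saddle has a quadratic tangency with nonzero unfolding speed in $\mu$. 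Since $\mathcal N_{C,n}\subset J_n$, the selected transverse connections of the continued saddle persist throughout $\mathcal N_{C,n}$. Setting $\mathcal N_C=\bigcup_n\mathcal N_{C,n}$ gives a nonempty open subset of $(0,1/2)$ that accumulates at zero, and $\bigcup_n D_n$ is dense in it. This is exactly hypothesis $(H_C)$.

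Proposition~\ref{prop:rpc-conditional-completion} then gives both the residual full-dimension conclusion and the fixed-$C$ Newhouse assertion. One point needs checking: that proposition defines $V_k$ through basic sets containing a saddle with connections to infinity. On each component $\mathcal N_{C,n}$ I will therefore run the density argument of Remark~\ref{rem:parameter-density} with the continued saddle $s_n(\mu)$. The main obstacle is confirming that the transverse connections to infinity, which lie outside $W$ and near the degenerate point, persist on all of $J_n$ and not only near $\mu_n$. My first move is to shrink $J_n$ before applying Theorem~\ref{t.thick-sets}. This is allowed because that theorem works in any sufficiently small parameter interval and still accumulates at $\mu_n$.
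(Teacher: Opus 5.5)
Your proposal is correct and follows essentially the paper's proof: take a compact persistence interval about each saddle-tangency parameter from Proposition~\ref{prop:rpc-fixed-energy-tangencies}, apply Theorem~\ref{t.thick-sets} with the fixed tolerance $\delta_0=1$ inside a shrinking window around that parameter, take the union of the resulting open sets to obtain $(H_C)$, and conclude with Proposition~\ref{prop:rpc-conditional-completion}. The ``main obstacle'' you raise at the end is already handled by choosing $J_n$ as a persistence interval for the saddle and both connections, which is exactly how the paper handles it.
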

\begin{proof}
Let $\widehat\mu_N>0$ be the hyperbolic saddle tangency parameters
from Proposition~\ref{prop:rpc-fixed-energy-tangencies}. They tend to
zero. For each $N$, choose a compact persistence interval $J_N$ with
$\widehat\mu_N$ in its interior, on which the saddle and both selected
transverse connections continue. Apply Theorem~\ref{t.thick-sets}
with $\delta_0=1$ inside
\[
 \operatorname{int}J_N\cap
 (\widehat\mu_N/2,3\widehat\mu_N/2)\cap(0,1/2).
\]
The resulting open set $\mathcal N_{C,N}$ has dense quadratic
tangencies unfolding in the mass parameter, with the connections
persisting throughout. The union $\mathcal N_C=\bigcup_N\mathcal N_{C,N}$
therefore satisfies $(H_C)$ and accumulates at zero. These domains
are fixed before the dimension tolerance is decreased.
Proposition~\ref{prop:rpc-conditional-completion}
then gives full dimension on a residual subset, including the flow
dimension three. No identification with the signed energy parameter
of \cite{GMP2025} is used in this proof.
\end{proof}

\begin{Remark}
The exponentially small value of $a_*(G)$ causes no obstruction
here because $G$ is fixed before $N\to\infty$. It merely changes
the constants and the mass interval in the contact argument.
This observation cannot be used when the mass is fixed and
$G\to\infty$; the next subsection supplies the estimates needed
for that different limit.
\end{Remark}

\subsection{Uniform estimates in the scaled coordinates}
\label{ss:rpc-uniform-energy}

We now keep the mass $\mu\in(0,1/2]$ fixed and write $G=C/2$.
All constants in this subsection may depend on this mass and on a
fixed, sufficiently small local square. They do not depend on $G$
or on the number $N$ of local iterates. The useful choice is to fix
the square in the \emph{scaled} McGehee coordinates. In the original
radial coordinate its distance from infinity then decreases as $G$
increases. This avoids a parameter-dependent local cutoff.

\paragraph{The scaled equations and averaging.}
Write $r=G^2R$, $P_r=G^{-1}p$, and $P_\varphi=GJ$.
Here $R$ is the scaled physical radius, rather than the reverser
used in the tangency construction, and $p$ is the scaled radial
momentum. The scaled physical time is $\tau=t/G^3$; it is distinct
from the local ratio $y/x$ used in the passage estimates. The equations are
\begin{align*}
 R'&=p,& p'&=J^2/R^3-1/R^2+V_R,\\
 \varphi'&=J/R^2-G^3,& J'&=V_\varphi,
\end{align*}
where a prime denotes differentiation in $\tau$ and
\[
 V(R,\varphi;G)=G^2U_\mu(G^2R,\varphi)-R^{-1}.
\]
Here $U_\mu$ is the positive Newtonian potential of the primaries in
rotating coordinates. The energy equation is
\begin{equation}\label{eq:uniform-scaled-energy}
 \frac{p^2}{2}+\frac{J^2}{2R^2}-\frac1R-V=G^3(J-1).
\end{equation}
Put $q=R^{-1/2}=Gu$, where $u=r^{-1/2}$ is the unscaled
McGehee coordinate, and use the angular time $\theta=-\varphi$.
On a fixed small
neighborhood of $(q,p)=(0,0)$ the implicit function theorem solves
(\ref{eq:uniform-scaled-energy}) uniquely for $J$ near one.
Moreover, $\theta'=G^3-Jq^4>0$. Thus $\theta$ is a legitimate
time coordinate and its return period is exactly $2\pi$.

For clarity, $O(G^{-a}\rho^b)$ below includes any specified finite
number of weighted spatial derivatives: a derivative of total order
$j$ in $(q,p)$ is bounded by $K_jG^{-a}\rho^{b-j}$, where
$\rho=q+|p|$. Angular derivatives do not change the weight.
Direct expansion of the potential gives
\[
 V=\frac{\mu(1-\mu)}{2G^4}q^6(3\cos^2\theta-1)
                         +O(G^{-6}q^8).
\]
Consequently, with brackets denoting angular average,
\begin{equation}\label{eq:uniform-averaging-input}
 J-1=O(G^{-3}\rho^2),\qquad
 J-\langle J\rangle=O(G^{-7}\rho^6).
\end{equation}
The $G$ derivatives of these bounds gain a factor $G^{-1}$.
These are estimates for the explicit implicit function $J$, not
for a separatrix splitting.

The reduced Hamiltonian for $\theta$ time is $J$ in the canonical
variables $(R,p)$. In $(q,p)$ its Hamiltonian vector field is
\[
 \frac{dq}{d\theta}=-\frac{q^3}{2}J_p,
 \qquad \frac{dp}{d\theta}=\frac{q^3}{2}J_q.
\]
Take the zero-mean angular primitive of $J-\langle J\rangle$ as
a generating function, with the sign chosen to cancel this term.
Its size is $O(G^{-7}\rho^6)$, so the associated change of variables
has size $O(G^{-7}\rho^8)$ and $G$ derivative
$O(G^{-8}\rho^8)$. One averaging step gives
\begin{equation}\label{eq:uniform-averaged-Hamiltonian}
 J_{\rm av}=\langle J\rangle+\mathcal E,\qquad
 \mathcal E=O(G^{-10}\rho^9),\qquad
 \mathcal E_G=O(G^{-11}\rho^9).
\end{equation}
Indeed, the bracket of a term of order $G^{-3}\rho^2$ with one
of order $G^{-7}\rho^6$ has order $G^{-10}\rho^9$:
the two derivatives lower the degree by two and the Poisson tensor
contributes $q^3$. All subsequent brackets have smaller bounds.
The same computation after a $G$ derivative proves the last estimate.
The Lie series converges on a slightly smaller fixed neighborhood;
alternatively its integral remainder gives these finite-order bounds.
The generating function is odd in $\theta$ and vanishes at $\theta=0$.
Thus this averaging change is the identity on the section in use
and respects the reversibility.

Notice that (\ref{eq:uniform-averaged-Hamiltonian}) does not estimate
the exponentially small splitting. We use averaging to explain and
control the local scales. For the global graph we retain the exact
splitting potential from Section~\ref{ss:complex-splitting-control}.

\begin{Lemma}[A uniform local square]\label{lem:rpc-scaled-square}
For all sufficiently large $G$, there are exact, reversible local
coordinates $(x,y)$ on a square independent of $G$ in which the
$\theta$-time equations are
\begin{equation}\label{eq:rpc-scaled-normal-form}
 \frac{dx}{d\theta}=a(G)x(x+y)^3(1+\mathcal A),\qquad
 \frac{dy}{d\theta}=-a(G)y(x+y)^3(1+\mathcal B),
 \qquad a(G)=\frac{1}{8\sqrt2\,G^3}.
\end{equation}
The axes are the actual local invariant manifolds, the symmetry
curve on $\theta=0$ is $x=y$, and, for $r=x+y$,
\begin{align}
 |\partial_x^i\partial_y^j\mathcal A|
 +|\partial_x^i\partial_y^j\mathcal B|
       &\le K r^{2-i-j},\qquad i+j\le2,
       \label{eq:rpc-uniform-spatial}\\
 |\mathcal A_G|+|\mathcal B_G|&\le K G^{-1}r^2.
       \label{eq:rpc-uniform-parameter}
\end{align}
The derivatives are taken at fixed $(x,y,\theta)$. The coefficients
are $2\pi$-periodic in $\theta$.
\end{Lemma}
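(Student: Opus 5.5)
We redo the construction of Section 2.3 in the scaled variables $(q,p)$, using the averaged Hamiltonian \eqref{eq:uniform-averaged-Hamiltonian} to keep every constant uniform in $G$.

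\emph{Leading part.} Expand $\langle J\rangle$ near $(q,p)=(0,0)$. From \eqref{eq:uniform-scaled-energy} with $V$ removed, the Kepler part gives
\[
 J=1+G^{-3}\Bigl(\tfrac{p^2}{2}-q^2\Bigr)+O(G^{-6}\rho^4),
\]
and the averaged potential correction is $O(G^{-7}\rho^6)$. Substituting into the $\theta$-time Hamiltonian equations gives
\[
 \frac{dq}{d\theta}=-\frac{q^3p}{2G^3}+\cdots,\qquad
 \frac{dp}{d\theta}=-\frac{q^4}{G^3}+\cdots .
\]
With $X=q-p/\sqrt2$ and $Y=q+p/\sqrt2$, up to a constant rescaling of $(X,Y)$, this becomes the leading field of Section 2.3 multiplied by $G^{-3}$. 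Dividing the $\theta$ variable by $G^3$, i.e.\ working with the scaled time, we obtain an $O(1)$ leading field with prefactor $a(G)G^3=1/(8\sqrt2)$. Every correction is then
\[
 G^{-3}\cdot O(\rho^5)\ \text{(autonomous)}\qquad\text{or}\qquad
 O(G^{-7}\rho^9)\ \text{(from } \mathcal E \text{ after rescaling)},
\]
uniformly in $G$, with weighted derivatives. The $G$-derivative gains $G^{-1}$, by \eqref{eq:uniform-averaging-input} and \eqref{eq:uniform-averaged-Hamiltonian}.

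\emph{Invariant axes.} For the rescaled $2\pi G^{-3}$-period (or simply the time-$1$) map, check the hypotheses of \cite{BFM} exactly as in Lemma~\ref{lem:uniform-separatrices}. The constants $N=M=4$, $a_p=c$, and so on are the same, and the remainders $f,g=O_5$ have bounds independent of $G$. Corollary 2.9 then gives local stable and unstable graphs
\[
 X=\sigma_s(Y,\theta),\qquad Y=\sigma_u(X,\theta)
\]
on a fixed square, with weighted bounds uniform in $G$. Applying the parameter version to $G^{-1}$ as a parameter on $[0,G_0^{-1}]$ gives the $G$-derivative bounds.

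The order count $\sigma=O_3$, $\partial_\theta\sigma=O_6$ from Section 2.3 carries over. The $\theta$-dependent terms are already of size $G^{-7}$, which gives the small phase dependence needed. Set
\[
 x=X-\sigma_s(Y,\theta),\qquad y=Y-\sigma_u(X,\theta).
\]
Reversibility is preserved by choosing $\sigma_u$ as the reverser image of $\sigma_s$. On $\theta=0$ the averaging change is the identity, and the reverser exchanges $X$ and $Y$, so the symmetry curve is $x=y$.

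\emph{Axis division.} As in \eqref{RPC3BP-1}, the field vanishes on the exact invariant axes. Hadamard division (Lemma~\ref{lem:remainder-calculus}(5)) followed by division by $(x+y)^3$ yields \eqref{eq:rpc-scaled-normal-form} with $\mathcal A,\mathcal B\in O_2$. These uniform bounds give \eqref{eq:rpc-uniform-spatial}. Differentiating the whole construction in $G$ at fixed $(x,y,\theta)$ — each step being a uniform $G^{-1}$-gain, including the chain-rule terms from $\partial_G\sigma$ — gives \eqref{eq:rpc-uniform-parameter}.

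\emph{Main obstacle.} The hardest step is uniformity of the invariant-manifold theorem and of its $G$-derivative. The $\theta$-dependence enters on the fast scale $G^3$. We therefore apply \cite{BFM} to the autonomous averaged field plus an $O(G^{-7})$ periodic perturbation, written in the rescaled time. The $G$-derivatives of the perturbation's phase produce factors $G^2$ (from $\partial_G(G^3\tau)$), which are absorbed by the $G^{-7}$ size. We also need to verify that the weighted remainder constants of \cite{BFM} depend only on finitely many derivative bounds. Those bounds are uniform here.
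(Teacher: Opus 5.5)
Your straightening $x=X-\sigma_s$, $y=Y-\sigma_u$, the reversibility bookkeeping and the axis division are fine and parallel the paper. The gap is in the step that carries the whole content of the lemma: uniformity in $G$ of the invariant axes and of their $G$-derivatives.

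Corollary~2.9 of \cite{BFM} concerns iterates of a single map, with constants for a fixed map or for a family over a compact parameter set. This is why Lemma~\ref{lem:uniform-separatrices} and Remark~\ref{rem:rpc-normal-input} restrict to compact $(\mu,C)$. In your rescaled time the forcing has period $2\pi G^{-3}$, and none of the candidate maps works:
\begin{itemize}
\item The time-$1$ map is not iterable: successive iterates start at phases differing by $G^3\not\equiv0\pmod{2\pi}$, so the corollary does not apply to it.
\item The time-$2\pi G^{-3}$ map is $\mathrm{Id}+2\pi G^{-3}F_4+\cdots$. Its constant $c\asymp G^{-3}\to0$, so the constants are not ``the same.''
\item A period map of length $2\pi kG^{-3}\approx1$ has uniform constants for each $G$, but $k$ must jump with $G$.
\end{itemize}
So no family in $\epsilon=G^{-1}$ reaches $\epsilon=0$ inside the hypotheses, and ``apply the parameter version on $[0,G_0^{-1}]$'' is not available. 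Uniform spatial bounds would require re-proving \cite{BFM} with explicit constant tracking; you list this as something to verify rather than supplying it. Moreover, with $k$ frozen locally, $\partial_G$ of the period map contains $-3G^{-1}P$ times the degree-four field. Your remark about absorbing the factor $G^2$ from the fast phase of the $O(G^{-7})$ perturbation does not address that term. (A minor slip: $J-1$ also contains $G^{-3}q^4/2$, so after rescaling the Kepler corrections are $O(\rho^6)$ uniformly, not $G^{-3}O(\rho^5)$; this is harmless.)

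The paper avoids a uniform parabolic-manifold theorem altogether. It reads the tails $p_\pm=\pm q\sqrt{2-q^2}+h_\pm$ off the Hamilton--Jacobi construction at infinity in \cite[Section 5.3]{GMS2016}. That construction is uniform in $G$ with explicit powers: $h_\pm=O(G^{-4}q^5)$ and $\partial_\theta h_\pm=O(G^{-7}q^8)$. The paper runs the same fixed-domain construction for complex $|z-G|<cG^{-2}$ (Lemma~\ref{lem:transport-resolvent}). Cauchy's estimate then gives $(h_\pm)_G=O(G^{-2}q^5)$ and $(h_\pm)_{\theta G}=O(G^{-5}q^8)$. It then straightens explicitly and tracks the term $G^3(h_\pm)_\theta$ produced by the fast angular velocity.

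To salvage your averaging route, you could take the separatrices of the autonomous part of \eqref{eq:uniform-averaged-Hamiltonian} directly, as its level curve through the origin, which is uniformly controlled in $G$. You would still need a separate weighted Lyapunov--Perron contraction for the graph correction caused by the fast-periodic $O(G^{-7})$ remainder, with constants uniform in $G$ and in the phase, together with its $G$-derivative. That argument is what the proposal is missing.
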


\begin{proof}
We record the invariant-manifold estimates used in making the
coordinates exact. On the two local tails, the radial graphs in the
scaled variables satisfy
\begin{equation}\label{eq:rpc-local-tail-jets}
 p_\pm=\pm q\sqrt{2-q^2}+h_\pm(q,\theta;G),\qquad
 h_\pm=O(G^{-4}q^5),\quad
 \partial_\theta h_\pm=O(G^{-7}q^8).
\end{equation}
Both estimates include fixed finite spatial and angular jets.
Here the positive sign is the local stable branch and the negative
sign is the local unstable branch. To check the powers directly
from the estimates at infinity in Section 5.3 of \cite{GMS2016},
use $q\asymp |v|^{-1/3}$ and $|y_h|\asymp |v|^{-1/3}$.
The first correction to the generating function has weighted
norm $O(G^{-4})$ of order one, and its angular derivative is
$O(G^{-7}|v|^{-2})$. A $v$ derivative and multiplication by
$y_h^{-1}$ give, respectively, $G^{-4}q^5$ and $G^{-7}q^8$.
The next correction has weighted norm $O(G^{-6})$ of order $5/3$
and gives smaller terms. The term involving
$r_h^{-2}\partial_\xi T_1$ in the radial graph formula is smaller
as well. Further spatial jets follow by Cauchy estimates on
strictly smaller tail sectors. The substitution
$\xi=\varphi-\alpha_h(v)$ preserves these bounds.

Apply the fixed-domain complex construction of
Lemma~\ref{lem:transport-resolvent} to these same local tails.
On $|z-G|<cG^{-2}$ the bounds in (\ref{eq:rpc-local-tail-jets})
remain valid with the positive real center $G$ on the right-hand
sides.
Cauchy's estimate therefore gives the deliberately nonoptimal bounds
\begin{equation}\label{eq:rpc-local-tail-parameter}
 (h_\pm)_G=O(G^{-2}q^5),\qquad
 (h_\pm)_{\theta G}=O(G^{-5}q^8).
\end{equation}
In this display and the preceding disk statement the center $G$ is
real; $z$ is the complexified parameter. In particular no shrinking
real tail interval is being used.

One may straighten the two axes explicitly by setting
\[
 x=\frac{p_+(q,\theta;G)-p}{\sqrt2},\qquad
 y=\frac{p-p_-(q,\theta;G)}{\sqrt2}.
\]
The sum is $2q+O(q^3)$, so these equations define a uniformly
invertible coordinate change. The reflection
$(q,p,\theta)\mapsto(q,-p,-\theta)$ interchanges $x$ and $y$.
In particular its fixed curve on the section is the diagonal.
Invariance of the two graphs gives exact factors $x$ and $y$
in their respective equations. At $G=\infty$, their degree-four
terms are $x(x+y)^3/(8\sqrt2)$ and its negative counterpart;
the Kepler corrections have relative order two.

Here are bounds for the additional terms, including the potentially
large angular velocity. Work first in $\tau$ time. The explicit
energy equation gives $J-1=O(G^{-3}\rho^2)$. The perturbing radial
force is $O(G^{-4}q^8)$. The angular part of the coordinate
change contributes $G^3(h_\pm)_\theta=O(G^{-4}q^8)$.
After a parameter derivative this last bound becomes
$O(G^{-2}q^8)$ by (\ref{eq:rpc-local-tail-parameter}).
All other parameter derivatives of the straightening are
$O(G^{-2}q^5)$ with the corresponding spatial weights.
After division by the exact axis factors and by $(x+y)^3$,
these errors satisfy (\ref{eq:rpc-uniform-spatial}) and
(\ref{eq:rpc-uniform-parameter}). Division by
$\theta'=G^3-Jq^4$ extracts $G^{-3}$ and leaves a factor
$1+O(G^{-3}r^4)$, with the same required parameter bounds.
The inverse coordinate change has the stated weighted bounds by
implicit differentiation. This proves the lemma.
\end{proof}

\begin{Proposition}[The uniform passage estimate]
\label{prop:rpc-uniform-passage}
Fix a sufficiently small $\delta>0$ in the square of
Lemma~\ref{lem:rpc-scaled-square}. Let $y=\Delta_N(x,G)$ be
the outgoing local image of $x=y$ after $N$ returns. On
$\delta/2\le x\le\delta$, if $N\delta^3/G^3$ exceeds a fixed
sufficiently large constant, then
\begin{align}
 K^{-1}\frac{G^2N^{-2/3}}x
 &\le\Delta_N(x,G)\le K\frac{G^2N^{-2/3}}x,
 \label{eq:rpc-uniform-height}\\
 |\partial_x\Delta_N|&\le 2\Delta_N/x,
 &|\partial_x^2\Delta_N|&\le K\Delta_N/x^2,
 \label{eq:rpc-uniform-height-spatial}\\
 |\partial_G\log\Delta_N(x,G)|&\le K/G.
 \label{eq:rpc-uniform-height-parameter}
\end{align}
These constants are independent of $N$ and $G$.
\end{Proposition}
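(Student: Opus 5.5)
Apply the general local passage machinery of Section~\ref{sec:revised-general} to the normal form \eqref{eq:rpc-scaled-normal-form}. The square is independent of $G$ and the remainder bounds \eqref{eq:rpc-uniform-spatial} hold with $\eta=2$ and constants independent of $G$. The only $G$-dependence is then the leading factor $a(G)=1/(8\sqrt2 G^3)$, which is absorbed into the rescaled time $ds=a(G)r^3\,d\theta$. All barrier, stretching and curvature estimates of that section are stated in rescaled time with constants depending only on the remainder bounds and $\delta$. They therefore hold verbatim, uniformly in $G$, provided the passage is long enough. The threshold hypothesis $N\delta^3/G^3\gg1$ is exactly this requirement. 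The elapsed angular time is $2\pi N$, so the rescaled time from the diagonal at height $h$ to the outgoing strip is long precisely when $h$ is small. That is forced once $N a(G)\delta^3$ is large, since a point with $h$ bounded below by a small multiple of $\delta$ exits in angular time $O(G^3\delta^{-3})$.

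\textbf{Height estimate \eqref{eq:rpc-uniform-height}.} Repeat the height computation in the proof of Lemma~\ref{lem:incoming-parameter}, now for the diagonal start $(h,h)$. Lemma~\ref{lem:integrable-errors} gives $x(s)y(s)\asymp h^2$ and $r\asymp h(e^v+e^{-v})$. The angular time is
\[
 \int\frac{ds}{a(G)r^3}\asymp G^3h^{-2}\int(e^v+e^{-v})^{-3}dv ,
\]
taken over an interval containing $[0,1]$ and with integral over $\mathbb R$ finite. Hence $2\pi N\asymp G^3h^{-2}$, which gives $h^2\asymp G^3/N$. I will double-check whether the exponent should read $h^{-3}$, because $r^{-3}$ integrated against $ds$ near the corner scales like $h^{-3}$. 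Taking $h^{-3}$, we get $h^3\asymp G^3/N$. At exit, $x\in[\delta/2,\delta]$ and $y\asymp h^2/x$. So $\Delta_N\asymp h^2/x\asymp G^2N^{-2/3}/x$, which matches the claim and confirms the $h^{-3}$ scaling. The spatial bounds \eqref{eq:rpc-uniform-height-spatial} come from the outgoing cone \eqref{eq:outgoing-cone}, which gives $|\lambda|\le\frac65\tau\le2y/x$. They also use Lemma~\ref{l.finalmu-general}, which gives $\kappa\le Cy/x^2$. Both hold after the uniformly bounded entry time, and the threshold guarantees that this time has elapsed.

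\textbf{Parameter estimate \eqref{eq:rpc-uniform-height-parameter}.} This is the main step. I would follow Proposition~\ref{prop:parameter-graph-bound}, with $G$ in place of $e$, for $V=\partial_G Y$ at fixed $(x,\theta)$. There are two new features.

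First, the leading factor $a(G)$ depends on $G$. Its derivative contributes a forcing $(a'/a)\,(F^{(2)}-F\lambda)/F^{(2)}$ to the equation for $W=V/Y$. Since $a'/a=-3/G$ and $(F^{(2)}-F\lambda)/(yr^3)=O(1)$ in the diagonal cone, this forcing, measured against $ds$, is $O(G^{-1})$ per unit of rescaled time. The passage lasts rescaled time $\asymp\log(1/h)$, which is unbounded. So a direct integration would give $\log N/G$ rather than $K/G$.

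To avoid this, I would not treat the $a(G)$ term as a forcing. Instead I would factor it out exactly: $Y(x,\theta,G)=\widetilde Y(x,a(G)\theta,G)$, where $\widetilde Y$ solves the normal form with leading constant one. Then
\[
 \partial_G\log Y=\frac{a'}{a}\,\theta\,\partial_\theta\log\widetilde Y+\partial_G\log\widetilde Y|_{\rm remainders}.
\]
The first term is computed from the explicit height scaling. We have $\partial_N\log\Delta_N\asymp -\frac{2}{3N}$, and after rescaling angular time by $a$ this yields exactly the $2/G$ produced by the $G^2$ factor in \eqref{eq:rpc-uniform-height}, up to bounded relative errors. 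To make this rigorous, I would differentiate the exit relation in the passage duration and use the integral-based time formula, showing $\theta\,\partial_\theta\log Y=O(1)$ uniformly.

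The second term is handled by Proposition~\ref{prop:parameter-graph-bound}. With $\mathcal A_G,\mathcal B_G=O(G^{-1}r^2)$ from \eqref{eq:rpc-uniform-parameter}, it yields $|W|\le KG^{-1}\delta^2$. Combining the two gives $|\partial_G\log\Delta_N|\le K/G$.

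I expect the bound $\theta\partial_\theta\log Y=O(1)$ to be the main obstacle. It requires differentiating the passage in its total duration uniformly over long passages, and I would prove it with the same variational equation by treating the time scale as a parameter.
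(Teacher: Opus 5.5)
Your height and spatial estimates follow the paper's route: start at $(u,u)$, use $xy\asymp u^2$ and $2\pi N\asymp G^3u^{-3}$, then apply the diagonal cone and Lemma~\ref{l.finalmu-general}. The parameter bound \eqref{eq:rpc-uniform-height-parameter}, however, has a genuine gap. It starts with your reading of the $a'(G)$ term. Differentiate at fixed $x$ and fixed terminal $\theta=2\pi N$, and put $W=\Delta_G/\Delta$. The $a'$ forcing is then
\[
 -\frac{a'}{a}\Bigl((1+\mathcal B)+\frac{\lambda}{\tau}(1+\mathcal A)\Bigr)=\frac3G\bigl(w+O(r^2)\bigr),\qquad w=1+\frac{\lambda}{\tau}.
\]
This is not merely $O(1/G)$ per unit of $s$. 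It carries the factor $w$, and $\lambda\approx-\tau$ in the outgoing cone. Writing $k=1-w$ in the slope equation gives $w_s=-3w+O(\tau|w|)+O(r^2)$. So once $\tau\le1/20$ one has $D^+|w|\le-2|w|+Kr^2$, and Lemma~\ref{lem:integrable-errors} yields $\int_0^S|w|\,ds\le K$, independently of the passage length. Together with $|\mathcal A_G|+|\mathcal B_G|\le KG^{-1}r^2$, this gives $\int_0^S|L|\,ds\le K/G$. Since also $\int_0^S|K_0|\,ds\le K$ and $W(0)=0$, variation of constants gives $K/G$ directly, with no $\log N$. This is the paper's argument. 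Your conclusion that direct integration gives $\log N/G$ overlooks exactly this cancellation.

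Your workaround does not close the gap. The remainders $\mathcal A,\mathcal B$ are $2\pi$-periodic in $\theta$, and their $\theta$-dependent part is generically nonzero (e.g.\ from the $3\cos^2\theta-1$ term of $V$). So the system in $T=a(G)\theta$ has forcing period $2\pi a(G)$. At fixed $T$ one gets $\partial_G[\mathcal A(x,y,T/a(G),G)]=\mathcal A_G+\frac3G\theta\,\mathcal A_\theta$, with $\theta$ up to $2\pi N$. Proposition~\ref{prop:parameter-graph-bound} assumes a parameter-independent forcing period, so it does not bound your second term by $KG^{-1}\delta^2$.

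The first term fails in the same way. You have $\theta\,\partial_\theta\log Y=-\theta a r^3\bigl((1+\mathcal B)-k(1+\mathcal A)\bigr)$, and nothing makes this uniformly $O(1)$. In $s$-time the phase turns at rate $(ar^3)^{-1}$, so the slope equation averages the oscillation out and $k$ cannot follow it. The bracket therefore keeps a $2\pi$-periodic part of the size of the $\theta$-dependent remainder. Meanwhile $\theta a r^3\asymp(\delta/u)^3\asymp N\delta^3/G^3$ at the exit. Both pieces of your splitting therefore grow linearly in $N$ at fixed $G$, and only their sum is bounded. Differentiating at fixed physical $\theta$, as the paper does, never produces $\theta\,\mathcal A_\theta$ terms and avoids the problem.
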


\begin{proof}
The spatial estimates follow from the diagonal barriers and curvature
estimate in Section~\ref{sec:revised-general}, since the weighted
remainders on this fixed square are uniformly small. For the size,
start at $(u,u)$ and put $ds=a(G)(x+y)^3d\theta$. The integrable
remainder estimate gives
\[
 x(s)\asymp ue^s,\quad y(s)\asymp ue^{-s},\quad
 xy\asymp u^2,\quad
 2\pi N=a(G)^{-1}\int_0^S(x+y)^{-3}ds\asymp G^3u^{-3}.
\]
Thus $u\asymp GN^{-1/3}$ and
(\ref{eq:rpc-uniform-height}) follows. The condition in the
statement ensures $u\ll\delta$ and the required outgoing passage.

The leading coefficient now depends on the parameter, so the
parameter proof requires one additional estimate. Write
$\tau=y/x$, $\lambda=\partial_x\Delta$, and
$w=1+\lambda/\tau$. The diagonal barriers give $|w|\le2$.
The slope equation, with $k=-\lambda/\tau=1-w$, gives
\[
 w_s=-3w+\frac{6\tau w-3\tau w^2}{1+\tau}+O(r^2).
\]
Since $\tau$ decreases exponentially in $s$, it enters
$[0,1/20]$ in a bounded time. Thereafter
\[
 D^+|w|\le-2|w|+Kr^2,
 \qquad \int_0^S |w(s)|\,ds\le K.
\]
The last bound follows by integration and
$\int_0^Sr^2ds\le K\delta^2$. It is independent of the passage
length, including the limit $u\to0$.

Differentiate the graph at fixed $x$ and fixed terminal
$\theta=2\pi N$, and put $W=\Delta_G/\Delta$. The same graph
variation calculation as in Proposition~\ref{prop:parameter-graph-bound}
gives $W_s=K_0W+L$, where
\begin{align*}
 |K_0|&\le K(\tau+r^2),\\
 L&=-\frac{a'}a\left((1+\mathcal B)
              +\frac{\lambda}{\tau}(1+\mathcal A)\right)
       -\mathcal B_G-\frac{\lambda}{\tau}\mathcal A_G.
\end{align*}
Here $a'/a=-3/G$. Therefore
\[
 |L|\le\frac K G(|w|+r^2),\qquad
 \int_0^S|K_0|ds\le K,\qquad
 \int_0^S|L|ds\le K/G.
\]
Since the initial diagonal is parameter independent, $W(0)=0$.
Variation of constants proves (\ref{eq:rpc-uniform-height-parameter}).
In particular there is no factor $N$ or $\log N$ in this estimate.
The orbit-dependent rescaled time is used only after the fixed-time
graph derivative has been taken.
\end{proof}

\paragraph{Transport of the exact splitting phase.}
It remains to express the global graph and the passage height in
the same coordinates. We give the estimates, since control of the
potential alone would not suffice.

Fix a compact interval of the negative Kepler tail, away from the
turning point $y_h=0$, corresponding to an outgoing strip strictly
inside the local square. Enlarge it slightly before making any
estimates. The interval, though far along the tail, is independent
of $G$. The global stable graph and the local unstable graph both
exist there for all large $G$. There is a domain choice to make
explicit here. The common boomerang domain in equation (43) of
\cite{GMS2016} contains a positive, not a negative, real interval.
First reflect that construction by
$(v,\xi)\mapsto(-v,-\xi)$ and interchange the stable and unstable
branches. In radial coordinates this is the reverser
$(R,p,\varphi,J)\mapsto(R,-p,-\varphi,J)$. It is independent
of $G$, so all complex-parameter bounds are preserved. We obtain
a common domain containing a fixed negative real interval.
Continuation from this reflected interval to the chosen negative
tail has fixed finite $v$ length; it never crosses $v=0$, and the
radial velocity stays away from zero. The equations
in $v$ have angular velocity $G^3+O(1)$ but bounded spatial
variations after removing this constant angular velocity. Integral
equations on this fixed interval give $O(G^{-4})$ perturbations
and fixed angular jets with constants depending on the interval.
The same continuation works for $|z-G|<cG^{-2}$ after reducing
$c$: the additional imaginary angular displacement is bounded by
$Kc$ times the fixed length. This proves uniform estimates on a
smaller fixed angular strip. No interval of length tending to
infinity with $G$ is used here.

Let $D=T_1^u-T_1^s$. Subtracting the two Hamilton--Jacobi equations,
as in equations (86)--(87) of \cite{GMS2016}, gives the exact equation
\begin{equation}\label{eq:rpc-exact-phase-transport}
 D_v-G^3b(v,\xi;G)D_\xi=0,\qquad b=1+O(G^{-4}).
\end{equation}
The bound includes every fixed number of angular derivatives.
It holds on the real interval and its complex-parameter continuation.
We will also use the weaker bounds
\[
 b_v=O(G^{-1}),\qquad b_{vv}=O(G^2),\qquad b_G=O(G^{-2}),
\]
with fixed angular jets. For the first two, write the radial graph
equations with $v$ as independent variable. Each $v$ derivative
can contribute at most one factor $G^3$ through the angular
transport term, while the angular derivatives of the perturbation
are $O(G^{-4})$. The algebraic formulas for $b$ have denominators
bounded away from zero on the fixed tail. The last bound follows
from Cauchy on the parameter disk. These deliberately coarse bounds
avoid assuming that the transported graphs have slow $v$ dependence.
The radial graph formula is
\[
 p^u-p^s=y_h^{-1}(D_v-r_h^{-2}D_\xi)
 \quad\hbox{on}\quad \xi=-\alpha_h(v).
\]
The potential is even in its phase by the reversibility
normalization of \cite{GMS2016}. If $\psi_+$ is the straightened phase on the positive
real interval, set
\[
 \psi_-(v,\xi;G)=-\psi_+(-v,-\xi;G).
\]
The generating-function symmetry gives
$D(v,\xi)=D(-v,-\xi)$, and hence
$D=\Upsilon(\psi_-)$ on the reflected interval. Also
$\psi_-=G^3v+\xi+O(G^{-1})$, with the same angular and parameter
bounds. Extend this phase from the reflected common domain by the
transport equation, and denote the extension by $\psi$. It satisfies
\begin{equation}\label{eq:rpc-phase-estimates}
 D=\Upsilon(\psi),\qquad
 \psi=G^3v+\xi+\eta,\qquad
 \psi_v=G^3b\psi_\xi,\qquad
 \|\partial_\xi^j\eta\|\le K_j/G.
\end{equation}
An irrelevant additive constant in $D$ may be included in
$\Upsilon$. To verify the last estimate, substitute this expression
for $\psi$ in its equation. The forcing is $G^3(b-1)=O(G^{-1})$;
the coefficient of the differentiated characteristic equation is
$G^3b_\xi=O(G^{-1})$. The initial phase correction is $O(G^{-1})$
by the straightening estimate of \cite{GMS2016}. Integration on the fixed interval
and induction over $j$ prove the claim. The preceding complex
continuation gives the same bounds on a disk of radius $cG^{-2}$.
Cauchy's estimate now yields
\begin{equation}\label{eq:rpc-phase-parameter}
 |\partial_G\partial_\xi^j\eta|\le K_jG.
\end{equation}
Only this polynomial bound, not a sharp expansion of the phase,
is needed below.

On the section, put $\psi_0(v,G)=\psi(v,-\alpha_h(v);G)$.
Since $\alpha_h'=r_h^{-2}$, we have the exact identities
\begin{align}
 \partial_v\psi_0&=(G^3b-r_h^{-2})\psi_\xi\asymp G^3,
 \label{eq:rpc-section-phase}\\
 p^u-p^s&=y_h^{-1}(G^3b-r_h^{-2})\psi_\xi\,
                                      \Upsilon'(\psi_0).
 \label{eq:rpc-section-splitting}
\end{align}
In particular we have not differentiated a Melnikov approximation
or substituted the algebraic averaging remainder for the splitting.

\paragraph{A single phase family on the energy window.}
The preceding holomorphic constructions use domains fixed at a
real center. Their phases need not agree on overlaps. We now make
a coherent smooth choice; otherwise a derivative estimate at each
center would not by itself give one family on the whole interval
$J_N$.

Choose a locally finite cover of the large positive $G$ axis by
intervals of length comparable to $G^{-2}$, with bounded overlap,
and a smooth partition of unity $\chi_i(G)$ subordinate to smaller
intervals in this cover. This is obtained from a fixed mesh in
$G^3$; in particular $|\chi_i'|\le KG^2$. Let
\[
 \psi_i(v,\xi;G)=G^3v+\xi+\eta_i(v,\xi;G)
\]
be the local phase constructions on one fixed tail interval.
Use the lifts for which $\eta_i=O(G^{-1})$. The interpolation
argument above also applies at every real point of the smaller
interval, by using a smaller disk contained in its original disk.
Thus all the local bounds, including the normalized-potential
parameter bound, hold throughout each support of $\chi_i$.
Set
\[
 \psi=\sum_i\chi_i\psi_i
      =G^3v+\xi+\sum_i\chi_i\eta_i.
\]
Since the transport equation is linear and differentiates only
$v,\xi$, this is still an exact phase for that equation.
It has degree one in $\xi$, and $\psi_\xi=1+O(G^{-1})>0$.
The angular jet bounds are preserved. Also
\[
 \partial_G\bigl(\psi-G^3v-\xi\bigr)
 =\sum_i\chi_i\partial_G\eta_i+\sum_i\chi_i'\eta_i=O(G).
\]
The same argument applies after any fixed number of angular
derivatives. Reversibility and, at equal masses, the half-period
symmetry are preserved by this linear combination.

There is a single periodic function $\Upsilon(\cdot,G)$ with
$D=\Upsilon(\psi,G)$: define it on a fixed transverse line $v=v_*$
and then use the common transport equation for $D$ and $\psi$.
We must check that its normalized derivative still has the
estimates used below. On an overlapping local chart write the
local phase on this transverse line as a function of the new phase:
\[
 \psi_i=\psi+\rho_i(\psi,G),\qquad
 \|\rho_i\|_{C^j}\le K_j/G,\qquad
 \|\partial_G\rho_i\|_{C^j}\le K_jG.
\]
For the last bound, differentiate
$\eta_i-\sum_j\chi_j\eta_j$ at fixed new phase. Its derivative
at fixed $\xi$ is $O(G)$, while its angular derivative is
$O(G^{-1})$ and $\xi_G=O(G^2)$ on $v=v_*$.
This proves the bound, including angular jets by differentiation
of the inverse coordinate.

The normalized derivative of the new potential is consequently
\[
 \bigl(\sin(\vartheta+\widetilde\rho_i)
       +\varepsilon_i(\vartheta+\widetilde\rho_i,G)\bigr)
       (1+\partial_\vartheta\widetilde\rho_i),
\]
where $\vartheta=\psi$ for unequal masses, and
$\vartheta=2\psi+\pi$ for equal masses; the corresponding
$\widetilde\rho_i$ obeys the same bounds. The last factor is the
Jacobian from differentiating the potential. Subtracting
$\sin\vartheta$ gives a new error with $C^2$ norm
$O(G^{-1/2})$ for unequal masses and $O(G^{-1})$ for equal
masses. Its parameter derivative is respectively $O(G^{7/4})$
and $O(G^{3/2})$: every new term from $\widetilde\rho_i$ or its
Jacobian is at most $O(G)$, using one additional fixed angular jet.
Thus the normalized-potential and phase estimates hold for this
single smooth family on the entire large-$G$ axis. Holomorphy
of the patched phase is not needed; the local holomorphic families
have already supplied the derivative bounds.

\begin{Proposition}[Uniform coordinate matching]
\label{prop:rpc-uniform-matching}
On a fixed outgoing strip the exact global stable graph can be
parametrized as
\begin{equation}\label{eq:rpc-matched-global-graph}
 y=a(\vartheta,G)\mathcal A_\mu(G)
             (\sin\vartheta+\varepsilon(\vartheta,G)),
 \qquad x=x(\vartheta,G),
\end{equation}
on any phase lobe whose inverse image stays in that strip. Here
$a>0$, $a\asymp1/x$, and
\[
 \mathcal A_\mu(G)=
 \begin{cases}
 \displaystyle\frac{\sqrt\pi}{2\sqrt2}
 \mu(1-\mu)(1-2\mu)G^{3/2}e^{-G^3/3},&0<\mu<1/2,\\[4pt]
 \displaystyle8\sqrt\pi\mu(1-\mu)G^{7/2}e^{-2G^3/3},&\mu=1/2.
 \end{cases}
\]
Uniformly over these lobes,
\begin{align}
 |x_\vartheta|+G|x_{\vartheta\vartheta}|&\le KG^{-3},&
 |x_G|/x&\le K/G,
 \label{eq:rpc-matching-x}\\
 \frac{|a_\vartheta|+|a_{\vartheta\vartheta}|}{a}
     &\le K/G,&
 |\partial_G\log a|&\le KG.
 \label{eq:rpc-matching-a}
\end{align}
Furthermore $\|\varepsilon\|_{C^2}=o(1)$ and
$|\varepsilon_G|=o(G^2)$.
\end{Proposition}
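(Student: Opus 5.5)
The plan is to combine three ingredients already in place: the exact section splitting formula \eqref{eq:rpc-section-splitting}, the coherent phase family $\psi$ with the single potential $\Upsilon(\cdot,G)$, and the straightened local coordinates of Lemma~\ref{lem:rpc-scaled-square}.

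\emph{Step 1: splitting in $(q,p)$.} The local unstable graph is $p^u=p_-(q,\theta;G)$, which corresponds to $y=0$. The global stable graph is $p^s=p^u-(p^u-p^s)$. By \eqref{eq:rpc-section-splitting},
\[
 p^s-p^u=-y_h^{-1}(G^3b-r_h^{-2})\psi_\xi\,\Upsilon'(\psi_0).
\]
Next I convert to scaled variables. In the scaled variables the radial momentum $y_h$ is the scaled $p$ up to a factor, $q\asymp|v|^{-1/3}$ on the tail, and $G^3b-r_h^{-2}\asymp G^3$.

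\emph{Step 2: conversion to $y$.} By definition $y=(p-p_-)/\sqrt2$, so on the global graph $y=(p^s-p_-)/\sqrt2$ exactly. Write $\Upsilon'(\psi_0)=A(G)(\sin\vartheta+\varepsilon_0)$, where $A$ is the explicit leading harmonic amplitude of Proposition~\ref{prop:normalized-potential-parameter}, as modified in the patched family. The normalized error then satisfies $\|\varepsilon_0\|_{C^2}=O(G^{-1/2})$ (resp.\ $O(G^{-1})$) and $\partial_G\varepsilon_0=O(G^{7/4})$.

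Pull out $\mathcal A_\mu(G)=G^3A(G)$, which accounts for the factor $G^3$ in $G^3b-r_h^{-2}$; indeed $G^{-3/2}\cdot G^3=G^{3/2}$, and $G^{1/2}\cdot G^3=G^{7/2}$ at equal masses. The remaining factor is
\[
 a=\frac{(b-G^{-3}r_h^{-2})\psi_\xi}{\sqrt2\,y_h}.
\]
On a tail where $q$ is bounded away from zero in the fixed outgoing strip, $y_h\asymp x$ there, so $a\asymp 1/x$. The orientation sign is chosen so that $a>0$, flipping $\vartheta$ if necessary. Because the expansion is exact, $\varepsilon=\varepsilon_0$, up to reparametrizing the phase variable from $\psi_0$ to $\vartheta$.

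\emph{Step 3: derivative estimates.} Parametrize the graph by $\vartheta$. Then $x(\vartheta,G)$ is obtained by inverting $\vartheta=\psi_0(v,G)$ (or $2\psi_0+\pi$) and composing with the coordinate change $v\mapsto q\mapsto x$. By \eqref{eq:rpc-section-phase}, $\partial_v\psi_0\asymp G^3$, so $v_\vartheta=O(G^{-3})$.

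For $v_{\vartheta\vartheta}$ I use $\partial_v^2\psi_0$, which involves $b_v=O(G^{-1})$ multiplied by $G^3$ and the derivative of $\psi_\xi$. This gives $\partial_v^2\psi_0=O(G^2)$, hence $v_{\vartheta\vartheta}=O(G^2\cdot G^{-9})$. The bound $|x_{\vartheta\vartheta}|\le KG^{-4}$ follows comfortably, once we note that the spatial coordinate change has bounded derivatives in $v$ on the fixed tail.

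For $x_G$, differentiate $\psi_0(v,G)=\vartheta$ at fixed $\vartheta$:
\[
 v_G=-\partial_G\psi_0/\partial_v\psi_0 .
\]
Here $\partial_G\psi_0=3G^2v+O(G)$, by \eqref{eq:rpc-phase-parameter} and the patching estimate, so $v_G=O(G^{-1})$. The $G$-dependence of the straightening $(q,p)\mapsto(x,y)$ is $O(G^{-2})$ by \eqref{eq:rpc-local-tail-parameter}, and the scaling $r=G^2R$ is built into $q$. Together these give $|x_G|/x\le K/G$.

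The bounds on $a$ are handled the same way. The relative phase derivatives of $a$ come from $v_\vartheta=O(G^{-3})$ times $v$-derivatives of $b$, $\psi_\xi$ and $y_h$. These $v$-derivatives are at most $O(G^2)$ (from $b_{vv}$ through the chain) or $O(G^{-1}\cdot G^3)$, which gives $K/G$. For $\partial_G\log a$, the worst term is $b_v v_G$ together with $\partial_G\psi_\xi=O(G)$, which gives $O(G)$.

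\emph{Step 4: the error $\varepsilon$.} If the phase variable is $\vartheta$ itself, the composition with the reparametrization introduces no new error. The parameter derivative of $\varepsilon$ at fixed $\vartheta$ is then the patched-family bound $O(G^{7/4})=o(G^2)$.

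The main obstacle I expect is the reparametrization to the common phase. The normalized potential estimate is stated at fixed $\psi$, while the physical derivative $\partial_G$ is taken at fixed $\vartheta$ on the section. Each rate must be tracked so that no factor $G^3$ from $\partial_G(G^3v)$ survives; the cancellation relies on dividing by $\partial_v\psi_0\asymp G^3$. I would first verify this carefully for $x_G$ and $\partial_G\log a$, and only then for the remainder $\varepsilon$.
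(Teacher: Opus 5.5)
Your outline follows the paper's proof. The prefactor comes from \eqref{eq:rpc-section-splitting} via $y=(p^s-p^u)/\sqrt2$, and $\mathcal A_\mu=G^3A$ absorbs the factor $G^3$. The phase is inverted with $v_\vartheta=O(G^{-3})$ and $v_G=O(G^{-1})$, and $\varepsilon$ is read off from the patched normalized potential. Two of your derivative computations, however, are wrong as written.

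First, $\partial_v^2\psi_0$ is not $O(G^2)$. Along the section, $\frac{d}{dv}\psi_\xi=G^3b_\xi\psi_\xi+(G^3b-r_h^{-2})\psi_{\xi\xi}=O(G^2)$. In $\partial_v^2\psi_0=\frac{d}{dv}\bigl[(G^3b-r_h^{-2})\psi_\xi\bigr]$ this is multiplied by $G^3b-r_h^{-2}\asymp G^3$. Hence $\psi_{0,vv}=O(G^5)$ and $v_{\vartheta\vartheta}=-\psi_{0,vv}/\psi_{0,v}^3=O(G^{-4})$, not $O(G^{-7})$. This still yields $G|x_{\vartheta\vartheta}|\le KG^{-3}$, but with no room to spare, not ``comfortably.'' It holds only because $\|\partial_\xi^2\eta\|\le K/G$ in \eqref{eq:rpc-phase-estimates}; a merely bounded $\psi_{\xi\xi}$ would cost a full power of $G$ here and in $a_\vartheta/a$. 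Likewise, $\frac{d^2}{dv^2}\psi_\xi$ along the section is $O(G^5)$ (using $\psi_{\xi\xi\xi}=O(G^{-1})$), so your ``at most $O(G^2)$'' holds only for first $v$-derivatives. The second relative phase derivative of $a$ is nevertheless $O(G^{-1})$. The $O(G^5)$ term is multiplied by $v_\vartheta^2=O(G^{-6})$, and the remaining term $v_{\vartheta\vartheta}\frac{d}{dv}\psi_\xi$ is $O(G^{-2})$.

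Second, in $\partial_G\log a$ at fixed $\vartheta$, the term $b_vv_G$ is only $O(G^{-2})$, so it is not the worst term. The contributions of size $G$ are $\partial_G\psi_\xi=O(G)$ from \eqref{eq:rpc-phase-parameter} and the transport term $\frac{d\psi_\xi}{dv}\,v_G=O(G^2)\cdot O(G^{-1})$. Your accounting omits the transport term, yet this is exactly where the rapid $v$-dependence of the transported phase enters, and it is the term the paper's proof isolates.

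Two smaller points. With $y=(p^s-p^u)/\sqrt2$ the prefactor is $a=-\frac{(b-G^{-3}r_h^{-2})\psi_\xi}{\sqrt2\,y_h}$. This is already positive on the negative tail because $y_h<0$, so no flip of $\vartheta$ is needed. On the global graph, $x=(p_+-p_-)/\sqrt2-y$ differs from its value on the local unstable graph by the exponentially small $y$, together with all its jets. You should note this to justify your formulas for $x_\vartheta$ and $x_G$, and to confirm that the phase parametrization is a graph. With these corrections your argument gives the stated bounds.
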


\begin{proof}
For unequal masses use $\vartheta=\psi_0$ modulo $2\pi$;
for equal masses use $\vartheta=2\psi_0+\pi$ modulo $2\pi$.
The normalized potential derivative and its error are those of
Proposition~\ref{prop:normalized-potential-parameter}, with the
coherent phase choice just constructed.
Equation (\ref{eq:rpc-section-splitting}) supplies the extra factor
$G^3$, explaining the powers in $\mathcal A_\mu$.

The section coordinate change is exact and makes the local
unstable graph the axis $y=0$. In fact its definition above gives
$y=(p^s-p^u)/\sqrt2$ on the global stable graph at the same $q$.
Thus the positive prefactor before using $x$ as independent
coordinate is exactly
\[
 -\frac{1}{\sqrt2\,y_h}
       (b-G^{-3}r_h^{-2})\psi_\xi.
\]
On the negative tail $y_h<0$, so it is positive and comparable
to $1/x$. The other coordinate is
$x=(p_+-p^s)/\sqrt2$; it differs from its value on the local
unstable graph by the exponentially small radial splitting.
All fixed jets of this difference are bounded by a polynomial in
$G$ times the splitting amplitude. These terms can therefore be
included in the bounds below. In particular the phase parametrization
is a graph for all sufficiently large $G$.

At fixed phase, (\ref{eq:rpc-section-phase}) and
(\ref{eq:rpc-phase-parameter}) give
\[
 v_\vartheta=O(G^{-3}),\qquad v_G=O(G^{-1}).
\]
For the second derivative it suffices to use
$\psi_{\xi v}=G^3(b_\xi\psi_\xi+b\psi_{\xi\xi})=O(G^2)$.
Together with bounded $b_v$ this gives
$\psi_{0,vv}=O(G^5)$ and hence
$v_{\vartheta\vartheta}=O(G^{-4})$.
Here $b_v$ is bounded on the fixed real tail: this follows either
by differentiating the continued equations, or from the radial
graph equations, whose angular perturbation derivatives are
$O(G^{-4})$ and whose only large coefficient is $G^3$.
The local coordinate jets are uniformly bounded on this fixed strip
by (\ref{eq:rpc-local-tail-jets})--(\ref{eq:rpc-local-tail-parameter}).
This proves (\ref{eq:rpc-matching-x}).

We spell out why the rapid $v$ dependence of the transported phase
causes no difficulty for the prefactor. Along the section,
\[
 \frac{d\psi_\xi}{dv}
   =G^3b_\xi\psi_\xi+(G^3b-r_h^{-2})\psi_{\xi\xi}.
\]
Divide by $d\vartheta/dv\asymp G^3$ and use
$\psi_{\xi\xi}=O(G^{-1})$. This proves an $O(G^{-1})$
first phase derivative. Differentiating again and using
$\psi_{\xi\xi\xi}=O(G^{-1})$ proves the same bound for the
second derivative; the angular derivatives of $b$ are $O(G^{-4})$.
The slow radial factors have derivatives $O(G^{-3})$ in phase.
For their second derivatives the weaker $O(G^{-4})$ bound for
$v_{\vartheta\vartheta}$ is sufficient. The factors involving
$b$ obey the same claimed bounds by their transport equations.
Finally, at fixed phase,
\[
 \frac{d\psi_\xi}{dG}
   =\partial_G\psi_\xi+
       \frac{d\psi_\xi}{dv}v_G=O(G)+O(G^2)O(G^{-1})=O(G).
\]
All other prefactor derivatives are smaller or bounded by this
quantity. This proves (\ref{eq:rpc-matching-a}). The assertion
about $\varepsilon$ follows from the normalized-potential estimate
and the preceding phase-patching calculation. Subtracting an integer
multiple of $2\pi$ from the phase does not affect these bounds.
\end{proof}

\begin{Corollary}[The height in splitting coordinates]
\label{cor:rpc-matched-height}
Put
\[
 H_N(\vartheta,G)=
       \frac{\Delta_N(x(\vartheta,G),G)}{a(\vartheta,G)}.
\]
On a fixed outgoing strip, uniformly for $G$ comparable to
$(\log N)^{1/3}$, one has
\begin{align}
 H_N&\asymp G^2N^{-2/3},
 \label{eq:rpc-final-height-size}\\
 |\partial_\vartheta H_N|+|\partial_\vartheta^2H_N|
     &\le (K/G)H_N,
 \label{eq:rpc-final-height-phase}\\
 |\partial_G\log H_N|&\le KG=o(G^2).
 \label{eq:rpc-final-height-parameter}
\end{align}
\end{Corollary}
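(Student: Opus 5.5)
The plan is to obtain all three estimates by composing the uniform passage bounds of Proposition~\ref{prop:rpc-uniform-passage} with the coordinate matching of Proposition~\ref{prop:rpc-uniform-matching}, using the chain rule. First we check that the passage proposition applies on the whole energy window. Its hypothesis requires $N\delta^3/G^3$ to be large. For $G\asymp(\log N)^{1/3}$ we have $G^3\asymp\log N\ll N$, so the hypothesis holds for all large $N$ with the fixed $\delta$. The matched lobes lie in the fixed strip $\delta/2\le x\le\delta$, so $x\asymp1$ there.

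\emph{Size.} By (\ref{eq:rpc-uniform-height}), $\Delta_N\asymp G^2N^{-2/3}/x$, and by Proposition~\ref{prop:rpc-uniform-matching}, $a\asymp1/x$. Dividing gives (\ref{eq:rpc-final-height-size}) directly.

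\emph{Phase derivatives.} Write $H_N=\Delta_N(x(\vartheta,G),G)\,a^{-1}$. For the first derivative,
\[
 \frac{\partial_\vartheta H_N}{H_N}=\frac{\Delta_{N,x}}{\Delta_N}x_\vartheta-\frac{a_\vartheta}{a}.
\]
By (\ref{eq:rpc-uniform-height-spatial}) and (\ref{eq:rpc-matching-x}), the first term is at most $(2/x)KG^{-3}$. By (\ref{eq:rpc-matching-a}), the second is at most $K/G$.

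For the second derivative, expand $\partial_\vartheta^2 H_N/H_N$. It contains the terms
\[
 \frac{\Delta_{N,xx}}{\Delta_N}x_\vartheta^2,\qquad
 \frac{\Delta_{N,x}}{\Delta_N}x_{\vartheta\vartheta},\qquad
 \frac{\Delta_{N,x}}{\Delta_N}x_\vartheta\frac{a_\vartheta}{a},\qquad
 \frac{a_\vartheta^2}{a^2},\qquad
 \frac{a_{\vartheta\vartheta}}{a}.
\]
Each is bounded by $K/G$ or smaller. The term with $x_{\vartheta\vartheta}$ uses $G|x_{\vartheta\vartheta}|\le KG^{-3}$. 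This proves (\ref{eq:rpc-final-height-phase}).

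\emph{Parameter derivative.} Differentiate at fixed $\vartheta$:
\[
 \partial_G\log H_N=\partial_G\log\Delta_N(x,G)+\frac{\Delta_{N,x}}{\Delta_N}x_G-\partial_G\log a.
\]
The first term is $O(1/G)$ by (\ref{eq:rpc-uniform-height-parameter}). The second is bounded by $(2/x)|x_G|\le 2K/G$, using $|x_G|/x\le K/G$. The third is $O(G)$ by (\ref{eq:rpc-matching-a}). Altogether $|\partial_G\log H_N|\le KG=o(G^2)$, which is (\ref{eq:rpc-final-height-parameter}).

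The main obstacle is uniformity. The constants in Proposition~\ref{prop:rpc-uniform-passage} must be independent of both $N$ and $G$ on the joint range, and the derivative of $\Delta_N$ in (\ref{eq:rpc-uniform-height-parameter}) must be taken at fixed $x$ and fixed terminal time $\theta=2\pi N$, so that it composes with $x(\vartheta,G)$ as above. Both points are supplied by the fixed scaled square of Lemma~\ref{lem:rpc-scaled-square}. It also has to be checked that the lobes used in the matching stay inside the strip where the passage graph is defined, which holds by choosing the tail interval as in the matching proposition.
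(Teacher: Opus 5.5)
Your proposal is correct and follows essentially the same route as the paper. Both obtain the size by dividing \eqref{eq:rpc-uniform-height} by $a\asymp1/x$, and both get the phase bounds from the chain rule on the fixed strip where $x$ is bounded away from zero. For the parameter derivative, both use the same three-term identity for $\partial_G\log H_N$, with the bounds $K/G$, $K/G$, $KG$, and both note that $N\delta^3/G^3\to\infty$ throughout the window $G\asymp(\log N)^{1/3}$.
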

\begin{proof}
The size follows from $a\asymp1/x$ and
(\ref{eq:rpc-uniform-height}). Apply the chain rule twice in phase
using (\ref{eq:rpc-uniform-height-spatial}) and
(\ref{eq:rpc-matching-x})--(\ref{eq:rpc-matching-a}). All $x$
values lie in a fixed strip bounded away from zero, so the resulting
bound is $K/G$. For the parameter derivative use the exact identity
\[
 \partial_G\log H_N
 =\frac{\Delta_{N,G}}{\Delta_N}
   +\frac{\Delta_{N,x}}{\Delta_N}x_G-\partial_G\log a.
\]
The three terms are bounded respectively by $K/G$, $K/G$, and
$KG$. Finally $N\delta^3/G^3\to\infty$ in the stated joint limit,
so the passage estimates apply throughout the range.
\end{proof}

\paragraph{Choice of a common lobe and the parameter conclusion.}
For completeness, one must choose a lobe that stays on the fixed
tail while $G$ varies over a whole contact window. In terms of
$C=2G$, choose $a<b$ close to $(16/k)^{1/3}$ with
$a^3<16/k<b^3$, where $k=1$ for unequal masses and $k=2$ for
equal masses. The unwrapped phase on the tail is
$kG^3v+O(1)$. Choose $a,b$ sufficiently close that the images
of an interior tail interval under multiplication by $ka^3/8$
and $kb^3/8$ overlap with positive length. For large $N$, their
common phase range has length comparable to $\log N$.
It therefore contains a translate
$[-\pi/6,7\pi/6]+2\pi j_N$ with fixed margins from its endpoints.
The same integer $j_N$ works for every
$C\in[a(\log N)^{1/3},b(\log N)^{1/3}]$.
The bounded phase correction does not remove these margins.
The inverse images stay in the chosen outgoing strip.

\begin{Proposition}\label{prop:rpc-fixed-mass-tangencies}
For each fixed $\mu\in(0,1)$ there is a sequence of positive
Jacobi constants tending to infinity at which the RPC3BP has
quadratic parabolic homoclinic tangencies unfolding nondegenerately
with $C$. For $0<\mu\le1/2$, the sequence constructed above satisfies
\[
 C_N^3=\frac{16}{k}\log N+O(\log\log N),\qquad
 k=\begin{cases}1,&\mu<1/2,\\2,&\mu=1/2.\end{cases}
\]
There are also quadratic tangencies of hyperbolic periodic saddles,
unfolding with $C$, at parameters tending to infinity. The saddles
have the transverse connections to infinity used in
Section~\ref{ss:sparse-excursions}.
\end{Proposition}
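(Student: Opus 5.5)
The plan is to verify the hypotheses of Proposition~\ref{prop:energy-contact} for the matched quantities of Proposition~\ref{prop:rpc-uniform-matching} and Corollary~\ref{cor:rpc-matched-height}, then apply Proposition~\ref{prop:general-reversible-contact} and the periodic-saddle transfer. First reduce to $0<\mu\le1/2$ by relabelling the primaries, since the mass $1-\mu$ problem is conjugate to the mass $\mu$ problem by the rotation $\varphi\mapsto\varphi+\pi$. This conjugacy preserves the Jacobi constant and commutes with the reverser up to the phase origin. Then fix $k$ as in the statement.

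\textbf{Matching the abstract contact criterion.} On the common lobe $[-\pi/6,7\pi/6]+2\pi j_N$ chosen above, valid for all $C\in J_N=[a(\log N)^{1/3},b(\log N)^{1/3}]$, divide the difference between the global stable graph \eqref{eq:rpc-matched-global-graph} and $\Delta_N$ by the positive factor $a(\vartheta,G)$. This gives $F_N=\mathcal A_\mu(G)(\sin\vartheta+\varepsilon)-H_N$. Rewrite $\mathcal A_\mu$ in terms of $C=2G$. It equals $a_0C^p\exp(-kC^3/24)$, with $p=3/2$ or $7/2$ and $a_0>0$.

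Derivatives in $C$ are half those in $G$, so the hypotheses transfer directly:
\begin{itemize}
\item the bounds $\|\varepsilon\|_{C^2}=o(1)$ and $|\varepsilon_G|=o(G^2)$ give \eqref{eq:energy-splitting-jets};
\item \eqref{eq:rpc-final-height-size} gives \eqref{eq:energy-height-size} with $q=2$;
\item \eqref{eq:rpc-final-height-phase} gives the phase bound $\le H_N/100$ once $G$ is large;
\item \eqref{eq:rpc-final-height-parameter} gives $|\partial_C\log H_N|=o(C^2)$.
\end{itemize}
Corollary~\ref{cor:rpc-matched-height} applies throughout $J_N$ because $N G^{-3}\asymp N/\log N\to\infty$. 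Joint $C^2$ regularity in $(\vartheta,C)$ holds for each fixed $N$ by smooth dependence of finite-time maps. Proposition~\ref{prop:energy-contact} then yields contacts at $(\theta_N,C_N)$ satisfying $F_N=\partial_\theta F_N=0$, $\partial^2_\theta F_N<0$ and $\partial_CF_N<0$, with the stated asymptotics for $C_N^3$.

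\textbf{From contacts to tangencies.} The symmetry curve on $\theta=0$ is the diagonal $x=y$ by Lemma~\ref{lem:rpc-scaled-square}. Hence Proposition~\ref{prop:general-reversible-contact} converts each contact of $W^s(O_\infty)$ with $f^N(\operatorname{Fix}R)$ into a quadratic homoclinic tangency to $O_\infty$ that unfolds nondegenerately in $C$. One point needs checking: the reverser fixes the single parabolic point, so the conclusion is genuinely homoclinic, not heteroclinic. The phase change $\theta\leftrightarrow x$ depends smoothly on $C$, so nondegeneracy is preserved, as the proof of Proposition~\ref{prop:energy-contact} notes.

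\textbf{Transfer to periodic saddles.} Fix $N$, so $C_N$ is a single finite parameter, and work on a compact $C$-interval about it. Transverse homoclinic points near $C_N$ come from simple zeros of $\sin\vartheta+\varepsilon$ on neighbouring lobes, or from \cite[Theorem 2.2]{GMS2016}. They continue on that interval. Then rerun Proposition~\ref{saddle-sequence} and Proposition~\ref{saddle} with $C$ in place of $e$. Their inputs are the local normal form with parameter-uniform $O_\eta$ remainders on compact parameter sets (Lemma~\ref{lem:uniform-separatrices}), smooth finite global maps, and the parameter estimates of Lemma~\ref{lem:incoming-parameter}. None of these needs uniformity as $C\to\infty$. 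For each $N$, choose the periodic approximation close enough that the new tangency parameter lies within $1$ of $C_N$, so the parameters still tend to infinity. The transverse connections to infinity come from the same construction.

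\textbf{Main obstacle.} The delicate step is checking that Corollary~\ref{cor:rpc-matched-height} and the common-lobe choice really hold uniformly on the whole window $J_N$. In particular the $KG$ bound on $\partial_G\log a$ must stay $o(G^2)$, and the lobe must remain inside the fixed outgoing strip for every $C\in J_N$ simultaneously. The application of Proposition~\ref{prop:energy-contact} depends on exactly this uniformity. If it is delicate, I would first re-derive the lobe overlap from the unwrapped phase $kG^3v+O(1)$ on the fixed tail interval.
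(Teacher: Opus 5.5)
Your proposal is correct and follows the paper's proof essentially step for step. You read off the hypotheses of Proposition~\ref{prop:energy-contact} from Corollary~\ref{cor:rpc-matched-height} and Proposition~\ref{prop:rpc-uniform-matching}, with $p=3/2$ or $7/2$, $q=2$ and $\partial_C=\tfrac12\partial_G$; you use Proposition~\ref{prop:general-reversible-contact} for the parabolic tangencies; you apply the saddle transfer of Section~\ref{ss.saddlesnearhomoclinicpoint} on a compact neighbourhood of each finite $C_N$ in the unscaled coordinates; and you relabel the primaries for $\mu>1/2$. The only differences are cosmetic: you relabel first, and you place the saddle tangency within $1$ rather than $1/N$ of $C_N$.
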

\begin{proof}
Divide the difference between (\ref{eq:rpc-matched-global-graph})
and the local passage graph by $a$. Corollary~\ref{cor:rpc-matched-height}
and Proposition~\ref{prop:normalized-potential-parameter} verify
all the hypotheses of Proposition~\ref{prop:energy-contact}, with
$p=3/2$ or $7/2$ after replacing $G$ by $C/2$.
The phase derivative bound is smaller than $1/100$ for all large
$G$. Moreover $\partial_C\log H_N=O(C)=o(C^2)$, whereas
\[
 \partial_C\log\mathcal A_\mu(C/2)
       =-kC^2/8+p/C.
\]
Thus the contact is quadratic and its unfolding derivative is
nonzero. Proposition~\ref{prop:general-reversible-contact} turns
this contact with an iterate of the symmetry curve into a
parabolic homoclinic tangency.

The saddle approximation in
Section~\ref{ss.saddlesnearhomoclinicpoint} applies at each such
finite parameter. Its constants need only be uniform on a small
compact neighborhood of that parameter; the unscaled local
coordinates used earlier provide exactly this situation. Choose
the approximating saddle tangency within $1/N$ of $C_N$.
Nondegenerate unfolding and both transverse connections follow
from that approximation. The other phase lobes supply the
transverse parabolic intersections. Finally exchanging the
primaries replaces $\mu$ by $1-\mu$ and preserves $C$, proving
the assertion for $\mu>1/2$ as well.
\end{proof}

\begin{Corollary}\label{cor:rpc-fixed-mass-completion}
For every fixed $\mu\in(0,1)$ there is an unbounded open set
$\mathcal N_\mu$ of positive Jacobi constants and a residual
subset $\mathcal R_\mu\subset\mathcal N_\mu$ such that for
$C\in\mathcal R_\mu$ the set of two-sided oscillatory motions
on $\Pi_C$ has Hausdorff dimension three. The set
$\mathcal N_\mu$ may be chosen as a union of parameter Newhouse
domains accumulating at infinity.
\end{Corollary}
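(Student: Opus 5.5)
The corollary is the fixed-mass analogue of Corollary~\ref{cor:rpc-fixed-energy-completion}. I would prove it the same way, with the Jacobi constant playing the role that the mass played there. By relabeling the primaries, which replaces $\mu$ by $1-\mu$ and preserves $C$, it suffices to treat $0<\mu\le1/2$.

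The first step is to invoke Proposition~\ref{prop:rpc-fixed-mass-tangencies}. It gives parameters $\widehat C_N\to\infty$ at which a hyperbolic periodic saddle $s_N$ of the return map $f_{\mu,C}$ has a quadratic homoclinic tangency unfolding nondegenerately in $C$. The same saddle has transverse connections to $O_\infty$ in both directions. For each $N$ I would fix a compact persistence interval $J_N$ with $\widehat C_N$ in its interior, on which the saddle continues and both selected transverse connections persist. This requires only finite-time smooth dependence at that particular finite parameter, so $J_N$ may depend on $N$ and no uniformity as $C\to\infty$ is needed here.

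Next I would realize the relevant return map near the compact closure of the tangency orbit as a global smooth area-preserving family. This uses the cutoff construction in the paragraph on local return maps and periodic saddles. On the orbit tubes the angle is monotone, and $P_\varphi$ serves as a periodic Hamiltonian in $\theta=-\varphi$ time. If the saddle has period $m$, I would pass to $f^m$.

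With this setup, apply Theorem~\ref{t.thick-sets} with the fixed tolerance $\delta_0=1$ inside $\operatorname{int}J_N\cap(\widehat C_N-1,\widehat C_N+1)$. This yields a nonempty open set $\mathcal N_{\mu,N}$ accumulating at $\widehat C_N$, together with a dense set of nondegenerately unfolding quadratic tangencies of the continued saddle. The localization in that theorem keeps the basic sets and tangency orbits in the region where the cutoff map agrees with the physical map. Set $\mathcal N_\mu=\bigcup_N\mathcal N_{\mu,N}$. Since $\widehat C_N\to\infty$, this set is open and unbounded, and it is a union of parameter Newhouse domains accumulating at infinity. This gives the Newhouse assertion.

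For the residual set I would argue exactly as in Proposition~\ref{prop:rpc-conditional-completion}, with $C$ in place of $\mu$. Let $V_k\subset\mathcal N_\mu$ be the set of parameters admitting a zero-dimensional basic set containing the continued saddle, of dimension $>2-1/k$, and with the selected connections to infinity. Each $V_k$ is open by hyperbolic continuation, continuity of dimension, and persistence of transversality. Each $V_k$ is dense by Remark~\ref{rem:parameter-density}, applied at the dense tangencies inside each $\mathcal N_{\mu,N}$. Put $\mathcal R_\mu=\bigcap_kV_k$.

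For $C\in\mathcal R_\mu$, Theorem~\ref{2-diml-hd} together with Proposition~\ref{prop:dimension-excursions} gives compact sets of two-sided oscillatory points in the section, with dimension tending to two. These orbits return infinitely often both to a fixed finite periodic orbit and to $O_\infty$. As in Proposition~\ref{prop:rpc-conditional-completion}, a uniform minimal flight time between visits to a compact transverse rectangle makes the physical return times tend to $\pm\infty$, so these points are oscillatory for the flow. A smooth flow box combined with \eqref{eq:interval-product-dimension} then raises the dimension to three on $\Pi_C$.

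The only real subtlety is bookkeeping. Each $\mathcal N_{\mu,N}$ must be fixed before the tolerance $1/k$ is decreased, and every parameter neighborhood must be small enough that the selected transverse connections persist. The hard analytic input, namely uniform tangencies as $C\to\infty$, has already been supplied by Proposition~\ref{prop:rpc-fixed-mass-tangencies}.
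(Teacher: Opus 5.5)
Your proposal is correct and follows the paper's proof essentially step for step: it takes the tangency parameters $\widehat C_N\to\infty$ from Proposition~\ref{prop:rpc-fixed-mass-tangencies} and fixes persistence intervals $J_N$, applies Theorem~\ref{t.thick-sets} with tolerance $\delta_0=1$ inside $\operatorname{int}J_N\cap(\widehat C_N-1,\widehat C_N+1)$, uses the density argument of Remark~\ref{rem:parameter-density} on these fixed domains, and finishes with the sparse-excursion and flow-box arguments. The extra details you spell out (the cutoff realization as a global area-preserving family, passage to $f^m$, and the minimal flight time between section visits) are exactly what the paper's proof draws on through its reference to the fixed-energy completion and Proposition~\ref{prop:rpc-conditional-completion}.
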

\begin{proof}
Let $\widehat C_N\to\infty$ be the hyperbolic saddle tangency
parameters supplied by Proposition~\ref{prop:rpc-fixed-mass-tangencies}.
Choose a compact interval $J_N$ about each $\widehat C_N$ on which
its saddle and both transverse connections persist. Apply
Theorem~\ref{t.thick-sets} with the fixed tolerance $\delta_0=1$
inside $\operatorname{int}J_N\cap(\widehat C_N-1,\widehat C_N+1)$.
Write $\mathcal N_{\mu,N}$ for the resulting open parameter set.
Then $\mathcal N_\mu=\bigcup_N\mathcal N_{\mu,N}$ is unbounded,
and all selected connections persist on the domain where they are
used. The density argument in Remark~\ref{rem:parameter-density},
applied on these fixed parameter domains, gives hyperbolic basic sets
containing the continued saddles and with dimensions tending to two
on a residual subset of their union. The sparse-excursion argument then gives a
set of two-sided oscillatory section points of dimension two.
The flow-box argument raises its dimension to three on the energy
surface. These are the same steps as in the fixed-energy proof;
only the one-dimensional parameter family has changed.
\end{proof}

\begin{Remark}
The uniformity needed here is for each fixed positive mass.
None of these estimates asserts a common threshold as $\mu\to0$
or as $\mu\to1/2$ through unequal masses. At exactly equal masses
we use the second harmonic and its separate normalization.
\end{Remark}


\begin{thebibliography}{Chazy}

\bibitem[Af]{Af} Alekseev, V.~M., Sur l'allure finale du mouvement dans le probl\`eme des trois corps, in \emph{Actes du Congr\`es International des Math\'ematiciens (Nice, 1970)}, Tome 2, Gauthier-Villars, Paris, 1971, 893--907.

\bibitem[Ae]{Ae} Alekseev, V.~M., Final motions in the three-body problem and symbolic dynamics, \emph{Uspekhi Mat. Nauk} \textbf{36} (1981), no.~4, 161--176; English translation in \emph{Russian Math. Surveys} \textbf{36} (1981), no.~4, 181--200.

\bibitem[A68II]{A1968II} Alekseev, V.~M., Quasirandom dynamical systems. II. One-dimensional nonlinear oscillations in a field with periodic perturbation, \emph{Math. USSR-Sb.} \textbf{6} (1968), no.~4, 505--560.

\bibitem[A69III]{A1969III} Alekseev, V.~M., Quasirandom dynamical systems. III. Quasirandom oscillations of one-dimensional oscillators, \emph{Math. USSR-Sb.} \textbf{7} (1969), no.~1, 1--43.

\bibitem[AKN]{AKN} Arnol'd, V.~I., Kozlov, V.~V., Neishtadt, A.~I., \emph{Dynamical Systems III: Mathematical Aspects of Classical and Celestial Mechanics}, Encyclopaedia of Mathematical Sciences, vol.~3, Springer-Verlag, Berlin, 1988.

\bibitem[Bak]{Bak} Bakker, L.~F., \emph{Arnold Diffusion in the Elliptic Restricted Planar Three-Body Problem}, M.Sc. thesis, University of Victoria, 1993.

\bibitem[BFM]{BFM} Baldom\'a, I., Fontich, E., Mart\'in, P., Invariant manifolds of parabolic fixed points (I). Existence and dependence on parameters, \emph{J. Differential Equations} \textbf{268} (2020), no.~9, 5516--5573. 

\bibitem[BGG]{BGG2023} Baldom\'a, I., Giralt, M., Guardia, M., Coorbital homoclinic and chaotic dynamics in the Restricted 3-Body Problem, preprint (2023), \href{https://arxiv.org/abs/2312.13819}{arXiv:2312.13819}.

\bibitem[BNRR]{BNRR2022} Barrientos, P.~G., Nakano, Y., Raibekas, A., Roldan, M., Topological entropy and Hausdorff dimension of irregular sets for non-hyperbolic dynamical systems, \emph{Dynamical Systems} \textbf{37} (2022), no.~2, 181--210. 

\bibitem[Bo2]{Bo2} Bowen, R., \emph{Equilibrium States and the Ergodic Theory of Anosov Diffeomorphisms}, second revised edition, edited by J.-R. Chazottes, Lecture Notes in Mathematics, vol.~470, Springer, Berlin, 2008.

\bibitem[CG]{CG2023} Capi\'nski, M.~J., Gidea, M., Arnold diffusion, quantitative estimates, and stochastic behavior in the three-body problem, \emph{Comm. Pure Appl. Math.} \textbf{76} (2023), 616--681. 

\bibitem[CGMSZ]{CGMSZ2022} Capi\'nski, M.~J., Guardia, M., Mart\'in, P., Seara, T.~M., Zgliczy\'nski, P., Oscillatory motions and parabolic manifolds at infinity in the planar circular restricted three body problem, \emph{J. Differential Equations} \textbf{320} (2022), 316--370.

\bibitem[CP]{CP2026} Capi\'nski, M.~J., Pasiut, A., Oscillatory collision approach in the Earth--Moon restricted three body problem, \emph{Commun. Nonlinear Sci. Numer. Simul.} \textbf{152} (2026), article 109173.

\bibitem[Chazy]{Chazy} Chazy, J., Sur l'allure du mouvement dans le probl\`eme des trois corps quand le temps cro\^it ind\'efiniment, \emph{Ann. Sci. \'Ecole Norm. Sup.} (3) \textbf{39} (1922), 29--130.

\bibitem[DH]{DH} Dankowicz, H., Holmes, P., The existence of transverse homoclinic points in the Sitnikov problem, \emph{J. Differential Equations} \textbf{116} (1995), no.~2, 468--483. 

\bibitem[DKRS]{DKRS2019} Delshams, A., Kaloshin, V., de la Rosa, A., Seara, T.~M., Global instability in the restricted planar elliptic three body problem, \emph{Comm. Math. Phys.} \textbf{366} (2019), 1173--1228. 

\bibitem[DN]{DN2005} Downarowicz, T., Newhouse, S., Symbolic extensions and smooth dynamical systems, \emph{Invent. Math.} \textbf{160} (2005), no.~3, 453--499.

\bibitem[Du3]{Du3} Duarte P.,  Plenty of elliptic islands for the standard
family of area preserving maps, {\it   Ann. Inst. H. Poincar\'e Anal.
Non Lin\'eaire, } vol.~11  (1994),  no.~4, pp.~359--409.

\bibitem[Du1]{Du1} Duarte P., Abundance of elliptic isles at conservative
bifurcations, {\it Dynamics and Stability of Systems}, vol.~14, no.~4, (1999), pp.~339--356.

\bibitem[Du2]{Du2} Duarte P., Persistent homoclinic tangencies for
conservative maps near the identity, {\it Ergodic Theory Dynam.
Systems}, vol.~20 (2000),  pp.~393--438.

\bibitem[Du4]{Du4} Duarte P., Elliptic Isles in Families of Area Preserving
Maps,  {\it Ergodic Theory and Dynamical Systems} vol.~28 (2008),
pp.~1781--1813.

\bibitem[FGLP]{FGLP2016} Franco-P\'erez, L., Gidea, M., Levi, M., P\'erez-Chavela, E., Stability interchanges in a curved Sitnikov problem, \emph{Nonlinearity} \textbf{29} (2016), 1056--1079. 

\bibitem[GaK]{GaK} Galante, J., Kaloshin, V., Destruction of invariant curves in the restricted circular planar three-body problem by using comparison of action, \emph{Duke Math. J.} \textbf{159} (2011), no.~2, 275--327.

\bibitem[GP]{GP} Garc\'ia, A., P\'erez-Chavela, E., Heteroclinic phenomena in the Sitnikov problem, in \emph{Hamiltonian Systems and Celestial Mechanics (P\'atzcuaro, 1998)}, World Scientific Monograph Series in Mathematics, vol.~6, World Scientific, River Edge, NJ, 2000, 174--185.

\bibitem[GMP]{GMP2025} Garrido, M., Mart\'in, P., Paradela, J., Parabolic saddles and Newhouse domains in celestial mechanics, \emph{Comm. Math. Phys.} \textbf{406} (2025), no.~7, article 173. 

\bibitem[Go]{Go} Gorodetski A., On stochastic sea of the standard map,  {\it  Comm. Math. Phys.}, vol.~309, no.~1 (2012), pp.~155--192.

\bibitem[GoK]{GK} Gorodetski A., Kaloshin V., How often surface diffeomorphisms
have infinitely many sinks and hyperbolicity of periodic points near
a homoclinic tangency, {\it Advances in Mathematics}, {\bf 208} (2007),
pp.~710--797.

\bibitem[GK09]{GK2009} Gorodetski, A., Kaloshin, V., Conservative homoclinic bifurcations and some applications, \emph{Trudy Mat. Inst. Steklova} \textbf{267} (2009), 82--96; English translation in \emph{Proc. Steklov Inst. Math.} \textbf{267} (2009), 76--90. 

\bibitem[GMPS]{GMPS2022} Guardia, M., Mart\'in, P., Paradela, J., Seara, T.~M., Hyperbolic dynamics and oscillatory motions in the 3 Body Problem, preprint (2022), \href{https://arxiv.org/abs/2207.14351}{arXiv:2207.14351}.

\bibitem[GMS]{GMS2016} Guardia, M., Mart\'in, P., Seara, T.~M.,
Oscillatory motions for the restricted planar circular three body problem,
\emph{Invent. Math.} \textbf{203} (2016), 417--492.

\bibitem[GPSV]{GPSV2021} Guardia, M., Paradela, J., Seara, T.~M., Vidal, C., Symbolic dynamics in the restricted elliptic isosceles three body problem, \emph{J. Differential Equations} \textbf{294} (2021), 143--177. 

\bibitem[GSMS]{GSMS2017} Guardia, M., Seara, T.~M., Mart\'in, P., Sabbagh, L., Oscillatory orbits in the restricted elliptic planar three body problem, \emph{Discrete Contin. Dyn. Syst.} \textbf{37} (2017), 229--256. 

\bibitem[K]{k2} Kaloshin V., Generic diffeomorphisms with superexponential
growth of number of periodic orbits, {\it  Comm. Math. Phys.} 211 (2000), no.~1, pp.~253--271.

\bibitem[KaH]{kh} Katok A., Hasselblatt  B., Introduction to
the Modern Theory of Dynamical Systems, {\it Cambridge University Press}, 1995.

\bibitem[LGS]{LGS2025} Lamas, J., Guardia, M., Seara, T.~M., Oscillatory motions, parabolic orbits and collision orbits in the planar circular restricted three-body problem, \emph{Comm. Math. Phys.} \textbf{406} (2025), article 106. 

\bibitem[LS]{LS} Llibre J.,  Sim\'o C., Oscillatory solutions in the planar
restricted three-body problem,{\it Math. Ann.} {\bf 248}, (1980),
pp.~153--184.

\bibitem[LS1]{ls1} Llibre J.,  Sim\'o C., Some homoclinic phenomena in
the three-body problem, J.Diff Equations, 37 (1980), pp.~444--465.

\bibitem[MMerr]{MMerr} Manning, A., Errata to `Hausdorff dimension for horseshoes', \emph{Ergodic Theory Dynam. Systems} \textbf{5} (1985), no.~2, 319.

\bibitem[Mar]{Mar} Marstrand J.M., The dimension of cartesian product sets,
{\it Proc. Cambridge Phil. Soc.} {\bf 50} (1954), pp.~198--202.

\bibitem[MP]{MP} Mart\'inez, R., Pinyol, C., Parabolic orbits in the elliptic restricted three body problem, \emph{J. Differential Equations} \textbf{111} (1994), no.~2, 299--339.

\bibitem[MM]{MM} McCluskey, H., Manning, A., Hausdorff dimension for horseshoes, \emph{Ergodic Theory Dynam. Systems} \textbf{3} (1983), no.~2, 251--260.

\bibitem[McG]{McG} McGehee R., A stable manifold theorem for degenerate fixed
points with applications to celestial mechanics. {\it J. Differential Equations}
14 (1973), pp.~70--88.

\bibitem[Moe]{Moe2007} Moeckel, R., Symbolic dynamics in the planar three-body problem, \emph{Regul. Chaotic Dyn.} \textbf{12} (2007), no.~5, 449--475.

\bibitem[MR]{MR1} Mora, L., Romero, N., Moser's invariant curves and homoclinic bifurcations, \emph{Dynamic Systems and Applications} \textbf{6} (1997), 29--42.

\bibitem[Mor]{Mor} Morrisey, T.~J., A degenerate Hartman theorem, \emph{Israel J. Math.} \textbf{95} (1996), 157--167.

\bibitem[Mo]{Mo} Moser, J., \emph{Stable and Random Motions in Dynamical Systems: With Special Emphasis on Celestial Mechanics}, Annals of Mathematics Studies, vol.~77, Princeton University Press, Princeton, NJ, 1973.

\bibitem[N1]{n1} Newhouse S., Non-density
of Axiom A(a) on $S^2$, {\it Proc. A.M.S. Symp. Pure Math.}, {\bf
14}, (1970), 191--202.

\bibitem[N2]{n2} Newhouse S., Diffeomorphisms with infinitely many sinks. {\it
Topology} {\bf 13}, (1974), pp.~9--18.

\bibitem[N3]{n3} Newhouse S., The abundance of wild hyperbolic sets and nonsmooth
stable sets of diffeomorphisms, {\it Publ. Math. I.H.E.S.}, {\bf
50}, (1979), pp.~101--151.

\bibitem[N5]{N5} Newhouse S., Lectures on dynamical systems, {\it Dynamical
systems (C.I.M.E. Summer School, Bressanone, 1978), Progr. Math., 8,
Birkh\"auser, Boston, Mass.}, 1980, pp.~1--114.

\bibitem[PT]{PT} Palis J.,  Takens F., Hyperbolicity and sensitive chaotic
dynamics at homoclinic bifurcations. Cambridge University Press,
1993.

\bibitem[PV]{PV} Palis, J., Viana, M., On the continuity of Hausdorff dimension and limit capacity for horseshoes, in \emph{Dynamical Systems}, Lecture Notes in Mathematics, vol.~1331, Springer, Berlin, 1988, 150--160.

\bibitem[Poi]{p} Poincar\'e, H., Sur le probl\`eme des trois corps et les \'equations de la dynamique, \emph{Acta Math.} \textbf{13} (1890), 1--270.

\bibitem[R1]{R} Robinson, C., Bifurcation to infinitely many sinks, \emph{Comm. Math. Phys.} \textbf{90} (1983), no.~3, 433--459.

\bibitem[R2]{R2} Robinson, C., Homoclinic orbits and oscillation for the planar three-body problem, \emph{J. Differential Equations} \textbf{52} (1984), no.~3, 356--377. 

\bibitem[SX]{sx} Saari D., Xia J., The existence of oscillatory and
superhyperbolic motion in Newtonian systems, {\it J. Differential
Equations}, 82 (1989), no.~2, pp.~342--355.

\bibitem[SZ]{SZ2020} Seara, T.~M., Zhang, J., Oscillatory orbits in the restricted planar four body problem, \emph{Nonlinearity} \textbf{33} (2020), 6985--7015. 

\bibitem[Si]{sit} Sitnikov K., The existence of oscillatory motions in the
three-body problems.  {\it Dokl. Akad. Nauk SSSR } 133,  pp.~303--306 (Russian); translated as {\it Soviet Physics. Dokl.}  5
(1960), pp.~647--650.

\bibitem[Sl]{Sl} Slotnick D., Asymptotic behavior of solutions of canonical systems near a closed, unstable orbit, {\it Contributions to theory of non-linear oscillations, Vol. IV, Annals of Mathematics Studies}, no.~41, Princeton University Press, Princeton, N.J., 1958, pp.~85--110.

\bibitem[X]{X} Xia J., Melnikov method and transversal homoclinic points in
the restricted three-body problem,  {\it  J. of Diff. Equations},
vol.~96, no.~1 (1992), pp.~170--184.

\bibitem[Xi2]{Xi2} Xia J., Arnold diffusion in the elliptic restricted
three-body problem, {\it J. Dynam. Differential Equations} 5 (1993), no.~2, pp.~219--240.

\bibitem[Xi3]{Xi3} Xia J., Arnold diffusion and oscillatory solutions in the
planar three-body problem, {\it J. Differential Equations} 110 (1994), no.~2, pp.~289--321.

\end{thebibliography}
\end{document}